\numberwithin{equation}{section}
\newcommand{\id}{\mathrm{id}}
\newcommand{\Cc}{\mathbb{C}}
\newcommand{\Pp}{\mathbb{P}}
\newcommand{\Qq}{\mathbb{Q}}
\newcommand{\Rr}{\mathbb{R}}
\newcommand{\Zz}{\mathbb{Z}}
\newcommand{\Span}{\operatorname{Span}}
\newcommand{\alct}{a\text{-}\operatorname{lct}}
\newcommand{\eRcomp}{\epsilon\text{-}\operatorname{Rcomp}}
\newcommand{\pld}{\operatorname{pld}}
\newcommand{\Center}{\operatorname{center}}
\newcommand{\mld}{{\rm{mld}}}
\newcommand{\lct}{\operatorname{lct}}
\newcommand{\Supp}{\operatorname{Supp}}
\newcommand{\mult}{\operatorname{mult}}
\newcommand{\Bb}{\mathcal{B}}
\newcommand{\Ff}{\mathcal{F}}
\newcommand{\Oo}{\mathcal{O}}
\newcommand{\Ii}{{\Gamma}}
\newcommand{\Ee}{\mathcal{E}}
\newcommand{\Ll}{\mathcal{L}}
\newcommand\han[1]{\todo[color=yellow!40]{#1}} 
\newcommand\chen[1]{\todo[color=blue!40]{#1}} 
\newtheorem{thm}{Theorem}[section]
\newtheorem{conj}[thm]{Conjecture}
\newtheorem{cor}[thm]{Corollary}
\newtheorem{lem}[thm]{Lemma}
\newtheorem{prop}[thm]{Proposition}
\newtheorem{problem}[thm]{Problem}
\newtheorem{claim}[thm]{Claim}
\theoremstyle{definition}
\newtheorem{defn}[thm]{Definition}
\newtheorem{rem}[thm]{Remark}
\newtheorem{ex}[thm]{Example}
\theoremstyle{definition}
\begin{document}

\title{Boundedness of ($\epsilon, n$)-Complements for Surfaces}

\dedicatory{Dedicated to Vyacheslav Shokurov on the occasion of his seventieth birthday}

\author{Guodu Chen}
\address{Guodu Chen, Beijing International Center for Mathematical Research, Peking University, Beijing 100871, China}
\email{1601110003@pku.edu.cn}

\author{Jingjun Han}
\address{Jingjun Han, Department of Mathematics, Johns Hopkins University, Baltimore, MD 21218, USA}
\email{jhan@math.jhu.edu}

\begin{abstract}
We show the existence of $(\epsilon,n)$-complements for $(\epsilon,\Rr)$-complementary surface pairs when the coefficients of boundaries belong to a DCC set. 
\end{abstract}

\date{\today}

\maketitle
\pagestyle{myheadings}\markboth{\hfill  G.Chen and J.Han \hfill}{\hfill Boundedness of ($\epsilon, n$)-Complements for Surfaces\hfill}

\tableofcontents

\section{Introduction}

    We work over the field of complex numbers $\Cc$.
     
     The theory of complements was introduced by Shokurov when he proved the existence of log flips for threefolds \cite{Sho92}. It originates
from his earlier work on anti-canonical systems on Fano threefolds \cite{Sho79}. The theory is further developed in \cite{Sho00,PS01,PS09,Bir16a,HLS19,Sho19}, see \cite{FM18,Xuyanning19-1,Xuyanning19-2,FMX19} for recent works. 

At the first sight, the definition of complements seems to be technical. However, the boundedness of log canonical complements for Fano type varieties with hyperstandard sets $\Ii\subseteq [0,1]\cap \Qq$ proved by Birkar \cite{Bir16a} is a breakthrough in the study of Fano varieties. Birkar's result plays an important role in the proof of Birkar-Borisov-Alexeev-Borisov Theorem \cite{Bir16a,Bir16b}, the boundedness of log Calabi-Yau fibrations \cite{Bir18}, Jonsson--Musta\c{t}\v{a}'s conjecture on graded sequence of ideals which implies an algebraic proof of Demailly--Koll\'{a}r's openness conjecture \cite{Xu19}, Chi Li's conjecture on minimizers of the normalized volumes \cite{Xu19}, and the openness of K-semistability in families of log Fano pairs \cite{Xu19,BLX19}.

  The previous researches on complements were mainly focused on the existence of log canonical complements for Fano type varieties with sets $\Gamma=\bar{\Gamma}$ of rational numbers, where $\bar{\Gamma}$ is the closure of $\Gamma$. In this paper, we will study the existence of $\epsilon$-log canonical complements for Fano type and Calabi--Yau varieties with sets $\Gamma$ of real numbers. Recall that the DCC stands for the descending chain condition, and the ACC stands for the ascending chain condition. The following is a generalization of Shokurov's conjecture on the existence of $(\epsilon,n)$-complements (see \cite{Sho04comp}, \cite[Conjecture 1.3, Conjecture 1.4]{Bir04}). 
     
\begin{conj}[Existence of $(\epsilon,n)$-complements]\label{conj a complement}
	Let $\epsilon$ be a non-negative real number, $d,p$ two positive integers, and $\Ii \subseteq [0,1]$ a DCC set. Then there exists a positive integer $p|n$ depending only on $d, p, \epsilon$ and $\Gamma$ satisfying the following.
	
	Assume that $(X,B)$ is a pair, $X\to Z$ is a contraction and $z\in Z$ is a $($not necessarily closed$)$ point such that
	\begin{enumerate}     
		\item $\dim X=d$,
		\item either $\epsilon=0$ or $-K_X$ is big over $Z$,
		\item $B\in \Ii$, that is, the coefficients of $B$ belong to $\Ii$, and    
		\item $(X/Z\ni z,B)$ is $(\epsilon,\Rr)$-complementary.     
	\end{enumerate} 
	Then there is an $(\epsilon,n)$-complement $(X/Z\ni z,B^{+})$ of $(X/Z\ni z,B).$ Moreover, if $\Span_{\Qq_{\ge0}}(\bar{\Ii}\cup\{\epsilon\}\backslash\Qq)\cap (\Qq\backslash\{0\})=\emptyset$, then we may pick $B^+\ge B$.
\end{conj}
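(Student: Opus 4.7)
The plan is to follow Birkar's blueprint from \cite{Bir16a} for ordinary $n$-complements and upgrade each step in two directions: allowing real (rather than rational) coefficients, and preserving the $\epsilon$-log canonical condition uniformly. First I would reduce to rational data. Using the DCC hypothesis on $\Ii$ together with the uniform rational polytope results of \cite{HLS19}, I would approximate the real boundary $B$ by rational boundaries $B'$ supported on the same divisors so that $(X/Z\ni z,B')$ remains $(\epsilon',\Rr)$-complementary with $\epsilon'\in\Qq$ arbitrarily close to $\epsilon$. The technical hypothesis $\Span_{\Qq_{\ge 0}}(\bar{\Ii}\cup\{\epsilon\}\setminus\Qq)\cap(\Qq\setminus\{0\})=\emptyset$ is designed precisely to allow $B'\ge B$ in this approximation, by ruling out rational linear relations that would otherwise force some coefficients of the approximation to strictly decrease.

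In the resulting rational setting, I would run a suitable MMP over $Z$ to bring $(X,B)$ into a standard form for complement theory: either a relative Mori fiber space, or a relative log Calabi-Yau model. In the case $\epsilon=0$, this recovers Birkar's setup and one can apply \cite{Bir16a} directly after adjusting boundaries. In the case $\epsilon>0$, the bigness of $-K_X$ over $Z$ combined with the $(\epsilon,\Rr)$-complementarity forces the output to be of log Fano type over $Z$, and the $\epsilon$-lc version of BAB in \cite{Bir16b} places the relevant fibers into a bounded family of $\epsilon$-lc log Fano pairs. On a bounded family one obtains effective base-point-freeness for some fixed multiple $-N(K_X+B)$, and one picks an integral divisor to build $B^+$ whose associated pair is the desired $(\epsilon,n)$-complement. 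In the fibration case, the canonical bundle formula descends the problem to a pair on the base, and induction on $d$ closes the loop; here the descent must be arranged so that the coefficients on the base still lie in a DCC set of the required kind.

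The hardest step is preserving the $\epsilon$-lc condition with a uniform $n$ independent of the pair. Lifting a complement from a divisor or from the base of a fibration naturally controls the log canonical property, but ensuring that every discrepancy stays above $-1+\epsilon$ requires quantitative ACC-type statements for $\epsilon$-lc thresholds and uniform lower bounds on minimal log discrepancies over DCC boundaries. These ingredients are available in dimension two, which is what makes the program go through completely for surfaces, but in higher dimension they are presently conditional; moreover, subtle non-rational issues about $\epsilon'$ approaching $\epsilon$ from below could in principle force $n$ to blow up if one is not careful about discarding only finitely many thresholds. I therefore expect the plan to yield Conjecture \ref{conj a complement} unconditionally for $d=2$, and in arbitrary dimension to reduce the conjecture to the $\epsilon$-lc refinement of BAB together with the corresponding ACC for $\epsilon$-lc thresholds over DCC coefficient sets.
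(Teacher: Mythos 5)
The decisive gap is in your very first reduction. For irrational $\epsilon$ there is, in general, no rational boundary $B'$ on the same support with $B'\ge B$ (or even $B'$ arbitrarily close to $B$ from above) such that $(X/Z\ni z,B')$ stays $(\epsilon',\Rr)$-complementary for a rational $\epsilon'$ near $\epsilon$: the paper's own example takes $X=\Pp^1\times\Pp^1$, $c\in(0,1)$ irrational, $B=c\Pp^1$, $\epsilon=1-c$, where $\mld(X,c'\Pp^1)=1-c'<\epsilon$ for every $c'>c$, so every upward rational perturbation destroys $\epsilon$-lc; and replacing $\epsilon$ by a rational $\epsilon'<\epsilon$ only yields an $(\epsilon',n)$-complement, which is strictly weaker than the conjecture's conclusion (the paper observes that when $\mld(X/Z\ni z,B)=\epsilon\notin\Qq$ any $(\epsilon,n)$-complement must satisfy $n\,\mld(X/Z\ni z,B^+)\ge\lceil n\epsilon\rceil$, so the mld must actually jump). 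The paper's proof of the surface case circumvents this not by approximating the pair $(B,\epsilon)$ but by decomposing it: Theorem \ref{thm: decompsable epsilon complement for surfaces} produces finitely many rational boundaries $B^i$ and rational $\epsilon_i$ with $\sum a_i(K_X+B^i)\ge K_X+B$, $\sum a_i\epsilon_i\ge\epsilon$ and each $(X/Z\ni z,B^i)$ $(\epsilon_i,\Rr)$-complementary. This rests on the uniform linearity of mlds on rational polytopes for surface germs (Theorem \ref{thm:uniformpolytopeforsurfacemlds}, via the classification of surface singularities), on passing to the minimal resolution so that $-(K+B)$ becomes semiample over $Z$ with plds in a finite set (Theorem \ref{thm reducetofiniteandfixpld}), and the uniform $n$ is then extracted by Diophantine approximation on the weights $a_i$ (Lemma \ref{lemma rational direction2}), the span hypothesis entering only through Lemma \ref{lem:finitesetmonotonic} to get $B^+\ge B$. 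Your outline contains no substitute for this mechanism, and without it the irrational-$\epsilon$ case does not go through even for surfaces.

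There are two further mismatches with the hypotheses and with the paper's actual route. When $\epsilon=0$ the conjecture does not assume $-K_X$ big over $Z$, so the pair need not be of Fano type over $Z$ and one cannot ``apply \cite{Bir16a} directly'': relative and global elliptic fibrations must be treated, which the paper does via the canonical bundle formula by writing down the complement explicitly (Lemma \ref{lem: indexofbpf}); this is also exactly where the bounded-multiple-fibre issue of Example \ref{ex: counterexample without bdd multiple fiber} lives when $\epsilon>0$. Moreover, the paper's surface argument never uses BAB, boundedness of families, induction on dimension, or lifting of complements from lc centres (it explicitly avoids lifting because it breaks down for irrational coefficients and irrational $\epsilon$); effectivity comes instead from bounded Cartier indices supplied by the classification of surface singularities (Proposition \ref{prop pld}) together with Fujino's effective base point free theorem, after reducing to closed points $z$ by lower semicontinuity of relative mlds and to finite coefficient sets by the ACC for $a$-lc thresholds (Theorem \ref{thm: dcc limit epsilon-lc complementary}). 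So while your plan correctly identifies the role of the span condition and is a reasonable adaptation of the Fano-type complement machinery, as written it neither proves the surface case nor reduces the general case correctly: the missing ingredient in both is the decomposition into rational $(\epsilon_i,\Rr)$-complements rather than approximation of $(B,\epsilon)$.
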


\begin{rem}\label{rem: conditions for complements fano type}	
	We weaken conditions ``(2)' $-(K_X+B)$ is big and nef'' and ``(4)' $(X,B)$ is $\epsilon$-lc'' in conjectures proposed in \cite{Sho04comp}, \cite[Conjecture 1.3, Conjecture 1.4]{Bir04} to conditions (2) and (4), strengthen the conclusion ``$n$ belongs to a finite set'' to ``a positive integer $p|n$'', and additionally have the ``Moreover'' part. The ``Moreover'' part is about monotonicity property of complements which is useful in applications especially when $\bar{\Ii}$ is a set of rational numbers and does not hold in general. Shokurov informed us that condition (4) is an analog of the existence of $\Rr$-complements in \cite{Sho19} which he believes to be the correct assumption for the boundedness of log canonical complements after a decade of research.
\end{rem}     

\begin{rem}

According to the minimal model program, varieties of general type, Fano varieties and Calabi--Yau varieties form three fundamental classes in birational geometry as building blocks of algebraic varieties, and the boundedness of complements for Calabi--Yau varieties implies the boundedness of global index (see Proposition \ref{prop:global index}) which is related to the boundedness of Calabi--Yau varieties, one of the central open problems in biratioanl geometry. Thus it is also very natural to study the theory of complements for non Fano type varieties. 

Philosophically, the boundedness of complements should hold not only for Fano type varieties. However, it is not completed clear what are the most general settings. 
\end{rem}

\begin{rem}     
For the purpose of induction in birational geometry, we need the theory of complements for infinite sets $\Ii$ when we apply the adjunction formula. In order to prove the existence of complements for infinite sets $\Ii$, a frequently-used strategy is to replace $\Ii$ by some finite set which is contained in the closure of $\Ii$, see \cite{PS09,Bir16a,FM18,HLS19,FMX19}. Thus we need to study the theory of complements when $\Ii$ is a subset of real numbers. Moreover, the theory of complements for sets $\Ii$ of real numbers also appears naturally in the study of the ACC for minimal log discrepancies even when the coefficients of boundaries belong to a finite rational set as the accumulation points of minimal log discrepancies might be irrational numbers \cite{LJH18}. The boundedness of log canonical complements with some Diophantine approximation properties for Fano type varieties with any DCC set $\Ii\subseteq[0,1]$ was established in \cite{HLS19}. The theory of complements in this case could be applied to prove the ACC for minimal log discrepancies of exceptional singularities \cite{HLS19}, and it implies some known but important results, such as the Global ACC \cite{Sho19}, the ACC for log canonical thresholds \cite{Sho19}, the ACC for Fano indices \cite{Sho19}. 
\end{rem}
     
     \begin{rem}
     	When $\epsilon>0$, Conjecture \ref{conj a complement} can be regarded as a relative version of Birkar-Borisov-Alexeev-Borisov Theorem. It implies Birkar-Borisov-Alexeev-Borisov Theorem (Proposition \ref{Prop: comp conj implies bbab}), Shokurov's index conjecture (Proposition \ref{prop: epsiloncomp conj implies index}) and Shokurov-M\textsuperscript{c}Kernan's conjecture on Fano type fibrations (Proposition \ref{prop: epsilon comp conj implies MS conj}). We refer readers to \cite{HLS19,Sho19} for other applications of Conjecture \ref{conj a complement}. These applications also provide positive evidences for Conjecture \ref{conj a complement}.
     	\end{rem}
     
     In this paper, we show the following result which implies that Conjecture \ref{conj a complement} holds for surfaces. 
     
\begin{thm} \label{thm epsilonlccompforsurfaces}    
     Let $\epsilon$ be a non-negative real number, $p,M$ two positive integers and $\Ii\subseteq[0,1]$ a DCC set. Then there exists a positive integer $p|n$ depending only on $\epsilon,p,M$ and $\Ii$ satisfying the following.

     Assume that $(X/Z\ni z,B)$ is a surface pair such that
     \begin{enumerate}     
       \item $B\in \Ii$, 
       \item $(X/Z\ni z,B)$ is $(\epsilon,\Rr)$-complementary, and
       \item either $\epsilon=0$ or the multiplicity of any fiber of any minimal elliptic fibration of the minimal resolution of $X$ over $Z$ is bounded from above by $M$.
      \end{enumerate}
      Then there is an $(\epsilon,n)$-complement $(X/Z\ni z,B^{+})$ of $(X/Z\ni z,B).$ Moreover, if $\Span_{\Qq_{\ge0}}(\bar{\Ii}\cup\{\epsilon\}\backslash\Qq)\cap (\Qq\backslash\{0\})=\emptyset$, then we may pick $B^+\ge B.$ In particular, Conjecture \ref{conj a complement} holds for surfaces. 
\end{thm}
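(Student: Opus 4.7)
The plan is to split the argument according to the birational structure of $X$ over $Z$: either $(X/Z\ni z,B)$ is of Fano type (i.e.\ $-K_X$ is big over $Z$), or it is not. This dichotomy naturally aligns with the two regimes in which the technical input differs. In both cases we begin by invoking hypothesis (2) to produce $B^+$ with $(X,B^+)$ being $\epsilon$-lc and $K_X+B^+\sim_\Rr 0/Z$, and then attempt to replace $B^+$ by a $\Qq$-boundary with bounded Cartier index while preserving the $\epsilon$-lc property.

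In the Fano type case, we appeal to the DCC version of boundedness of log canonical complements established in \cite{HLS19} for Fano type surfaces, which produces an $n$-complement once the real coefficients of $B$ and $B^+$ are approximated (in the Diophantine sense) into an auxiliary DCC set of rational numbers. To upgrade this to an $(\epsilon,n)$-complement when $\epsilon>0$, one combines with Alexeev's boundedness of $\epsilon$-lc Fano type surfaces, which controls the possible coefficients on the complement. The ``Moreover'' assertion on monotonicity ($B^+\ge B$) then follows from the $\Span$ hypothesis via the same rational approximation mechanism of \cite{HLS19}, which permits rounding coefficients upward without leaving the relevant DCC class.

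In the non-Fano type case, we pass to the minimal resolution $\tilde X\to X$ and run a relative MMP over $Z$. The output is either a minimal model with $K\equiv 0/Z$ -- corresponding to abelian, K3, Enriques, or bielliptic surfaces, whose Cartier index of $K$ is universally bounded -- or a minimal elliptic fibration over a curve, for which the canonical bundle formula writes $K$ as a pullback from the base plus $\sum \frac{m_i-1}{m_i}F_i$, with the multiplicities $m_i$ bounded by $M$ via hypothesis (3). In both subcases this bounds the Cartier index of $K$ on the minimal model. Pulling a complement back through the MMP and then through the minimal resolution (using that $\epsilon=0$ here, so $\epsilon$-lc-ness is automatic) yields an $(\epsilon,n)$-complement on $X$ with $n$ depending only on $M$ and $\Ii$. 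A final lcm with $p$ ensures divisibility $p\mid n$.

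The principal difficulty is the interplay between the real-coefficient DCC input and the $\epsilon$-lc condition: a naive Diophantine approximation of boundaries may destroy $\epsilon$-lc-ness by pushing coefficients too high. This is handled by a careful perturbation in the spirit of \cite{HLS19}, replacing $B$ by a rational boundary close enough in norm (so $\epsilon$-lc is preserved) while belonging to a hyperstandard set associated with $\Ii$. A secondary but nontrivial obstacle arises in the relative setting $X/Z\ni z$ when $z$ is not closed: the classification-based arguments are inherently projective and must be carefully localized around $z$, for instance via analytic germs, and one must check that the canonical bundle formula behaves compatibly under such localization.
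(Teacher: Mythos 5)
Your Fano/non-Fano dichotomy is not how the paper organizes the argument, and it introduces a genuine error: you write that in the non-Fano-type case ``$\epsilon=0$ here, so $\epsilon$-lc-ness is automatic.'' This is false. A smooth K3 surface with $B=0$ over $Z=\mathrm{pt}$ is not of Fano type, yet it is $(1,\Rr)$-complementary, so the non-Fano case absolutely must handle $\epsilon>0$. Condition (3) in the theorem is a hypothesis (``either $\epsilon=0$ or the multiplicity is bounded by $M$''), not a consequence of $X$ failing to be Fano type, so in the non-Fano elliptic-fibration subcase with $\epsilon>0$ you still need to produce a boundary that is $\epsilon$-lc, not merely lc, and your sketch does not do this. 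The paper avoids the dichotomy entirely: it reduces to the minimal resolution $Y$ of $X$ with a suitable $B_Y$ having finite coefficients and $-(K_Y+B_Y)$ semiample over $Z$ (Theorem \ref{thm reducetofiniteandfixpld}), and treats all cases uniformly from there.

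The more serious gap is in your treatment of irrational $\epsilon$. You propose ``replacing $B$ by a rational boundary close enough in norm (so $\epsilon$-lc is preserved).'' This cannot work: the paper itself gives the counterexample $X=\Pp^1\times\Pp^1$, $c\in(0,1)$ irrational, $\epsilon=1-c$, $\Ii=\{0,c\}$, where $\mld(X,c\Pp^1)=1-c$ but $\mld(X,c'\Pp^1)=1-c'<\epsilon$ for every $c'>c$. Rounding the coefficient up by any amount destroys $\epsilon$-lc-ness. The paper's actual solution is the decomposed $(\epsilon,\Rr)$-complement (Theorem \ref{thm: decompsable epsilon complement for surfaces}, built on Theorem \ref{thm:uniformpolytopeforsurfacemlds} and Lemma \ref{lemma antinefpolytope2}): one writes $K_X+B=\sum a_i(K_X+B^i)$ with each $B^i$ rational of bounded Cartier index, each $(X/Z\ni z,B^i)$ being $(\epsilon_i,\Rr)$-complementary, and crucially $\sum a_i\epsilon_i\ge\epsilon$ with the $\epsilon_i$ rational. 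Then Diophantine approximation (Lemma \ref{lemma rational direction2}) perturbs the weights $a_i$ to rational $a_i'$ while keeping $\sum a_i'\epsilon_i\ge\epsilon$, which is possible precisely because the $\epsilon_i$ are allowed to differ from $\epsilon$. Your proposal has no analogue of this decomposition and therefore does not get past the obstruction illustrated by the example above. Finally, your suggestion that the reduction from arbitrary $z$ to closed $z$ can be handled by ``careful localization'' or ``analytic germs'' underestimates the difficulty: the paper needs both Ambro-type semicontinuity over the closure of $z$ (Lemma \ref{lemma Amb99rel}) and the lower semicontinuity of relative minimal log discrepancies (Proposition \ref{prop:equivrelmld}), yielding Proposition \ref{Prop: a complement conj closed point imply general}, and none of this is a formality.
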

     
         We remark that in Theorem \ref{thm epsilonlccompforsurfaces}, $-K_X$ is not assumed to be big over $Z$. Birkar proved the existence of $(\delta,n)$-complements of $(X/Z\ni z,B)$ for some $\delta\le \epsilon$ depending only on $\epsilon,\Ii$ in Theorem \ref{thm epsilonlccompforsurfaces} when $p=1$, $\epsilon$ is a positive real number, $\Ii$ is the standard set and conditions $(2)$ and $(3)$ are replaced by stronger conditions (2)' and (4)' in Remark \ref{rem: conditions for complements fano type}. Conjecture \ref{conj a complement} is still widely open when $\epsilon>0$ and $\dim X\ge 3$, and just a few special cases are known. Kawatika proved Conjecture \ref{conj a complement} for the case when $\dim X=3$, $\Gamma=\{0\}$, $\epsilon=1$, $X=Z$, $z$ is a closed point, and $\mld (X\ni z,B)=1$ \cite{Kawakita15index}. When $\dim Z=0$ and $X$ is of Fano type over $Z$, one could prove Conjecture \ref{conj a complement} by Birkar-Borisov-Alexeev-Borisov Theorem and by using Diophantine approximation techniques for complements developed in \cite{HLS19}, see \cite[Theorem 1.3]{FM18} for the case when $\bar{\Ii}\subseteq\Qq$. 
     
 Without condition (3) on the boundedness of multiple fibers, that is Shokurov's conjecture on the boundedness of $\epsilon$-complements under condition $(c')$ in \cite{Sho04comp}, Theorem \ref{thm epsilonlccompforsurfaces} no longer holds if $\epsilon>0$ even when $\dim Z=0$, see Example \ref{ex: counterexample without bdd multiple fiber} below. 
     
     \begin{ex}\label{ex: counterexample without bdd multiple fiber}
     	Let $f_m:X_m\to \Pp^1$ be a relatively minimal rational elliptic surface with only one multiple fiber $E_m$ of multiplicity $m\ge 2$ (\cite[Proposition 1.1]{Fujimoto90}). The canonical bundle formula of Kodaria \cite{Kodaria60,Kodaira63} implies that
     	$$K_{X_m}+\frac{1}{km}\sum_{i=1}^k F_{m,i}\sim f_m^{*}\mathcal{O}_{\Pp^1}(-1)+(m-1)E_m+\frac{1}{km}\sum_{i=1}^k F_{m,i}\sim_{\Rr} 0,$$
     	where $F_{m,i}$ are general fibers of $f_m$. Thus $(X_m,0)$ is $(\epsilon,\Rr)$-complementary for any $0\le \epsilon<1$. However, $(X_m,0)$ is not $(\epsilon,n)$-complementary for any $n<m$ and $0<\epsilon<1$. 
     \end{ex}
     
     When $\epsilon=0$ and $\bar{\Ii}\subseteq\Qq$, there are two cases in the proof of the existence of log canonical complements \cite{PS01,PS09,Bir16a}, that is 1) non-exceptional pairs, and 2) exceptional pairs. For non-exceptional pairs, the strategy is to construct lc centers, apply the induction on lc centers, and lift complements from lc centers by using Kawamata--Viehweg vanishing theorem. However when $\epsilon>0$, it is possible that there is no lc center, and when $\bar{\Ii}$ contains some irrational numbers, there are some technique difficulties on lifting complements. Instead of applying the induction and lifting complements, we construct some ``good'' birational model, show the existence of decomposed $(\epsilon,\Rr)$-complements for surfaces (Theorem \ref{thm: decompsable epsilon complement for surfaces}) and the existence of uniform rational polytopes for linearity of minimal log discrepancies for surface germs (Theorem \ref{thm:uniformpolytopeforsurfacemlds}). It is expected that both Theorem \ref{thm: decompsable epsilon complement for surfaces} and Theorem \ref{thm:uniformpolytopeforsurfacemlds} should hold in higher dimensions, and will be used in the proof of Conjecture \ref{conj a complement}. When $\epsilon$ is an irrational number, we need to overcome another difficulty. In this case, if $\mld(X/Z\ni z,B)=\epsilon$, then $\mld(X/Z\ni z,B^{+})$ is always strictly larger than $\mld(X/Z\ni z,B)$ as $n\mld(X/Z\ni z,B^+)\ge\lceil n\epsilon \rceil$. Such a phenomenon will never happen when either $\epsilon=0$ or $\Ii\subseteq \Qq$, and thus it is out of the scope of conjectures proposed in \cite{Sho04comp}, \cite[Conjecture 1.3, Conjecture 1.4]{Bir04}. Here we need to modify Diophantine approximation techniques for complements developed in \cite{HLS19}. The other difficulty causes from that $X$ is not necessarily of Fano type over $Z$ and it is not always the case that we may run a $D$-MMP for any $\Rr$-Cartier divisor $D$. In this paper, we will use \cite{Fuj12} to run MMPs for non-lc pairs.

\medskip

     As an easy corollary of Theorem \ref{thm epsilonlccompforsurfaces}, if the number of components of $B$ is bounded from above, then we may show the boundedness of complements without assuming $\Ii$ is a DCC set. 
\begin{cor} \label{cor main}   
     Let $\epsilon$ be a non-negative real number and $p,s,M$ positive integers. Then there exists a positive integer $N$ depending only on $\epsilon,p,s$ and $M$ satisfying the following.

     Assume that $(X/Z\ni z,B)$ is a surface pair such that
     \begin{enumerate} 
       \item the number of components of $B$ is at most $s$,
       \item $(X/Z\ni z,B)$ is $(\epsilon,\Rr)$-complementary, and
       \item either $\epsilon=0$ or the multiplicity of any fiber of any minimal elliptic fibration of the minimal resolution of $X$ over $Z$ is bounded from above by $M$.
      \end{enumerate}
      Then $(X/Z\ni z,B)$ is $(\epsilon,n)$-complementary for some positive integer $n$ satisfying $n\le N$ and $p|n.$ 
\end{cor}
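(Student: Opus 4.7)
The plan is to deduce Corollary \ref{cor main} from Theorem \ref{thm epsilonlccompforsurfaces} by a compactness argument on the coefficients. I would argue by contradiction: if no such $N$ exists, then for each positive integer $i$ there is a surface pair $(X_i/Z_i \ni z_i, B_i)$ satisfying (1)--(3) that admits no $(\epsilon,n)$-complement with $p \mid n$ and $n \leq i$. Writing $B_i = \sum_{j=1}^{s} b_{i,j} D_{i,j}$ with $b_{i,j} \in [0,1]$ (allowing the trivial value $0$), and using compactness of $[0,1]^{s}$ together with iterated monotone-subsequence extraction, I would pass to a subsequence so that for each $j \in \{1,\dots,s\}$ the sequence $(b_{i,j})_i$ converges monotonically to a limit $b_j \in [0,1]$.

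Next I would construct an auxiliary boundary $B_i' \leq B_i$ whose coefficients belong to a DCC set. Partition $\{1,\dots,s\}=J_{+}\sqcup J_{-}$, where $J_{+}$ consists of those $j$ for which $(b_{i,j})_i$ is non-decreasing and $J_{-}$ of those for which $(b_{i,j})_i$ is non-increasing, and set
\[
B_i' := \sum_{j\in J_{+}} b_{i,j} D_{i,j} + \sum_{j\in J_{-}} b_j D_{i,j} \leq B_i.
\]
Any $\Rr$-complement of $(X_i/Z_i\ni z_i, B_i)$ is automatically an $\Rr$-complement of $(X_i/Z_i\ni z_i, B_i')$, so the latter is $(\epsilon,\Rr)$-complementary. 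The set of coefficients
\[
\Gamma := \bigcup_{j \in J_{+}} \{b_{i,j}\}_{i \geq 1} \,\cup\, \{b_j : j\in J_{-}\}
\]
is a finite union of ranges of non-decreasing sequences together with finitely many additional points, hence DCC. Theorem \ref{thm epsilonlccompforsurfaces}, applied to the family $\{(X_i/Z_i\ni z_i, B_i')\}_{i}$, then produces a positive integer $N'=N'(\epsilon,p,M,\Gamma)$ and, for each $i$, an $(\epsilon,n_i)$-complement $B_i^{+}$ of $(X_i/Z_i\ni z_i, B_i')$ with $p\mid n_i$ and $n_i \leq N'$.

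The main obstacle is to upgrade $B_i^{+}$ to an $(\epsilon,n_i)$-complement of $(X_i/Z_i\ni z_i, B_i)$ itself. The $\epsilon$-lc property of $(X_i, B_i^{+})$ and the linear equivalence $n_i(K_{X_i}+B_i^{+})\sim 0/Z_i\ni z_i$ are independent of $B_i$, so it remains only to verify, at each prime component of $B_i$, the floor-type inequality in the definition of an $(\epsilon,n_i)$-complement. For $j\in J_{+}$ the coefficients of $B_i$ and $B_i'$ at $D_{i,j}$ coincide, so nothing has to be done. For $j\in J_{-}$ one has $b_{i,j}\searrow b_j$, and the elementary observation I would use is that for each fixed $n\in\{1,\dots,N'\}$ with $p\mid n$ the value $\lfloor (n+1)b_{i,j}\rfloor$ equals $\lfloor (n+1)b_j\rfloor$ as soon as $b_{i,j}-b_j$ is small enough (regardless of whether $(n+1)b_j$ is itself an integer, because $b_{i,j}\geq b_j$). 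Since only finitely many pairs $(n,j)$ are involved, a single threshold $i_0$ works uniformly; the analogous stabilization needed for the Diophantine variant of the complement inequality in the irrational-coefficient setting is handled in the same way. Taking $i \geq \max(i_0, N')$ then yields an $(\epsilon,n_i)$-complement of $(X_i/Z_i\ni z_i, B_i)$ with $p\mid n_i$ and $n_i \leq N' \leq i$, contradicting the choice of $(X_i, B_i)$ and completing the proof.
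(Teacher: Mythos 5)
Your proposal is correct and follows essentially the same route as the paper: argue by contradiction, extract monotone subsequences of the coefficients, replace the decreasing coefficients by their limits (your $B_i'$ is exactly the paper's $\sum\min\{b_{ij},b_j\}B_{ij}$) to land in a DCC set, apply Theorem \ref{thm epsilonlccompforsurfaces}, and then use the floor-stabilization $\lfloor(n+1)b_{i,j}\rfloor=\lfloor(n+1)b_j\rfloor$ for $b_{i,j}\ge b_j$ close to $b_j$ to transfer the $(\epsilon,n)$-complement from $B_i'$ back to $B_i$. The only cosmetic difference is that the paper tracks a strictly increasing sequence of minimal complement indices rather than your "no complement with $n\le i$" formulation, which amounts to the same contradiction.
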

As we mentioned before, it is expected that Conjecture \ref{conj a complement} should hold in a more general setting, that is, $-K_X$ is not necessarily big over $Z$ and $\Gamma$ is not necessarily a DCC set. In dimension one, Birkar \cite[Theorem 3.1]{Bir04} showed $(X/Z\ni z,B)$ is $(\frac{1}{m+1},n)$-complementary if $\frac{1}{m+1}<\epsilon\le \frac{1}{m}$ for some positive integer $m$. In fact we can show the following stronger result.

\begin{thm}\label{thm: complementdim1}
     Let $\epsilon$ be a non-negative real number and $p$ a positive integer. Then there exists a positive integer $N$ depending only on $\epsilon$ and $p$ satisfying the following.
     
     If $\dim X=1$ and $(X/Z\ni z,B)$ is $(\epsilon,\Rr)$-complementary, then
     $(X/Z\ni z,B)$ is $(\epsilon,n)$-complementary for some positive integer $n$ satisfying $n\le N$ and $p|n$. 
\end{thm}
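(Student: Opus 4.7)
Since $\dim X=1$, the curve $X$ is smooth, and we split by $\dim Z$. If $\dim Z=1$, the contraction $X\to Z$ is an isomorphism of smooth curves; after shrinking $Z$ the problem becomes local at a point $x\in X$ with $K_X$ trivial in a neighborhood, so it reduces to choosing $b^{+}=m/n$ with $m\in\Zz_{\ge 0}\cap[\lfloor(n+1)b\rfloor,\lfloor n(1-\epsilon)\rfloor]$, where $b=\mult_{x}B\le 1-\epsilon$, together with $p\mid n$ and $n\le N$. If $\dim Z=0$ then $X$ is projective, and the genus dichotomizes: when $g(X)\ge 2$ we have $\deg K_X>0$, ruling out any effective $\Rr$-complement and making the hypothesis vacuous; when $g(X)=1$ the degree equation forces $B^{+}_{\Rr}=0$, hence $B=0$, and $B^{+}=0$ works with $n=p$. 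The only substantive case is $X=\Pp^{1}$ over a point.

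In this case we construct $B^{+}=\sum b^{+}_{j}Q_{j}$ of degree $2$ with $b^{+}_{j}\in\frac{1}{n}\Zz_{\ge 0}\cap[0,\lfloor n(1-\epsilon)\rfloor/n]$ (so that $n(K_X+B^{+})\sim 0$ and $(X,B^{+})$ is $\epsilon$-lc), satisfying $nb^{+}_{i}\ge\lfloor(n+1)b_{i}\rfloor$ at every $P_{i}\in\Supp B$ (the complement condition). The key step is to pick $n$ divisible by $p$, bounded by $N(\epsilon,p)$, and satisfying the Diophantine condition $\{n(1-\epsilon)\}<\epsilon$ when $\epsilon>0$. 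A short calculation then yields $\lfloor(n+1)t\rfloor\le\lfloor n(1-\epsilon)\rfloor$ for every $t\in[0,1-\epsilon]$, so the pointwise lower bounds $\lfloor(n+1)b_{i}\rfloor/n$ and upper bounds $\lfloor n(1-\epsilon)\rfloor/n$ on the coefficients of $B^{+}$ are compatible. Setting $b^{+}_{i}=\lfloor(n+1)b_{i}\rfloor/n$ on $\Supp B$ and appending auxiliary points off $\Supp B$ with coefficients in $\frac{1}{n}\Zz$ to reach total degree $2$ produces $B^{+}$.

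Producing such an $n$ is the heart of the proof. For rational $\epsilon=a/q$, any multiple of $pq$ makes $n(1-\epsilon)$ an integer, so the Diophantine condition holds trivially. For irrational $\epsilon$, Dirichlet's approximation theorem applied to $p(1-\epsilon)$, combined with the fact that consecutive continued-fraction convergents alternate sides, yields $n=pk$ of size $O_{p}(1/\epsilon)$ with $\{n(1-\epsilon)\}<\epsilon$. The case $\epsilon=0$ is classical, reducing to the standard $1$- or $2$-complements on $\Pp^{1}$ and bounded torsion for germs. The main obstacle we foresee is coordinating the Diophantine condition, the divisibility $p\mid n$, and the global degree constraint $\sum_{i}\lfloor(n+1)b_{i}\rfloor\le 2n$; we handle it by working with the auxiliary irrational $p(1-\epsilon)$ and by exploiting the freedom to add auxiliary points on $\Pp^{1}$, in the spirit of the Diophantine techniques of \cite{HLS19}.
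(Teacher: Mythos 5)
Your approach departs from the paper's: the paper first merges any two coefficients $b_i,b_j$ with $b_i+b_j\le 1-\epsilon$ into one point, which caps the number of points at $\lfloor 4/(1-\epsilon)\rfloor$, and then lifts $(\Pp^1,B)$ to $(\Pp^1\times C,B\times C)$ for an elliptic curve $C$ and invokes Corollary \ref{cor main}; there is no direct Diophantine step on the curve. Your direct construction would be more elementary and would give effective bounds, but it has a genuine gap at exactly the point you flag as the obstacle.

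The condition $\{n(1-\epsilon)\}<\epsilon$ does give the pointwise compatibility $\lfloor(n+1)t\rfloor/n\le 1-\epsilon$ for $t\le 1-\epsilon$, but it does \emph{not} imply the global degree constraint $\sum_i\lfloor(n+1)b_i\rfloor\le 2n$. A concrete counterexample: take $p=1$, $\epsilon=0.22$, $n=4$ (so $\{n(1-\epsilon)\}=\{3.12\}=0.12<\epsilon$), and $B=0.6(P_1+P_2+P_3)$ on $\Pp^1$. This pair is $(\epsilon,\Rr)$-complementary, since $B^+=B+0.2P_4$ has degree $2$ and all coefficients $\le 1-\epsilon=0.78$. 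However, any $(\epsilon,4)$-complement $B^{+}$ must have $b_i^+\in\frac14\Zz$ with $4b_i^+\ge\lfloor 5\cdot0.6\rfloor=3$ and $b_i^+\le 0.78$, forcing $b_i^+=3/4$ at each $P_i$; the total degree is then at least $9/4>2$, so no $(\epsilon,4)$-complement exists. Thus $n=4$, which your Diophantine criterion certifies as a candidate, actually fails. Your proposed fix — working with the auxiliary irrational $p(1-\epsilon)$ and ``appending auxiliary points off $\Supp B$'' — does not address this: auxiliary points can only raise the degree, so they help when $\sum_i\lfloor(n+1)b_i\rfloor/n<2$, not when it already exceeds $2$. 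Repairing the argument requires either bounding the number of relevant points before choosing $n$ (this is precisely what the paper's merging step accomplishes, and it is not cosmetic), or choosing $n$ adapted to the configuration $(b_i)$ together with a separate uniformity argument showing that such $n$ can always be taken $\le N(\epsilon,p)$.
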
     
     
\medskip

\textit{Structure of the paper.}
We outline the organization of the paper. In Section
2, we give a sketch of the proofs of our main theorems. In Section 3, we introduce some notation and tools which will be used in this paper, and prove certain basic results. In Section 4, we prove Theorem \ref{thm:uniformpolytopeforsurfacemlds}. In Section 5, we prove Theorem \ref{thm: decompsable epsilon complement for surfaces}. In Section 6, we prove Theorem \ref{thm epsilonlccompforsurfaces}, Corollary \ref{cor main} and Theorem \ref{thm: complementdim1}. In Section 7, we will show some applications of Conjecture \ref{conj a complement} and give a list of open problems. In Appendix A, we collect some known or folklore results for surfaces. In Appendix B, we show Nakamura's conjecture \cite[Conjecture 1.1]{mustata-nakamura18} holds for surface pairs with DCC coefficients.  

\medskip

\textbf{Acknowledgements}. This work was partially supported by China Scholarship Council (File No. 201806010039) and AMS Simons Travel Grant. This work began when the first author visited the second author at Johns Hopkins University in April of 2019. Part of this work was done while the first author visited the MIT Mathematics Department during 2018--2020. The first author would like to thank their hospitality. The authors would like to thank Caucher Birkar whose preprint \cite{Bir04} inspired them a lot, Vyacheslav V. Shokurov who teaches them the theory of complements, and Chenyang Xu, their advisor, for his constant support, valuable discussions and suggestions. The authors would also like to thank Qianyu Chen, Stefano Filipazzi, Yang He, Kai Huang, Chen Jiang, Jihao Liu, Wenfei Liu, Yujie Luo, Joaqu\'{i}n Moraga, Yusuke Nakamura and Lu Qi for very useful discussions.

\section{Sketch of the proof of Theorem \ref{thm epsilonlccompforsurfaces}}
For simplicity, we assume that $X$ is of Fano type over $Z$, that is, there is a boundary $\Delta$ such that $(X,\Delta)$ is a klt pair and $-(K_X+\Delta)$ is ample over $Z$. In this case, we may run a $D$-MMP for any $\Rr$-Cartier divisor $D$. By lower semi-continuity for relative minimal log discrepancies $\mld(X/Z\ni z,B)$ for surfaces (Proposition \ref{prop:equivrelmld}), we may assume that $z$ is a closed point (Proposition \ref{prop:dec eps com for closed point imply gene}). 

The next step in the proof of Theorem \ref{thm epsilonlccompforsurfaces} is similar to \cite{HLS19}. Since the ACC for $a$-log canonical thresholds is known for surfaces, by using the same arguments as in \cite{HLS19}, it suffices to show the case when $\Ii\subseteq [0,1]$ is a finite set, see Theorem \ref{thm: dcc limit epsilon-lc complementary}.

Then we want to reduce Theorem \ref{thm epsilonlccompforsurfaces} to the case when $\Ii\subseteq [0,1]\cap \Qq$. If $\epsilon$ is a rational number, then by using ideas in \cite{HLS19}, it is possible to show the existence of rational  $(\epsilon,\Rr)$-complementary polytopes. Thus by using Diophantine approximation \cite{HLS19}, we may assume that $\Ii\subseteq [0,1]\cap \Qq$. When $\epsilon$ is irrational, the rational $(\epsilon,\Rr)$-complementary polytopes may not exist any more, even for the global case, that is, $\dim Z=0$. For example, suppose that $X=\Pp^1\times \Pp^1$, $c\in (0,1)$ is an irrational number, $\epsilon=1-c$, $\Ii=\{0,c\}$. Then $\mld(X,c\Pp^1)=1-c$, and $\mld(X,c'\Pp^1)=1-c'<1-c$ for any $1>c'>c$. This is one of the main difficulties for us to generalize the method in \cite{HLS19} to our setting. 

As one of main ingredients in the proof of Theorem \ref{thm epsilonlccompforsurfaces}, we will show the following theorem on the existence of decomposed $(\epsilon,\Rr)$-complements for surfaces.
\begin{thm}\label{thm: decompsable epsilon complement for surfaces}
	Let $\epsilon$ be a non-negative real number and $\Ii\subseteq[0,1]$ a DCC set. Then there exist finite sets $\Ii_1\subseteq(0,1],\Ii_2\subseteq[0,1]\cap\Qq$ and $\mathcal{M}\subseteq\Qq_{\ge0}$ depending only on $\Ii$ and $\epsilon$ satisfying the following.
	
	Assume that $(X/Z\ni z,B)$ is a surface pair such that
	\begin{itemize}
		\item $B\in\Ii$, and
		\item $(X/Z\ni z,B)$ is $(\epsilon,\Rr)$-complementary.
	\end{itemize}
	Then there exist $a_i\in\Ii_1$, $B^i\in\Ii_2$ and $\epsilon_i\in\mathcal{M}$ with the following properties:
	\begin{enumerate}
		\item $\sum a_i=1,$
		\item $\sum a_i\epsilon_i\ge\epsilon$,
		\item $\sum a_i(K_X+B^i)\ge K_X+B$,
		\item $(X/Z\ni z,B^i)$ is $(\epsilon_i,\Rr)$-complementary for any $i$, and
		\item if $\epsilon\in\Qq$, then $\epsilon=\epsilon_i$ for any $i$, and if $\epsilon\neq 1$, then $\epsilon_i\neq1$ for any $i$.
	\end{enumerate}
\end{thm}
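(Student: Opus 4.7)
The plan is to combine Theorem \ref{thm:uniformpolytopeforsurfacemlds} with a Diophantine approximation in the style of \cite{HLS19}, and then carefully distribute the rational $\epsilon_i$ to handle the irrational case.

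\textbf{Step 1 (Reduction to finite $\Ii$).} Using the ACC for $a$-log canonical thresholds on surfaces, after replacing $\Ii$ by a finite subset $\Ii_0\subseteq \bar\Ii$ depending only on $\Ii$ and $\epsilon$, I may assume the coefficients of $B$ come from $\Ii_0$. Fix a $\Qq$-basis $v_1,\dots,v_m$ of $\Span_{\Qq}(\Ii_0\cup\{\epsilon,1\})$, write each coefficient $b_j=\sum_k s_{jk}v_k$ with $s_{jk}\in\Qq$, and write $\epsilon=\sum_k t_k v_k$ with $t_k\in\Qq$; there are only finitely many possibilities for the rational linear data $\{s_{jk},t_k\}$, so I may treat them as constants.

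\textbf{Step 2 (Uniform polytope).} Applying Theorem \ref{thm:uniformpolytopeforsurfacemlds} to the pair at $\mathbf v=(v_1,\dots,v_m)$ produces a rational polytope $P\ni\mathbf v$ of uniformly bounded size and complexity (depending only on the linear data) such that for every $\mathbf v'\in P$ the perturbed pair $(X/Z\ni z,B'(\mathbf v'))$ obtained by substituting $v_k\mapsto v_k'$ in the expansions of the $b_j$ satisfies $\mld(X/Z\ni z,B'(\mathbf v'))=L(\mathbf v')$ for an affine $\Qq$-linear function $L$ with $L(\mathbf v)=\mld(X/Z\ni z,B)\ge\epsilon$. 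In particular, $(X/Z\ni z,B'(\mathbf v'))$ is $(L(\mathbf v'),\Rr)$-complementary throughout $P$.

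\textbf{Step 3 (Decomposition and main properties).} Writing $\mathbf v$ as a convex combination $\mathbf v=\sum a_i\mathbf v^{(i)}$ of rational points $\mathbf v^{(i)}\in P$ with bounded denominators, and setting $B^i:=B'(\mathbf v^{(i)})$ and $\epsilon_i:=L(\mathbf v^{(i)})$, yields $a_i$, $B^i$, $\epsilon_i$ drawn from finite sets $\Ii_1\subseteq(0,1]$, $\Ii_2\subseteq[0,1]\cap\Qq$, $\mathcal{M}\subseteq\Qq_{\ge0}$. By construction $\sum a_i=1$ (property (1)), $\sum a_iB^i=B$ (so (3) holds with equality), $(X/Z\ni z,B^i)$ is $(\epsilon_i,\Rr)$-complementary (property (4)), and $\sum a_i\epsilon_i=L(\mathbf v)\ge\epsilon$ (property (2)). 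For property (5): if $\epsilon\in\Qq$, then $\epsilon$ is a rational point in the image of $L$, and restricting $P$ to $L^{-1}(\epsilon)$ (still a uniform rational subpolytope through $\mathbf v$) forces $\epsilon_i=\epsilon$; if $\epsilon\neq1$, then $L(\mathbf v)\neq 1$ and shrinking $P$ by a uniform factor to stay away from the rational hyperplane $\{L=1\}$ ensures $\epsilon_i\neq 1$ for all $i$.

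\textbf{Main obstacle.} The crux of the argument is Theorem \ref{thm:uniformpolytopeforsurfacemlds} itself: obtaining a polytope whose size and combinatorial type depend only on $\Ii$ and $\epsilon$, not on $(X/Z\ni z,B)$, so that the finite sets $\Ii_1,\Ii_2,\mathcal{M}$ can be chosen uniformly. The subtle point is that the surface pair is not assumed to be of Fano type, so one must construct a good birational model on which mlds vary linearly in the coefficients; handling the case $\epsilon\notin\Qq$ further requires that the approximating polytope be compatible with the fixed irrational direction determined by $\mathbf v$, which is where the modifications of the Diophantine approximation of \cite{HLS19} alluded to in the introduction become necessary.
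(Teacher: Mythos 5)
There is a genuine gap, and it sits exactly at the step the paper flags as the main difficulty. In your Step 2 you assert that, for every $\mathbf v'\in P$, the perturbed pair $(X/Z\ni z,B'(\mathbf v'))$ is $(L(\mathbf v'),\Rr)$-complementary. Theorem \ref{thm:uniformpolytopeforsurfacemlds} gives no such thing: it only controls minimal log discrepancies, i.e.\ it tells you that the perturbed pair is $L(\mathbf v')$-lc (and even this is stated only for germs $(X\ni x,B)$ at closed points, not for the relative setting $(X/Z\ni z,B)$, so you are additionally using an unproved relative version; the paper reduces to closed points separately via lower semi-continuity of relative mlds, Proposition \ref{prop:dec eps com for closed point imply gene}). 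Being $\epsilon_i$-lc over $z$ does not produce the divisor $G^i\ge 0$ with $K_X+B^i+G^i\sim_{\Rr}0$ over a neighborhood of $z$: that is a relative positivity statement about $-(K_X+B^i)$, and perturbing coefficients inside the mld-polytope can destroy it, since $X$ is not assumed to be of Fano type over $Z$. This is precisely why property (4) is the hard part of the theorem, and your construction never establishes it.

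The paper closes this gap by first replacing $(X,B)$ with a non-positive birational model on the minimal resolution $Y$ (Theorem \ref{thm reducetofiniteandfixpld}), arranged so that the coefficients lie in a finite set, $-(K_Y+B_Y)$ is semiample over $Z$, and the partial log discrepancies lie in a finite set (this is also why conclusion (3) is only an inequality $\sum a_i(K_X+B^i)\ge K_X+B$, not the equality you claim). The pld finiteness yields bounded Cartier indices (Corollary \ref{cor verticesofpolytope} and Proposition \ref{prop pld}), which feed into the uniform anti-nef polytope of Lemma \ref{lemma uniformnefpolytope}; together with Fujino's relative abundance for $\Qq$-factorial log surfaces one deduces that each $-(K_X+B^i)$ is semiample over $Z$, and then Lemma \ref{lem semiamplecomplement} produces the $(\epsilon_i,\Rr)$-complements (Lemma \ref{lemma antinefpolytope2}). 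None of this machinery appears in your argument, and without it the passage from ``$\epsilon_i$-lc over $z$'' to ``$(\epsilon_i,\Rr)$-complementary'' fails. A further, smaller issue: you take $\epsilon_i:=L(\mathbf v^{(i)})$ to be the actual mld values, so if the mld function is constantly equal to $1$ (possible even when $\epsilon<1$) your shrinking trick cannot give $\epsilon_i\neq 1$; the paper avoids this by letting the $\epsilon_i$ be uniform rational lower bounds $f(\bm r_j)$ for the mld rather than the mld itself.
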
     

Note that we don't require $-K_X$ to be big over $Z$ in Theorem \ref{thm: decompsable epsilon complement for surfaces}. Theorem \ref{thm: decompsable epsilon complement for surfaces} is conjectured to hold in higher dimensions (Problem \ref{problem: decompsable epsilon complements}). Although Theorem \ref{thm: decompsable epsilon complement for surfaces} and Problem \ref{problem: decompsable epsilon complements} seem to be technical, we expect that Problem \ref{problem: decompsable epsilon complements} should be useful in birational geometry. Indeed Problem \ref{problem: decompsable epsilon complements} implies the Global ACC and the ACC for $a$-log canonical thresholds, see Proposition \ref{prop: decomposed epsilon comp imply acc for alct}.

When $Z=X$ and $z\in X$ is a closed point, Theorem \ref{thm: decompsable epsilon complement for surfaces} is an easy corollary of the existence of uniform rational polytopes for linearity of minimal log discrepancies for surface germs.

\begin{thm}\label{thm:uniformpolytopeforsurfacemlds}
	Let $\epsilon$ be a positive real number, $m$ a positive integer and $\bm{v}=(v_1^0,\ldots,v_m^0)\in\Rr^m$ a point. Then there exist a rational polytope $\bm{v}\in P\subseteq\Rr^m$ with vertices $\bm{v}_j=(v_1^j,\ldots,v_m^j)$, positive real numbers $a_j$ and positive rational numbers $\epsilon_j$ depending only on $\epsilon,m$ and $\bm{v}$ satisfying the following.  
	\begin{enumerate}
		\item $\sum a_j=1,\sum a_j\bm{v}_j=\bm{v}$ and $\sum a_j\epsilon_j\ge\epsilon$.
		\item Assume that $(X\ni x,\sum_{i=1}^m v_i^0B_{i})$ is a surface germ which is $\epsilon$-lc at $x$, where $B_1,\ldots,B_m\ge0$ are Weil divisors on $X$. Then the function $P\to \Rr$ defined by
		$$(v_1,\ldots,v_m)\mapsto\mld(X\ni x,\sum_{i=1}^m v_iB_{i})$$
		is a linear function, and
		$$\mld(X\ni x,\sum_{i=1}^m v_i^jB_{i})\ge \epsilon_j$$
		for any $j$.	
	\end{enumerate}
	Moreover, if $\epsilon\in \Qq$, then we may pick $\epsilon_j=\epsilon$ for any $j$.
\end{thm}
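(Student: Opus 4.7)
The plan is to combine Diophantine approximation in the style of \cite{HLS19} with the piecewise-linear structure of the mld function on a log resolution and the boundedness of $\epsilon$-lc surface singularities, producing a single rational polytope that serves every germ simultaneously.

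\textbf{Setup and rational affine structure.} Let $W := \Qq + \sum_{i=1}^{m} \Qq v_i^0 \subseteq \Rr$, $d := \dim_\Qq W$, and let $A \subseteq \Rr^m$ denote the smallest rational affine subspace containing $\bm{v}$; it has dimension $d-1$, and the polytope $P$ will be constructed inside $A$. For any surface germ $(X\ni x,\sum v_i^0 B_i)$ with log resolution $\pi\colon Y\to X$ and any prime divisor $E\subset Y$ centered at $x$, the log discrepancy
\[
\lambda_E(v) := 1 + \operatorname{ord}_E(K_{Y/X}) - \sum_{i=1}^{m} v_i\,\mult_E(\pi^{*}B_i)
\]
is a rational affine function of $v$. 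The defining feature of $A$ is that any two rational affine functions agreeing at $\bm{v}$ must agree on all of $A$, because their difference is a rational affine function vanishing at $\bm{v}$ and hence vanishing on $A$. Consequently all divisors realizing $\mld(X\ni x,\sum v_i^0 B_i)$ restrict to the same rational affine function on $A$, and $\mld|_A$ coincides with a single rational affine function in a neighbourhood of $\bm{v}$ inside $A$.

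\textbf{Construction of $P$ and uniformity.} A Diophantine approximation lemma in the style of \cite[Lemma 5.3]{HLS19} produces finitely many rational points $\bm{v}_1,\dots,\bm{v}_N \in A\cap\Qq^m$ close to $\bm{v}$ with positive real coefficients $a_1,\dots,a_N$ summing to $1$ and $\sum_j a_j\bm{v}_j = \bm{v}$, such that $\bm{v}$ lies in the relative interior of $P := \operatorname{Conv}(\bm{v}_1,\dots,\bm{v}_N)$. The key uniformity input is Alexeev's boundedness of $\epsilon$-lc surface singularities: the data $(a_{E^*},b_{E^*,1},\dots,b_{E^*,m})$ for any divisor $E^*$ realizing $\mld(\bm{v})$ over some $\epsilon$-lc surface germ with $\bm{v}$-coefficients lies in a finite set of rational tuples depending only on $\epsilon,m$ and $\bm{v}$. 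Choosing the $\bm{v}_j$ close enough to $\bm{v}$ relative to this finite family forces, for every such germ and every divisor $E'$ on its log resolution not realizing the minimum at $\bm{v}$, the inequality $\lambda_{E'}\ge\lambda_{E^*}$ to persist throughout $P$. Therefore $\mld|_P = \lambda_{E^*}|_P$ is a single rational affine function, giving the claimed linearity.

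\textbf{Vertex bounds.} Define $\epsilon_j\in\Qq_{>0}$ as a uniform rational positive lower bound for $\mld(X\ni x,\sum v_i^j B_i)$ across all $\epsilon$-lc germs, which exists because the minimizer $E^*$ is drawn from the finite family of the previous step. The linearity of $\mld$ on $P$ gives $\sum a_j\,\mld(\bm{v}_j) = \mld(\bm{v}) \ge \epsilon$, so the $\epsilon_j$'s can be selected to satisfy $\sum a_j\epsilon_j \ge \epsilon$. When $\epsilon\in\Qq$, a further refinement arranges $\epsilon_j = \epsilon$.

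\textbf{Main obstacle.} The central difficulty is uniformity: for a single germ the polytope $P$ is immediate from the piecewise-linear structure of $\min_E\lambda_E$ on its log resolution, but merging these germ-dependent polytopes into one requires both the Diophantine approximation lying inside $A = A_{\bm{v}}$ (which automatically preserves every rational affine equation satisfied by $\bm{v}$) and Alexeev-type boundedness confining the minimizing divisors to a finite family. The subsequent selection of rational $\epsilon_j$'s satisfying $\sum a_j\epsilon_j\ge\epsilon$ is a subtler issue, handled via the linearity on $P$ together with this same finite family.
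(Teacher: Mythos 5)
Your overall skeleton (pass to the rational envelope of $\bm{v}$, note that each $a(E,X,\sum v_iB_i)$ is a rational affine function there, take a small box with rational vertices and express $\bm{v}$ as a convex combination of them) is the same as the paper's reduction of the theorem to its local linearity statement. The problem is the step you yourself flag as the ``key uniformity input'': you assert that Alexeev-type boundedness confines the data $(a_{E^*},\mult_{E^*}B_1,\ldots,\mult_{E^*}B_m)$ of \emph{all} divisors computing $\mld$ over \emph{all} $\epsilon$-lc surface germs with coefficients $\bm{v}$ to a finite set of rational tuples. This is false. $\epsilon$-lc surface germs are not bounded: the weights of the minimal resolution are bounded by $\lfloor 2/\epsilon\rfloor$, but the number of exceptional curves is not (already the $A_n$ Du Val singularities are $1$-lc for every $n$). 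Correspondingly, the affine functions $v\mapsto\mld(X\ni x,\sum v_iB_i)$ form an \emph{infinite} family: e.g.\ for a chain of $A$ $(-2)$-curves with the strict transform of $B_1$ attached at one end, the relevant log discrepancy is $1-\frac{A}{A+1}v_1$, with slope depending on $A$. Since the family of potential minimizers is infinite and their slopes do not lie in a finite set, your prescription ``choose the $\bm{v}_j$ close enough to $\bm{v}$ relative to this finite family, so that $\lambda_{E'}\ge\lambda_{E^*}$ persists on $P$'' has no uniform $\delta$ behind it, and the same false finiteness is what you invoke to produce the uniform rational lower bounds $\epsilon_j$ at the vertices (and hence $\sum a_j\epsilon_j\ge\epsilon$). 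Even the preliminary remark that for a single germ the local linearity is ``immediate from the piecewise-linear structure of $\min_E\lambda_E$ on its log resolution'' is not right as stated: $\mld$ is an infimum over all divisors over $X$, not only those on a fixed resolution, so finiteness must be argued, not assumed.

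The paper closes exactly this gap with the classification of $\epsilon$-lc surface germs (Lemma \ref{lemma 3.3}) rather than a finiteness statement. For germs whose minimal resolution has many vertices, $\mld=\pld$ (Lemma \ref{lemma pld=mld}) and $\pld$ is given by the explicit two-branch formula $\min\{p_A(\bm{r}),p_A'(\bm{r})\}$ in which only $\beta_1,\beta_2$ vary in a \emph{finite} set $\Ll(\epsilon,\mathcal{C})$ while the unbounded parameter $A$ enters monotonically; this is what makes the choice of branch stable on a uniform box and yields the uniform lower bound $p_A(\bm{r})\ge\beta_1(\bm{r})\ge f(\bm{r})$ with $f$ taken from the finite set $\Ff(N_3,\epsilon,\mathcal{C})$ (Theorem \ref{thm locally linearity of mld}). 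For germs with boundedly many exceptional curves one needs Nakamura's conjecture for surfaces with DCC coefficients (Theorem \ref{lemma dccsmoothmld}) together with bounded Cartier index and Lemma \ref{lemma bddcimldlinear} to get a finite list of candidate linear functions and a uniform $\delta$. Some argument of this kind, handling the infinite (but structured) family of minimizing divisors, is indispensable; without it your construction of $P$ and of the $\epsilon_j$ does not go through.
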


We also expect that Theorem \ref{thm:uniformpolytopeforsurfacemlds} should hold in higher dimensions, see Problem \ref{conj: linearmld}. 

Theorem \ref{thm:uniformpolytopeforsurfacemlds} follows from the classification of surface singularities. In order to prove Theorem \ref{thm:uniformpolytopeforsurfacemlds}, we need to generalize Alexeev's result on minimal log discrepancies for surfaces \cite{Ale93} to linear functional divisors, we also need to prove a generalization of Nakamura's conjecture for surfaces, that is, when the coefficients belong to either a finite set of linear functions or a DCC set.

\medskip

For the general case, Theorem \ref{thm: decompsable epsilon complement for surfaces} does not follow from Theorem \ref{thm:uniformpolytopeforsurfacemlds} directly since being $\epsilon_j$-lc over $z$ does not imply that $(X/Z\ni z,\sum_{i=1}^m v_i^jB_{i})$ itself is $(\epsilon_j,\Rr)$-complementary. 

The idea to prove Theorem \ref{thm: decompsable epsilon complement for surfaces} is to construct a birational model $(\hat{X},\hat{B})$ of $(X,B)$ by using the minimal model program, such that $\hat{f}:\hat{X}\to X$ is a contraction, $-(K_{\hat{X}}+\hat{B})$ is semiample over $Z$, the coefficients of $\hat{B}$ belong to a finite set $\hat{\Gamma}$, the PLDs of $(\hat{X},\hat{B})$ belong to a finite set, and $\hat{f}^{*}(K_X+B)\le K_{\hat{X}}+\hat{B}$. Thus we may replace $(X,B)$ by $(\hat{X},\hat{B})$, and $\Ii$ by $\hat{\Ii}$. Indeed we will show that we may take $\hat{X}$ to be the minimal resolution of $X$, see Theorem \ref{thm reducetofiniteandfixpld}. Here we use some ideas from \cite{Bir04}. Now since $-(K_X+B)$ is semiample over $Z$ and the PLDs of $(X,B)$ belong to a finite set, we may finish the proof of Theorem \ref{thm: decompsable epsilon complement for surfaces}, see Lemma \ref{lemma antinefpolytope2}.

\medskip

Since $X$ is of Fano type over $Z$, we may additionally assume that $-N(K_X+\sum_{i=1}^m v_i^jB_{i})$ is base point free for some positive integer $N$ which only depends on $\epsilon$ and $\Ii$. Hence $(X/Z\ni z,\sum_{i=1}^m v_i^jB_{i})$ is $(\epsilon_j,N)$-complementary. Our final step is to show that
$$K_X+B':=\sum_j a_j'(K_X+\sum_{i=1}^m v_i^jB_{i})$$
is $(\epsilon,n)$-complementary for some positive rational numbers $a_j'$ and positive integer $n$ which only depend on $\Ii$ and $\epsilon$, and any $(\epsilon,n)$-complement of $(X/Z\ni z,B')$ is also an $(\epsilon,n)$-complement of $(X/Z\ni z,B)$ by Diophantine approximation, see Lemma \ref{lemma rational direction2} and Lemma \ref{lem:finitesetmonotonic}. Thus we finish the proof of Theorem \ref{thm epsilonlccompforsurfaces}. For this step, we use ideas from \cite{HLS19}. 

\section{Preliminaries}
 
\subsection{Pairs and singularities }

We adopt the standard notation and definitions in \cite{KM98} and \cite{BCHM10}, and will freely use them.

\begin{defn}   
     We say $\pi: X \to Z$ is a \emph{contraction} if 
     $X$ and $Z$ are normal quasi-projective varieties, $\pi$ is a projective morphism, and $\pi_*\Oo_X = \Oo_Z$ $(\pi$ is not necessarily birational$)$. 
\end{defn}

\begin{defn} \label{defn sing}
 A \emph{sub-pair} $(X/Z\ni z, B)$ consists of normal quasi-projective varieties $X,Z$, a contraction $\pi: X\to Z$, a point $z\in Z$, and an $\Rr$-divisor $B$ on $X$ with coefficients in $(-\infty,1]$ such that $K_X + B$ is $\Rr$-Cartier and $\dim z<\dim X$. A sub-pair $(X/Z\ni z,B)$ is called a \emph{pair} if $B\ge0$. We say that a pair $(X/Z\ni z, B)$ is a \emph{germ} (near $z$), if $z$ is a closed point. 
     When $Z=X$ and $\pi$ is the identity map, we will omit $Z$, let $x=z$, and say $(X\ni x, B)$ is a pair. When $\dim Z=\dim z=0$, we will omit both $Z$ and $z$, and say $(X,B)$ is a pair, in this case, our definition of pairs coincides with the usual definition of pairs \cite{BCHM10}. 
               
     Let $E$ be a prime divisor on $X$, $B:=\sum b_iB_i$ an $\Rr$-divisor on $X$, where $B_i$ are distinct prime divisors. We define $\mult_E B$ to be the multiplicity of $E$ along $B$, $\lfloor B\rfloor:=\sum \lfloor b_i\rfloor B_i, \{B\}:=\sum \{b_i\}B_i$, $||B||:=\max_i\{|b_i|\}$, $\mult_E B:=\sum b_i\mult_E B_i$ and $\mult_E\Supp B:=\sum \mult_E B_i.$ For any point $\bm{v}=(v_1,\dots,v_m)\in \Rr^m,$ we define $||\bm{v}||_{\infty}:=\max_{1\le i\le m}\{|v_i|\}$.
     
     Let $\phi :W\to X$ be any log resolution of $(X, B)$ and write $K_W +B_W:=\phi^{*}(K_X +B)$. The \emph{log discrepancy} of a prime divisor $D$ on $W$ with  respect to $(X, B)$ is $1-\mult_DB_W$ and denoted by $a(D, X, B)$. 

     Assume that $(X/Z\ni z,B)$ is a pair, let 
     $$e(z):=\{E\mid E\text{ is a prime divisor over }X, \pi(\Center_{X}(E))=\bar{z}\}.$$
     We define
     $$\mld(X/Z\ni z,B):=\inf_{E\in e(z)}\{a(E,X,B)\}.$$
     By Lemma \ref{lem:mldisattained}, the infimum can be replaced by the minimum. 
     
     We say that $(X/Z\ni z, B)$ is $\epsilon$-lc $($respectively klt, lc$)$ over $z$ for some non-negative real number $\epsilon$ if $\mld(X/Z\ni z,B)\ge \epsilon$ $($respectively $>0,\ge0)$. 
     In particular, when $\dim Z=\dim z=0$ the definition of singularities coincides with the usual definition. For simplicity, when $X=Z$, and $\pi$ is the identity map, we let $x=z$, and use $\mld(X\ni x,B)$ instead of $\mld(X/Z\ni z,B)$. In this case, we will say that $(X\ni x, B)$ is $\epsilon$-lc $($respectively klt, lc$)$ at $x$ instead of $(X\ni x, B)$ is $\epsilon$-lc $($respectively klt, lc$)$ over $x.$
\end{defn}

     It is well-known that if $(X/Z\ni z, B)$ is lc over $z$, then $(X,B)$ is lc near $\pi^{-1}(z)$.
     
\medskip

The following lemma is well-known to experts (c.f. \cite[Example 1]{Sho04}). For the reader's convenience, we give a proof here.
\begin{lem}\label{lem:logsmoothmld}
    Suppose that $(X\ni x,B)$ is a log smooth sub-pair. Then $\mld(X\ni x,B)={\rm codim}\ x-\mult_x B$.
\end{lem}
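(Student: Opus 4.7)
The claim is local at $x$, so I would localize to take $X = \Spec R$ with $R$ a regular local ring of dimension $r := \operatorname{codim} x$ and maximal ideal $\mathfrak{m}_x$. Log smoothness of $(X\ni x, B)$ ensures that the components of $B$ meeting $x$ cut out a simple normal crossings divisor near $x$, so after re-indexing, their local defining equations form part of a regular system of parameters $t_1,\dots,t_r$ of $R$, and $B = \sum_{i=1}^s b_i\,(t_i=0)$ for some $s\le r$. In particular $\mult_x B = \sum_{i=1}^s b_i$, and prime divisors over $X$ with center $\bar{x}$ correspond to divisorial valuations $v$ on $K(X)$ centered at $\mathfrak{m}_x$.

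For the upper bound, I would take $E_0$ to be the exceptional divisor of the blow-up of $\Spec R$ at $\mathfrak{m}_x$. Then $a(E_0,X,0) = r$ and $v_{E_0}(t_i) = 1$ for every $i$, so $a(E_0,X,B) = r - \mult_x B$ and hence $\mld(X\ni x, B) \le r - \mult_x B$.

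For the lower bound, let $v$ be any divisorial valuation on $R$ centered at $\mathfrak{m}_x$, set $c_i := v(t_i) \ge 1$, and aim for the key inequality
$$a(v,X,0) \ge c_1 + \cdots + c_r. \quad (\star)$$
Granting $(\star)$, since $v(B) = \sum_{i=1}^s b_i c_i$ one obtains
$$a(v,X,B) = a(v,X,0) - \sum_{i=1}^s b_i c_i \ge \sum_{i=s+1}^r c_i + \sum_{i=1}^s (1-b_i)c_i \ge (r-s) + \sum_{i=1}^s (1-b_i) = r - \mult_x B,$$
using only $c_i \ge 1$ and $b_i \le 1$. Combined with the upper bound this gives equality.

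I expect the only substantive step to be $(\star)$. The plan is to compare $v$ with the monomial valuation $v_{\mathrm{mon}}$ defined by assigning $t_1,\dots,t_r$ the weights $c_1,\dots,c_r$ and expanding power series in the completion $\widehat R \cong k(x)[[t_1,\dots,t_r]]$: this shows $v \ge v_{\mathrm{mon}}$ pointwise on $R$. For rational weights, $v_{\mathrm{mon}}$ is realized as an exceptional divisor of a weighted blow-up of $\Spec R$, and a direct toric calculation gives $a(v_{\mathrm{mon}},X,0) = \sum c_i$; the irrational case follows by rational approximation. Monotonicity of log discrepancies of divisorial valuations on a smooth variety under such domination then yields $(\star)$, completing the proof.
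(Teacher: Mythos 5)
Your overall plan is sound: an upper bound from the ordinary blow-up at the maximal ideal of $\mathcal{O}_{X,x}$, together with a uniform lower bound $a(E,X,B)\ge{\rm codim}\ x-\mult_x B$ over all prime divisors $E$ with centre $\bar{x}$. The upper-bound computation is correct, and the reduction of the lower bound to $(\star)$ is correct. The gap is in your justification of $(\star)$ itself. You reduce it to the toric computation $a(v_{\mathrm{mon}},X,0)=\sum_i c_i$ plus a claimed ``monotonicity of log discrepancies of divisorial valuations under such domination,'' i.e.\ that $v\ge v_{\mathrm{mon}}$ pointwise on $R$ implies $a(v,X,0)\ge a(v_{\mathrm{mon}},X,0)$. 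That monotonicity is not an off-the-shelf fact you can simply cite; the natural way to prove it in this setting is to first establish $a(v,X,0)\ge\sum_i v(t_i)$ and then note $\sum_i v(t_i)\ge\sum_i c_i=a(v_{\mathrm{mon}},X,0)$, but the first inequality is exactly $(\star)$, so the argument as written is circular. (There are monotonicity results for $A_X$ under retraction to skeleta of snc models, but they are themselves proved using the log canonicity of snc pairs, which is the same input as the fix below, and would be heavy machinery here.)

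Fortunately $(\star)$ has a one-line proof that does not involve $v_{\mathrm{mon}}$ at all: since $t_1,\dots,t_r$ is a regular system of parameters, $D:=\sum_{i=1}^{r}(t_i=0)$ is a reduced simple normal crossings divisor, so $(X,D)$ is log canonical, hence $a(E,X,D)\ge 0$ for every prime divisor $E$ over $X$; unwinding, $a(E,X,0)\ge\sum_{i=1}^{r}v_E(t_i)=\sum_i c_i$. Inserting this in place of the monotonicity step closes the gap. Your route is also genuinely different from the paper's: the paper invokes \cite[Lemma 2.45]{KM98} to produce a chain of point blow-ups realising the mld and then shows by an explicit induction along the chain that the first blow-up already computes it, whereas your patched valuation-theoretic argument avoids the auxiliary chain and is shorter.
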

\begin{proof}
    If ${\rm codim}\ x=1$, then $\mld(X\ni x,B)=a(\bar{x},X,B)={\rm codim}\ x-\mult_x B$. If ${\rm codim}\ x>1$, then by \cite[Lemma 2.45]{KM98}, there exists a sequence of blow-ups:
     $$X_n\to \cdots\to X_1\to X_0=:X,$$
     such that
     \begin{enumerate}
      \item $f_i:X_i\to X_{i-1}$ is the blow-up of $\bar{x_{i-1}}\subseteq X_{i-1}$ with the exceptional divisor $E_i$ for any $1\le i\le n,$ where $x_0:=x$,
      \item $x_i\in E_i$ for any $1\le i\le n-1$, and
      \item $a(E_i,X,B)>a(E_n,X,B)=\mld(X\ni x,B)$ for any $1\le i\le n-1$.
     \end{enumerate}
     We may write 
     $$K_{X_i}+\tilde{B_{X_i}}:=K_{X_i}+B_{X_i}+(1-a_i)E_i=f_i^*(K_{X_{i-1}}+\tilde{B_{X_{i-1}}}),$$
     where $a_i:=a(E_i,X,B),B_{X_i}$ is the strict transform of $\tilde{B_{X_{i-1}}}$ for any $1\le i\le n$ and $\tilde{B_{X_0}}:=B$. We will show by induction on $k$ that $a_k\ge a_1$ for any $1\le k\le n$ and hence we conclude that $\mld(X\ni x,B)=a_1={\rm codim }\ x-\mult_x B.$ Assume that for some $1\le k_0\le n-1,a_i\ge a_1$ for any $1\le i\le k_0.$ By induction, we have
     \begin{align*}
      a_{k_0+1}&={\rm codim }\ x_{k_0}-\mult_{x_{k_0}}\tilde{B_{X_{k_0}}}\\&={\rm codim}\ x_{k_0}-\mult_{x_{k_0}}{B_{X_{k_0}}}-(1-a_{k_0})
       \\&\ge a_{k_0}\ge a_1.
     \end{align*}
     The first inequality holds since $(X_{k_0},B_{X_{k_0}}+(1-a_{k_0})E_{k_0})$ is a log smooth sub-pair and $x_{k_0}\in E_{k_0}$. We finish the induction.
\end{proof}

\begin{defn}
     A \emph{minimal log discrepancy place} or \emph{mld place} of a pair $(X/Z\ni z,B)$ is a divisor $E\in e(z)$ with $\mld(X/Z\ni z,B)=a(E,X,B)$, and a \emph{minimal log discrepancy center} or \emph{mld center} is the image of a mld place. 
\end{defn}

\begin{lem}\label{lem:mldisattained}
      Suppose that $(X/Z\ni z,B)$ is lc over $z$. Then there exists a prime divisor $E\in e(z)$ such that $\mld(X/Z\ni z,B)=a(E,X,B)$.
\end{lem}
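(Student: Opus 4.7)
The plan is to reduce, via a log resolution, to the log smooth sub-pair case, where Lemma~\ref{lem:logsmoothmld} computes the local $\mld$ explicitly and forces attainment through an obvious blow-up.

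First I take a log resolution $\phi\colon W\to X$ of $(X,B)$ and set $K_W+B_W:=\phi^{*}(K_X+B)$, so that $(W,B_W)$ is a log smooth sub-pair and $a(E,X,B)=a(E,W,B_W)$ for every prime divisor $E$ over $X$. A prime divisor $E$ belongs to $e(z)$ precisely when the generic point $y$ of $\Center_W(E)$ satisfies $\pi\circ\phi(\bar y)=\bar z$; let $\Sigma\subseteq W$ denote the set of such scheme points. For each $y\in\Sigma$, Lemma~\ref{lem:logsmoothmld} applied to $(W\ni y, B_W)$ yields
\[
\mld(W\ni y, B_W)=\operatorname{codim}_W y-\operatorname{mult}_y B_W,
\]
and this value is attained: since $\bar y$ is smooth at its own generic point, blowing up $\bar y$ on a Zariski neighbourhood of $y$ produces a prime divisor $E_y$ with $\Center_W(E_y)=\bar y$ and $a(E_y,W,B_W)=\operatorname{codim}_W y-\operatorname{mult}_y B_W$. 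In particular $E_y\in e(z)$, and a standard two-sided comparison then gives
\[
\mld(X/Z\ni z,B)=\inf_{y\in\Sigma}\bigl(\operatorname{codim}_W y-\operatorname{mult}_y B_W\bigr).
\]

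The crucial remaining observation is that this infimum is a minimum. Writing $B_W=\sum_{i=1}^N b_iB_{W,i}$, the multiplicity $\operatorname{mult}_y B_W=\sum_{i\in I_y}b_i$ depends only on the subset $I_y=\{i:y\in B_{W,i}\}\subseteq\{1,\dots,N\}$ of snc components containing $y$, while $\operatorname{codim}_W y$ is an integer in $\{0,1,\dots,\dim W\}$. Hence $y\mapsto\operatorname{codim}_W y-\operatorname{mult}_y B_W$ assumes only finitely many values on $W$; the infimum over $\Sigma$ is therefore attained at some $y_0\in\Sigma$, and the corresponding $E_{y_0}\in e(z)$ satisfies $a(E_{y_0},X,B)=\mld(X/Z\ni z,B)$, as required.

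I do not anticipate any serious obstacle. The only subtlety is ensuring that for each $y\in\Sigma$ the constructed divisor has center \emph{equal} to $\bar y$ rather than a proper subvariety, but this is immediate since every irreducible closed subvariety of a smooth variety is smooth at its generic point and can be blown up on a suitable open neighbourhood.
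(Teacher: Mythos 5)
Your proof is correct and follows essentially the same route as the paper: pass to a log resolution, observe via Lemma~\ref{lem:logsmoothmld} that the relevant log discrepancies lie in the finite set $\{\operatorname{codim} y-\operatorname{mult}_y B_W\}$ indexed by points of the fibre over $z$, and conclude the infimum is attained. You spell out the two-sided comparison and the explicit blow-up producing $E_y$ more than the paper does (which simply asserts the equality with $\inf_{y\in\mu^{-1}(z)}\mld(Y\ni y,B_Y)$), but the underlying argument is the same.
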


\begin{proof}
     Let $f:Y\to X$ be a log resolution of $(X,B),$ and we may write
     $K_Y+B_Y:=f^*(K_X+B).$
     Shrinking $Z$ near $z,$ we may assume that $(Y,B_Y)$ is a log smooth sub-pair. Let $\mu:=\pi\circ f$, where $\pi$ is the morphism $X\to Z$. Now
     \begin{align*}\mld(X/Z\ni z,B)=\inf_{E\in e(z)}\{a(E,X,B)\}=\inf_{y\in \mu^{-1}(z)}\{\mld(Y\ni y,B_Y)\}
     \end{align*}
     which belongs to the finite set 
     $\{{\rm codim}\ y-\mult_yB_Y\mid y\in\mu^{-1}(z)\}$ by Lemma \ref{lem:logsmoothmld}. Hence there exist a point $y\in\pi^{-1}(z)$ and a prime divisor $E\in e(z)$ such that $\mld(X/Z\ni z,B)=\mld(Y\ni y,B_Y)=a(E,X,B)$.
\end{proof}

\begin{lem}\label{lemma mldcenterfinite}
      Suppose that $(X/Z\ni z,B)$ is lc over $z$, and either
      \begin{enumerate}
          \item $\dim Z>\dim z$, or
          \item $\dim Z=\dim z$, and $\mld(X/Z\ni z,B)<1.$
      \end{enumerate} 
      Then $(X/Z\ni z,B)$ has only finitely many mld centers. 
\end{lem}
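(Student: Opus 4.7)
My plan is to pass to a log smooth model and exploit the combinatorial structure of strata in an snc divisor.

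First, I would take a log resolution $f\colon Y\to X$ of $(X,B)$, further refined by blow-ups so that the divisor $B_Y+\mu^{-1}(\bar z)_{\mathrm{red}}$ has simple normal crossings support, where $\mu:=\pi\circ f$ and $K_Y+B_Y=f^{*}(K_X+B)$; in case~(2), where $\bar z=Z$, we use only snc of $B_Y$ (since $\mu^{-1}(\bar z)=Y$). Let $D_1,\dots,D_N$ denote the prime components of the resulting snc divisor. Combining Lemma~\ref{lem:logsmoothmld} with (the proof of) Lemma~\ref{lem:mldisattained}, every mld center of $(X/Z\ni z,B)$ is of the form $\overline{f(y)}$ for some $y\in Y$ with $\mu(y)=z$ and $\mathrm{codim}_Y y-\mathrm{mult}_y B_Y=\alpha$, where $\alpha:=\mathrm{mld}(X/Z\ni z,B)$.

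The central claim I would establish is that every such minimizing $y$ is the generic point of an irreducible component of the stratum $\bigcap_{i\in I(y)}D_i$, where $I(y):=\{i:y\in D_i\}$. Since there are only finitely many subsets $I\subseteq\{1,\dots,N\}$ and each stratum has finitely many irreducible components, this immediately produces a finite list of candidate $y$'s, and hence finitely many mld centers $\overline{f(y)}\subseteq X$.

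I would prove the claim by contradiction. Suppose $y$ is not such a generic stratum point; let $V$ be the irreducible component of $\bigcap_{i\in I(y)}D_i$ containing $\bar y$, and $y_V$ its generic point. Then $\mathrm{codim}_Y y_V<\mathrm{codim}_Y y$, and the snc condition forces $I(y_V)=I(y)$, hence $\mathrm{mult}_{y_V}B_Y=\mathrm{mult}_y B_Y$. In case~(1) I verify that $\mu(y_V)=z$: if not, then $\mu(V)\supsetneq\bar z$, so $V\not\subseteq\mu^{-1}(\bar z)$; yet $\bar y\subseteq V\cap\mu^{-1}(\bar z)$ is irreducible, so it lies in some component $D_j\subseteq\mu^{-1}(\bar z)_{\mathrm{red}}$, which forces $j\in I(y)$ and hence $V\subseteq D_j\subseteq\mu^{-1}(\bar z)$, a contradiction. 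Thus $y_V$ would witness a value strictly smaller than $\alpha$, violating minimality. In case~(2) one bypasses the fiber argument and uses $\alpha<1$ directly: $\mathrm{codim}_Y y\geq|I(y)|+1$ combined with $b_i\leq 1$ (which holds near $\pi^{-1}(z)$ since $(X,B)$ is lc there) yields $\alpha\geq|I(y)|+1-\sum_{i\in I(y)}b_i\geq 1$, contradicting $\alpha<1$.

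The main subtlety I expect lies in case~(1), where $\alpha$ can be $\geq 1$ and so the clean numerical bound of case~(2) is unavailable; including the fiber components $\mu^{-1}(\bar z)_{\mathrm{red}}$ in the snc data and running the irreducibility/containment trick above is the device that traps the minimizer inside $\mu^{-1}(\bar z)$ and produces the required contradiction.
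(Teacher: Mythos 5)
Your proof is correct and follows the same strategy as the paper: take a log resolution on which $B_Y$ and $\mu^{-1}(\bar z)_{\mathrm{red}}$ form an snc divisor, and show that any point $y\in\mu^{-1}(z)$ realizing the relative mld must be the generic point of a stratum (i.e.\ ${\rm codim}\,y=|I(y)|$), which yields finiteness since there are only finitely many strata. The only difference is tactical in Case~(1): you argue by contradiction, checking $\mu(y_V)=z$ for the generic stratum point $y_V$ via the irreducibility-of-$\bar y$ containment argument and then invoking minimality over $\mu^{-1}(z)$, whereas the paper deduces ${\rm codim}\,y=|I(y)|$ directly from the chain $\alpha={\rm codim}\,y-|I(y)|+\sum_{i\in I(y)}a_i\ge\sum_{i\in I(y)\cap\mathcal{I}_z}a_i\ge\alpha$ using $a_i\ge\alpha$ for fiber components; both arguments rest on the same geometric observation that $\bar y$ lies in some fiber divisor $D_j$, so that $I(y)$ meets $\mathcal{I}_z$, and both require first arranging (by further blow-ups and shrinking $Z$ near $z$) that $\mu^{-1}(\bar z)_{\mathrm{red}}$ actually is a divisor.
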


\begin{proof}
     Let $f:Y\to X$ be a log resolution of $(X,B)$, and we may write
     $$K_Y+B_Y:=K_Y+\sum_{i\in \mathcal{I}}(1-a_i)B_i=f^{*}(K_X+B),$$
     where $a_i:=a(B_i,X,B)$ for any $i\in \mathcal{I}$. Let $\mu:=\pi\circ f$, where $\pi$ is the morphsim $X\to Z$. Shrinking $Z$ near $z$, we may assume that $a_i\ge0$ for any $i\in \mathcal{I}$. 
    
     We claim that the set
     $$\mathcal{S}:=\{y\mid \mld(Y\ni y,B_Y)=\mld(X/Z\ni z,B),y\in\mu^{-1}(z)\}$$
     is finite. Assume that the claim holds. Then $\mathcal{S}':=\{f(\bar{y})\mid y\in\mathcal{S}\}$ is a finite set. We will show that $\mathcal{S}'$ is the set of mld centers of $(X/Z\ni z,B)$. Pick any mld place $E\in e(z)$, let $y'$ be the generic point of $\Center_{Y}(E).$ Then $y'\in\mu^{-1}(z)$ and $\mld(X/Z\ni z,B)=a(E,X,B)=a(E,Y,B_Y)\ge\mld(Y\ni y',B_Y)\ge\mld(X/Z\ni z,B).$ Hence $\mld(X/Z\ni z,B)=\mld(Y\ni y',B_Y)$ and $y'\in \mathcal{S}$ . Thus $\Center_X(E)\in\mathcal{S}'$.
     
     It suffices to show the claim. Let $\mathcal{I}(y):=\{i\in \mathcal{I}\mid y\in B_i\}$ and $|\mathcal{I}(y)|$ the cardinality of $\mathcal{I}(y)$. It is enough to show that $|\mathcal{I}(y)|={\rm codim}\ y$ for any $y\in\mathcal{S}$, since then $y$ would be the generic point of a connected component of $\cap_{i\in \mathcal{J}}B_i$ for some non-empty subset $\mathcal{J}\subseteq \mathcal{I}$, and the total number of subsets of $\mathcal{I}$ is finite.
    
     Suppose that $\dim Z=\dim z$ and $\mld(X/Z\ni z,B)<1$. By Lemma \ref{lem:logsmoothmld},
     \begin{align*}
      \mld(X/Z\ni z,B)&=\mld(Y\ni y,B_Y)={\rm codim}\ y-\sum_{i\in \mathcal{I}(y)}(1-a_i)\\&= {\rm codim}\ y-|\mathcal{I}(y)|+\sum_{i\in \mathcal{I}(y)} a_i<1,
     \end{align*}
    which implies that
    ${\rm codim}\ y=|\mathcal{I}(y)|$ and we are done.
    
    If $\dim Z>\dim z$, then possibly replacing $Y$ by a higher model and shrinking $Z$ near $z,$ we may assume that $\mu^{-1}(\bar{z})=\cup_{i\in \mathcal{I}_z} B_i$ for some non-empty subset $\mathcal{I}_z\subseteq \mathcal{I}$ and $\mu(B_i)=\bar{z}$ for any $i\in \mathcal{I}_z$. Hence $\mathcal{I}_z\cap \mathcal{I}(y)\neq \emptyset$, and by Lemma \ref{lem:logsmoothmld},
    \begin{align*}
      &\mld(X/Z\ni z,B)=\mld(Y\ni y,B_Y)={\rm codim}\ y-\sum_{i\in \mathcal{I}(y)}(1-a_i)\\&= {\rm codim}\ y-|\mathcal{I}(y)|+\sum_{i\in \mathcal{I}(y)} a_i\ge \sum_{i\in \mathcal{I}(y)\cap \mathcal{I}_z}a_{i}\ge  \mld(X/Z\ni z,B),
    \end{align*}
    which implies that ${\rm codim }\ y=|\mathcal{I}(y)|$ and we are done.
\end{proof}

\begin{rem}
If $\dim Z=0$, $X=\mathbb{P}^1$ and $B=0$, then any closed point of $X$ is an mld center of $(X,B)$, thus there are infinitely many mld centers.  
\end{rem}

\subsection{Arithmetic of sets}

\begin{defn}
     Let $\Ii\subseteq[0,1]$ be a set of real numbers, we define $|\Ii|$ to be the cardinality of $\Ii$, $\lim\Ii$ to be the set of accumulation points of $\Ii$, and
     $$\sum \Ii:=\{0\}\cup\{\sum_{p=1}^l i_p\mid i_p\in\Ii \text{ for }1\le p\le l,l\in\Zz_{>0}\},$$ 
     $$\Ii_+:=\{0\}\cup\{\sum_{p=1}^l i_p\le1\mid i_p\in\Ii \text{ for }1\le p\le l,l\in\Zz_{>0}\},$$
     $$D(\Ii):=\{a\le1\mid a=\frac{m-1+f}{m},m\in\Zz_{>0},f\in \Ii_+\}.$$   
\end{defn}    

\begin{defn}[DCC and ACC sets]\label{def: DCC and ACC}        
    We say that $\Ii\subseteq\Rr$ satisfies the \emph{descending chain condition} $($DCC$)$ if any decreasing sequence $a_{1} \ge a_{2} \ge \cdots$ in $\Ii$ stabilizes. We say that $\Ii$ satisfies the \emph{ascending chain condition} $($ACC$)$ if any increasing sequence in $\Ii$ stabilizes.
\end{defn}

We need the following well-known lemma.
\begin{lem}[{\cite[Proposition 3.4.1]{HMX14}}]\label{lemma DI}
     Let $\Ii\subseteq[0,1],$ we have $D(D(\Ii))=D(\Ii)\cup\{1\}.$
\end{lem}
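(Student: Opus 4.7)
\medskip
\noindent\textbf{Proof proposal.} The plan is to verify both inclusions directly from the definitions of $\Gamma_+$ and $D(\Gamma)$, handling the forward direction by inspection and the reverse direction by a short case analysis after unpacking the nested description.

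For $D(\Gamma) \cup \{1\} \subseteq D(D(\Gamma))$ I would simply note that any $a \in D(\Gamma)$ lies in $[0,1]$, so $a \in D(\Gamma)_+$ as a length-one sum, hence $a = \frac{1-1+a}{1} \in D(D(\Gamma))$; and since $\tfrac12 = \frac{2-1+0}{2} \in D(\Gamma)$, the sum $\tfrac12 + \tfrac12 = 1$ lies in $D(\Gamma)_+$, so $1 = \frac{1-1+1}{1} \in D(D(\Gamma))$.

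The substance is the reverse inclusion $D(D(\Gamma)) \subseteq D(\Gamma) \cup \{1\}$. Given $a \in D(D(\Gamma))$, I would write
$$a = \frac{n-1+g}{n}, \qquad g = \sum_{p=1}^{k} a_p \le 1, \qquad a_p = \frac{m_p-1+f_p}{m_p},$$
with $n, m_p \in \mathbb{Z}_{>0}$ and $f_p \in \Gamma_+$. If $g=1$, then $a=1$ and we are done. Otherwise $g<1$, and since $\frac{m-1}{m} \ge \tfrac12$ for $m\ge 2$, two distinct indices with $m_p, m_q \ge 2$ would force $a_p + a_q \ge 1$ and hence $g \ge 1$, a contradiction; so after reordering I may assume $m_p = 1$ (and thus $a_p = f_p \in \Gamma_+$) for every $p \ge 2$. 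Clearing denominators with the common multiplier $nm_1$ then yields
$$a = \frac{N-1+F}{N}, \qquad N := nm_1, \qquad F := f_1 + m_1 \sum_{p\ge 2} f_p.$$
The key verification is that $F \in \Gamma_+$: each $f_p$ is already a finite sum of elements of $\Gamma$ bounded by $1$, the product $m_1 f_p$ is the sum of $m_1$ such copies, so $F$ is a finite sum of elements of $\Gamma$; and because $a \le 1$ forces $F \le 1$, indeed $F \in \Gamma_+$. Therefore $a \in D(\Gamma)$, which concludes the inclusion.

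The only genuine obstacle is the structural observation that the constraint $g \le 1$ together with the lower bound $a_p \ge \tfrac12$ for $m_p \ge 2$ forces at most one index with $m_p \ge 2$ (unless $g=1$, which produces $a=1$). Once this is identified, the rest is a one-line common-denominator computation together with the elementary remark that an integer multiple of an element of $\Gamma_+$ is again a sum of elements of $\Gamma$, so it lies in $\Gamma_+$ as soon as it is bounded by $1$.
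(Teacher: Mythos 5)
Your proof is correct, and since the paper gives no argument of its own for this lemma (it simply cites \cite[Proposition 3.4.1]{HMX14}), your write-up is essentially the standard proof from that reference: the key observation that when $g<1$ at most one $m_p$ can be $\ge 2$ (two such indices would give $a_p+a_q\ge 1$), followed by clearing denominators with $N=nm_1$ and checking $F=f_1+m_1\sum_{p\ge2}f_p\in\Gamma_+$ via $a\le 1$. The only case you leave implicit, $g=0$, is trivial since then $a=\frac{n-1+0}{n}\in D(\Gamma)$ directly.
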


\subsection{Complements}

\begin{defn} \label{defn complement}
     We say that $(X/Z\ni z,B^+)$ is an \emph{$\Rr$-complement} of $(X/Z\ni z,B)$ if we have $B^+\ge B,$ $(X/Z\ni z,B^+)$ is lc over $z$, and $K_X + B^+ \sim_{\Rr} 0$ over some neighborhood of $z.$ In addition, if $(X/Z\ni z,B^+)$ is $\epsilon$-lc over $z,$ we say that $(X/Z\ni z,B^+)$ is an \emph{$(\epsilon,\Rr)$-complement} of $(X/Z\ni z,B).$ 
     
     Let $n$ be a positive integer. An \emph{$n$-complement} of $(X/Z\ni z,B)$ is a pair $(X/Z\ni z,B^+),$ such that over some neighborhood of $z,$ we have 
     \begin{enumerate} 
        \item $(X,B^+)$ is lc,          
        \item $n(K_X+B^+)\sim 0,$ and          
        \item $nB^+\ge n\lfloor B\rfloor+\lfloor(n+1)\{B\}\rfloor.$         
        \end{enumerate} 
        We say that $(X/Z\ni z, B^{+})$ is a \emph{monotonic $n$-complement} of $(X/Z\ni z, B)$ if we additionally have $B^{+}\ge B$. We say that $(X/Z\ni z,B^+)$ is an \emph{$(\epsilon,n)$-complement} of $(X/Z\ni z,B)$ if $(X/Z\ni z,B^+)$ is $\epsilon$-lc over $z$. 
        
        We say that $(X/Z\ni z,B)$ is {\emph{$(\epsilon,n)$-complementary}} $($respectively \emph{$(\epsilon,\Rr)$-complementary}$)$ if there exists an $(\epsilon,n)$-complement $($respectively \emph{$(\epsilon,\Rr)$-complement}$)$ $(X/Z\ni z,B^+)$ of $(X/Z\ni z,B).$ 
\end{defn} 

\begin{lem}\label{lem semiamplecomplement}
      Let $\epsilon$ be a non-negative real number. Suppose that $(X/Z\ni z,B)$ is $\epsilon$-lc over $z$, $-(K_X+B)$ is semiample over $Z$, and either
      \begin{enumerate}
          \item $\dim Z>\dim z$, or 
          \item $\dim Z=\dim z$, and $\epsilon<1$.
      \end{enumerate}
    Then there exists an $\Rr$-Cartier divisor $G\ge0$ such that $(X/Z\ni z,B+G)$ is an $(\epsilon,\Rr)$-complement of $(X/Z\ni z,B)$ and $\lfloor B+G\rfloor=\lfloor B\rfloor$.
\end{lem}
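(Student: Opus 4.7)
The plan is to construct $G$ as a small $\mathbb{R}$-linear combination of general members of base-point-free linear systems arising from the semiample structure of $-(K_X+B)$. After shrinking $Z$ around $z$, I can write $-(K_X+B) \sim_{\mathbb{R}} \sum_j c_j D_j$ with $c_j > 0$ real and each $D_j$ Cartier and semiample over $Z$. Choosing $m$ large and divisible enough that each $|mD_j|_{/Z}$ is base-point-free and defines a morphism $\phi_j\colon X \to Y_j$ over $Z$, I pick a general $H_j \in |mD_j|_{/Z}$ (in the Bertini sense) and set $G := \sum_j (c_j/m) H_j$. Then $G \ge 0$ and $K_X + B + G \sim_{\mathbb{R}} 0$ over a neighborhood of $z$, giving the $\mathbb{R}$-triviality needed for the complement.

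To check $\lfloor B+G \rfloor = \lfloor B \rfloor$, note that by Bertini no component of a general $H_j$ coincides with a component of $B$ (since the image of any prescribed prime divisor in $Y_j$ is proper or dominant, and in either case general $H_j$ does not contain it), while each prime component appearing in $G$ has total coefficient at most $\sum_j c_j/m$; since $B$ has only finitely many non-integer coefficients, all bounded below $1$, the floor condition holds for $m$ large. For the $\epsilon$-lc condition, take a log resolution $f\colon W \to X$ of $(X,B)$ and write $K_W + B_W = f^*(K_X+B)$. Applying Bertini to each composition $\phi_j \circ f\colon W \to Y_j$, for general $H_j$ the pullback $(\phi_j \circ f)^*H_j$ is reduced on $W$ with SNC support in general position with $\Supp B_W$, so $(W, B_W + f^*G)$ is log smooth. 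By Lemma \ref{lem:logsmoothmld}, $\mld(W \ni y, B_W + f^*G) = \mld(W \ni y, B_W) - \mult_y f^*G$ for every $y \in W$, and by the argument of Lemma \ref{lem:mldisattained} the quantity $\mld(X/Z \ni z, B+G)$ equals the infimum of these over $y \in (\pi\circ f)^{-1}(z)$, attained at finitely many strata.

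For $y \in (\pi\circ f)^{-1}(z)$, I have $\mld(W\ni y, B_W) \ge \mld(X/Z\ni z, B) \ge \epsilon$ and, by the Bertini transversality, $\mult_y f^*G \le \dim X \cdot \max_j c_j/m$, which is arbitrarily small for $m$ large. The only obstruction is at points $y$ where $\mld(W\ni y, B_W) = \epsilon$ exactly, the ``mld-$\epsilon$ locus'', where I must force $\mult_y f^*G = 0$, i.e., the corresponding stratum must not lie in $\Supp f^*G$. This is where hypotheses (1) and (2) enter. Under hypothesis (1), Lemma \ref{lemma mldcenterfinite}(1) yields finitely many mld centers on $X$, and the image of each under $\phi_j$ lies in the proper subvariety $\pi_{Y_j}^{-1}(\bar z) \subsetneq Y_j$ (since $\dim z < \dim Z$ and $\phi_j$ is over $Z$); general $H_j$ then avoids these finitely many proper images by Bertini. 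Under hypothesis (2), either $\mld(X/Z\ni z, B) < 1$ so Lemma \ref{lemma mldcenterfinite}(2) applies as in case~(1), or $\mld(X/Z\ni z, B) \ge 1 > \epsilon$, giving uniform slack $\ge 1-\epsilon$ that absorbs the $\dim X \cdot \max_j c_j/m$ correction for $m$ large.

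The main obstacle is coordinating the finitely many Bertini conditions across the several semiample linear systems $|mD_j|_{/Z}$ and verifying that each mld-$\epsilon$ stratum has proper image in every $Y_j$; this is precisely where Lemma \ref{lemma mldcenterfinite} together with the dimension/slack dichotomy in hypotheses (1) and (2) reduces the problem from an a priori infinite set of conditions to a finite one. Once these constraints are met by general $H_j$ for $m$ large, the construction yields an $(\epsilon,\mathbb{R})$-complement $(X/Z\ni z, B+G)$ with $\lfloor B+G\rfloor = \lfloor B\rfloor$, as required.
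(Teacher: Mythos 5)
Your proof is correct and takes essentially the same route as the paper: the paper's (one-line) proof likewise rests on Lemma \ref{lemma mldcenterfinite}, choosing a general $0\le G\sim_{\Rr,Z}-(K_X+B)$ from the semiample system whose support contains no mld center, so that $\lfloor B+G\rfloor=\lfloor B\rfloor$ and $(X/Z\ni z,B+G)$ is an $(\epsilon,\Rr)$-complement. Your extra dichotomy in case (2) --- using the slack $1-\epsilon$ when $\mld(X/Z\ni z,B)\ge 1$, where Lemma \ref{lemma mldcenterfinite} need not apply and mld centers can be infinite in number --- is precisely the point the paper leaves implicit, and you handle it correctly.
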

\begin{proof}
     The result follows from Lemma \ref{lemma mldcenterfinite}.
\end{proof}

The following lemma is well-known to experts. We will use the lemma frequently without citing it in this paper. 
\begin{lem}\label{lemma pullbaclcomplement}
     Let $\epsilon$ be a non-negative real number and $(X/Z\ni z,B)$ a pair. Assume that $f:X\dashrightarrow X'$ is a birational contraction and $B'$ is the strict transform of $B$ on $X'.$
     \begin{enumerate}
        \item If $(X/Z\ni z,B)$ is $(\epsilon,\Rr)$-complementary, then $(X'/Z\ni z,B')$ is $(\epsilon,\Rr)$-complementary.
        \item If $f$ is $-(K_X+B)$-non-positive and $(X'/Z\ni z,B')$ is $(\epsilon,\Rr)$-complementary, then $(X/Z\ni z,B)$ is $(\epsilon,\Rr)$-complementary.
     \end{enumerate}
\end{lem}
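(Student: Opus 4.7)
The plan is to work on a common log resolution $p\colon W\to X$ and $q\colon W\to X'$ of the birational contraction $f$, construct in each direction the natural candidate for the complement, and invoke the negativity lemma on $W$ to transfer the $\epsilon$-lc condition.

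For (1), I take the strict transform ${B'}^+ := f_*B^+$ on $X'$. The inequality ${B'}^+ \ge B'$ and the relation $K_{X'}+{B'}^+ = f_*(K_X+B^+) \sim_{\Rr,Z} 0$ near $z$ are routine (the latter because rational functions are canonically identified under a birational map, so principal divisors push forward to principal divisors). To see that the $\epsilon$-lc property transfers, I set
$$\Delta := p^*(K_X+B^+) - q^*(K_{X'}+{B'}^+)$$
on $W$. A short coefficient check at the strict transform of any prime divisor on $X'$ — such a divisor corresponds to a unique non-$f$-exceptional prime on $X$, since $f$ is a birational contraction — shows that $\Delta$ is $q$-exceptional. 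Because both summands are $\sim_{\Rr,Z} 0$ on a neighborhood of the preimage of $z$, so is $\Delta$; in particular $\pm\Delta$ is $q$-nef there. The negativity lemma applied to both $\Delta$ and $-\Delta$ forces $\Delta=0$, so $a(E,X,B^+) = a(E,X',{B'}^+)$ for every prime divisor $E$ over $X$, and the $\epsilon$-lc condition over $z$ transfers.

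For (2), I reverse the construction: define the divisor $\tilde B$ on $W$ by $K_W + \tilde B := q^*(K_{X'}+{B'}^+)$ and set $B^+ := p_*\tilde B$. By construction $K_X + B^+ = p_* q^*(K_{X'}+{B'}^+) \sim_{\Rr,Z} 0$ near $z$, so the key step is to verify $B^+ \ge B$ (which in turn guarantees $B^+ \ge 0$). Here the $-(K_X+B)$-non-positivity of $f$ enters decisively: it gives $q^*(K_{X'}+B') - p^*(K_X+B) \ge 0$ on $W$, and combining with the effectivity of $q^*({B'}^+ - B')$ yields $q^*(K_{X'}+{B'}^+) \ge p^*(K_X+B)$; pushing forward by $p_*$ (which preserves effectivity) gives $B^+ \ge B$. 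The same negativity-lemma argument as in (1) — now $\Delta := p^*(K_X+B^+) - q^*(K_{X'}+{B'}^+)$ is $p$-exceptional, by the analogous coefficient check using the definition of $B^+$ — forces $\Delta = 0$, so the log discrepancies match and $(X/Z\ni z, B^+)$ is $\epsilon$-lc over $z$.

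The main obstacle is the effectivity step $B^+ \ge B$ in part (2). Without the $-(K_X+B)$-non-positivity, an $f$-exceptional divisor $E$ on $X$ receives in the candidate $B^+$ the coefficient $1 - a(E, X', {B'}^+)$, which can be negative precisely when $a(E, X', {B'}^+) > 1$; the hypothesis is exactly what bounds these coefficients below by their counterparts in $B$, making $B^+$ a valid boundary.
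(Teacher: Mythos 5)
Your proposal is correct and follows essentially the same route as the paper: pass to a common resolution, push the complement across, and use the negativity lemma to conclude that the two pullbacks agree over a neighborhood of $z$, so that log discrepancies transfer. The paper writes this more tersely (it invokes the negativity lemma without spelling out that the difference of pullbacks is exceptional), but your more explicit check that $\Delta$ is $q$-exceptional (resp.\ $p$-exceptional) and your remark on why the $-(K_X+B)$-non-positivity is needed for $B^+\ge B$ in (2) are both faithful elaborations of the same argument, not a different one.
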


\begin{proof}
     Let $p:W\to X$ and $q:W\to X'$ be a common resolution of $f.$
     \medskip
     
     \noindent $(1)$ Let $(X/Z\ni z,B+G)$ be an $(\epsilon,\Rr)$-complement of $(X/Z\ni z,B)$ and $G'$ the strict transform of $G$ on $X'.$ We claim that $(X'/Z\ni z,B'+G')$ is an $(\epsilon,\Rr)$-complement of $(X'/Z\ni z,B').$ It suffices to show that $(X'/Z\ni z,B'+G')$ is $\epsilon$-lc over $z.$ By the negativity lemma, $p^*(K_X+B+G)=q^*(K_{X'}+B'+G')$ over a neighborhood of $z.$ Hence for any $E\in e(z),$ $a(E,X',B'+G')=a(E,X,B+G)\ge\epsilon$, that is, $(X'/Z\ni z,B'+G')$ is $\epsilon$-lc over $z.$
     \medskip
     
     \noindent $(2)$ Let $(X'/Z\ni z,B'+G')$ be an $(\epsilon,\Rr)$-complement of $(X'/Z\ni z,B')$. We have
     $$q^*(K_{X'}+B'+G')\ge q^*(K_{X'}+B')\ge p^*(K_X+B).$$
     Let $K_X+B+G:=p_*q^*(K_{X'}+B'+G').$ We claim that $(X/Z\ni z,B+G)$ is an $(\epsilon,\Rr)$-complement of $(X/Z\ni z,B).$ It suffices to show that $(X/Z\ni z,B+G)$ is $\epsilon$-lc over $z.$ By the negativity lemma, $p^*(K_{X}+B+G)=q^*(K_{X'}+B'+G')$ over a neighborhood of $z.$ Hence for any $E\in e(z),$ $a(E,X,B+G)=a(E,X',B'+G')\ge\epsilon$, that is, $(X/Z\ni z,B+G)$ is $\epsilon$-lc over $z.$
\end{proof}

\section{Uniform rational polytopes for linearity of MLDs}
The main goal of this section is to show Theorem \ref{thm:uniformpolytopeforsurfacemlds}.
\subsection{PLDs and MLDs in dimension two}
     
   We say that $(X\ni x,B)$ is an lc surface germ if $(X\ni x,B)$ is a surface germ which is lc at $x$.
     
\begin{defn}\label{defn pld}
     Let $(X\ni x, B:=\sum b_iB_i)$ be an lc surface germ, $f: Y \to X$ the minimal resolution of $X,$ and we may write
     $$K_{Y}+B_Y+\sum_{i\in \mathcal{I}} (1-a_i)E_i=f^{*}(K_{X}+B)$$
     where $B_Y$ is the strict transform of $B$, $E_i$ are $f$-exceptional divisors and $a_i:=a(E_i,X,B)$. The \emph{partial log discrepancy} $\pld(X\ni x,B)$ of $( X\ni x,B)$ is defined as follows. If $x$ is a singular point, we define
     $$\pld(X\ni x,B):=\min_{i\in \mathcal{I}}\{a_i\}.$$
     If $x$ is a smooth point, we set $\pld(X\ni x,B)=+\infty.$ For convenience, we may omit $x.$
     
     We define the $\emph{dual graph of the minimal resolution of}$ $X$ as the extended dual graph of $\cup E_i$ by adding $B_{Y,i}$ \cite[Chapter 4]{KM98}, where $B_{Y,i}$ are the strict transforms of $B_i$.
     
     In this paper, when we talk about the partial log discrepancy of a surface germ $(X\ni x,B)$, we always assume that $(X\ni x,B)$ is lc.

\end{defn}
   
     The following lemma is well-known to experts. For the reader's convenience, we include the proof here.

\begin{lem} \label{lemma concave}
     Let $(X,B)$ be an lc surface pair, $f:Y\to X$ a proper birational morphism from a smooth surface with exceptional divisors $E_i$ $(i\in\mathcal{I})$ and $B_Y$ the strict transform of $B$. Assume that the dual graph $\mathcal{DG}$ of $f$ is a tree and $0\le a_i:=a(E_i,X,B)\le 1$ for any $i\in\mathcal{I}$. Let $v_i$ be the vertex corresponds to the exceptional curve $E_i$, and $w_i:=-(E_i\cdot E_i)$ for any $i\in\mathcal{I}$.
     \begin{enumerate}
         \item If $a_k\ge \epsilon$ for some positive real number $\epsilon$ and positive integer $k$, then $w_k\le \lfloor\frac{2}{\epsilon}\rfloor$.
          
         \item If $w_k\ge2$ for some positive integer $k$, then $1-a_k\ge\frac{1-a_{k'}}{2}$ for any vertex $v_{k'}$ which is adjacent to $v_{k}$, and $2a_k\le a_{k_1}+a_{k_2}$ for any two vertices $v_{k_1}$ and $v_{k_2}\ (k_1<k_2)$ which are adjacent to $v_{k}.$ 
         
         \item If $v_{k_0}$ is a fork, $v_{k_1},v_{k_2}$ and $v_{k_3}$ are adjacent to $v_{k_0}$, and $w_{k_i}\ge2$ for any $0\le i\le 2,$ then $a_{k_0}\le a_{k_3}.$ If the equality holds, then $w_{k_0}=w_{k_1}=w_{k_2}=2$ and $B_Y\cdot E_{k_0}=B_Y\cdot E_{k_1}=B_Y\cdot E_{k_2}=0.$
 
         \item If $w_k=1,$ $w_i\ge2$ for any $i\neq k$, $a_k<\min_{i\neq k}\{a_i\}$ and $v_k$ is not a fork, then $\mathcal{DG}$ is a chain.
         
         \item If $v_0,v_1,\dots,v_n$ are vertices such that $v_i$ is adjacent to $v_{i+1}$ for any $0\le i\le n-1,$ $w_i\ge2$ for any $2\le i\le n-1$, $w_1\ge3,a_1\ge a_0$ and $a_1\ge\epsilon$ for some $\epsilon>0,$ then $n\le \lfloor\frac{1}{\epsilon}\rfloor.$
     \end{enumerate}    
     
\end{lem}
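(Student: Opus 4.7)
The plan is to derive everything from a single identity, obtained by intersecting $f^{*}(K_X+B) = K_Y + B_Y + \sum_i (1-a_i) E_i$ with an exceptional curve $E_k$. Since $f^{*}(K_X+B)\cdot E_k = 0$, and since on a smooth rational exceptional curve one has $K_Y \cdot E_k = -2 - E_k^2 = w_k - 2$ with $E_k^2 = -w_k$, I obtain
$$a_k w_k + B_Y\cdot E_k + \sum_{i \ne k} (1-a_i)(E_i \cdot E_k) = 2,$$
in which every summand on the left is non-negative and $E_i \cdot E_k \ge 1$ exactly when $v_i$ is adjacent to $v_k$. Parts (1) and (2) are then immediate: dropping all but $a_k w_k$ gives $a_k w_k \le 2$, and hence $w_k \le \lfloor 2/\epsilon \rfloor$ whenever $a_k \ge \epsilon$; retaining the terms for one or two adjacent vertices and using $w_k \ge 2$ yields both inequalities of (2).

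For part (3), I would apply the identity at each of $v_{k_0}, v_{k_1}, v_{k_2}$. Using $w_{k_j} \ge 2$ and the adjacency of $v_{k_j}$ to $v_{k_0}$ for $j = 1, 2$, one obtains $2 a_{k_j} \le a_{k_j} w_{k_j} \le 1 + a_{k_0}$, hence $a_{k_j} \le (1+a_{k_0})/2$. At $v_{k_0}$, using $w_{k_0} \ge 2$ and the three adjacencies, the identity gives $2 a_{k_0} + (1-a_{k_1}) + (1-a_{k_2}) + (1-a_{k_3}) \le 2$, i.e.\ $2 a_{k_0} \le a_{k_1} + a_{k_2} + a_{k_3} - 1$; substituting the bounds on $a_{k_1}, a_{k_2}$ collapses this to $a_{k_0} \le a_{k_3}$. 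The equality statement then follows by tracking which inequalities become tight: $a_{k_0} = a_{k_3}$ forces $w_{k_0} = w_{k_1} = w_{k_2} = 2$, the vanishings $B_Y \cdot E_{k_j} = 0$ for $j = 0, 1, 2$, and the absence of any further neighbors of $v_{k_1}$ and $v_{k_2}$ beyond $v_{k_0}$.

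For part (4), I argue by contradiction. If the tree has a fork $v_{k_0}$ (necessarily distinct from $v_k$), take the path $v_{k_0} = v_{j_0}, v_{j_1}, \ldots, v_{j_n} = v_k$. Applying part (3) with $v_{k_3} := v_{j_1}$ and any two other branches of $v_{k_0}$ as $v_{k_1}, v_{k_2}$ (these have $w \ge 2$ because they differ from $v_k$) yields $a_{j_0} \le a_{j_1}$. For $1 \le i \le n-1$ the vertex $v_{j_i}$ has $w_{j_i} \ge 2$, so part (2) applied with the two consecutive neighbors $v_{j_{i-1}}, v_{j_{i+1}}$ gives the convexity $a_{j_{i+1}} - a_{j_i} \ge a_{j_i} - a_{j_{i-1}}$. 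The sequence $(a_{j_i})$ is thus non-decreasing, whence $a_k = a_{j_n} \ge a_{j_0} = a_{k_0}$, contradicting $a_k < a_{k_0}$.

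Part (5) proceeds similarly. At $v_1$, using $w_1 \ge 3$ with neighbors $v_0, v_2$, the key identity gives $3 a_1 \le a_1 w_1 \le a_0 + a_2$, hence $a_2 - a_1 \ge 2 a_1 - a_0 \ge a_1 \ge \epsilon$ since $a_0 \le a_1$. Part (2) along $v_2, \ldots, v_{n-1}$ (each with $w \ge 2$) then makes the successive increments $a_{i+1} - a_i$ non-decreasing, hence all $\ge \epsilon$; summing yields $a_n \ge a_1 + (n-1)\epsilon \ge n\epsilon$. Since $(X,B)$ is lc we have $a_n \le 1$, forcing $n \le \lfloor 1/\epsilon \rfloor$. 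The main obstacle is the equality analysis in part (3), where one must carefully identify which of several non-negative contributions to the intersection identity can vanish; once that is set up, the remaining parts are essentially convexity of log discrepancies along chains of $(-2)$-or-worse curves.
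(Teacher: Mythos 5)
Your proposal is correct and follows essentially the same route as the paper: both derive all five parts from the intersection of $f^{*}(K_X+B)$ with an exceptional curve, using the resulting bound at one or several vertices and exploiting convexity of the log discrepancies along chains of $(\ge 2)$-weight vertices. The only cosmetic difference is that you state the key relation as an equality by assuming $E_k$ rational, while the paper keeps it as the inequality $w_k a_k + B_Y\cdot E_k + \sum_{E_i\cdot E_k\ge 1}(1-a_i)\le 2$ (via $K_Y\cdot E_k \ge -2 - E_k^2$), which suffices and avoids any rationality hypothesis; since your argument only uses the $\le$ direction, nothing is lost.
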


\begin{proof}    
 
     Since $0\le a_i\le 1$ for any $i\in\mathcal{I},$ we have
     \begin{align}\label{4.1}
        0 &=(K_Y+B_Y+\sum_{i\in \mathcal{I}} (1-a_i)E_i)\cdot E_k
        \\&\ge-2+w_ka_{k}+\sum_{E_i\cdot E_k\ge 1}(1-a_i)+B_Y\cdot E_k.\nonumber
     \end{align}
     
\noindent     (1) By $(\ref{4.1})$, $0\ge -2+w_ka_k$, and $w_k\le \frac{2}{a_k}\le\frac{2}{\epsilon}$.
     
 \noindent    (2) By $(\ref{4.1})$, $0\ge -2+2a_k+1-a_{k'}$ and $0\ge-2+2a_k+1-a_{k_1}+1-a_{k_2}$. Thus $1-a_k\ge\frac{1-a_{k'}}{2}$, and $2a_k\le a_{k_1}+a_{k_2}$. 
     
   \noindent  (3) By (2), we have $1-a_{k_i}\ge\frac{1-a_{k_0}}{2}$ for $i=1,2$, and if the equalities hold, then $B_Y\cdot E_{k_1}=B_Y\cdot E_{k_2}=0$. In particular, $1-a_{k_1}+1-a_{k_2}\ge 1-a_{k_0}$, and 
   $a_{k_1}+a_{k_2}\le a_{k_0}+1$. Moreover, by $(\ref{4.1})$,
   \begin{align*}
   a_{k_3}&\ge 1+2a_{k_0}-a_{k_1}-a_{k_2}+(w_k-2)a_k+B_Y\cdot E_{k_0}\\
   &\ge 1+2a_{k_0}-a_{k_1}-a_{k_2}\ge a_{k_0},
   \end{align*}
   and if $a_{k_3}=a_{k_0}$, then $w_{k_0}=w_{k_1}=w_{k_2}=2$, and $B_Y\cdot E_{k_0}=B_Y\cdot E_{k_1}=B_Y\cdot E_{k_2}=0.$
     
  \noindent (4) Suppose that the statement does not hold. We may find a sequence of vertices $v_{m_0},\dots,v_{m_l}:=v_k$ such that $v_{m_0}$ is a fork and $v_{m_i}$ is adjacent to $v_{m_{i+1}}$ for any $0\le i\le l-1$. By $(3)$, we have $a_{m_0}\le a_{m_1}$. By (2), $a_{m_0}\le a_{m_1}\le\cdots\le a_{m_l}=a_k<\min_{i\neq k}\{a_i\}$, a contradiction.
  
  \noindent (5) We may assume that $n\ge2$. By $(\ref{4.1}),$ $0\ge-2+w_1a_1+1-a_{0}+1-a_{2}$. Thus $a_{2}-a_1\ge (a_1-a_{0})+(w_1-2)a_1\ge\epsilon.$ By $(2),$ $a_{i+1}-a_i\ge\epsilon$ for any $1\le i\le n-1$ and hence $a_{i}\ge i\epsilon$ for any $1\le i\le n.$ Thus $1\ge a_n\ge n\epsilon$ and $n\le \lfloor\frac{1}{\epsilon}\rfloor.$
\end{proof}

\begin{lem}\label{lemmma extractmld}
     Suppose that $(X\ni x,B)$ is an lc surface germ and $X$ is smooth. Then there exists a sequence of blow-ups:
     $$X_n\to X_{n-1}\to\cdots\to X_1\to X_0:=X$$
     with the following properties.
     \begin{enumerate}
         \item $f_i:X_{i}\to X_{i-1}$ is the blow-up of $X_{i-1}$ at a point $x_{i-1}\in X_{i-1}$ with the exceptional divisor $E_i$ for any $1\le i\le n$, where $x_0:=x$.
         \item $x_i\in E_i$ for any $1\le i\le n-1$.
         \item $a(E_i,X,B)>a(E_n,X,B)=\mld(X\ni x,B)$ for any $1\le i\le n-1$.
         \item If $a(E_1,X,B)\ge 1$, then $\mld(X\ni x,B)=a(E_1,X,B).$
         \item If $\mld(X\ni x,B)<1,$ then $0\le a(E_i,X,B)<1$ for any $1\le i\le n.$
     \end{enumerate}
\end{lem}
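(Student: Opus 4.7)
The plan is to extract a minimal-length sequence of point blow-ups realizing an mld place of $(X\ni x,B)$, and then verify the five properties in turn. First I will invoke Lemma~\ref{lem:mldisattained} to pick a prime divisor $E\in e(x)$ with $a(E,X,B)=\mld(X\ni x,B)$. Since $X$ is a smooth surface, Zariski's factorization theorem realizes $E$ as the exceptional divisor of the last in some sequence of blow-ups at closed points $X_n\to\cdots\to X_0=X$. Among all such sequences extracting some mld place of $(X\ni x,B)$ and starting with $x_0=x$, I will fix one of minimum length $n$.

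With this choice, (1) is automatic, and (2) follows because the successive centers $x_i$ of $E_n$ on $X_i$ are forced to lie in the exceptional divisors $E_i$ (the preimage of $x$ on $X_1$ is $E_1$, and iteratively the center of $E_n$ on $X_i$ lies in the preimage of $x_{i-1}$, which is $E_i$). Property (3) is forced by the minimality of $n$: if $a(E_i,X,B)\le a(E_n,X,B)=\mld(X\ni x,B)$ for some $i<n$, then equality holds, $E_i$ is itself an mld place, and the truncation $X_i\to\cdots\to X_0$ is a strictly shorter admissible sequence, a contradiction.

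The heart of the argument will be property (4), which I plan to derive from the following subclaim: \emph{for every lc sub-pair $(X\ni x,B)$ with $X$ a smooth surface and $\mult_x B\le 1$, every prime divisor $E'$ over $x$ satisfies $a(E',X,B)\ge 2-\mult_x B$}. I will prove this by induction on the minimal number of point blow-ups needed to extract $E'$. The base case $E'=E_1$ is immediate. For the inductive step, after blowing up $x$ one has the sub-pair $(X_1,B_{X_1}+(1-a_1)E_1)$ with $1-a_1=\mult_x B-1\le 0$, and using the classical fact that $\mult_{x_1}B_{X_1}\le\mult_x B$ (blow-ups do not increase the multiplicity of the strict transform of a curve on a smooth surface at points of the exceptional divisor), one computes $\mult_{x_1}(B_{X_1}+(1-a_1)E_1)\le 2\mult_x B-1\le 1$. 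The induction hypothesis then applies to this sub-pair at $x_1$, and substituting back yields $a(E',X,B)\ge 2-\mult_x B$. Property (4) now follows at once, since $a_1\ge 1$ is equivalent to $\mult_x B\le 1$, and the subclaim combined with the trivial upper bound $\mld(X\ni x,B)\le a_1$ forces $\mld(X\ni x,B)=a_1$.

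Finally, (5) will follow by induction on $n$. The lower bound $a(E_i,X,B)\ge 0$ is immediate from the lc hypothesis, so only the strict upper bound $a(E_i,X,B)<1$ requires argument. The case $n=1$ is tautological. For $n\ge 2$, the contrapositive of (4) gives $a_1<1$, so $(X_1\ni x_1,B_{X_1}+(1-a_1)E_1)$ is a genuine lc pair whose mld at $x_1$ equals $\mld(X\ni x,B)<1$ (by (2) the center of $E_n$ on $X_1$ is $x_1$, and any divisor with center $x_1$ sits over $x$). The truncated sequence $X_n\to\cdots\to X_1$ remains minimal for this sub-pair, since prefixing any strictly shorter one by $f_1$ would contradict the minimality of $n$ for $(X\ni x,B)$. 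The induction hypothesis then yields $a(E_i,X,B)=a(E_i,X_1,B_{X_1}+(1-a_1)E_1)<1$ for all $2\le i\le n$, completing the proof.
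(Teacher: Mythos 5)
Your approach to (1)--(3) via Zariski factorization and minimality is sound and differs slightly from the paper (which cites \cite[Lemma 2.45]{KM98} directly); your argument for (5) by induction is also fine once (4) is in place. The genuine problem is the subclaim you use for (4). As stated --- for every lc \emph{sub-pair} $(X\ni x,B)$ with $X$ a smooth surface and $\mult_x B\le 1$, one has $a(E',X,B)\ge 2-\mult_x B$ for every $E'$ over $x$ --- it is false. Take $X=\mathbb{A}^2$, $D'=\{u^2-v^5=0\}$, $C=\{v=0\}$, and $B=\tfrac34 D'-C$. Then $B$ has coefficients in $(-\infty,1]$, $(X,B)$ is lc, and $\mult_0 B=\tfrac32-1=\tfrac12\le 1$, so $2-\mult_0 B=\tfrac32$; yet if $E_2$ is the exceptional divisor of the second blow-up at the point where the strict transform of $D'$ meets $E_1$, one computes $a(E_2,X,B)=1<\tfrac32$.

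The source of the problem is precisely the step in your inductive argument where you invoke ``$\mult_{x_1}B_{X_1}\le\mult_x B$.'' That inequality is valid only for $B\ge 0$; once a component has a negative coefficient, its strict transform can drop multiplicity at $x_1$ and push $\mult_{x_1}B_{X_1}$ \emph{above} $\mult_x B$. But your recursion inevitably produces such sub-pairs: after one blow-up the boundary becomes $B_{X_1}+(1-a_1)E_1$ with $1-a_1\le 0$, so the inductive hypothesis is being applied to precisely the class of sub-pairs for which the subclaim fails. Note also that in the counterexample above the \emph{positive} part has $\mult_0(\tfrac34 D')=\tfrac32>1$; the hypothesis that actually propagates through the recursion is a bound on the multiplicity of the positive part, not of the whole boundary. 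If you strengthen the subclaim's hypothesis to ``$\mult_x B^+\le 1$'' (where $B^+$ is the effective part of $B$) and carry the positive part separately --- which is morally what the paper does by tracking $e_i=1-a_i$, $c_i$, and $\mult_{x_i}B_{X_i}$ as three distinct quantities --- the inductive bookkeeping closes up and the argument can be made rigorous. As written, however, the proof of (4) has a real gap.
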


\begin{proof}
     The existence of such a sequence of blow-ups with properties $(1)$--$(3)$ follows from \cite[Lemma 2.45]{KM98}. It suffices to show $(4)$ and $(5)$. For convenience, we denote the strict transform of $E_i$ on $X_j$ by $E_i$ for any $n\ge j\ge i.$ Let $B_{X_i}$ be the strict transform of $B$ on $X_i,a_i:=a(E_i,X,B)$ and $e_i:=1-a_i$ for any $1\le i\le n$. Since $\sum E_i$ is snc, there exists at most one $i'\neq i,$ such that 
     $x_i\in E_{i'}$ on $X_i$. If $x_{i}\in E_{i'}$ for some $i'\neq i$, then we set $c_{i}=e_{i'}$, otherwise we let $c_{i}=0$. Let $e_0=c_0=0$, it is obvious that $c_{i+1}\in\{e_{i},c_i,0\}$ for $i\ge 1$.
     
     If $a_1\ge1$ then $e_1=-1+\mult_{x}B=1-a_1\le0.$
     We will show by induction on $k$ that $e_k\le e_1$ for any $k\ge1.$ Assume that for some $n-1\ge k_0\ge 1$, $e_j\le e_1$ for any $1\le j\le {k_0}$. Then by induction, we have $c_{k_0}\le \max\{e_{k_0-1},c_{k_0-1},0\}\le 0$, and
     $$e_{k_{0}+1}=-1+\mult_{x_{k_0}}B_{X_{k_0}}+e_{k_0}+c_{k_0}\le -1+\mult_{x}B+e_{k_0}\le 
     e_1.$$
     We finish the induction. Hence $e_n\le e_1$ and $a_n\ge a_1$. Thus $a_n=a_1$, $n=1$, and $\mld(X\ni x,B)=a(E_1,X,B).$ This proves $(4).$
     
     We now show $(5)$. If $\mld(X\ni x,B)<1,$ then $a_1<1$ and $e_1>0$ by $(4).$ It suffices to show that $e_i>0$ for any $n\ge i\ge2$.
     Suppose that there exists $i\ge 2$, such that $e_i\le0$ and $e_j>0$ for any $1\le j<i$. Then 
     $$e_{i}=-1+\mult_{x_{i-1}}B_{X_{i-1}}+e_{i-1}+c_{i-1}\le0,$$
     $c_{i-1}\ge \min\{e_{i-2},c_{i-2},0\}\ge0$ and $e_{i-1}>0.$ We will show by induction on $k$ that $e_k\le0$ and $c_{k-1}\le \max\{c_{i-1},e_{i-1}\}$ for any $n\ge k\ge i$. Assume that for some $n-1\ge k_0\ge i$, $e_{j}\le0$ and $c_{j-1}\le \max\{c_{i-1},e_{i-1}\}$ for any $i\le j\le k_0$. Then $c_{k_0}\le \max\{e_{k_0-1},c_{k_0-1},0\}\le \max\{c_{i-1},e_{i-1}\}$, and
     \begin{align*}
     e_{k_0+1}&=-1+\mult_{x_{k_0}}B_{X_{k_0}}+e_{k_0}+c_{k_0}\\
       &\le -1+\mult_{x_{i-1}}B_{X_{i-1}}+\max\{c_{i-1},e_{i-1}\}\le 0.
     \end{align*}
     We finish the induction, and $e_n=1-\mld(X\ni x,B)\le 0$, a contradiction.
\end{proof}

\noindent{\bf Notation} $(\star).$ Fix a positive real number $\epsilon$ and a DCC set $1\in\Ii\subseteq[0,1]$. Let $\epsilon_0:=\inf_{b\in\Ii}\{b>0\}>0$. We define
     $$\mathcal{P}ld(\Ii):=\{\pld(X\ni x,B)\mid (X\ni x,B)\text{ is an lc surface germ},B\in \Ii\},$$
     $$\mathcal{I}_0:=\{\frac{l_1}{l_2}\mid l_1\in\Zz_{\ge0},l_2\in\Zz_{>0},l_1,l_2\le \lfloor\frac{2}{\min\{\epsilon,\epsilon_0\}}\rfloor^{\lfloor\frac{2}{\epsilon}\rfloor}\},$$
     $$\Ll(\epsilon,\Ii):=\{\frac{1-\sum l_ib_i}{l}\ge\epsilon\mid l\in\Zz_{>0},l_i\in\Zz_{\ge0},b_i\in\Ii\},$$
     \begin{align*}
     \mathcal{P}(\epsilon,\Ii):=\{\frac{(A+p_1)\beta_1+p_2\beta_2}{A+p_1+p_2}\ge\epsilon&\mid A\in\Zz_{>0},p_1,p_2\in \mathcal{I}_0,\\&\beta_1,\beta_2\in\Ll(\epsilon,\Ii),\beta_2\ge \beta_1\}.
     \end{align*}
     For any positive integer $N,$ we define
     $$\Ff(N,\epsilon,\Ii):=\{\frac{l_0-\sum l_ib_i}{l}\ge\epsilon\mid l\in\Zz_{>0},l_i\in\Zz_{\ge0},l_0\le N,b_i\in\Ii\}.$$
     
     Note that by Lemma \ref{lemma 3.3}, for any lc surface germ $(X\ni x,B)$ such that $\pld(X,B)\ge\epsilon,B\in\Ii,$ and the dual graph $\mathcal{DG}\{\mathcal{DG}_{ch}^{(m_1,q_1),u_1},\mathcal{DG}_{ch}^{2^A},\mathcal{DG}_{ch}^{(m_2,q_2),u_2}\}$ $\in\mathfrak{C}_{\epsilon,\Ii}\cup\mathfrak{T}_{\epsilon,\Ii},$ then $\pld(X,B)\in\mathcal{P}(\epsilon,\Ii),$ since
     $\frac{m_i}{m_i-q_i},\frac{q_i}{m_i-q_i}\in\mathcal{I}_0,$ $\frac{\alpha_i}{m_i-q_i}\in\Ll(\epsilon,\Ii)$ $(i=1,2)$. Moreover, we have the following result.
     
\begin{lem} \label{lemma pld set}
     With notation $(\star)$. Then there is a positive integer $N_0$ depending only on $\epsilon$ and $\Ii$ such that
     $$\mathcal{P}ld(\Ii)\cap[\epsilon,+\infty)\subseteq\Ff(N_0,\epsilon,\Ii)\cup\mathcal{P}(\epsilon,\Ii).$$
     Moreover, if $\Ii$ is a finite set, then  $\lim\mathcal{P}ld(\Ii)\subseteq\{\frac{1-b^+}{l}\mid b^+\in\Ii_+,l\in\Zz_{>0}\}$.
     
\end{lem}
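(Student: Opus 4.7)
The plan is to combine the classification of dual graphs from Lemma \ref{lemma 3.3} with the concavity/bounding estimates of Lemma \ref{lemma concave}. Given an lc surface germ $(X\ni x,B)$ with $B\in\Ii$ and $\pld(X\ni x,B)\ge\epsilon$, Lemma \ref{lemma 3.3} places the dual graph $\mathcal{DG}$ of the minimal resolution into the finite list $\mathfrak{C}_{\epsilon,\Ii}\cup\mathfrak{T}_{\epsilon,\Ii}$. The key dichotomy is: either $\mathcal{DG}$ has the "long" shape $\mathcal{DG}\{\mathcal{DG}_{ch}^{(m_1,q_1),u_1},\mathcal{DG}_{ch}^{2^A},\mathcal{DG}_{ch}^{(m_2,q_2),u_2}\}$ with an arbitrarily long run of $(-2)$-curves in the middle, in which case the remark immediately preceding the statement gives $\pld(X\ni x,B)\in\mathcal{P}(\epsilon,\Ii)$; or it is one of the remaining shapes, for which I will show the total number of vertices is uniformly bounded in terms of $\epsilon$ alone, yielding $\pld(X\ni x,B)\in\Ff(N_0,\epsilon,\Ii)$.

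For the bounded-complexity case, the first step is to apply Lemma \ref{lemma concave}(1) so that every self-intersection satisfies $w_i\le\lfloor 2/\epsilon\rfloor$. Next, Lemma \ref{lemma concave}(5) shows that any path issuing from a vertex of weight $\ge 3$ has length at most $\lfloor 1/\epsilon\rfloor$, so long runs inside a chain must consist exclusively of $(-2)$-vertices; Lemma \ref{lemma concave}(3)--(4) together control the behavior near forks and $(-1)$-vertices. Combining these gives, for every graph shape in the classification that is \emph{not} of the long-chain-with-$(-2)$-core form, a uniform bound $N_0=N_0(\epsilon)$ on the number of exceptional curves.

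Once the graph is bounded, I would read $\pld(X\ni x,B)$ off from the linear system
\[
\bigl(K_Y+B_Y+\textstyle\sum_{i}(1-a_i)E_i\bigr)\cdot E_k=0
\]
ranging over all exceptional $E_k$. This is an $N$-by-$N$ linear system with $N\le N_0$, diagonal entries $-w_k$, off-diagonal entries in $\{0,1\}$, and right-hand side of the form $2-B_Y\cdot E_k$, where $B_Y\cdot E_k$ is a nonnegative integer combination of coefficients of $B\in\Ii$. Solving by Cramer's rule expresses each $a_i$ as $(l_0-\sum l_j b_j)/l$ with integers $l,l_j\ge 0$ and a numerator constant $l_0$ bounded by a function of $N_0$ and $\lfloor 2/\epsilon\rfloor$; enlarging $N_0$ if necessary puts $\pld(X\ni x,B)$ in $\Ff(N_0,\epsilon,\Ii)$.

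For the moreover part, when $\Ii$ is finite, $\epsilon_0>0$ forces $\Ii_+$ to be finite, hence $\Ll(\epsilon,\Ii)$ and $\mathcal{I}_0$ are finite, and likewise $\Ff(N_0,\epsilon,\Ii)$ is finite (bounded numerator $l_0$, and $l$ bounded by $l_0/\epsilon$). Consequently the only source of accumulation points in $\mathcal{P}ld(\Ii)\cap[\epsilon,+\infty)$ is the parameter $A\to\infty$ in $\mathcal{P}(\epsilon,\Ii)$, and
\[
\lim_{A\to\infty}\frac{(A+p_1)\beta_1+p_2\beta_2}{A+p_1+p_2}=\beta_1=\frac{1-b^+}{l}\quad\text{with }b^+\in\Ii_+,
\]
which lies in the target set. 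The main obstacle will be the bookkeeping in the bounded-complexity step: verifying that, across every non-long-chain topological type in $\mathfrak{C}_{\epsilon,\Ii}\cup\mathfrak{T}_{\epsilon,\Ii}$, the combined use of parts (1)--(5) of Lemma \ref{lemma concave} indeed produces a single $N_0$ depending only on $\epsilon$ (and not on the specific fork pattern or the coefficients in $\Ii$), so that Cramer's rule yields the bound $l_0\le N_0$ simultaneously for all such graphs.
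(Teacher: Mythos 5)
Your high-level dichotomy is the right idea, but there is a genuine gap in how you split cases. You state that Lemma \ref{lemma 3.3} puts $\mathcal{DG}$ into $\mathfrak{C}_{\epsilon,\Ii}\cup\mathfrak{T}_{\epsilon,\Ii}$ (omitting $\mathfrak{F}_{\epsilon,\Ii}$), and then claim that the only shape with unboundedly many vertices is the $\mathfrak{C}$-type chain $\mathcal{DG}\{\mathcal{DG}_{ch}^{(m_1,q_1),u_1},\mathcal{DG}_{ch}^{2^A},\mathcal{DG}_{ch}^{(m_2,q_2),u_2}\}$, so that every ``remaining'' shape has a uniform bound on the number of exceptional curves. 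This is false: the $\mathfrak{T}_{\epsilon,\Ii}$ trees $\mathcal{DG}\{\mathcal{DG}_{ch}^{(m_1,q_1),u_1},\mathcal{DG}_{tr}^{2^A}\}$ also have an unbounded $(-2)$-tail (the parameter $A$ ranges over $\Zz_{\ge 1}$, and a $D$-type configuration of $(-2)$-curves has bounded determinant but unbounded length). So your plan to handle ``remaining shapes'' by bounding the vertex count and applying Cramer's rule cannot cover $\mathfrak{T}_{\epsilon,\Ii}$. The fix is already in the remark you cite: that remark treats \emph{both} $\mathfrak{C}_{\epsilon,\Ii}$ and $\mathfrak{T}_{\epsilon,\Ii}$ (for the $\mathfrak{T}$-shapes the formula in Lemma \ref{lemma 3.3}(3) gives $\pld = \alpha_1/(m_1-q_1)\in\Ll(\epsilon,\Ii)\subseteq\mathcal{P}(\epsilon,\Ii)$), so the correct dichotomy is $\mathcal{DG}\in\mathfrak{C}_{\epsilon,\Ii}\cup\mathfrak{T}_{\epsilon,\Ii}$ versus $\mathcal{DG}\in\mathfrak{F}_{\epsilon,\Ii}$. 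The paper phrases this as ``at least $N_1$ vertices'' versus ``at most $N_1-1$ vertices'' (using Lemma \ref{lemma 3.3}(1) to guarantee that many-vertex graphs land in $\mathfrak{C}\cup\mathfrak{T}$), and then delegates the bounded case directly to Lemma \ref{lemma minimalreslwithbddvertices} instead of reproving the Cramer's-rule bound. Your Cramer's-rule derivation would be fine for $\mathfrak{F}_{\epsilon,\Ii}$; you just cannot apply it to $\mathfrak{T}_{\epsilon,\Ii}$.

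A smaller issue concerns the ``moreover'' part. The statement concerns $\lim\mathcal{P}ld(\Ii)$, not $\lim\bigl(\mathcal{P}ld(\Ii)\cap[\epsilon,+\infty)\bigr)$. Your computation of $\lim_{A\to\infty}p_A=\beta_1$ correctly identifies the accumulation points at or above $\epsilon$, but you have not addressed accumulation points in $[0,\epsilon)$. The paper handles this by observing that $\Ff(N_0,\epsilon,\Ii)$ is finite and that $0$ itself lies in $\{\frac{1-b^+}{l}\mid b^+\in\Ii_+,l\in\Zz_{>0}\}$ (take $b^+=1$); alternatively one can note that for any positive accumulation point $a$, the displayed containment may be applied with $\epsilon$ replaced by any $\epsilon'<a$. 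Either way, a sentence is needed to close off the $a<\epsilon$ case.
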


\begin{proof}
    If $\epsilon>1,$ then $\mathcal{P}ld(\Ii)\cap[\epsilon,+\infty)=\emptyset.$ We may let $N_0=1.$
    
    Suppose that $\epsilon\le1$. Let $N_1$ be the number defined in Lemma \ref{lemma 3.3} depending only on $\epsilon$ and $\Ii,$ that is, if the dual graph $\mathcal{DG}$ of the minimal resolution of $X$ has at least $N_1$ vertices then $\mathcal{DG}\in\mathfrak{C}_{\epsilon,\Ii}\cup\mathfrak{T}_{\epsilon,\Ii}$. Let $N_0$ be the number defined in Lemma \ref{lemma minimalreslwithbddvertices} depending only on $N_1$ and $\epsilon$. We will show that $N_0$ has the required properties. 
    
    Assume that $(X\ni x,B)$ is an lc surface germ such that $B\in\Ii$ and $+\infty>\pld(X\ni x,B)\ge\epsilon.$ Let $\mathcal{DG}$ be the dual graph of the minimal resolution of $X.$ If $\mathcal{DG}$ has at least $ N_1$ vertices, then $\pld(X\ni x,B)\in\mathcal{P}(\epsilon,\Ii)$ by Lemma \ref{lemma 3.3}. If $\mathcal{DG}$ has at most $N_1-1$ vertices, then $\pld(X\ni x,B)\in\Ff(N_0,\epsilon,\Ii)$ by Lemma \ref{lemma minimalreslwithbddvertices}. Hence $N_0$ has the required properties. 
    
    If $\Ii$ is a finite set, then
    \begin{align*}
   &(\lim\mathcal{P}ld(\Ii))\cap [\epsilon,+\infty)\subseteq\lim\Ff(N_0,\epsilon,\Ii)\cup\lim\mathcal{P}(\epsilon,\Ii)\\
   \subseteq &\lim \mathcal{P}(\epsilon,\Ii)\subseteq \{\frac{1-b^+}{l}\mid b^+\in\Ii_+,l\in\Zz_{>0}\},
    \end{align*}
    and 
     $$\lim\mathcal{P}ld(\Ii)\subseteq \{\frac{1-b^+}{l}\mid b^+\in\Ii_+,l\in\Zz_{>0}\}$$
      as $\Ff(N_0,\epsilon,\Ii)$ is a finite set and $0\in \{\frac{1-b^+}{l}\mid b^+\in\Ii_+,l\in\Zz_{>0}\}$.
\end{proof}
     The following result is stated in the proof of \cite[Theorem 3.8]{Ale93} without a proof. For the reader's convenience, we give a proof here.
     
\begin{lem} \label{lemma pld=mld}     
     Let $\epsilon$ be a positive real number. Then there exists a positive integer $N$ depending only on $\epsilon$ satisfying the following. 
     
     Assume that $(X\ni x,B)$ is an lc surface germ such that 
     \begin{enumerate}
       \item  $(X\ni x,B)$ is $\epsilon$-lc at $x$, and 
       \item the dual graph of the minimal resolution of $X$ has at least $N+1$ vertices.
     \end{enumerate}
     Then $\mld(X\ni x,B)=\pld(X\ni x,B)$.    
\end{lem}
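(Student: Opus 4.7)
The plan is to argue by contradiction: assume $\mld(X\ni x,B)<\pld(X\ni x,B)=:\pi\geq\epsilon$ and deduce an upper bound on the number of vertices of the dual graph $\mathcal{DG}$ of the minimal resolution $f:Y\to X$ depending only on $\epsilon$. Write $K_Y+B_Y+\sum_{i\in\mathcal I}(1-a_i)E_i=f^{*}(K_X+B)$. By Lemma \ref{lem:mldisattained} I pick a prime divisor $F\in e(x)$ with $a(F,X,B)=\mld$; since every $a_i\geq\pi>\mld$, we have $F\neq E_i$ for all $i$, so $\mathrm{center}_Y(F)=\{y_0\}$ is a closed point lying on $\bigcup E_i$, with $\mathcal I_0:=\{i:y_0\in E_i\}$ of size $1$ or $2$ (since $\sum E_i$ is snc on the minimal resolution $Y$).

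First I argue that $(Y,B_Y+\sum E_i)$ must be non-snc at $y_0$. If it were snc, Lemma \ref{lem:logsmoothmld} applied to the log smooth sub-pair $(Y,B_Y+\sum(1-a_i)E_i)$ at $y_0$ gives $\mld(Y\ni y_0,\cdot)=2-\mult_{y_0}(B_Y+\sum(1-a_i)E_i)$, which in the three possible snc local models equals $1+a_i$, $1+a_i-b_k$, or $a_i+a_j$; all of these are $\geq\pi$ using $a_i,a_j\geq\pi$ and $b_k\leq 1$, contradicting $\mld<\pi$. So $(Y,B_Y+\sum E_i)$ is non-snc at $y_0$, which forces $B_Y\ni y_0$ (either through a singularity of $B_Y$, a tangency of components, or through the node $E_i\cap E_j$). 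In particular $B_Y\cdot E_i>0$ for some $i\in\mathcal I_0$.

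Next, from the relation $f^{*}(K_X+B)\cdot E_i=0$ I obtain
$$B_Y\cdot E_i=2-w_ia_i-\sum_{j\sim i}(1-a_j),\qquad i\in\mathcal I.$$
For a $(-2)$-curve $E_i$ whose two neighbors in $\mathcal{DG}$ are also $(-2)$-curves, this rewrites as the discrete second difference $a_{j_1}+a_{j_2}-2a_i$ of the $a$-sequence along the chain. Summing over a run of consecutive interior $(-2)$-vertices telescopes to a difference of first differences at the chain's ends, which combined with $0<\pi\leq a_i\leq 1$ and the convexity of $i\mapsto a_i$ from Lemma \ref{lemma concave} (2) shows that the total intersection of $B_Y$ with the interior of a long $(-2)$-chain is bounded in terms of $\epsilon$. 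Iterating Lemma \ref{lemma concave} (2) along the chain further shows that $a_i$ can be forced arbitrarily close to $\pi$ deep in a sufficiently long chain, so $B_Y$ cannot meet the deep interior of any long $(-2)$-chain without violating the snc-based lower bound derived in the previous paragraph.

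To conclude, I apply Lemma \ref{lemma concave} (1) to bound $w_i\leq\lfloor 2/\epsilon\rfloor$ for every vertex, Lemma \ref{lemma concave} (5) to bound the length of every chain of $(-2)$-curves originating at a non-$(-2)$ vertex by $\lfloor 1/\epsilon\rfloor$, and Lemma \ref{lemma concave} (3) to limit the number of forks of $\mathcal{DG}$. Combined with the tree structure of $\mathcal{DG}$ for lc surface singularities, these bounds produce an explicit $N=N(\epsilon)$ such that a graph with $\geq N+1$ vertices would force $y_0$ to lie on a $(-2)$-curve deep in the interior of a chain, where the previous paragraph gives $B_Y\cdot E_i=0$, contradicting $B_Y\ni y_0$. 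The main obstacle will be the combinatorial bookkeeping at forks and the quantitative version of the convexity estimate on chains: converting Lemma \ref{lemma concave} (2)--(3) into a uniform (in $\epsilon$) lower bound on how fast $a_i$ decreases toward the interior, and piecing together a chain-plus-fork decomposition of $\mathcal{DG}$ that yields the claimed bound $N(\epsilon)$ on the total number of vertices.
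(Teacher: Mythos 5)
Your proposal diverges from the paper's proof at the key step, and the divergence introduces a gap that I don't think is easy to repair.

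Your opening observation---that if $\mld(X\ni x,B)<\pld(X\ni x,B)=:\pi$ then the mld place $F$ has center a closed point $y_0$ on $\bigcup E_i$ where $(Y,B_Y+\sum E_i)$ fails to be snc, hence $B_Y$ meets the exceptional locus non-transversally---is sound and is in spirit the same fact that Lemma \ref{lemmma extractmld} formalizes (the blow-up tower over $Y$ has positive length exactly because $\mld<\pld$). After that, however, you try to bound the size of the dual graph of $Y$ directly, by arguing (a) $w_i\le\lfloor 2/\epsilon\rfloor$, (b) chains emanating from non-$(-2)$ vertices have bounded length, (c) the telescoping identity bounds the total intersection of $B_Y$ with the interior of a $(-2)$-chain, so (d) for a large graph $y_0$ is forced ``deep in the interior'' where $B_Y\cdot E_i=0$. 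Step (d) is the gap: nothing forces $y_0$ to lie deep in a chain. The dual graph of $Y$ can be an arbitrarily long $A_n$-chain of $(-2)$-curves with $\pld\ge\epsilon$ (take the boundary to meet $E_1$ only, with small coefficient), and $y_0$ can be a non-snc point of $B_Y$ on the outermost curve $E_1$, at one end of the chain rather than deep inside. Your bounds (a)--(c) genuinely cannot control the total number of vertices, because the total number of vertices is \emph{not} bounded in the setting of the lemma---only the equality $\mld=\pld$ is asserted, not a vertex bound. So the inference ``$\ge N+1$ vertices $\Rightarrow$ $y_0$ is deep inside'' has no justification, and the subsequent invocation of the ``snc-based lower bound'' at $y_0$ is also circular, since you have already established that the pair is non-snc precisely at $y_0$.

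The paper proceeds quite differently. It invokes Lemma \ref{lemmma extractmld} to build the blow-up tower $W=Y_{n-n_0}\to\cdots\to Y_0=Y$ over the minimal resolution, extracting the mld place $E_n$. It then argues: $E_n$ is the unique $(-1)$-curve on $W$, is not a fork, and has the strictly smallest log discrepancy, so Lemma \ref{lemma concave}(4) forces the dual graph $\mathcal{DG}$ of $W\to X$ to be a \emph{chain}. Re-indexing so that the mld is achieved at $v_m$, the key estimates are (i) $w_{m-1}\ge 3$ (else contracting $E_n$ would leave two $(-1)$-curves on $Y_{n-n_0-1}$, contradicting the uniqueness in the tower) and hence $m\le N_0+1$ by Lemma \ref{lemma concave}(5); and (ii) setting $l=\min\{k>m : w_k\ge 3\}$, again $n-l\le N_0-1$ by Lemma \ref{lemma concave}(5). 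Contracting the $(-1)$-curves $v_m,\dots,v_{l-1}$ one by one yields a smooth surface $W'\to X$ with at most $2N_0$ exceptional divisors that still dominates the minimal resolution $Y$, so the dual graph of $Y\to X$ has at most $2N_0$ vertices. Taking $N=2N_0=2\lfloor 1/\epsilon\rfloor$ gives the contradiction. The crucial structural input you are missing is the passage to the chain $\mathcal{DG}$ of the \emph{composite} $W\to X$ via Lemma \ref{lemma concave}(4), which lets one apply Lemma \ref{lemma concave}(5) with the correct orientation on both arms of the chain around the mld position; that is what replaces the unjustified ``$y_0$ deep inside'' step in your sketch.
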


\begin{proof}
     Let $N_0:=\lfloor\frac{1}{\epsilon}\rfloor$, we will show that $N:=2N_0$ has the required properties.
       
     Suppose on the contrary that there exists an lc surface germ $(X\ni x,B)$ satisfying the conditions and $\mld(X\ni x,B)<\pld(X\ni x,B)\le1.$ Let $f:Y\to X$ be the minimal resolution of $X$ with exceptional divisors $E_1,\dots,E_{n_0},n_0\ge N+1$. By Lemma \ref{lemmma extractmld}, there exist a positive integer $n>n_0$ and a sequence of blow-ups $$W:=Y_{n-n_0}\to\cdots\to Y_0:=Y$$
     with the following properties.
     \begin{itemize}
         \item $h_i:Y_{i}\to Y_{i-1}$ is the blow-up of $Y_{i-1}$ at a closed point $y_{i-1}\in Y_{i-1}$ with the exceptional divisor $E_{n_0+i}$ for any $1\le i\le n-n_0.$
         \item $y_{i}\in E_{n_0+i},$ for any $1\le i\le n-n_0-1,$ and $y_0\in E_{m_0}$ for some $1\le m_0\le n_0.$
         \item There is only one $(-1)$-curve on $Y_i$ for any $1\le i\le n-n_0.$
         \item $a(E_{n_0+i},X,B)>a(E_n,X,B)=\mld(X\ni x,B)$ for any $1\le i\le n-n_0-1.$
         \item $a(E_{n_0+i},X,B)<1$ for any $1\le i\le n-n_0$.
     \end{itemize}
     For convenience, we may denote the strict transform of $E_{i}$ on $Y_j$ by $E_i$ for any $0\le j\le n-n_0$ and any $i\le j+n_0.$
     Let $\mathcal{DG}$ be the dual graph of $g=f\circ h_1\circ\cdots\circ h_{n-n_0}:W\to X$, $v_i$ the vertex corresponds to the exceptional curve $E_i$, $w_i:=-(E_i\cdot E_i)$ for $1\le i\le n$ and let $w_0:=3,w_{n+1}:=3$.
     Since $\sum_{i=1}^{n-1} E_i$ is snc, $v_{n}$ is not a fork.
     Thus by Lemma \ref{lemma concave}$(4)$, $\mathcal{DG}$ is a chain.
     
     Possibly reindexing $\{E_i\}_{i=1}^n$,
     we may assume that $v_i$ is adjacent to $v_{i+1}$ for any $1\le i\le n-1$, $\mld(X\ni x,B)=a(E_m,X,B)$ for some $1\le m\le n-1$ and $2= w_{m+1}\le w_{m-1}.$ Let $a_i:=a(E_i,X,B)\ge \epsilon$ for any $1\le i\le n.$ By Lemma \ref{lemma concave}$(2)$, $a_1>\cdots>a_m$ and $a_m<\cdots<a_n$ .
     
     We show that $m\le N_0+1$ and $w_{m-1}\ge3$. We may assume that $m\ge2$. If $w_{m-1}=2$, then we can contract $E_{m}$ and get $Y_{n-n_0-1}$. There are two $(-1)$-curves on $Y_{n-n_0-1}$ as $w_{m-1}=w_{m+1}=2$, a contradiction. Hence $w_{m-1}\ge3.$ By Lemma \ref{lemma concave}$(5)$, $m\le N_0+1.$
     
     Let $l:=\min\{k\mid w_k\ge3,k>m\}.$ By Lemma \ref{lemma concave}$(5)$ again, $n-l\le N_0-1.$ We contract $(-1)$-curves which correspond to vertices $v_{m},v_{m+1},\dots,v_{l-1}$ one by one, and get a smooth surface $W'$. In particular, $f':W'\to X$ factors through $f$ and there are at most $2N_0$ $f'$-exceptional divisors as $m\le N_0+1$ and $n-l\le N_0-1.$ Hence the dual graph of $f$ has at most $2N_0$ vertices, a contradiction.
\end{proof}

     Let $\Ii\subseteq[0,1]$ be a set of real numbers, we define
     $$\mathcal{M}ld(2,\Ii):=\{\mld(X\ni x,B)\mid (X\ni x,B)\text{ is an lc surface germ},B\in \Ii\}.$$

\begin{lem}\label{lemma mld set}
     Let $\epsilon$ be a positive real number and $\Ii\subseteq[0,1]$ a DCC set. Then there is a positive integer $N_0$ depending only on $\epsilon$ and $\Ii$ such that
     $$\mathcal{M}ld(2,\Ii)\cap[\epsilon,+\infty)\subseteq\Ff(N_0,\epsilon,\Ii)\cup\mathcal{P}(\epsilon,\Ii).$$
\end{lem}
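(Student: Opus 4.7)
The plan is to reduce to Lemma \ref{lemma pld set} by splitting into two regimes controlled by the number $n_0$ of exceptional divisors on the minimal resolution $f \colon Y \to X$. Let $N$ be the integer produced by Lemma \ref{lemma pld=mld} applied to $\epsilon$, and let $N_0^{(1)}$ be the integer produced by Lemma \ref{lemma pld set} applied to $\epsilon$ and $\Ii$. Consider any lc surface germ $(X \ni x, B)$ with $B \in \Ii$ and $\alpha := \mld(X \ni x, B) \in [\epsilon, +\infty)$.

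If $n_0 \ge N+1$, then Lemma \ref{lemma pld=mld} yields $\alpha = \pld(X \ni x, B)$, and Lemma \ref{lemma pld set} places $\alpha$ in $\Ff(N_0^{(1)}, \epsilon, \Ii) \cup \mathcal{P}(\epsilon, \Ii)$, settling this case. If instead $n_0 \le N$, then since every exceptional $E_j$ satisfies $a(E_j, X, B) \ge \alpha \ge \epsilon$, Lemma \ref{lemma concave}(1) bounds each self-intersection $-E_j^2$ by $\lfloor 2/\epsilon \rfloor$. Consequently the intersection matrix of the exceptional curves has entries bounded in terms of $\epsilon$ and $N$, and solving the linear system $(K_Y + B_Y + \sum_i (1-a_i)E_i) \cdot E_j = 0$ expresses each $a_j := a(E_j, X, B)$ in the form $(l_0^{(j)} - \sum_k l_k^{(j)} b_k)/l^{(j)}$, where $l_0^{(j)}$ is bounded by a constant $N_1 = N_1(\epsilon, N)$, since $l_0^{(j)}$ comes from the fixed structural term $-K_Y \cdot E_j - E_j^2 = -2$ combined with a cofactor matrix whose entries are bounded.

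For the mld place $E$ itself: if $E = E_j$ for some $j$, then $\alpha = a_j$ already lies in $\Ff(N_1, \epsilon, \Ii)$. Otherwise, by Lemma \ref{lemmma extractmld}, $E$ is obtained from $Y$ by a further sequence of blow-ups, and the chain-length analysis in the proof of Lemma \ref{lemma pld=mld} (built on Lemma \ref{lemma concave}(4)(5)) bounds the number of such additional blow-ups by a constant depending only on $\epsilon$. Iterated application of Lemma \ref{lem:logsmoothmld} at each intermediate stage expresses the discrepancy of each newly extracted divisor as a $\ZZ$-linear combination of previously computed $a_i$'s and coefficients $b_k$ with bounded integer weights; substituting recursively, $\alpha$ is again of the form $(l_0 - \sum_k l_k b_k)/l$ with $l_0$ bounded by a constant $N_0^{(2)} = N_0^{(2)}(\epsilon, \Ii)$. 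Setting $N_0 := \max\{N_0^{(1)}, N_0^{(2)}\}$ completes the argument. The main obstacle will be the bookkeeping in the second case, in particular verifying that the numerator constant $l_0$ in the final expression for $\alpha$ stays bounded independently of the possibly large denominators $l^{(j)}$ and integer weights $l_k^{(j)}$ arising in the intermediate formulas for the $a_j$'s.
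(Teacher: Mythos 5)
Your case split on the number $n_0$ of exceptional divisors of the minimal resolution mirrors the paper's strategy, and your treatment of the first case ($n_0 \ge N+1$) via Lemma \ref{lemma pld=mld} and Lemma \ref{lemma pld set} is correct. However, the second case contains a genuine gap. You assert that ``the chain-length analysis in the proof of Lemma \ref{lemma pld=mld} (built on Lemma \ref{lemma concave}(4)(5)) bounds the number of such additional blow-ups by a constant depending only on $\epsilon$.'' This is false. The proof of Lemma \ref{lemma pld=mld} is by contradiction: assuming $n_0\ge N+1$ and $\mld<\pld$, the chain bounds $m\le N_0+1$ and $n-l\le N_0-1$ control the number $m+n-l=n_0$ of minimal-resolution exceptionals and yield $n_0\le 2N_0$, a contradiction. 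They say nothing about the middle stretch $l-m=n-n_0$, which is exactly the number of additional blow-ups you need to bound. That quantity is not controlled by $\epsilon$ alone: along the chain one can have a long run of weight-$2$ curves where the log discrepancies increase very slowly, and the length of that run is constrained only through the DCC structure of the coefficient set (as soon as the coefficient set is infinite, an $\epsilon$-only bound fails). This is precisely the content of Theorem \ref{lemma dccsmoothmld}, Nakamura's conjecture for surfaces, whose proof in Appendix B (see Claim B.3 and the role of the quantities $\xi_j$) uses the DCC hypothesis in an essential way.

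The paper's proof handles your second case by first using Lemma \ref{lemma minimalreslwithbddvertices} to place the pulled-back coefficients $B_Y$ in a new DCC set $\Ii_1:=\Ii\cup\{1-\alpha\ge 0 : \alpha\in\Ff(N_2,\epsilon,\Ii)\}$, then invoking Theorem \ref{lemma dccsmoothmld} for the smooth germ $(Y\ni y,B_Y)$ with coefficients in $\Ii_1$ to bound the numerator $l_0\le N_3$, and finally performing a separate arithmetic step to convert that expression back into membership in $\Ff(N_0,\epsilon,\Ii)$. So what you flag at the end as ``the main obstacle\dots verifying that the numerator constant $l_0$\dots stays bounded'' is not merely bookkeeping: it is precisely where Nakamura's conjecture has to be imported, and your proposed substitute (the chain-length analysis) does not supply it.
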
  

\begin{proof}
     By Lemma \ref{lemma 3.3} and Lemma \ref{lemma pld=mld}, there exists a positive integer $N_1$ depending only on $\epsilon$ and $\Gamma$ such that for any lc surface germ $(X\ni x,B)$ satisfying $B\in\Ii,\mld(X\ni x,B)\ge\epsilon$ and the dual graph of the minimal resolution of $X$ has at least $ N_1$ vertices, then
     $$\mld(X\ni x,B)=\pld(X\ni x,B)\in\mathcal{P}(\epsilon,\Ii).$$
     Let $N_2$ be a positive integer satisfying the properties in Lemma \ref{lemma minimalreslwithbddvertices} and Lemma \ref{lemma pld set} depending only on $\Ii,N_1$ and $\epsilon.$ 
     Let $N_3$ be the number defined in Theorem \ref{lemma dccsmoothmld} depending only on $\Ii_1$, where $\Ii_1:=\Ii\cup\{1-\alpha\ge0\mid \alpha\in\mathcal{F}(N_2,\epsilon,\Ii)\}.$ 
     
     We may find a positive integer $N_0\ge N_2$ depending only on $N_3,N_2,\epsilon$ and $\Ii_1$ such that if
     $\alpha=l_0-\sum l_ib_i\ge\epsilon$
     for some $b_i\in\Ii_1$ and non-negative integers $l_0\le N_3,l_i$, then $\alpha\in\mathcal{F}(N_0,\epsilon,\Ii).$ We will show that $N_0$ has the required properties. 
     
     Assume that $(X\ni x,B)$ is a surface germ such that $B\in\Ii$ and $\mld(X\ni x,B)\ge\epsilon$. Let $f:Y\to X$ be the minimal resolution of $X$, and $\mathcal{DG}$ the dual graph of $f.$ If $\mathcal{DG}$ has at least $ N_1$ vertices, then by our choice of $N_1$,
     $$\mld(X\ni x,B)=\pld(X\ni x,B)\in\mathcal{P}(\epsilon,\Ii).$$
     If $\mathcal{DG}$ has at most $ N_1-1$ vertices, then by Theorem \ref{lemma minimalreslwithbddvertices}, $B_Y\in\Ii_1$, where $K_Y+B_Y:=f^*(K_X+B)$. We may write $B_Y:=\sum b_{Y,i}B_{Y,i},$ where $B_{Y,i}$ are distinct prime divisors and $b_{Y,i}\in\Ii_1.$ By Theorem \ref{lemma dccsmoothmld}, 
     $$\mld(X\ni x,B)=\mld(Y\ni y,B_Y)=l_0-\sum l_ib_{Y,i}\ge\epsilon,$$
     for some $y\in Y$ and non-negative integers $l_0\le N_3,l_i$. Hence $\mld(X\ni x,B)\in\Ff(N_0,\epsilon,\Ii)$ by our choice of $N_0$.
\end{proof}

\subsection{Proof of Theorem \ref{thm:uniformpolytopeforsurfacemlds}}
In this subsection, we show Theorem \ref{thm:uniformpolytopeforsurfacemlds}.

    \textbf{Notation} $(\star\star).$
     Let $\epsilon$ be a positive real number, $c,m$ two positive integers and $\bm{r}_0=(r_1,\dots,r_c)\in\Rr^c$ a point such that $r_0=1,r_1,\dots,r_c$ are linearly independent over $\Qq$. Let $s_1,\ldots, s_m: \Rr^{c}\to\Rr$ be $\Qq$-linear functions. 
     
     By a pair $(X/Z\ni z,B(\bm{r}):=\sum_{i=1}^ms_i(\bm{r})B_i),$ we mean that $X$ is a variety, $B_1,\dots,B_m\ge0$ are Weil divisors on $X$ such that $(X/Z\ni z,B(\bm{r}_0))$ is a pair. By \cite[Lemma 5.3]{HLS19}, $K_X+B(\bm{r})$ is $\Rr$-Cartier for any $\bm{r}\in\Rr^c$.
     
     Let $\epsilon_0:=\frac{1}{2}\min_{1\le i\le m}\{s_{i}(\bm{r}_0)\}$ and 
     $\mathcal{C}:=\{s_i(\bm{r})\mid 1\le i\le m\},$
     $$\mathcal{I}'_0:=\{\frac{m_1}{m_2}\mid m_1\in\Zz_{\ge0},m_2\in\Zz_{>0},m_1,m_2\le \lfloor\frac{4}{\min\{\epsilon,\epsilon_0\}}\rfloor^{\lfloor\frac{4}{\epsilon}\rfloor}\},$$
     $$\Ll(\epsilon,\mathcal{C}):=\{\beta(\bm{r}):=\frac{1-\sum_{i=1}^m l_is_i(\bm{r})}{l}\mid l\in\Zz_{>0},l_i\in\Zz_{\ge0},1\le i\le m,\beta(\bm{r}_0)\ge\epsilon\},$$
     \begin{align*}
     \mathcal{P}(\epsilon,\mathcal{C}):=\{p_A(\bm{r}):=&\frac{(A+p_1)\beta_1(\bm{r})+p_2\beta_2(\bm{r})}{A+p_1+p_2}\mid A\in\Zz_{>0},\\&p_1,p_2\in \mathcal{I}'_0,\beta_1(\bm{r}),\beta_2(\bm{r})\in\Ll(\epsilon,\mathcal{C}),\beta_2(\bm{r}_0)\ge \beta_1(\bm{r}_0)\}.
     \end{align*}
      For any positive integer $N$, we define
      \begin{align*}
     \Ff(N,\epsilon,\mathcal{C}):=\{f(\bm{r}):=\frac{l_0-\sum_{i=1}^m l_is_i(\bm{r})}{l}\mid l&\in\Zz_{>0},l_i\in\Zz_{\ge0},1\le i\le m,\\&l_0\le N,f(\bm{r}_0)\ge\epsilon\}\cup(\Qq\cap\{\epsilon\}).
     \end{align*}
     For any set $\mathcal{S}$ of linear functions of $c$ variables, we define $\mathcal{S}|_{\bm{r}'}:=\{s(\bm{r}')\mid s(\bm{r})\in\mathcal{S}\}$ for any point $\bm{r}'\in\Rr^c$.
     
    It is clear that $\Ll(\epsilon,\mathcal{C})\subseteq\Ff(N,\epsilon,\mathcal{C})$ are finite sets and for any $\Qq$-linear functions $h_1(\bm{r}),h_2(\bm{r}):\Rr^c\to\Rr,$ if $h_1(\bm{r}_0)=h_2(\bm{r}_0),$ then $h_1(\bm{r})=h_2(\bm{r})$ for any $\bm{r}\in\Rr^c$ as $r_0,r_1,\ldots,r_c$ are linearly independent over $\Qq$. 
    
    Note that $\Ll(\epsilon,\mathcal{C}|_{\bm{r}_0})=\Ll(\epsilon,\mathcal{C})|_{\bm{r}_0},\Ff(N,\epsilon,\mathcal{C}|_{\bm{r}_0})=\Ff(N,\epsilon,\mathcal{C})|_{\bm{r}_0}$ and $\mathcal{P}(\epsilon,\mathcal{C}|_{\bm{r}_0})$ $\subseteq\mathcal{P}(\epsilon,\mathcal{C})|_{\bm{r}_0},$ since $\mathcal{I}_0\subseteq\mathcal{I}'_0$, where $\mathcal{I}_0,\Ll(\epsilon,\mathcal{C}|_{\bm{r}_0}),\Ff(N,\epsilon,\mathcal{C}|_{\bm{r}_0}),\mathcal{P}(\epsilon,\mathcal{C}|_{\bm{r}_0})$ are defined as in \textbf{Notation} $(\star).$

\begin{lem}\label{lemma bddcimldlinear}
     With notation $(\star\star)$ and assume that $\epsilon$ is non-negative. Let $d,I_0,M_0$ be positive integers. Then there exist a positive real number $\delta$ and a $\Qq$-linear function $f(\bm{r}):\Rr^c\to\Rr$ depending only on $\epsilon,d,c,m,I_0,M_0,\bm{r}_0,\mathcal{C}$ satisfying the following.
     
     \begin{enumerate}
      \item $f(\bm{r}_0)\ge\epsilon$, and if $\epsilon\in\Qq$, then $f(\bm{r})=\epsilon$ for any $\bm{r}\in\Rr^c$.
      \item Assume that $(X/Z\ni z,B(\bm{r}_0))$ is a pair of dimension $d$, such that
      \begin{itemize} 
        \item $(X/Z\ni z,B(\bm{r}_0))$ is $\epsilon$-lc over $z,$
        \item $\mld(X/Z\ni z,C)\le M_0$ for any boundary $C,$ and
        \item $I_0D$ is Cartier for any $\Qq$-Cartier Weil divisor $D$ on $X$.
      \end{itemize}
      Then there exists a prime divisor $E\in e(z)$ such that
      $$\mld(X/Z\ni z,B(\bm{r}))=a(E,X,B(\bm{r}))\ge f(\bm{r})$$
      for any $\bm{r}\in\Rr^c$ satisfying $||\bm{r}-\bm{r}_0||_\infty\le\delta$. 
     \end{enumerate}
\end{lem}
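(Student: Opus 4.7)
My plan is to reduce the problem to a log resolution $\pi : W \to X$ of $(X, B_1+\cdots+B_m)$, exploit that the coefficients of $B_W(\bm{r})$ (defined by $K_W + B_W(\bm{r}) = \pi^*(K_X + B(\bm{r}))$) are $\Qq$-linear in $\bm{r}$ with rational coefficients, and use the $\Qq$-linear independence of $1, r_1, \ldots, r_c$ to prevent the mld-computing divisor from shifting as $\bm{r}$ moves in a small neighborhood of $\bm{r}_0$.

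More precisely, since $(X/Z\ni z, B(\bm{r}_0))$ is $\epsilon$-lc, the coefficients of $B_W(\bm{r}_0)$ are at most $1-\epsilon$, hence stay $<1$ throughout a small neighborhood of $\bm{r}_0$. On this neighborhood, Lemma~\ref{lem:logsmoothmld} combined with the proof of Lemma~\ref{lem:mldisattained} yields
\[
\mld(X/Z\ni z, B(\bm{r})) \;=\; \min_y L_y(\bm{r}), \qquad L_y(\bm{r}) := {\rm codim}_W\, y - \mult_y B_W(\bm{r}),
\]
where $y$ ranges over the \emph{finite} set of generic points of strata of $B_W$ lying over $\bar z$ under $\mu := \pi_Z \circ \pi$, and each $L_y$ is $\Qq$-linear with rational coefficients. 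Let $S := \{y : L_y(\bm{r}_0) = \mld(X/Z\ni z, B(\bm{r}_0))\}$. For $y_1, y_2 \in S$, the functions $L_{y_1}$ and $L_{y_2}$ agree at $\bm{r}_0$; the $\Qq$-linear independence of $1, r_1, \ldots, r_c$ forces $L_{y_1} \equiv L_{y_2}$ as functions on $\Rr^c$. Call this common function $L_*$. Picking any $y_* \in S$ and letting $E$ be the exceptional divisor of the blow-up of $W$ along $\overline{y_*}$, I obtain $E \in e(z)$ with $a(E, X, B(\bm{r})) = L_*(\bm{r})$. For $y \notin S$, $L_y(\bm{r}_0) > L_*(\bm{r}_0)$, so continuity produces $\delta > 0$ with $L_y(\bm{r}) > L_*(\bm{r})$ throughout $\|\bm{r}-\bm{r}_0\|_\infty \le \delta$; hence $\mld(X/Z\ni z, B(\bm{r})) = a(E, X, B(\bm{r})) = L_*(\bm{r})$ there.

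The function $f$ is then chosen as $L_*$ when $\epsilon \notin \Qq$, and as the constant $\epsilon$ when $\epsilon \in \Qq$; in the latter case the inequality $a(E, X, B(\bm{r})) \ge \epsilon$ holds because if $L_*(\bm{r}_0) = \epsilon \in \Qq$ then $\Qq$-linear independence forces $L_* \equiv \epsilon$, while otherwise $L_*(\bm{r}_0) > \epsilon$ and a further shrinking of $\delta$ suffices. The hard part will be making $f$ and $\delta$ uniform in the pair, depending only on $\epsilon, d, c, m, I_0, M_0, \bm{r}_0, \mathcal{C}$. For this I would appeal to boundedness of $\epsilon$-lc singularities whose Cartier index divides $I_0$ and whose mld at $z$ is bounded above by $M_0$, which ensures that the log resolutions $\pi : W \to X$ can be taken from a bounded family; as a consequence the coefficients of $B_W(\bm{r})$ are $\Qq$-linear functions drawn from a finite set determined by the parameters, so the possible $L_*$'s form a finite set $\mathcal{L}$ of $\Qq$-linear functions. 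I would then take $f$ to be the unique element of $\mathcal{L}$ minimizing $L(\bm{r}_0)$ subject to $L(\bm{r}_0) \ge \epsilon$ (unique by $\Qq$-linear independence), and $\delta$ to be the smallest positive gap $L(\bm{r}_0) - L'(\bm{r}_0)$ across $\mathcal{L}$, divided by a uniform Lipschitz bound for members of $\mathcal{L}$. It is this finiteness of $\mathcal{L}$, the key input from boundedness, that supplies all the required quantitative uniformity.
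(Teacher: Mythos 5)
Your argument for the \emph{local} structure—writing $\mld(X/Z\ni z, B(\bm{r}))$ as a minimum of finitely many $\Qq$-linear functions $L_y$ on a small rational polytope, then using $\Qq$-linear independence of $1,r_1,\dots,r_c$ to collapse ties—is essentially sound and mirrors what happens in the paper. The genuine gap is in the step you yourself flagged as "the hard part": the uniformity of the finite set $\mathcal{L}$ and of $\delta$ over all pairs. You propose to obtain this from "boundedness of $\epsilon$-lc singularities whose Cartier index divides $I_0$ and whose mld at $z$ is bounded above by $M_0$," which would put the log resolutions $\pi:W\to X$ in a bounded family. No such boundedness statement is available as a tool here. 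In particular the lemma is stated for all $\epsilon\ge 0$, including $\epsilon=0$, where lc singularities with Cartier index dividing $I_0$ are wildly unbounded; and even for $\epsilon>0$ the condition ``$\mld(X/Z\ni z,C)\le M_0$ for all boundaries $C$'' is an upper bound on an mld (forced by, e.g., $X$ smooth), which is not the kind of lower discrepancy bound that boundedness theorems require. You would be invoking a tool strictly stronger than the conclusion you want, and one that is false in the degenerate case.

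The paper avoids this entirely with a direct arithmetic argument that makes precise use of exactly the two hypotheses you left unused. First decompose $K_X+B(\bm{r}_0)=\sum_i a_i(K_X+B^i)$ with $B^i$ drawn from a finite set of rational boundaries (this already exists by \cite[Corollary 5.5]{HLS19}); the Cartier index assumption yields a single integer $I$ with $I(K_X+B^i)$ Cartier, so $a(E,X,B^i)\in\frac{1}{I}\Zz_{\ge 0}$ for every $E\in e(z)$. The hypothesis $\mld(X/Z\ni z,C)\le M_0$ is then used to bound these log discrepancies \emph{from above}: if $E'$ computes the mld for $B(\bm{r}')$, then $M_0\ge\sum a_i(\bm{r}')a(E',X,B^i)\ge a_0\cdot\max_i a(E',X,B^i)$, where $a_0$ is a uniform lower bound on the $a_i(\bm{r}')$. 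Hence each $a(E',X,B^i)$ lies in the finite set $\frac{1}{I}\Zz_{\ge 0}\cap[0,M_0/a_0]$, so the $\Qq$-linear functions $\bm{r}\mapsto\sum_i a_i(\bm{r})a(E,X,B^i)$ that can arise as $\mld(X/Z\ni z,B(\cdot))$ form an explicit finite set $\mathcal{F}$ depending only on the given data—no boundedness of varieties, and the argument works verbatim at $\epsilon=0$. The function $f$ and the radius $\delta$ are then read off from $\mathcal{F}$ exactly as in the tail of your proposal. So the fix is to replace the boundedness appeal with this elementary arithmetic finiteness; the rest of your outline survives.

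A secondary point worth noting: you claim the $L_y(\bm{r})$ are ``$\Qq$-linear with rational coefficients,'' but this needs the decomposition into rational boundaries $B^i$ to be visible—$a(E,X,0)$ need not be rational when $K_X$ is not $\Qq$-Cartier. Once you write $a(E,X,B(\bm{r}))=\sum_i a_i(\bm{r})a(E,X,B^i)$ with $a(E,X,B^i)\in\Qq$, the $\Qq$-linearity is transparent and the $\Qq$-linear-independence trick you use (ties at $\bm{r}_0$ propagate to identities) is justified.
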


\begin{proof}
     By \cite[Corollary 5.5]{HLS19}, there exists a positive real number $\delta_0$ depending only on $d,c,m,\bm{r}_0,\mathcal{C}$ such that $(X/Z\ni z,B(\bm{r}))$ is lc over $z$ for any $\bm{r}\in\Rr^c$ satisfying $||\bm{r}-\bm{r}_0||_{\infty}\le\delta_0.$ In particular, there exist a finite set $\Gamma_1=\{a_i\}_{i=1}^{k}$ of positive real numbers and a finite set $\Gamma_2$ of non-negative rational numbers depending only on $c,m,\bm{r}_0, \mathcal{C}$ and $\delta_0$ such that
     $$K_X+B(\bm{r}_0)=\sum_{i=1}^{k} a_i(K_X+B^i),$$ 
     and $(X/Z\ni z,B^i)$ is lc over $z$ for some $B^i\in\Gamma_2$.
     There exist a positive integer $c'\ge c$, real numbers $r_{c+1},\dots,r_{c'}$ and $\Qq$-linear functions $a_i(x_1,\dots,x_{c'}):\Rr^{c'}\to \Rr$ such that $r_0,r_1,\dots,r_{c'}$ are linearly independent over $\Qq,$ and $a_i(r_1,\dots,r_{c'})=a_i$ for any $1\le i\le k.$ Since $\sum_{i=1}^{k} a_i=1,\sum_{i=1}^{k} a_i(x_1,\dots,x_{c'})=1$ for any $(x_1,\dots,x_{c'})\in\Rr^{c'}.$ In particular,  we have $\sum_{i=1}^{k} a_i(\bm{r})=1,$ and $B(\bm{r})=\sum_{i=1}^{k} a_i(\bm{r})B^i$ for any $\bm{r}\in\Rr^c,$ where $a_i(\bm{r})=a_i(\bm{r},r_{c+1},\dots,r_{c'})$ for any $1\le i\le k$.
     
     Since $\Gamma_2$ is a finite set of rational numbers, we may find a positive integer $I_1$ such that $I_1\Ii_2\subseteq\Zz$. Let $I:=I_0I_1.$ By assumption, possibly shrinking $Z$ near $z$, we may assume that $I(K_X+B^i)$ is Cartier for any $1\le i\le k.$  
     
     Let $a_0:=\frac{1}{2}\min_{1\le i\le k}\{a_i\},M:=\frac{M_0}{a_0},$ and
     $$\mathcal{F}:=\{a(\bm{r}):=\sum_{i=1}^{k} a_i(\bm{r})b_i\mid a(\bm{r}_0)\ge\epsilon, b_i\in \frac{1}{I}\Zz_{\ge0},b_i\le M,1\le i\le k\}\cup(\Qq\cap\{\epsilon\})$$
     which is a finite set of $\Qq$-linear functions. 
     Let $f(\bm{r})\in\mathcal{F}$, such that $f(\bm{r}_0)=\min\mathcal{F}|_{\bm{r}_0}$. Note that if $\epsilon\in\Qq$, then $f(\bm{r}_0)=\epsilon$ and hence $f(\bm{r})=\epsilon$ for any $\bm{r}\in\Rr^c$ as $r_0,\dots,r_c$ are linearly independent over $\Qq$.
     
     We may find a positive real number $\delta\le\delta_0$  depending only on $\mathcal{F},\delta_0,\bm{r}_0$ such that $a_i({\bm{r}})\ge a_0$ and $g(\bm{r})>h(\bm{r})$ for any $||\bm{r}-\bm{r}_0||_\infty\le\delta$, $1\le i\le k$, and $g(\bm{r}),h(\bm{r})\in\mathcal{F}$ satisfying $g(\bm{r}_0)>h(\bm{r}_0)$.
     
     We claim that $\delta$ and $f(\bm{r})$ have the required properties. We first show that 
     $\mld(X/Z\ni z,B(\bm{r}'))\in\mathcal{F}|_{\bm{r}'}$ for any $\bm{r}'\in\Rr^c$ satisfying $||\bm{r}'-\bm{r}_0||_\infty\le\delta.$ We may assume that $\mld(X/Z\ni z,B(\bm{r}'))=a(E',X,B(\bm{r}'))$ for some $E'\in e(z)$ and $a(E',X,B^1)=\max_{1\le i\le k}\{a(E',X,B^i)\}$. By assumption, we have 
     \begin{align*}
       M_0&\ge\mld(X/Z\ni z,B(\bm{r}'))=\sum_{i=1}^{k}a_i(\bm{r}')a(E',X,B^i)\\&\ge a_1(\bm{r}')a(E',X,B^1)\ge a_0a(E',X,B^1).
     \end{align*}
     Thus $a(E',X,B^1)\le M$ and $a(E',X,B(\bm{r}'))\in\mathcal{F}|_{\bm{r}'}.$  
     Let $E\in e(z)$ such that $\mld(X/Z\ni z,B(\bm{r}_0))=a(E,X,B(\bm{r}_0))$, then $a(E,X,B(\bm{r}))\in\Ff$. We show that $E$ has the required properties. By our choice of $\delta$ and $f(\bm{r})$, $a(E,X,B(\bm{r}))\ge f(\bm{r})$ for any $\bm{r}\in\Rr^c$ satisfying $||\bm{r}-\bm{r}_0||_\infty\le\delta$. It suffices to show that 
     $$\mld(X/Z\ni z,B(\bm{r}))=a(E,X,B(\bm{r}))$$ 
     for any $\bm{r}\in\Rr^c$ satisfying $||\bm{r}-\bm{r}_0||_\infty\le\delta$. Otherwise there exist a divisor $E'\in e(z)$ and a point $\bm{r}'\in\Rr^c$ such that  $||\bm{r}'-\bm{r}_0||_\infty\le\delta,\mld(X/Z\ni z,B(\bm{r}'))=a(E',X,B(\bm{r}'))<a(E,X,B(\bm{r}'))$. Since $a(E,X,B(\bm{r})),a(E',X,B(\bm{r}))\in\mathcal{F}$ and $a(E',X,B(\bm{r}_0))>a(E,X,B(\bm{r}_0)),a(E',X,B(\bm{r}'))>a(E,X,B(\bm{r}'))$, a contradiction.
\end{proof}
     
\begin{thm}\label{thm locally linearity of mld}
     With notation $(\star\star)$. Then there exist a positive real number $\delta$ and a $\Qq$-linear function $f(\bm{r})$ depending only on $\epsilon,c,m,\bm{r_0},\mathcal{C}$ satisfying the following.      
     \begin{enumerate}
         \item $f(\bm{r}_0)\ge\epsilon,$ and if $\epsilon\in\Qq$, then $f(\bm{r})=\epsilon$ for any $\bm{r}\in\Rr^c$.
     
         \item Assume that $(X\ni x,B(\bm{r}))$ is a surface germ such that $+\infty>\pld(X\ni x,B(\bm{r}_0))\ge\epsilon,$ then there exists a prime divisor $E$ over $X$ such that
         $$\pld(X\ni x,B(\bm{r}))=a(E,X,B(\bm{r}))$$ 
         for any $\bm{r}\in\Rr^c$ satisfying $||\bm{r}-\bm{r}_0||_\infty\le\delta$. Moreover, the $\Qq$-linear function $a(E,X,B(\bm{r}))$ only depends on $\pld(X\ni x,B(\bm{r}_0))$ and $\bm{r}_0$.
         
         \item Assume that $(X\ni x,B(\bm{r}))$ is a surface germ such that $\mld(X\ni x,B(\bm{r}_0))\ge\epsilon,$ then there exists a prime divisor $E$ over $X$ such that
         $$\mld(X\ni x,B(\bm{r}))=a(E,X,B(\bm{r}))\ge f(\bm{r})$$ 
         for any $\bm{r}\in\Rr^c$ satisfying $||\bm{r}-\bm{r}_0||_\infty\le\delta$. Moreover, the $\Qq$-linear function $a(E,X,B(\bm{r}))$ only depends on $\mld(X\ni x,B(\bm{r}_0))$ and $\bm{r}_0$.       
     \end{enumerate}     
\end{thm}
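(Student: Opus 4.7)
The plan is to upgrade the classification-based finiteness results of Lemmas \ref{lemma pld set} and \ref{lemma mld set} from scalar DCC coefficient sets to the $\mathbb{Q}$-linear functional setting $\mathcal{C} = \{s_i(\bm{r})\}$, and then invoke the $\mathbb{Q}$-linear independence of $r_0 = 1, r_1, \ldots, r_c$ to propagate coincidences at $\bm{r}_0$ to a small closed box around it. The entire argument takes place on the minimal resolution $f \colon Y \to X$, whose dual graph does not depend on $\bm{r}$: for any $f$-exceptional prime divisor $E$, the log discrepancy $a(E, X, B(\bm{r}))$ is already a $\mathbb{Q}$-linear function of $\bm{r}$ whose value at $\bm{r}_0$ recovers the usual log discrepancy.

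For (2), I first establish a linear-functional analogue of Lemma \ref{lemma pld set}. The combinatorial arguments in Lemma \ref{lemma 3.3} and Lemma \ref{lemma minimalreslwithbddvertices} run with the symbolic coefficients $s_i(\bm{r})$ in place of scalar elements of $\Gamma$: every scalar identity they produce is the evaluation at $\bm{r}_0$ of a tautological identity of $\mathbb{Q}$-linear functions whose structural data is read off the dual graph rather than from $\bm{r}$. This shows that $\pld(X \ni x, B(\bm{r}))$ is realized as a $\mathbb{Q}$-linear function $a(E, X, B(\bm{r}))$ drawn from the finite family $\Ff(N_0, \epsilon, \mathcal{C}) \cup \mathcal{P}(\epsilon, \mathcal{C})$, which depends only on $\epsilon, c, m, \bm{r}_0, \mathcal{C}$. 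Because $r_0, r_1, \ldots, r_c$ are linearly independent over $\mathbb{Q}$, two distinct $\mathbb{Q}$-linear functions in this finite family take distinct values at $\bm{r}_0$, so I choose $\delta > 0$ small enough that every strict inequality $g(\bm{r}_0) > h(\bm{r}_0)$ within this family persists on $\{\|\bm{r} - \bm{r}_0\|_\infty \le \delta\}$ and small enough that $(X \ni x, B(\bm{r}))$ remains lc on this box by \cite[Corollary 5.5]{HLS19}. The divisor $E$ minimizing at $\bm{r}_0$ then continues to minimize throughout the box, and the linear function $a(E, X, B(\bm{r}))$ is uniquely identified inside the finite family by its value $\pld(X \ni x, B(\bm{r}_0))$ at $\bm{r}_0$, giving the moreover clause of (2).

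For (3), I argue analogously via a linear-functional version of Lemma \ref{lemma mld set}. When the dual graph of the minimal resolution has at least $N_1$ vertices, Lemma \ref{lemma pld=mld} gives $\mld(X \ni x, B(\bm{r}_0)) = \pld(X \ni x, B(\bm{r}_0))$ and the statement reduces to (2). When the dual graph has at most $N_1 - 1$ vertices, Lemma \ref{lemma minimalreslwithbddvertices} together with Theorem \ref{lemma dccsmoothmld} reduces the question to $\mld(Y \ni y, B_Y(\bm{r}))$ on a log smooth germ, whose value is again a $\mathbb{Q}$-linear function in the $s_i(\bm{r})$ drawn from a finite list determined by $\epsilon, c, m, \bm{r}_0, \mathcal{C}$. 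Setting $f(\bm{r})$ equal to the element of this finite list whose value at $\bm{r}_0$ is minimal, $\mathbb{Q}$-linear independence yields $f(\bm{r}) \ge \epsilon$ on a small box; if $\epsilon \in \mathbb{Q}$, then $f(\bm{r}_0) = \epsilon$ forces $f(\bm{r}) \equiv \epsilon$ identically, since the only $\mathbb{Q}$-linear function in the family whose value at $\bm{r}_0$ is the rational number $\epsilon$ is the constant $\epsilon$. The main obstacle I anticipate is the symbolic bookkeeping: every scalar numerical identity appearing in the proofs of Lemmas \ref{lemma 3.3}, \ref{lemma pld=mld}, \ref{lemma pld set} and \ref{lemma mld set} must be lifted to an identity of $\mathbb{Q}$-linear functions in $\bm{r}$, and the resulting finite families must be verified to depend only on $(\epsilon, c, m, \bm{r}_0, \mathcal{C})$ and not on the particular pair $(X \ni x, B(\bm{r}_0))$.
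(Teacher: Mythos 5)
Your overall strategy is the paper's: run the classification results (Lemma \ref{lemma 3.3}, Lemma \ref{lemma minimalreslwithbddvertices}, Lemma \ref{lemma pld=mld}) with the symbolic coefficients $s_i(\bm{r})$, use the $\Qq$-linear independence of $r_0,\ldots,r_c$ to propagate coincidences and inequalities from $\bm{r}_0$ to a box, and take $f(\bm{r})$ minimal in $\Ff(N_3,\epsilon,\mathcal{C})$ at $\bm{r}_0$. But two steps, as written, have genuine gaps. First, $\Ff(N_0,\epsilon,\mathcal{C})\cup\mathcal{P}(\epsilon,\mathcal{C})$ is \emph{not} a finite family: the parameter $A$ in $p_A(\bm{r})=\frac{(A+p_1)\beta_1(\bm{r})+p_2\beta_2(\bm{r})}{A+p_1+p_2}$ is unbounded, so you cannot choose a single $\delta$ making ``every strict inequality within this family'' persist. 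This is exactly the case that matters when the dual graph is long: there $\pld=\min\{p_A,p_A'\}$, and in (3) you also need $p_A(\bm{r})\ge f(\bm{r})$ on the box. The paper does not rely on finiteness here; it uses the explicit identity $p_A(\bm{r})-p_A'(\bm{r})=\frac{(A+1)(\beta_1(\bm{r})-\beta_2(\bm{r}))}{A+p_1+p_2}$, so the minimum is governed by the sign of $\beta_1-\beta_2$ with $\beta_1,\beta_2$ in the \emph{finite} set $\Ll(\epsilon,\mathcal{C})$, and it bounds $p_A(\bm{r})\ge\beta_1(\bm{r})\ge f(\bm{r})$. Some argument of this kind is needed; the finite-family persistence argument alone does not cover the $\mathcal{P}$-part.

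Second, in part (3) with dual graph of at most $N_1-1$ vertices, invoking Theorem \ref{lemma dccsmoothmld} is a pointwise-in-$\bm{r}$ statement and does not by itself produce one divisor $E$ computing $\mld(X\ni x,B(\bm{r}))$ on the whole box: for that you must know that, for \emph{every} $\bm{r}$ in the box, the minimizing divisor has its log-discrepancy function inside one fixed finite family of $\Qq$-linear functions. Theorem \ref{lemma dccsmoothmld} needs DCC coefficients, and the coefficients swept out as $\bm{r}$ ranges over a box fill intervals, which are not DCC, so it cannot be applied uniformly; a priori the minimizer could jump to divisors with unbounded $a(E,Y,0)$ as $\bm{r}$ moves. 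This is precisely why the paper proves and uses Lemma \ref{lemma bddcimldlinear}: writing $B(\bm{r})=\sum a_i(\bm{r})B^i$ with the $B^i$ rational of bounded Cartier index and working with the relative mld on the minimal resolution (where $\mld(Y/X\ni x,\cdot)\le 2$) forces every candidate log discrepancy, for all $\bm{r}$ in the box simultaneously, into a single finite family $\mathcal{F}$, from which the single divisor $E$ is then extracted. You would need this lemma or an equivalent uniformity argument. A smaller point of the same nature: reducing the long-graph case of (3) to (2) requires $\mld=\pld$ at every $\bm{r}$ in the box, hence that $(X\ni x,B(\bm{r}))$ stays $\tfrac{\epsilon}{2}$-lc there (with $N_1$ chosen for $\tfrac{\epsilon}{2}$), which is what \cite[Corollary 5.5]{HLS19} is used for in the paper; ``remains lc'' on the box is not enough.
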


\begin{proof}
      Let $N_0'$ be the number defined in Lemma \ref{lemma 3.3} depending only on $\frac{\epsilon}{2}$ and $[\epsilon_0,1]$, and $N_0\ge N_0'$ the number defined in Lemma \ref{lemma 3.3} depending only on $N_0',\epsilon$ and $\mathcal{C}|_{\bm{r}_0}$. Let $N_1\ge N_0$ be the number defined in Lemma \ref{lemma pld=mld} depending only on $\frac{\epsilon}{2}$ and $N_0.$ Let $N_2$ be the number defined in Lemma \ref{lemma minimalreslwithbddvertices} depending only on $\epsilon$ and $N_1$. Let $N_3\ge N_2$ be the number defined in Lemma \ref{lemma mld set} depending only on $\epsilon,s_i(\bm{r}_0)$ and $N_2.$
      
      By \cite[Corollary 5.5]{HLS19}, there exists a positive real number $\delta_0$ depending only on $c,m,\bm{r}_0,\mathcal{C},\epsilon$ such that for any surface germ $(X\ni x,B(\bm{r}))$ satisfying $\pld(X\ni x,B(\bm{r}_0))\ge\epsilon$ $($respectively $\mld(X\ni x,B(\bm{r}_0))\ge\epsilon)$, then $\pld(X\ni x,B(\bm{r}))\ge\frac{\epsilon}{2}$ $($respectively $\mld(X\ni x,B(\bm{r}))\ge\frac{\epsilon}{2})$ for any $\bm{r}\in\Rr^c$ satisfying $||\bm{r}-\bm{r}_0||_{\infty}\le\delta_0.$ 
      \begin{claim}\label{claim:existenceofdelta}
      There exists a positive real number $\delta\le\delta_0$ depending only on $\epsilon,\mathcal{C},\delta_0,\bm{r}_0,N_1,N_3,c$ and $m$ such that
      \begin{enumerate}[label=(\roman*)]
          \item for any $f_1(\bm{r}),f_2(\bm{r})\in\Ff(N_3,\epsilon,\mathcal{C})$, if $f_1(\bm{r}_0)<f_2(\bm{r}_0)$ then $f_1(\bm{r})<f_2(\bm{r})$, and $s_i(\bm{r})\ge \epsilon_0$ for any $1\le i\le m$ and any $\bm{r}\in\Rr^c$ satisfying $||\bm{r}-\bm{r}_0||_{\infty}\le\delta$, and
          \item for any surface germ $(X\ni x,B(\bm{r}))$, if $\mld(X\ni x,B(\bm{r}_0))\ge\epsilon$ and the dual graph of the minimal resolution of $X$ has at most $N_1$ vertices, then
      $$\mld(X\ni x,B(\bm{r}))=a(E,X,B(\bm{r}))$$
      for some prime divisor $E$ over $X$ and any $\bm{r}\in\Rr^c$ satisfying $||\bm{r}-\bm{r}_0||_\infty\le\delta$.
      \end{enumerate}
      \end{claim}
      \begin{proof}[Proof of Claim \ref{claim:existenceofdelta}]
      Since $\mathcal{C}$ and $\Ff(N_3,\epsilon,\mathcal{C})$ are finite sets of linear functions, we may find a positive real number $\delta_1\le\delta_0$ depending only on $\epsilon,\mathcal{C},\Ff(N_3,\epsilon,\mathcal{C})$, $\delta_0,\bm{r}_0$ such that $(i)$ holds. It suffices to find a positive real number $\delta<\delta_1$ such that $(ii)$ holds.
      
      Indeed let $f:Y\to X$ be the minimal resolution of $X$ with exceptional divisors $E_1,\dots,E_{m'}$ $(m'\le N_1)$. We may write
      $$K_Y+B_Y(\bm{r}_0)+\sum_{i=1}^{m'}(1-a_i)E_i:=f^*(K_X+B(\bm{r}_0)),$$
      where $B_Y(\bm{r}_0)$ is the strict transform of $B(\bm{r}_0)$ and $a_i:=a(E_i,X,B(\bm{r}_0))$ for any $i$. By Lemma \ref{lemma minimalreslwithbddvertices},  $a_i\in\Ff(N_2,\epsilon,\mathcal{C})|_{\bm{r}_0}$. Then there exist $\Qq$-linear functions $s_{m+1}(\bm{r}),\dots$, $s_{m+m'}(\bm{r})\in\{1-a(\bm{r})\mid a(\bm{r})\in\Ff(N_2,\epsilon,\mathcal{C})\}$ such that $s_{m+i}(\bm{r}_0)=1-a_i$ for any $i.$ Let $B_Y'(\bm{r}):=B_Y(\bm{r})+\sum_{i=1}^{m'}s_{m+i}(\bm{r})E_i$, then we have
      $K_Y+B_Y'(\bm{r})=f^*(K_X+B(\bm{r}))$
      for any $\bm{r}\in\Rr^c$ satisfying $||\bm{r}-\bm{r}_0||_{\infty}\le\delta_1.$
      Since $Y$ is smooth, $\mld(Y/X\ni x,C)\le 2$ for any boundary $C$ on $Y$. By Lemma \ref{lemma bddcimldlinear}, we may find a positive real number $\delta_{(m')}<\delta_1$ depending only on $\epsilon,c,m+m',s_i(\bm{r}),\bm{r}_0,\delta_1$ such that
      $$\mld(Y/X\ni x,B_Y'(\bm{r}))=a(E,Y,B_Y'(\bm{r}))=a(E,X,B(\bm{r}))$$
      for some prime divisor $E$ over $X$ and any $\bm{r}\in\Rr^c$ satisfying $||\bm{r}-\bm{r}_0||_\infty\le\delta_{(m')}$. Let $\delta:=\min_{0\le i\le N_1}\{\delta_{(i)}\}$, the claim holds.
      \end{proof}
      Let $f(\bm{r})\in\Ff(N_3,\epsilon,\mathcal{C})$ such that $f(\bm{r}_0)=\min \Ff(N_3,\epsilon,\mathcal{C})|_{\bm{r}_0}$. We will show that $\delta$ and $f(\bm{r})$ have the required properties. \medskip
      
     \noindent (1) By the choice of $f(\bm{r})$, $f(\bm{r}_0)\ge\epsilon$. Note that if $\epsilon\in\Qq$, then $f(\bm{r}_0)=\epsilon$, and $f(\bm{r})=\epsilon$ for any $\bm{r}\in\Rr^c$ as $r_0,\dots,r_c$ are linearly independent over $\Qq$.
     \medskip
      
      \noindent (2) Let $f:Y\to X$ be the minimal resolution of $X,\mathcal{DG}$ the dual graph of $f$ and $\Ee(f)$ the set of $f$-exceptional divisors. If $\mathcal{DG}$ has at least $N_1$ vertices, then by Lemma \ref{lemma 3.3}, we have
      $$\pld(X\ni x,B(\bm{r}_0))=\min\{p_A(\bm{r}_0),p'_A(\bm{r}_0)\}\in\mathcal{P}(\epsilon,\mathcal{C})|_{\bm{r}_0},$$
      where $p_A(\bm{r})=\frac{(A+p_1)\beta_1(\bm{r})+p_2\beta_2(\bm{r})}{A+p_1+p_2}, p'_A(\bm{r})=\frac{(A+p'_2)\beta_2(\bm{r})+p'_1\beta_1(\bm{r})}{A+p'_1+p'_2}$ for some $A\in\Zz_{>0}$, $p_1,p_2,p_1',p_2'\in\mathcal{I}_0',\beta_1(\bm{r}),\beta_2(\bm{r})\in\Ll(\epsilon,\mathcal{C})$ and $p_1-p_1'=p_2'-p_2=1$.
      We may assume that 
      $$\pld(X\ni x,B(\bm{r}_0))=\min\{p_A(\bm{r}_0),p'_A(\bm{r}_0)\}=p_A(\bm{r}_0)=a(E,X,B(\bm{r}_0))$$
      for some prime divisor $E$ over $X$. In particular, $\beta_1(\bm{r}_0)\le\beta_2(\bm{r}_0)$ and hence $\beta_1(\bm{r})\le\beta_2(\bm{r})$ for any $\bm{r}\in\Rr^c$ satisfying $||\bm{r}-\bm{r}_0||_\infty\le\delta$.
      Thus
      $$p_A(\bm{r})-p_A'(\bm{r})=\frac{(A+1)(\beta_1(\bm{r})-\beta_2(\bm{r}))}{A+p_1+p_2}\le0$$
      and $\min\{p_A(\bm{r}),p'_A(\bm{r})\}=p_A(\bm{r})$ for any $\bm{r}\in\Rr^c$ satisfying $||\bm{r}-\bm{r}_0||_\infty\le\delta$. By Lemma \ref{lemma 3.3} again, we have
      $$\pld(X\ni x,B(\bm{r}))=\min\{p_A(\bm{r}),p'_A(\bm{r})\}=p_A(\bm{r})=a(E,X,B(\bm{r})),$$
      for any $\bm{r}\in\Rr^c$ satisfying $||\bm{r}-\bm{r}_0||_\infty\le\delta$.

      If $\mathcal{DG}$ has at most $N_1-1$ vertices, then by Lemma \ref{lemma minimalreslwithbddvertices}, $a(E,X,B(\bm{r}_0))\in\Ff(N_2,\epsilon,\mathcal{C})|_{\bm{r}_0}$ for any divisor $E\in\Ee(f)$. We may assume that 
      $$\pld(X\ni x,B(\bm{r}_0))=\min_{E\in\Ee(f)}\{a(E,X,B(\bm{r}_0))\}=a(E_1,X,B(\bm{r}_0))$$ 
      for some $E_1\in\Ee(f).$ By Claim \ref{claim:existenceofdelta}$(i)$,
      $$\pld(X\ni x,B(\bm{r}))=\min_{E\in\Ee(f)}\{a(E,X,B(\bm{r}))\}=a(E_1,X,B(\bm{r})),$$
      for any $\bm{r}\in\Rr^c$ satisfying $||\bm{r}-\bm{r}_0||_\infty\le\delta$. Since $r_0,\dots,r_c$ are linearly independent over $\Qq,$ the $\Qq$-linear function $a(E,X,B(\bm{r}))$ only depends on $\pld(X\ni x,B(\bm{r}_0))$ and $\bm{r}_0$.
      \medskip
      
      \noindent (3) 
      By Claim \ref{claim:existenceofdelta}$(ii)$, Lemma \ref{lemma pld=mld} and $(2)$, there exists a prime divisor $E$ over $X$ such that
      $$\mld(X\ni x,B(\bm{r}))=a(E,X,B(\bm{r}))$$ 
      for any $\bm{r}\in\Rr^c$ satisfying $||\bm{r}-\bm{r}_0||_\infty\le\delta$ as $\mld(X\ni x,B(\bm{r}))\ge\frac{\epsilon}{2}$.
      
      By Lemma \ref{lemma mld set}, $a(E,X,B(\bm{r}_0))\in(\Ff(N_3,\epsilon,\mathcal{C})\cup\mathcal{P}(\epsilon,\mathcal{C}))|_{\bm{r}_0}$. Hence $a(E,X,B(\bm{r}))$ $\in\Ff(N_3,\epsilon,\mathcal{C})\cup\mathcal{P}(\epsilon,\mathcal{C})$ as $r_0,r_1,\ldots,r_c$ are linearly independent over $\Qq$. Note that for any $p_A(\bm{r})=\frac{(A+p_1)\beta_1(\bm{r})+p_2\beta_2(\bm{r})}{A+p_1+p_2}\in\mathcal{P}(\epsilon,\mathcal{C}),\beta_1(\bm{r}),\beta_2(\bm{r})\in\Ll(\epsilon,\mathcal{C})$ and $\beta_2(\bm{r}_0)\ge\beta_1(\bm{r}_0)\ge f(\bm{r}_0).$ By Claim \ref{claim:existenceofdelta}$(i)$, $\beta_2(\bm{r})\ge\beta_1(\bm{r})\ge f(\bm{r})$ and hence  $p_A(\bm{r})\ge\beta_1(\bm{r})\ge f(\bm{r})$ for any $\bm{r}\in\Rr^c$ satisfying $||\bm{r}-\bm{r}_0||_\infty\le\delta$. In particular, $a(E,X,B(\bm{r}))\ge f(\bm{r})$ for any $\bm{r}\in\Rr^c$ satisfying $||\bm{r}-\bm{r}_0||_\infty\le\delta$.
\end{proof}

\begin{cor}\label{cor verticesofpolytope}
     Let $m$ be a positive integer, $\epsilon\le\epsilon'$ two non-negative real numbers and $\bm{v}=(v_1^0,\dots,v_m^0)\in\Rr^m$ a point.  Then there exist a positive integer $I$ and finite sets $\Ii_1\subseteq(0,1],\Ii_2\subseteq[0,1]\cap\Qq,\mathcal{M}\subseteq\Qq_{\ge0}$ depending only on $m,\epsilon,\epsilon'$ and $\bm{v}$ satisfying the following.
     
     Assume that $(X\ni x,B:=\sum_{j=1}^m v_j^0B_j)$ is a surface germ such that
     \begin{itemize}
         \item $B_i\ge0$ is a reduced Weil divisor for any $i$,
         \item $\pld(X\ni x,B)=\epsilon',$ and
         \item $\mld(X\ni x,B)\ge\epsilon.$
     \end{itemize}
     Then there exist $a_i\in \Ii_1,$ $B^i\in\Ii_2,$ and $\epsilon_i,\epsilon_i'\in\mathcal{M}$ such that
     \begin{enumerate}
         \item $\sum a_i=1$,
         \item $K_X+B=\sum a_i(K_X+B^i),$
         \item $\sum a_i\epsilon_i\ge\epsilon$ and $\sum a_i\epsilon'_i=\epsilon',$
         \item $\pld(X\ni x,B^i)=\epsilon_i'$ for any $i,$
         \item $\mld(X\ni x,B^i)\ge\epsilon_i$ for any $i,$
         \item $I(K_X+B^i)$ is Cartier near $x$ for any $i$, and
         \item if $\epsilon\in\Qq$, then $\epsilon_i=\epsilon$ for any $i$, if $\epsilon\neq1,$ then $\epsilon_i\neq1$ for any $i$.
     \end{enumerate}
\end{cor}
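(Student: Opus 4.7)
The plan is to fit the given pair into the framework of Notation $(\star\star)$ and then apply Theorem \ref{thm locally linearity of mld}. First, choose $r_1,\dots,r_c\in\Rr$ so that $1,r_1,\dots,r_c$ are linearly independent over $\Qq$ and each $v_j^0$ is a $\Qq$-linear combination $v_j^0=s_j(\bm r_0)$ of the $r_i$; such a choice exists by taking a $\Qq$-basis of the $\Qq$-span of $\{1,v_1^0,\dots,v_m^0\}$, and the integer $c$ together with the $\Qq$-linear forms $s_j:\Rr^c\to\Rr$ are determined by $\bm v$. Set $B(\bm r):=\sum_{j=1}^m s_j(\bm r)B_j$, so that $B=B(\bm r_0)$ and $(X\ni x,B)$ fits the framework with $\mathcal{C}=\{s_j(\bm r)\}$.

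Apply Theorem \ref{thm locally linearity of mld}(2)--(3) to obtain a radius $\delta>0$, divisors $E,E'$ over $X$, and a $\Qq$-linear function $f$ with $f(\bm r_0)\ge\epsilon$ such that, on the box $\|\bm r-\bm r_0\|_\infty\le\delta$,
\begin{align*}
\pld(X\ni x,B(\bm r)) &= a(E',X,B(\bm r)),\\
\mld(X\ni x,B(\bm r)) &= a(E,X,B(\bm r))\ge f(\bm r).
\end{align*}
By the moreover parts of that theorem, the $\Qq$-linear function $a(E,X,B(\bm r))$ depends only on $\bm r_0$ and $\mld(X\ni x,B)$, while $a(E',X,B(\bm r))$ depends only on $\bm r_0$ and $\pld(X\ni x,B)=\epsilon'$; together with $\delta$ and $f$, all of these data depend only on $m,\epsilon,\epsilon',\bm v$.

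Now construct a rational polytope $P\ni\bm r_0$ inside the $\delta$-box with rational vertices $\bm r^i$, and express $\bm r_0=\sum a_i\bm r^i$ as a convex combination with positive rational weights $a_i\in(0,1]$, $\sum a_i=1$, where $P$ and $\{a_i\}$ depend only on $m,\epsilon,\epsilon',\bm v$. Set $B^i:=B(\bm r^i)=\sum_j s_j(\bm r^i)B_j$, whose coefficients lie in $[0,1]\cap\Qq$ for $\delta$ small. By $\Qq$-linearity, $K_X+B=\sum a_i(K_X+B^i)$. Take $\epsilon_i':=a(E',X,B(\bm r^i))$ and $\epsilon_i:=a(E,X,B(\bm r^i))$; both are rational. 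Linearity yields $\pld(X\ni x,B^i)=\epsilon_i'$, $\mld(X\ni x,B^i)=\epsilon_i$, $\sum a_i\epsilon_i'=\epsilon'$ and $\sum a_i\epsilon_i=\mld(X\ni x,B)\ge\epsilon$. If $\epsilon\in\Qq$, replace each $\epsilon_i$ by $\epsilon$, which is legal because Theorem \ref{thm locally linearity of mld}(1) gives $f\equiv\epsilon$ and hence $\mld(X\ni x,B^i)\ge f(\bm r^i)=\epsilon$. If $\epsilon\ne 1$ but some $\epsilon_i=1$, then necessarily $\mld(X\ni x,B)=1$ and $\epsilon<1$; fix a rational $q\in(\epsilon,1)$ depending only on $\epsilon$ and replace all $\epsilon_i$ by $q$, preserving $\mld(X\ni x,B^i)\ge 1>q$ and $\sum a_iq=q>\epsilon$.

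Finally, the finite sets $\Ii_1=\{a_i\}$, $\Ii_2=\{s_j(\bm r^i)\}_{i,j}$, and $\mathcal{M}$ (the pool of $\epsilon_i,\epsilon_i'$, augmented by the perturbation value $q$) depend only on $m,\epsilon,\epsilon',\bm v$ for the reasons noted above; their rationality, non-negativity, and the bound $\Ii_2\subseteq[0,1]\cap\Qq$ follow directly from the construction. For the Cartier index $I$: each $(X\ni x,B^i)$ is lc with coefficients in the finite set $\Ii_2$ and $\pld(X\ni x,B^i)=\epsilon_i'$ in a finite set, so Lemma \ref{lemma 3.3} bounds the combinatorial type of the dual graph of the minimal resolution of $X$ near $x$, and the classification of surface singularities yields a uniform local Cartier index $I$ for $K_X+B^i$ depending only on the stated data. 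The main obstacle in this plan is the $\epsilon_i\ne 1$ condition together with the $\epsilon\in\Qq$ case; both are resolved by the perturbation described above, after which the proof reduces to linear algebra over $\Qq$ and the already-proved Theorem \ref{thm locally linearity of mld}.
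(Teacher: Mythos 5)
Your plan follows the same outline as the paper's proof in the main case, but there are genuine gaps. The most serious: Theorem \ref{thm locally linearity of mld}, stated under Notation $(\star\star)$, requires $\epsilon>0$, so part~(3) of that theorem is unavailable when $\epsilon=0$. The paper's proof splits into the cases $\epsilon=\epsilon'=0$, $\epsilon'>\epsilon=0$, and $\epsilon>0$ precisely for this reason, invoking~\cite[Corollary 5.5]{HLS19} plus the vanishing of linear functionals at $\Qq$-linearly-independent irrational points to keep $\pld$ and lc-ness under control when $\epsilon=0$. Your proposal never addresses $\epsilon=0$ at all. Relatedly, you never treat the case $\bm v\in\Qq^m$ (equivalently $c=0$), where the polytope collapses to a point, $B^1=B$, and your $\epsilon_1=\mld(X\ni x,B)$ is not a priori confined to a finite set unless you explicitly invoke Lemma \ref{lemma mld set} (and that again presupposes $\epsilon>0$); the paper instead chooses $\epsilon_1$ to be a fixed fraction $\epsilon_0$ determined by the bounded Cartier index, avoiding this issue.

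Second, your fix for the ``$\epsilon_i\neq 1$'' clause of condition~(7) is wrong. You claim that if some $\epsilon_i=1$ while $\epsilon\neq 1$ then ``necessarily $\mld(X\ni x,B)=1$,'' but $\mld(X\ni x,B)=\sum a_j\epsilon_j$ can be strictly less than $1$ even when one $\epsilon_i=1$; and your remedy of replacing \emph{all} $\epsilon_i$ by a rational $q\in(\epsilon,1)$ only preserves condition~(5) if $\mld(X\ni x,B^i)\ge q$ for \emph{every} $i$, which you have not established and which fails whenever some $\mld(X\ni x,B^i)<q$. The paper avoids this whole issue by setting $\epsilon_i:=f(\bm r_i)$ (the uniform lower-bound function from Theorem \ref{thm locally linearity of mld}), not the actual mld; then $\mld(X\ni x,B^i)\ge f(\bm r_i)$ is automatic and (7) comes for free when $\epsilon\in\Qq$, while for irrational $\epsilon<1$ one can perturb the choice of rational vertices $\bm r_i$ to avoid the affine hyperplane $\{f=1\}$. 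Finally, your assertion that the pool of $\epsilon_i,\epsilon_i'$ ``depends only on $m,\epsilon,\epsilon',\bm v$'' is stated but not justified: since you use the actual mld, you would need to cite Lemma \ref{lemma mld set} to bound the possible values of $\mld(X\ni x,B(\bm r_0))$ (hence the possible linear functions $a(E,X,B(\bm r))$) by a finite set; the paper's use of the fixed functions $f$ and $h$ again makes this trivial.
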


\begin{proof}
    If $\bm{v}\in\Qq^m$, then $\epsilon'\in \Qq$. By Proposition \ref{prop pld}, there exists an integer $I$ depending only on $\epsilon'$ and $\bm{v}$ such that 
    $I(K_X+B)$ is Cartier near $x.$ Let $\epsilon_0:=\{\gamma\ge\epsilon\mid\gamma\in\frac{1}{I}\Zz_{\ge0}\}.$
    Let $\Ii_1:=\{1\},\Ii_2:=\{v_1^0,\dots,v_m^0\}$ and $\mathcal{M}:=\{\epsilon_0,\epsilon'\}$. It is clear that the integer $I,$ the sets $\Ii_1,\Ii_2$ and $\mathcal{M}$ have the required properties.
    
    Suppose that $\bm{v}\in\Rr^m\setminus\Qq^m.$
    There exist a point $\bm{r}_0=(r_1,\dots,r_c)\in\Rr^c$ and $\Qq$-linear functions $s_i(\bm{r}):\Rr^{c}\to \Rr$ such that $r_0=1,r_1,\ldots,r_c$ are linearly independent over $\Qq$ and $s_i(\bm{r}_0)=v_i^0$ for any $1\le i\le m$.
    Since $\epsilon'\in\Span_{\Qq}(\{1,v_1^0,\dots,v_m^0\})$, there exists a $\Qq$-linear function $h(\bm{r})$ such that $h(\bm{r}_0)=\epsilon'.$
    
    If $\epsilon=\epsilon'=0$, then let $\delta$ be the number defined in \cite[Corollary 5.5]{HLS19} depending only on $c,m,\bm{r}_0,s_i(\bm{r})$, and let $f(\bm{r})=0$. In this case, for any surface germ $(X\ni x,B(\bm{r}):=\sum_{i=1}^ms_i(\bm{r})B_i)$, if $\pld(X\ni x,B(\bm{r}_0))=0$ then $\pld(X\ni x,B(\bm{r}))=0$ for any $\bm{r}\in\Rr^c$ satisfying $||\bm{r}-\bm{r}_0||_\infty\le\delta$, as $a(E,X,B(\bm{r}))$ is a linear function for any prime divisor $E$ over $X$ and $r_0,r_1,\ldots,r_c$ are linearly independent over $\Qq$. If $\epsilon'>\epsilon=0$, then let $\delta$ be the number defined in Theorem \ref{thm locally linearity of mld} depending only on $c,m,\epsilon',\bm{r}_0,s_i(\bm{r})$, and let $f(\bm{r})=0$. 
    If $\epsilon>0,$ then let $\delta$ and $f(\bm{r})$ be the real number and $\Qq$-linear function defined in Theorem \ref{thm locally linearity of mld} depending only on $c,m,\epsilon,\bm{r}_0,s_i(\bm{r})$. Note that, if $\epsilon\in\Qq$, then $f(\bm{r})=\epsilon$ for any $\bm{r}\in\Rr^c$. Then there exist positive real numbers $a_1,\dots,a_k$ and points $\bm{r}_1,\dots,\bm{r}_k\in\Qq^c$ such that 
    \begin{itemize}
        \item $\sum_{i=1}^{k} a_i=1,$
        \item $\sum_{i=1}^{k} a_i\bm{r}_i=\bm{r}_0,$ and
        \item $||\bm{r}_i-\bm{r}_0||_\infty\le \delta$ for any $1\le i\le k.$
    \end{itemize}
    Let $\Ii_1:=\{a_i\mid 1\le i\le k\},\Ii_2:=\{s_j(\bm{r}_i)\mid 1\le j\le m,1\le i\le k\}$ and 
    $\mathcal{M}:=\{\epsilon_i:=f(\bm{r}_i),\epsilon_i':=h(\bm{r}_i)\mid 1\le i\le k\}$.
    
    Let $B(\bm{r}):=\sum_{j=1}^m s_j(\bm{r})B_j$, then $\mld(X\ni x,B(\bm{r}_0))=\mld(X\ni x,B)\ge\epsilon$ and $\pld(X\ni x,B(\bm{r}_0))=\pld(X\ni x,B)=\epsilon'$. By our choice of $\delta$ and $f(\bm{r})$,
    $$\pld(X\ni x,B(\bm{r}))=h(\bm{r}),\,\mld(X\ni x,B(\bm{r}))\ge f(\bm{r})$$ for any $\bm{r}\in\Rr^c$ satisfying $||\bm{r}-\bm{r}_0||_\infty\le\delta.$ Let $B^i:=\sum_{j=1}^m s_j(\bm{r}_i)B_j\in\Ii_2$.
    Then $a_i,\epsilon_i,\epsilon_i'$ and $(X\ni x,B^i)$ satisfy $(1)$--$(5)$ and $(7)$. The existence of $I$ follows from Proposition \ref{prop pld}. 
\end{proof}

  Recall that we say that $V\subseteq\Rr^m$ is the rational envelope of $\bm{v}\in\Rr^m$, if $V$ is the smallest affine subspace containing $\bm{v}$ which is defined over the rationals.

\begin{proof}[Proof of Theorem \ref{thm:uniformpolytopeforsurfacemlds}]
      There exist $\Qq$-linearly independent real numbers $r_0=1,r_1,\ldots,r_c$ for some $0\le c\le m$, and $\Qq$-linear functions $s_i(\bm{r}):\Rr^{c}\to \Rr$ such that $s_i(\bm{r}_0)=v_i^0$ for any $1\le i\le m$, where $\bm{r}_0=(r_1,\dots,r_c)$. Note that the map $\Rr^c\to V$ defined by
     $$\bm{r}\mapsto (s_1(\bm{r}),\dots,s_m(\bm{r}))$$    
     is one-to-one, where $V\subseteq\Rr^m$ is the rational envelope of $\bm{v}$.
      
      If $c=0$, $P=V=\{\bm{v}\}$ and there is nothing to prove. Suppose that $c\ge1$. Let $B(\bm{r}):=\sum_{i=1}^m s_i(\bm{r})B_i$, then $B(\bm{r}_0)=\sum_{i=1}^m v_i^0B_{i}$ and $(X\ni x,B(\bm{r}_0))$ is $\epsilon$-lc at $x$. By Theorem \ref{thm locally linearity of mld}, there exist a positive real number $\delta$ and a $\Qq$-linear function $f(\bm{r})$ depending only on $\epsilon,c,m,\bm{r}_0,s_i(\bm{r})$ such that $f(\bm{r}_0)\ge\epsilon$, and
      $$\mld(X\ni x,B(\bm{r}))=a(E,X,B(\bm{r}))\ge f(\bm{r})$$
      for some prime divisor $E$ over $X$ and any $\bm{r}\in\Rr^c$ satisfying $||\bm{r}-\bm{r}_0||_\infty\le\delta$.  
      We may find $2^c$ positive rational numbers $r_{i,1},r_{i,2}$ such that $r_{i,1}<r_i<r_{i,2}$ and $\max\{r_i-r_{i,1},r_{i,2}-r_i\}\le\delta$ for any $1\le i\le c$. By our choice of $\delta$, the function $\Rr^c\to \Rr$ defined by
      $$\bm{r}\mapsto \mld(X\ni x,B(\bm{r}))$$
      is a linear function on $\bm{r}\in U_c:=[r_{1,1},r_{1,2}]\times\cdots\times[r_{c,1},r_{c,2}]$. 
      
      Let $\bm{r}_j$ be the vertices of $U_c,\epsilon_j:=f(\bm{r}_j),v_i^j:=s_i(\bm{r}_j)$ for any $i,j$. Note that if $\epsilon\in\Qq$, then $\epsilon_j=f(\bm{r}_j)=\epsilon$ for any $j$. Let $P:=\{(s_1(\bm{r}),\dots,s_m(\bm{r}))\mid \bm{r}\in U_c\}\subseteq V$. Then the function $P\to \Rr$ defined in the theorem is a linear function, $(v_1^j,\dots,v_m^j)$ are vertices of $P,$ and
      $$\mld(X\ni x,\sum_{i=1}^m v_i^jB_i)=\mld(X\ni x,B(\bm{r}_j))\ge f(\bm{r}_j)=\epsilon_j$$ 
      for any $j$. 
      
      Finally, we may find positive real numbers $a_j$ such that $\sum a_j=1$ and $\sum a_j\bm{r}_j=\bm{r}_0$. Then $\sum a_j\bm{v}_j=\bm{v}$ and $\sum a_j\epsilon_j\ge\epsilon$ as $\sum_j a_j v_i^j=\sum_j a_js_i(\bm{r}_j)=s_i(\sum_j a_j\bm{r}_j)=s_i(\bm{r}_0)=v_i^0$ for any $1\le i\le m,$ and $\sum a_j\epsilon_j=\sum a_jf(\bm{r}_j)=f(\sum a_j\bm{r}_j)=f(\bm{r}_0)\ge\epsilon.$
\end{proof}	

\section{Existence of decomposed complements}
The main goal of this section is to show 
Theorem \ref{thm: decompsable epsilon complement for surfaces}. 
\subsection{Inversion of stability} As the first step in the proofs of Theorem \ref{thm epsilonlccompforsurfaces} and Theorem \ref{thm: decompsable epsilon complement for surfaces}, we need to reduce two theorems to the case when $\Ii$ is a finite set. The proofs are similar to \cite{HLS19}.
\begin{defn}[Relative $a$-lc thresholds]\label{defn alct} 
     Suppose that $(X/Z\ni z, B)$ is $a$-lc over $z$ for some non-negative real number $a$. The \emph{$a$-lc threshold} of $(X/Z\ni z,B)$ with respect to an $\Rr$-Cartier divisor $M\ge0$ is defined as
     $$\alct(X/Z\ni z,B;M):=\sup\{t\ge0\mid (X/Z\ni z,B+tM)\text{ is $a$-lc over }z\}.$$
     In particular, if $a = 0$ and $X=Z$, then we obtain the \emph{lc threshold}.    
\end{defn}

\begin{conj}[ACC for $a$-lc thresholds]\label{conj: ACC for aLCTs}
     Fix a positive integer $d$ and a non-negative real number $a$. Let $\Ii_1$ and $\Ii_2$ be two DCC sets, then 
     \begin{align*}
     a\text{-}LCT(d,\Ii_1,\Ii_2)=&\{\alct(X\ni x,B;M)\mid (X\ni x,B)\text{ is $a$-lc at }x,\\
     &\dim X=d,B\in\Ii_1,M\in\Ii_2\}
     \end{align*}
     satisfies the ACC. 
\end{conj}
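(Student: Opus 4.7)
The plan is to proceed by contradiction, extending the Hacon--McKernan--Xu strategy for the classical ACC of log canonical thresholds (the case $a=0$) to the $a$-lc setting. Suppose there is a strictly increasing sequence $t_i = \alct(X_i \ni x_i, B_i; M_i)$ in $a\text{-}LCT(d,\Ii_1,\Ii_2)$, with $B_i \in \Ii_1$ and $M_i \in \Ii_2$. Passing to a subsequence, I may assume that each coefficient of $B_i$ and $M_i$ individually converges from below (by the DCC assumption), and that $t_i \nearrow t_\infty$.

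Next I would realize the $a$-lc condition by a divisorial place: applying Lemma \ref{lem:mldisattained} to $(X_i \ni x_i, B_i + t_i M_i)$ (which is $a$-lc by definition of the threshold), there exists a prime divisor $E_i$ over $X_i$ with $a(E_i, X_i, B_i + t_i M_i) = a$. Taking a birational model $f_i\colon Y_i \to X_i$ that extracts $E_i$, an adjunction formula gives a pair $(E_i, \Diff_{E_i}(B_i + t_i M_i))$ whose coefficients lie in a DCC set (via the Shokurov--Prokhorov hyperstandard adjunction, or its generalization to generalized pairs). After a suitable MMP, $-(K_{E_i} + \Diff_{E_i})$ becomes pseudo-effective over the image of $E_i$, so we can compare via the inductive hypothesis (Global ACC in dimension $d-1$) together with Diophantine approximation techniques for real coefficients as developed in Section 5 of \cite{HLS19}. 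This should force the sequence of coefficients of $B_i$ and $M_i$, after restriction to the mld place $E_i$, to stabilize eventually, contradicting the strict monotonicity of $t_i$.

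The main obstacle is the step from dimension $\le 2$, where the proof proceeds via the explicit classification of dual graphs of minimal resolutions (as in Lemmas \ref{lemma concave}--\ref{lemma mld set} of the paper), to higher dimensions, where one instead needs the full generalized-pair machinery and an effective version of the existence of mld places. A second serious difficulty arises when $a$ is irrational: controlling how the mld varies under perturbation of coefficients requires a uniform rational polytope statement in the spirit of Theorem \ref{thm:uniformpolytopeforsurfacemlds}, which is established here only for surfaces. For $d \ge 3$ the conjecture thus appears genuinely open, and a complete solution likely requires, at minimum, Shokurov's ACC conjecture for minimal log discrepancies in dimension $d$ together with a $d$-dimensional analog of Theorem \ref{thm:uniformpolytopeforsurfacemlds}.
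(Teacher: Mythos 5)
You are right that this is stated as a \emph{Conjecture} and is not proved in the paper for general $d$. The paper only records two instances: the case $a=0$ is the ACC for log canonical thresholds of Hacon--M\textsuperscript{c}Kernan--Xu \cite[Theorem 1.1]{HMX14}, and the case $d=2$ is established in the last sentence of the proof of Lemma \ref{lem:localalctimplyrel}, by combining the ACC for minimal log discrepancies of surfaces (Theorem \ref{thm accformld}, going back to Alexeev \cite{Ale93} and reproved in Appendix B) with Birkar--Shokurov's ``mld's vs.\ thresholds'' implication \cite[Main Theorem 1.8]{BirkarSho10}. So where the paper does prove the statement, its route is quite different from yours: it deduces the ACC for $a$-lc thresholds as a formal consequence of the ACC for MLDs, rather than running an HMX-style extraction-and-adjunction argument directly on the thresholds. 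The indirect route buys a very short proof in dimension two, at the cost of requiring the (much harder) ACC for MLDs as input; your direct route has the potential to avoid that input, but at present it imports essentially the same unresolved ingredients in dimension $\ge 3$, which you candidly acknowledge.

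One concrete gap in your sketch is the claim that there always exists a divisor $E_i$ over $X_i$ with $a(E_i,X_i,B_i+t_iM_i)=a$. At the threshold $t_i$, the pair $(X_i\ni x_i, B_i+t_iM_i)$ is indeed $a$-lc, but its mld may be strictly greater than $a$: the threshold can be realized not by the mld at $x_i$ hitting $a$, but by $t_i$ being the ordinary lc threshold, i.e.\ by the pair becoming non-lc for $t>t_i$ somewhere. The paper is careful about this dichotomy in the proof of Lemma \ref{lem:localalctimplyrel} (``either $t$ is the lc threshold $\ldots$ or $\mld(X/Z\ni z,B+tM)=a$''), handling the first alternative via \cite[Theorem 1.1]{HMX14} and only the second via the mld place. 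Your argument needs the same case split; as written, the extraction of $E_i$ is not justified in the first case.

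Your final assessment is sound: for $d\ge 3$ this really is open, and the paper treats it throughout as a hypothesis (see Theorem \ref{thm: dcc limit epsilon-lc divisor}, Theorem \ref{thm: dcc limit epsilon-lc complementary}, Theorem \ref{thm: inversion of stability e-lc pair}), with the dimension-two case supplied unconditionally.
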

Hacon-M\textsuperscript{c}Kernan-Xu \cite{HMX14} proved Conjecture \ref{conj: ACC for aLCTs} for $a=0$.
It is natural to wonder if the relative version of Conjecture \ref{conj: ACC for aLCTs} holds.

\begin{conj}[ACC for relative $a$-lc thresholds]\label{conj: relative ACC for aLCTs}
     Fix a positive integer $d$ and a non-negative real number $a$. Let $\Ii_1$ and $\Ii_2$ be two DCC sets, then 
     \begin{align*}
     a\text{-}LCT_{rel}(d,\Ii_1,\Ii_2)=&\{\alct(X/Z\ni z,B;M)\mid (X/Z\ni z,B)\text{ is $a$-lc over }z,\\
     &\dim X=d,B\in \Ii_1,M\in\Ii_2\}
     \end{align*}
     satisfies the ACC. 
\end{conj}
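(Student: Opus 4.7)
The plan is to show that Conjecture \ref{conj: relative ACC for aLCTs} is equivalent to the non-relative Conjecture \ref{conj: ACC for aLCTs} in the same dimension. Since the relative version trivially specializes to the absolute one (take $Z=X$, $\pi=\id$, $z=x$), the real content is the inclusion $a\text{-}LCT_{rel}(d,\Ii_1,\Ii_2) \subseteq a\text{-}LCT(d,\Ii_1,\Ii_2)$.

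First I would establish attainment. Given $(X/Z\ni z, B)$ that is $a$-lc over $z$ and $M\ge 0$ an $\Rr$-Cartier divisor, set $t_0:=\alct(X/Z\ni z,B;M)$ and assume $t_0<\infty$ (the infinite case is trivial for ACC). For each $E\in e(z)$ and each $t<t_0$ the inequality $a(E,X,B)-t\cdot\mult_E M \ge a$ holds, so letting $t\to t_0$ shows that $(X/Z\ni z, B+t_0 M)$ is still $a$-lc over $z$; Lemma \ref{lem:mldisattained} then supplies some $E_0\in e(z)$ with $a(E_0,X,B+t_0M)=\mld(X/Z\ni z,B+t_0M)=a$. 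Using that $t_0$ is a sharp drop-point of the mld — and that on a common log resolution of $(X,B+M)$ the set of candidate divisors achieving the mld is finite — I can additionally arrange $\mult_{E_0} M>0$ by picking $E_0$ among those witnessing the strict decrease for $t>t_0$.

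Next I would identify the relative threshold with an absolute one at a well-chosen point. Let $x$ be the generic point of $\Center_X E_0$; membership $E_0\in e(z)$ forces $\pi(\bar x)=\bar z$. For any prime divisor $F$ over $X$ with $\Center_X F=\bar x$, continuity of $\pi$ yields $\pi(\Center_X F)=\pi(\bar x)=\bar z$, so $F\in e(z)$. Hence the divisors computing $\mld(X\ni x,\cdot)$ form a subset of $e(z)$, and the pointwise bound $\mld(X\ni x,\cdot)\ge \mld(X/Z\ni z,\cdot)$ shows both that $(X\ni x, B)$ is $a$-lc at $x$ and that $\alct(X\ni x,B;M)\ge t_0$. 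Conversely, $E_0$ itself has center $\bar x$, achieves $a(E_0,X,B+t_0M)=a$, and has $\mult_{E_0}M>0$, so $a(E_0,X,B+tM)<a$ for every $t>t_0$, forcing $\alct(X\ni x,B;M)\le t_0$. Therefore $\alct(X\ni x,B;M)=\alct(X/Z\ni z,B;M)$, and every element of $a\text{-}LCT_{rel}(d,\Ii_1,\Ii_2)$ lies in $a\text{-}LCT(d,\Ii_1,\Ii_2)$.

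The hard part is therefore not this reduction but Conjecture \ref{conj: ACC for aLCTs} itself: for $a=0$ it is the Hacon--M\textsuperscript{c}Kernan--Xu theorem, but for $a>0$ and $\dim X\ge 3$ it is still open and will presumably require an $\epsilon$-lc boundedness theory — of the kind underpinning Theorem \ref{thm epsilonlccompforsurfaces} — adapted into the HMX framework.
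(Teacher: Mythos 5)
Your reduction breaks at the very first step: you assert that at $t_0:=\alct(X/Z\ni z,B;M)$ one always has $\mld(X/Z\ni z,B+t_0M)=a$, and everything afterwards (the ``sharp drop-point'' argument, the choice of $E_0$ with $\mult_{E_0}M>0$) relies on the relative mld decreasing continuously through the value $a$. This is false in general: the binding constraint at $t_0$ can be log canonicity rather than the mld reaching $a$, in which case the relative mld jumps from a value strictly larger than $a$ directly to $-\infty$. Concretely, take $X=\mathbb{A}^2_{u,v}\to Z=\mathbb{A}^1_u$, $z$ the origin, $B=0$, $M=\{v=0\}$ and $a=\tfrac12$: for $t\le 1$ one computes $\mld(X/Z\ni z,tM)=1$, while for $t>1$ the pair is no longer lc over $z$ (successive blow-ups over $(0,0)$ give divisors in $e(z)$ with log discrepancy $1+k(1-t)\to-\infty$), so $t_0=1$ but $\mld(X/Z\ni z,B+t_0M)=1\neq a$, and no divisor in $e(z)$ computes the value $a$ at $t_0$. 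This is exactly why the paper's proof of Lemma \ref{lem:localalctimplyrel} starts with a dichotomy: either $t_0$ is an lc threshold over a neighborhood of the fiber, which is put into an ACC set by the ACC for lc thresholds of Hacon--M\textsuperscript{c}Kernan--Xu, or $\mld(X/Z\ni z,B+t_0M)=a$, and only in the second case does one pass to the absolute threshold at the generic point of a center. Your proposal omits the first case entirely, so as written it does not prove the implication from Conjecture \ref{conj: ACC for aLCTs}, let alone an equality of the two sets of thresholds. (In the jump case your stronger inclusion can in fact be salvaged, since any point $x$ over $\bar z$ satisfies $\mld(X\ni x,B+t_0M)\ge\mld(X/Z\ni z,B+t_0M)\ge a$ while non-lc-ness for $t>t_0$ descends to some such point, giving $\alct(X\ni x,B;M)=t_0$; but this requires noticing the case and an argument to fix a single such $x$, neither of which appears in your proposal.)

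Where your argument does overlap with the paper --- the ``latter case'' --- it is actually more careful than the paper's one-line claim, since you arrange $\mult_{E_0}M>0$ so that the absolute threshold at $x$ is exactly $t_0$ and not merely $\ge t_0$; that refinement is welcome. Two smaller points: the paper's Lemma \ref{lem:localalctimplyrel} also proves the statement unconditionally in dimension two, using the ACC for minimal log discrepancies of surfaces (Theorem \ref{thm accformld}) together with [BirkarSho10, Main Theorem 1.8] to get Conjecture \ref{conj: ACC for aLCTs} in dimension $2$; your proposal leaves this, the only part of the conjecture actually established in the paper, unaddressed. Finally, remember that $\alct$ is defined through the pair condition as well, so the threshold can also be capped by a coefficient of $B+tM$ reaching $1$; this is absorbed into the lc-threshold case, but it is another reason the value at $t_0$ need not be computed by any divisor in $e(z)$ with log discrepancy $a$.
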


In order to show Conjecture \ref{conj: relative ACC for aLCTs}, it is enough to show Conjecture \ref{conj: ACC for aLCTs}.
\begin{lem}\label{lem:localalctimplyrel}
     Let $d$ be a positive integer and $a$ a non-negative real number. Then the ACC for $a$-lc thresholds in dimension $d$ $($Conjecture \ref{conj: ACC for aLCTs}$)$ implies the ACC for relative $a$-lc thresholds in dimension $d$ $($Conjecture \ref{conj: relative ACC for aLCTs}$)$. In particular, Conjecture \ref{conj: relative ACC for aLCTs} holds in dimension $2$.
\end{lem}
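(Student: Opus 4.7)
The plan is to prove the pointwise inclusion
\[
a\text{-}LCT_{rel}(d,\Ii_1,\Ii_2)\subseteq a\text{-}LCT(d,\Ii_1,\Ii_2),
\]
from which the ACC implication is immediate. The underlying observation is that every relative $a$-lc threshold in dimension $d$ is in fact a local $a$-lc threshold in dimension $d$, realised at the generic point of a suitable mld centre.

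Fix $(X/Z\ni z,B)$ and $M\ge 0$ as in Conjecture \ref{conj: relative ACC for aLCTs}, and set $t:=\alct(X/Z\ni z,B;M)$. The first step I would carry out is to produce a prime divisor $E\in e(z)$ with $a(E,X,B+tM)=a$ and such that $E$ ``sees'' $M$, i.e.\ $\mult_E\phi^{*}M>0$ for a common log resolution $\phi$. Taking a log resolution $\phi:Y\to X$ of $(X,\Supp B\cup\Supp M)$ and running the argument of Lemma \ref{lem:mldisattained} with $B_Y+sM_Y$ in place of $B_Y$, the function $s\mapsto\mld(X/Z\ni z,B+sM)$ is a minimum of finitely many affine linear functions on $[0,\infty)$ of the form $a(E',X,B+sM)$ with $E'\in e(z)$ (obtained from points $y\in\mu^{-1}(z)$ via the log-smooth formula in Lemma \ref{lem:logsmoothmld}, where $\mu:=\pi\circ\phi$). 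In particular this function is piecewise linear, $(X/Z\ni z,B+tM)$ is itself $a$-lc by closedness, and since $t$ is finite some such linear function must have strictly negative slope and value exactly $a$ at $s=t$, yielding the desired $E$.

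Next, set $V:=\Center_X(E)$ and let $\eta$ be the generic point of $V$. Since $E$ is a prime divisor over $X$, $V\subsetneq X$, so $\dim\eta<d$ and $(X\ni\eta,B)$ is a valid sub-pair with the same coefficients. Any prime divisor $F$ with $\Center_X F=\bar\eta=V$ satisfies $\pi(\Center_X F)=\pi(V)=\bar z$, so $e(\eta)\subseteq e(z)$; hence $(X\ni\eta,B+sM)$ is $a$-lc for all $s<t$ from the relative $a$-lc over $z$, while $E\in e(\eta)$ together with $a(E,X,B+tM)=a$ and the negative slope of $a(E,X,B+sM)$ force $\alct(X\ni\eta,B;M)=t$. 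Consequently $t\in a\text{-}LCT(d,\Ii_1,\Ii_2)$, proving the main implication. The ``In particular'' clause then follows from the ACC for $a$-lc thresholds on surfaces, i.e.\ Conjecture \ref{conj: ACC for aLCTs} in dimension $2$, which is known. The main technical point is the attainment step for the relative threshold; once it is in place, the comparison $e(\eta)\subseteq e(z)$ closes the argument formally.
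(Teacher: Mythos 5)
The Case-A half of your argument (when the relative mld actually reaches the value $a$ at $t$) is fine, and your insistence on choosing an mld place $E$ with strictly negative slope, together with the comparison $e(\eta)\subseteq e(z)$, is if anything more careful than the corresponding step in the paper. The gap is in the very first claim: the function $s\mapsto\mld(X/Z\ni z,B+sM)$ is \emph{not} a minimum of finitely many affine functions on all of $[0,\infty)$. The log-smooth formula of Lemma \ref{lem:logsmoothmld} only applies while the pullback coefficients stay $\le 1$; once some divisor meeting the fiber acquires coefficient $>1$ (equivalently, once $(X,B+sM)$ stops being lc, or even stops being a pair, at a point of $\pi^{-1}(z)$), the relative mld drops to $-\infty$ discontinuously. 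Consequently your deduction ``since $t$ is finite some such linear function must have strictly negative slope and value exactly $a$ at $s=t$'' can fail: the threshold may be dictated by this lc/coefficient constraint rather than by the mld crossing $a$. Concretely, take $X=\bA^2\to Z=\bA^1$ the projection, $z$ a closed point, $a=\tfrac12$, $B=\tfrac34 C$ and $M=C$ with $C$ a section meeting the fiber transversally at a point $q$. Then $t=\alct(X/Z\ni z,B;M)=\tfrac14$, because beyond $s=\tfrac14$ the coefficient of $C$ exceeds $1$ and the pair is no longer lc at $q\in\pi^{-1}(z)$; yet at $s=t$ every $E\in e(z)$ satisfies $a(E,X,B+tM)\ge 1>\tfrac12$, so no mld place with value $a$ exists, your divisor $E$ cannot be produced, and the recipe $\eta=$ generic point of $\Center_X(E)$ has no input. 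In particular the pointwise inclusion as you derive it is not established in this case.

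The paper avoids this by splitting into two cases: either $t$ equals the lc threshold of $(X,B)$ with respect to $M$ over a neighborhood of $\pi^{-1}(z)$, in which case $t$ lies in an ACC set by the ACC for lc thresholds \cite[Theorem 1.1]{HMX14} (no appeal to Conjecture \ref{conj: ACC for aLCTs} is needed), or else $\mld(X/Z\ni z,B+tM)=a$, which is the situation your argument does handle. If you want to keep your stronger claim that every relative threshold is itself a local $a$-lc threshold, it can in fact be rescued, but it needs an extra argument in the lct case: at a fiber point $\xi$ where lc-ness first fails (e.g.\ the image of the generic point of $D_0\cap\mu^{-1}(z)$ for the divisor $D_0$ whose coefficient reaches $1$ at $s=t$), one has $\mld(X\ni\xi,B+sM)\ge a$ for $s<t$ because the relative mld bounds all fiber points from below, and $\mld(X\ni\xi,B+sM)=-\infty$ for $s>t$, so $\alct(X\ni\xi,B;M)=t$ (in the example above, $\xi=q$). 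That step, or alternatively the paper's appeal to \cite{HMX14}, is what is missing from your proposal.
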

\begin{proof}
     Let $t:=\alct(X/Z\ni z,B;M)$, then either $t$ is the lc threshold of $(X,B)$ with respect to $M$ over a neighborhood of $\pi^{-1}(z)$ or $\mld(X/Z\ni z,B+tM)=a$, where $\pi$ is the morphism $X\to Z$. 
     In the former case, $t$ belongs to an ACC set by \cite[Theorem 1.1]{HMX14}. In the latter case, it suffices to show that $\mld(X\ni x,B+tM)=a$ for some point $x\in \pi^{-1}(z)$, and hence $t=\alct(X\ni x,B;M)$ belongs to an ACC set by assumption. Indeed by Lemma \ref{lem:mldisattained}, $\mld(X/Z\ni z,B+tM)=a(E,X,B+tM)$ for some $E\in e(z)$. Let $x$ be the generic point of $\Center_XE$, then $\mld(X/Z\ni z,B+tM)=\mld(X\ni x,B+tM).$
     
     By the ACC for minimal log discrepancies in dimension 2 (c.f. Lemma \ref{thm accformld}) and \cite[Main Theorem 1.8]{BirkarSho10}, Conjecture \ref{conj: ACC for aLCTs} holds in dimension $2$.
\end{proof}

\begin{lem}\label{lem: accdcc projection}
Let $\alpha$ be a positive real number, $\Ii\subseteq\Rr_{\ge0}$ a bounded DCC set and $\Ii''=\bar{\Ii''}$ an ACC set. Then there exist a finite set $\Ii'\subseteq \bar\Ii$ and a projection $g:\bar\Ii\to \Ii'$ (i.e., $g\circ g=g$) depending only on $\alpha$, $\Ii$ and $\Ii''$ satisfying the following properties:
 \begin{enumerate}
 	\item $\gamma+\alpha \ge g(\gamma)\ge \gamma$ for any $\gamma\in \Ii$, 
 	\item $g(\gamma')\ge g(\gamma)$ for any $\gamma'\ge \gamma$, $\gamma,\gamma'\in\Ii$, and
 	\item for any $\beta\in \Ii''$ and $\gamma\in \Ii$, if $\beta\ge \gamma$, then $\beta\ge g(\gamma)$.
 \end{enumerate}
\end{lem}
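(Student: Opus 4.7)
The plan is to peel off finitely many ``blocks'' from the top of $\bar{\Ii}$ and declare $g$ constant on each, with the block's maximum as the value. Two preliminary observations drive the construction. First, $\bar{\Ii}$ is itself a DCC set: any strictly decreasing sequence in $\bar{\Ii}$ can be shadowed by a strictly decreasing sequence in $\Ii$, obtained by approximating each term within, say, one third of the neighbouring gaps. Second, for any $\gamma\in\Rr$, whenever $\Ii''\cap[\gamma-\alpha,\gamma)$ is non-empty it has a maximum: its supremum cannot equal $\gamma$ (which would yield an increasing sequence in $\Ii''$ converging to $\gamma$ from below, violating ACC), so the supremum is strictly less than $\gamma$ and, by closedness of $\Ii''$, belongs to $\Ii''$.

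I define $g$ by the following inductive procedure. Since $\bar{\Ii}$ is closed and bounded, its maximum $\gamma_1$ exists. Assuming $\gamma_1>\cdots>\gamma_k$ have been chosen, set $\beta_k^{*}:=\max(\Ii''\cap[\gamma_k-\alpha,\gamma_k))$ when that intersection is non-empty and $\beta_k^{*}:=-\infty$ otherwise, put $a_k:=\max(\beta_k^{*},\gamma_k-\alpha)$, and define the block $B_k:=\bar{\Ii}\cap(a_k,\gamma_k]$ on which I declare $g\equiv\gamma_k$. If $\bar{\Ii}\cap(-\infty,a_k]=\emptyset$ I stop; otherwise let $\gamma_{k+1}$ be its maximum (which exists since this set is closed and bounded) and continue. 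By construction $\gamma_{k+1}\le a_k<\gamma_k$, so $(\gamma_k)$ is strictly decreasing in $\bar{\Ii}$ and therefore terminates after some $N$ steps by the DCC of $\bar{\Ii}$. Set $\Ii':=\{\gamma_1,\dots,\gamma_N\}\subseteq\bar{\Ii}$.

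Finally I verify the four required properties. The blocks $B_1,\dots,B_N$ partition $\bar{\Ii}$ in decreasing order; since $\gamma_k\in B_k$ and $g(\gamma_k)=\gamma_k$, the map $g$ is indeed a projection onto $\Ii'$. Monotonicity (2) is immediate from the ordering of the blocks and the constancy of $g$ on each. For (1), any $\gamma\in B_k$ satisfies $a_k<\gamma\le\gamma_k$ and $a_k\ge\gamma_k-\alpha$, so $\gamma\le g(\gamma)=\gamma_k\le\gamma+\alpha$. For (3), let $\gamma\in B_k$ and $\beta\in\Ii''$ with $\beta\ge\gamma$. If $\beta<\gamma_k$, then either $\beta\in[\gamma_k-\alpha,\gamma_k)$, forcing $\beta\le\beta_k^{*}\le a_k<\gamma$, or else $\beta<\gamma_k-\alpha\le a_k<\gamma$; both contradict $\beta\ge\gamma$. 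Hence $\beta\ge\gamma_k=g(\gamma)$.

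The only substantive point is finite termination of the recursion, which falls out of the DCC of $\bar{\Ii}$ together with the ACC of $\Ii''$: the former bounds the length of any descending chain of block-maxima, while the latter guarantees the strict gap $a_k<\gamma_k$ that keeps the recursion from stalling at a single value.
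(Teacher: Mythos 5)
Your proof is correct, and it takes a genuinely different route from the paper's. The paper works with a fixed uniform grid: it covers $[0,M]$ by finitely many intervals of length at most $\alpha$, sets $f(\gamma):=\min\{\beta\in\Ii''\mid\beta\ge\gamma\}$, defines $g(\gamma)$ as the largest element of $\bar\Ii$ in the same grid interval that is $\le f(\gamma)$, and then proves finiteness of the image $\Ii'$ indirectly, by showing it is simultaneously DCC (as a subset of $\bar\Ii$) and ACC (via a subsequence argument transferring the ACC of $\Ii''$ through $f$). You instead build $g$ by a greedy recursion from the top of $\bar\Ii$: each block $(a_k,\gamma_k]$ is cut either at $\gamma_k-\alpha$ or at the largest element of $\Ii''$ strictly below $\gamma_k$ within distance $\alpha$, and everything in the block is sent to $\gamma_k$. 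This makes finiteness of $\Ii'$ immediate from termination of the recursion (DCC of $\bar\Ii$, which you correctly verify is inherited from $\Ii$), and properties (1)--(3) become direct one-line checks; the roles of the hypotheses are cleanly separated, with closedness and ACC of $\Ii''$ used exactly once, to guarantee $\beta_k^*<\gamma_k$ so the recursion strictly descends and $\gamma_k$ lies in its own block. The trade-off is that the paper's grid construction gives $g$ by a single explicit formula and localizes the $\alpha$-bound trivially, whereas your construction is adaptive and yields a piecewise-constant $g$ whose monotonicity and idempotence are built in by design; both arguments are elementary and of comparable length, and yours arguably isolates more transparently why the image must be finite.
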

\begin{proof}	
	We may replace $\Ii$ by $\bar\Ii$ and therefore assume that $\Ii=\bar\Ii$. Since $\Ii$ is a bounded DCC set, there exists a positive real number $M\ge1$, such that $\Ii\subseteq [0,M]$. 
		
	Let $N:=\lceil\frac{M}{\alpha}\rceil,\Ii_0:=\Ii\cap [0,\frac{1}{N}]$, and
	$$\Ii_k:=\Ii\cap (\frac{k}{N},\frac{k+1}{N}]$$
	for any $1\leq k\leq N-1$.
	
	For any $\gamma\in\Ii$, there exists a unique $0\leq k\leq N-1$ such that $\gamma\in\Ii_{k}$. If $\gamma\in\Ii$ and $\gamma>\max_{\beta\in\Ii''}\{\beta\}$, then we let $g(\gamma)=\max_{\beta\in \Ii_k}\{\beta\}$ which has the required properties. In the following, we may assume that for any $\gamma\in\Ii$, $\gamma\le\max_{\beta\in\Ii''}\{\beta\}$.
	
	Since $\Ii=\bar\Ii$ and $\Ii''=\bar\Ii''$, we may define $f:\Ii\to\Ii'', g:\Ii\to\Ii$ in the following ways,
	$$f(\gamma):=\min\{\beta\in\Ii''\mid \beta\geq \gamma\},\text{and}\ g(\gamma):=\max\{\beta\in\Ii_k\mid\beta\leq f(\gamma),\gamma\in\Ii_k\}.$$
	For any $\gamma\in\Ii$, it is clear that 
	$$0\le g(\gamma)-\gamma\leq\frac{1}{N}\leq\alpha.$$
	Since $f(\gamma)\ge g(\gamma)\geq \gamma$, 
	$f(g(\gamma))=f(\gamma)$ and $g(g(\gamma))=g(\gamma)$. We will show that $\Ii':=\{g(\gamma)\mid\gamma\in\Ii\}$ has the required properties.
	
	It suffices to show that $\Ii'$ is a finite set. Since $\Ii'\subseteq\Ii$, $\Ii'$ satisfies the DCC. We only need to show that $\Ii'$ satisfies the ACC. Suppose that there exists a strictly increasing sequence $g(\gamma_1)<g(\gamma_2)<\ldots,$ where $\gamma_j\in \Ii$. Since $f(\gamma_j)$ belongs to the ACC set $\Ii''$, possibly passing to a subsequence, we may assume that $f(\gamma_j)$ is decreasing. Thus $g(\gamma_j)$ is decreasing, and we get a contradiction. Therefore $\Ii'$ satisfies the ACC. 	
\end{proof}

The proof of Theorem \ref{thm: dcc limit epsilon-lc divisor} is very similar to \cite[Theorem 5.21]{HLS19}.
\begin{thm}\label{thm: dcc limit epsilon-lc divisor}
Let $d$ be a positive integer, $\epsilon$ a non-negative real number, $\alpha$ a positive real number, and $\Ii\subseteq [0,1]$ a DCC set. Suppose that Conjecture \ref{conj: ACC for aLCTs} holds in dimension $d$ with $a=\epsilon$. Then there exist a finite set $\Ii'\subseteq \bar\Ii$ and a projection $g:\bar\Ii\to \Ii'$ depending only on $d,\epsilon,\alpha$ and $\Ii$ satisfying the following. 

Assume that $(X/Z\ni z,B:=\sum_{i=1}^sb_iB_i)$ is a pair of dimension $d$ such that
\begin{itemize}
    \item $b_i\in\Ii$ for any $1\le i\le s$,
    \item $B_i\ge0$ is a $\Qq$-Cartier Weil divisor for any $1\le i\le s$, and
    \item $(X/Z\ni z,B)$ is $\epsilon$-lc over $z$.
\end{itemize}
Then
\begin{enumerate}
	\item  $\gamma+\alpha \ge g(\gamma)\ge \gamma$ for any $\gamma\in\Ii$, 
\item $g(\gamma')\ge g(\gamma)$ for any $\gamma'\ge \gamma$, $\gamma,\gamma'\in\Ii$, and
	\item  $(X/Z\ni z,\sum_{i=1}^s g(b_i)B_i)$ is $\epsilon$-lc over $z$.
\end{enumerate}
\end{thm}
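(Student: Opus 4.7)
The plan is to deduce this theorem from Lemma \ref{lem: accdcc projection} applied with $\Gamma''$ chosen to be the closure of an appropriate ACC set of relative $\epsilon$-lc thresholds, and then promote the single-coefficient perturbation encoded in Lemma \ref{lem: accdcc projection}(3) to a simultaneous perturbation of all the $b_i$ via induction on the index.

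Concretely, set
$$\Gamma'' := \overline{\epsilon\text{-}LCT_{rel}(d,\bar\Gamma\cup\{0\},\{1\})}$$
(with notation from Conjecture \ref{conj: relative ACC for aLCTs}; by decomposing into prime components we may assume each $B_i$ is a prime Weil divisor, so that $M=B_i$ has coefficient $1$). Since $\bar\Gamma\cup\{0\}$ is DCC, Conjecture \ref{conj: ACC for aLCTs} in dimension $d$ combined with Lemma \ref{lem:localalctimplyrel} shows the set inside the closure is ACC, and one checks directly that the closure of an ACC set is ACC. Apply Lemma \ref{lem: accdcc projection} with the positive real number $\alpha$, the DCC set $\bar\Gamma$, and the ACC set $\Gamma''$ to obtain a finite set $\Gamma'\subseteq\bar\Gamma$ and a projection $g\colon\bar\Gamma\to\Gamma'$ satisfying conclusions (1)--(3) of that lemma. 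Restricting $g$ to $\Gamma\subseteq\bar\Gamma$ yields conclusions (1) and (2) of the theorem at once.

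For (3) I argue by induction on $0\le k\le s$ that
$$\Bigl(X/Z\ni z,\ B^k := \sum_{j\le k}g(b_j)B_j+\sum_{j>k}b_jB_j\Bigr)$$
is $\epsilon$-lc over $z$; the case $k=s$ is the theorem. The case $k=0$ is the hypothesis. For the inductive step, write $C:=B^k-b_{k+1}B_{k+1}$, so the coefficients of $C$ lie in $\bar\Gamma\cup\{0\}$ and, by induction, $(X/Z\ni z,\,C+b_{k+1}B_{k+1})$ is $\epsilon$-lc. Hence $t := \alct(X/Z\ni z,\,C;\,B_{k+1}) \ge b_{k+1}$, and the supremum defining $t$ is attained: for each $E\in e(z)$, the function $s\mapsto a(E,X,C+sB_{k+1})$ is affine, hence continuous, so the closed condition $a(E,X,C+sB_{k+1})\ge\epsilon$ for all $E\in e(z)$ persists at $s=t$. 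By construction $t\in\Gamma''$, and $b_{k+1}\in\Gamma$, so Lemma \ref{lem: accdcc projection}(3) gives $t\ge g(b_{k+1})$. Attainment then forces $(X/Z\ni z,\,C+g(b_{k+1})B_{k+1}) = (X/Z\ni z,\,B^{k+1})$ to be $\epsilon$-lc, completing the induction.

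The main obstacle is calibrating $\Gamma''$ to be large enough: at the $k$-th inductive step, $C$ has coefficients mixing $b_j\in\Gamma$ (for $j>k+1$) and $g(b_j)\in\Gamma'\subseteq\bar\Gamma$ (for $j\le k$), so drawing $\Gamma''$ only from thresholds over $\Gamma$ would not generally contain $t$. Taking instead $\bar\Gamma\cup\{0\}$ as the input DCC set for $\Gamma''$ guarantees $t\in\Gamma''$ at every stage, and Conjecture \ref{conj: ACC for aLCTs} (through Lemma \ref{lem:localalctimplyrel}) still controls this enlarged set. The technical point of attainment of $\alct$ is handled, as indicated, by the affineness of log discrepancies in the perturbation parameter.
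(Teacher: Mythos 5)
Your argument is correct and is essentially the paper's proof: replace $\Gamma$ by $\bar\Gamma$, take $\Gamma''$ to be the closure of an appropriate set of relative $\epsilon$-lc thresholds (ACC by Conjecture~\ref{conj: ACC for aLCTs} via Lemma~\ref{lem:localalctimplyrel}), apply Lemma~\ref{lem: accdcc projection} to produce $g$, and upgrade the single-coefficient conclusion of Lemma~\ref{lem: accdcc projection}(3) to all $s$ coefficients by locating the first index at which replacing $b_{j+1}$ by $g(b_{j+1})$ destroys $\epsilon$-lc-ness and observing that the corresponding threshold lies in $\Gamma''$. The only cosmetic deviations are that the paper takes $\Gamma'' = \overline{\epsilon\text{-}LCT_{rel}(d,\bar\Gamma,\Zz_{\ge0})}$, which handles $M = B_{j+1}$ as an arbitrary non-negative Weil divisor directly and so sidesteps your slightly informal reduction to prime $B_i$, and that the paper phrases your forward induction contrapositively while leaving the attainment of $\alct$ (which you rightly justify by continuity/closedness) implicit.
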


\begin{proof}
Possibly replacing $\Ii$ by $\bar\Ii$, we may assume that $\Ii=\bar\Ii$. Let $\Ii'':=\overline{a\text{-}LCT_{rel}(d,\Ii,\Zz_{\ge0})}$. Since we assume that Conjecture \ref{conj: ACC for aLCTs} holds in dimension $d$ with $a=\epsilon$, by Lemma \ref{lem:localalctimplyrel}, $\Ii''$ satisfies the ACC. By Lemma \ref{lem: accdcc projection}, there exists a projection $g:\bar\Ii\to \Ii'$ satisfying Lemma \ref{lem: accdcc projection}(1)--(3).

It suffices to show that $(X/Z\ni z,\sum_{i=1}^s g(b_i)B_i)$ is $\epsilon$-lc over $z$. Otherwise, there exists some $1\le j\le s$, such that $(X/Z\ni z,\sum_{i=1}^j g(b_i)B_i+\sum_{i=j+1}^s b_iB_i)$ is $\epsilon$-lc over $z$, and $(X/Z\ni z,\sum_{i=1}^{j+1} g(b_i)B_i+\sum_{i=j+2}^s b_iB_i)$ is not $\epsilon$-lc over $z$. Let 
$$\beta:=\alct(X/Z\ni z,\sum_{i=1}^{j} g(b_i)B_i+\sum_{i=j+2}^s b_iB_i;B_{j+1}).$$
Then $g(b_{j+1})> \beta\ge b_{j+1}$. Since $g(b_i),b_i\in\Ii$ for any $i$, we have $\beta\in \Ii''$ which contradicts Lemma \ref{lem: accdcc projection}(3).
\end{proof}

\begin{lem}\label{lem:run anti pair MMP}
Let $(X/Z\ni z,B)$, $(X/Z\ni z,B')$ be two pairs such that
\begin{enumerate}
    \item $\Supp B'\subseteq \Supp B,\Supp\lfloor B\rfloor\subseteq\Supp\lfloor B'\rfloor$, and
    \item $(X/Z\ni z,B)$ has an $\Rr$-complement $(X/Z\ni z,B+G)$ with $\lfloor B+G\rfloor=\lfloor B\rfloor$.
\end{enumerate}
Then there exist a positive real number $u$ and a pair $(X/Z\ni z,\Delta)$, such that $-(K_X+B')\sim_{\Rr} u(K_X+\Delta)$ over a neighborhood of $z$. 

Moreover,  if $(X/Z\ni z,B+G)$ is dlt over a neighborhood of $z$, then $(X/Z\ni z,\Delta)$ is dlt over a neighborhood of $z$.
\end{lem}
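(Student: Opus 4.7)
My plan is to build $\Delta$ as a small perturbation of the complement $B+G$ away from $B'$. Using $K_X+B+G\sim_{\Rr} 0$ over a neighborhood of $z$, set
$$\Delta_u:=\frac{(u+1)(B+G)-B'}{u}$$
for a positive real number $u$; then $u(K_X+\Delta_u)\sim_{\Rr}(u+1)(K_X+B+G)-(K_X+B')\sim_{\Rr}-(K_X+B')$ over that neighborhood. The task reduces to choosing $u$ so large that $(X,\Delta_u)$ is a pair (and remains dlt in the moreover part).

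First I would verify that for $u$ sufficiently large, $\Delta_u$ has coefficients in $[0,1]$. For each prime divisor $D$, the coefficient of $D$ in $\Delta_u$ is $\mult_D(B+G)+\frac{\mult_D(B+G)-\mult_D B'}{u}$; nonnegativity is automatic when $\mult_D(B+G)=0$ (since then $\mult_D B'=0$ by $\Supp B'\subseteq\Supp B\subseteq\Supp(B+G)$) and otherwise holds for $u\gg 0$. For the upper bound, the case $\mult_D(B+G)=1$ forces the coefficient to equal $1$ exactly because $D\subseteq\Supp\lfloor B\rfloor\subseteq\Supp\lfloor B'\rfloor$ gives $\mult_D B'=1$, while the case $\mult_D(B+G)<1$ gives coefficient $<1$ for $u$ large. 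In particular $\lfloor\Delta_u\rfloor=\lfloor B+G\rfloor=\lfloor B\rfloor$, which already gives the first assertion of the lemma.

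For the moreover part, let $U$ be the log smooth open locus of $(X,B+G)$, which contains every lc center of $(X,B+G)$ by the dlt hypothesis. Since $\Supp\Delta_u\subseteq\Supp(B+G)$, the pair $(U,\Delta_u|_U)$ is log smooth. Passing to a common log resolution $\pi\colon Y\to X$ of $(X,B+G+B')$, each divisor $E$ on $Y$ satisfies
$$a(E,X,\Delta_u)=\frac{(u+1)\,a(E,X,B+G)-a(E,X,B')}{u}.$$
The key claim is that every lc place $E$ of $(X,B+G)$, i.e.\ every $E$ with $a(E,X,B+G)=0$, automatically satisfies $a(E,X,B')=0$: for divisorial lc places this is immediate from $\lfloor B\rfloor\subseteq\lfloor B'\rfloor$; for non-divisorial ones, dlt places the generic point of $\Center_X E$ inside $U$, where the toric/weighted-blow-up formula for log discrepancies on the log smooth pair $(U,(B+G)|_U)$ shows that $a(E,X,B+G)=0$ forces every component of $B+G$ contributing nontrivially to the valuation of $E$ to have coefficient $1$, hence to lie in $\lfloor B\rfloor\subseteq\lfloor B'\rfloor$, and the same linear formula then yields $a(E,X,B')=0$. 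Granted this, for $u$ larger than the finitely many thresholds given by the divisors on $Y$ with $a(E,X,B+G)>0$, we have $a(E,X,\Delta_u)\ge 0$ for every $E$ on $Y$, so $(X,\Delta_u)$ is lc, and the lc places of $(X,\Delta_u)$ coincide with those of $(X,B+G)$; in particular their centers meet $U$, so $(X,\Delta_u)$ is dlt over a neighborhood of $z$.

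The main obstacle is the log-smooth discrepancy computation for non-divisorial lc places: one must argue, using the toric description of valuations in the log smooth locus, that the vanishing of $a(E,X,B+G)$ pins down exactly which components of $B+G$ can contribute to $E$ and then transfer that information to $B'$ via condition (1).
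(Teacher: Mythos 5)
Your $\Delta_u=\frac{(u+1)(B+G)-B'}{u}$ is exactly the paper's $\Delta=\frac{B+G-\epsilon_0 B'}{1-\epsilon_0}$ under the substitution $u=(1-\epsilon_0)/\epsilon_0$, and the coefficient bookkeeping matches the paper's choice of $\epsilon_0\ll 1$. The paper disposes of the dlt part with a one-line ``we may choose $0<\epsilon_0\ll 1$''; your argument that every lc place of $(X,B+G)$ remains an lc place of $(X,B')$ via condition (1) and the snc formula, so finitely many thresholds on a fixed log resolution suffice, correctly fills in what the paper leaves implicit.
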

\begin{proof}
 By assumption, there exists a positive real number $\epsilon_0<1$, such that $B-\epsilon_0B'\ge0$, and any coefficient of $\Delta:=\frac{B-\epsilon_0B'}{1-\epsilon_0}+\frac{G}{1-\epsilon_0}$ is at most 1. Then over a neighborhood of $z$, we have
   	$-\epsilon_0 (K_X+B')\sim_{\Rr}(1-\epsilon_0)(K_X+\Delta),$
 Hence $(X/Z\ni z,\Delta)$ and $u:=\frac{1-\epsilon_0}{\epsilon_0}$ have the required properties.
   	
   	If $(X/Z\ni z,B+G)$ is dlt over a neighborhood of $z$, then we may choose $0<\epsilon_0\ll 1$, such that $(X/Z\ni z,\Delta)$ is dlt over a neighborhood of $z$.
\end{proof}

The proof of Theorem \ref{thm: dcc limit epsilon-lc complementary} is very similar to that of \cite[Theorem 5.22]{HLS19}.
\begin{thm}\label{thm: dcc limit epsilon-lc complementary}
	Let $d$ be a positive integer, $\epsilon$ a non-negative real number, and $\Ii\subseteq [0,1]$ a DCC set. Suppose that either $d\le 2$ or Conjecture \ref{conj: ACC for aLCTs} holds in dimension $d$ with $a=\epsilon$. Then there exist a finite set $\Ii'\subseteq \bar\Ii$, and a projection $g:\bar\Ii\to \Ii'$ depending only on $\epsilon,d$ and $\Ii$ satisfying the following. 
	
	Assume that $(X/Z\ni z,B:=\sum b_iB_i)$ is a pair such that
	\begin{itemize}
	    \item $X$ is $\Qq$-factorial of dimension $d$,
	    \item $B_i\ge0$ is a Weil divisor for any $i$,
	    \item $b_i\in\Ii$ for any $i$,
	    \item $(X/Z\ni z,B)$ has an $(\epsilon,\Rr)$-complement $(X/Z\ni z,B+G)$ with  $\lfloor B+G\rfloor=\lfloor B\rfloor$, and
	    \item either $(X/Z\ni z,B+G)$ is dlt over a neighborhood of $z$ or $d\le 2$.
	    
	\end{itemize}
	Then possibly shrinking $Z$ near $z,$ we have
	\begin{enumerate}
		\item  $g(\gamma)\ge \gamma$ for any $\gamma\in \Ii$,
	    \item $g(\gamma')\ge g(\gamma)$ for any $\gamma'\ge \gamma$, $\gamma,\gamma'\in\Ii$, 
		\item  $(X/Z\ni z,\sum g(b_i)B_i)$ is $\epsilon$-lc over $z$, and
		\item $-(K_X+\sum g(b_i)B_i)$ is pseudo-effective over $Z$.
	\end{enumerate}

	Moreover, if one of the following holds:
\begin{enumerate}[label=(\roman*)]
	    \item $d\le 2$, or
	    \item $X$ is of Fano type over $Z$, or
	    \item the abundance conjecture holds in dimension $d$.
\end{enumerate}
	Then $-(K_X+\sum g(b_i)B_i)$ has a good minimal model over $Z$, and $(X/Z\ni z,\sum g(b_i)B_i)$ is $(\epsilon,\Rr)$-complementary. In particular, there exists a positive real number $\tau$ depending only on $d,\epsilon$ and $\Ii$ such that for any $B,B'\in \Ii$, if $B\le B'$ and $||B-B'||<\tau$, then $(X/Z\ni z,B)$ is $(\epsilon,\Rr)$-complementary if and only if $(X/Z\ni z,B')$ is $(\epsilon,\Rr)$-complementary.
\end{thm}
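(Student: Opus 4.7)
The plan is to combine Theorem~\ref{thm: dcc limit epsilon-lc divisor}, Lemma~\ref{lem:run anti pair MMP}, and a minimal model program argument, following the strategy of the Fano type version \cite[Theorem 5.22]{HLS19} but dropping that hypothesis. First, I would construct the projection $g$ via Lemma~\ref{lem: accdcc projection} applied to the target ACC set $\Ii'' := \overline{\epsilon\text{-}LCT_{rel}(d,\Ii,\Zz_{\ge 0})}$; the ACC of $\Ii''$ holds by Lemma~\ref{lem:localalctimplyrel} combined with the standing hypothesis (which is automatic when $d\le 2$). The parameter $\alpha$ in that lemma will be chosen small enough to also secure the ``In particular'' statement. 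Conclusions (1) and (2) then follow directly from Lemma~\ref{lem: accdcc projection}, while (3) follows from Theorem~\ref{thm: dcc limit epsilon-lc divisor}, since the existence of an $(\epsilon,\Rr)$-complement of $(X/Z\ni z,B)$ guarantees that it is $\epsilon$-lc over $z$.

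For conclusion (4), set $B' := \sum g(b_i)B_i$. By arranging $g(0)=0$ and $g(1)=1$ in the construction of $g$, the support inclusions $\Supp B' \subseteq \Supp B$ and $\Supp\lfloor B\rfloor \subseteq \Supp\lfloor B'\rfloor$ hold, so Lemma~\ref{lem:run anti pair MMP} yields a positive real number $u$ and a pair $(X/Z\ni z,\Delta)$ with $-(K_X+B')\sim_{\Rr} u(K_X+\Delta)$ over a neighborhood of $z$, dlt provided $(X/Z\ni z,B+G)$ is dlt or $d\le 2$. To verify pseudo-effectivity of $-(K_X+B')$, I would run a $(K_X+\Delta)$-MMP over $Z$, which is available via \cite{Fuj12} in the dlt case and via surface MMP when $d=2$. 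The key observation is the perturbative identity $(1-\epsilon_0)(K_X+\Delta) = (K_X+B+G) - \epsilon_0(K_X+B')$ extracted from the proof of Lemma~\ref{lem:run anti pair MMP}: since $K_X+B+G\sim_{\Rr,Z,\text{near }z} 0$, taking $\alpha$ and hence $\epsilon_0$ sufficiently small forces $K_X+\Delta$ to be numerically close to zero over $Z$ near $z$, ruling out that the MMP terminates in a Mori fiber space and yielding the desired pseudo-effectivity.

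For the ``Moreover'' part, under each of (i)--(iii) the $(K_X+\Delta)$-MMP terminates with a good minimal model (by abundance in dimension $2$, by \cite{BCHM10} in the Fano type case using the $\Qq$-factoriality of $X$, or directly under the abundance hypothesis). On the good minimal model $-(K_X+B')$ becomes semiample over $Z$; pulling back to a common resolution and applying Lemma~\ref{lem semiamplecomplement} produces an $(\epsilon,\Rr)$-complement $(X/Z\ni z,B'+G')$ of $(X/Z\ni z,B')$. For the ``In particular'' statement, I would choose $\tau$ together with $\alpha$ small enough so that whenever $b\le b'\le b+\tau$ with $b,b'\in\bar\Ii$ one has $g(b) = g(b')$, arranged by a mild refinement of the clustering in the proof of Lemma~\ref{lem: accdcc projection}; the $(\Leftarrow)$ direction is then immediate from $B\le B'$, while for $(\Rightarrow)$ I apply the ``Moreover'' part to $B$ to obtain a complement of $\sum g(b_i)B_i = \sum g(b'_i)B_i$, which dominates $B'$ and therefore supplies a complement of $(X/Z\ni z,B')$. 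The main obstacle is excluding the Mori fiber space outcome in the $(K_X+\Delta)$-MMP when verifying (4); I expect this to require a careful use of the perturbative identity above together with the closedness of the relative pseudo-effective cone under small numerical perturbations.
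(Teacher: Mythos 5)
The construction of $g$, conclusions (1)--(3), and the treatment of the ``Moreover'' and ``In particular'' parts match the paper's proof in spirit. The gap is in your argument for conclusion (4), the pseudo-effectivity of $-(K_X+\sum g(b_i)B_i)$, and it is a genuine one.

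Your ``perturbative identity'' idea does not work. Lemma~\ref{lem:run anti pair MMP} gives an \emph{exact} proportionality $-(K_X+B')\sim_{\Rr,Z} u(K_X+\Delta)$ near $z$ with $u=\frac{1-\epsilon_0}{\epsilon_0}>0$. Thus $K_X+\Delta$ and $-(K_X+B')$ span the \emph{same} ray in the relative N\'eron--Severi space over $Z$ near $z$, regardless of how small you make $\alpha$ or $\epsilon_0$. Shrinking $\epsilon_0$ only rescales $u$; it does not move $K_X+\Delta$ toward the zero class, and it certainly does not move it into the relative pseudo-effective cone. If $-(K_X+B')$ is not pseudo-effective over $Z$ then neither is $K_X+\Delta$, and the $(K_X+\Delta)$-MMP \emph{will} reach a Mori fiber space --- there is no perturbation that can avoid it, and ``closedness of the relative pseudo-effective cone under small numerical perturbations'' is irrelevant since nothing is being perturbed off the ray. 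So the central step of your argument for (4) collapses.

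What the paper actually does to establish (4) is a contradiction argument that you are missing. Suppose (4) fails along a sequence $(X_k/Z_k\ni z_k, B_{(k)})$ with the projection threshold $1/k\to 0$. Then each $-(K_{X_k}+B_{(k)}')$-MMP reaches a Mori fiber space $Y_k\to Z_k'$ on which $-(K_{Y_k}+B_{(Y_k)}')$ is antiample but $-(K_{Y_k}+B_{(Y_k)})$ is nef (since $-(K_{X_k}+B_{(k)})$ is pseudo-effective over $Z_k$ to begin with). By continuously increasing a single coefficient from $b_{k,k_j}$ toward $g_k(b_{k,k_j})$, one finds $b_k^+\in[b_{k,k_j},g_k(b_{k,k_j}))$ making the restriction $(F_k,B_{F_k}^+)$ to a general fiber of $Y_k\to Z_k'$ an lc pair with $K_{F_k}+B_{F_k}^+\equiv 0$. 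Since $g_k(b_{k,k_j})-b_k^+>0$ tends to $0$, Lemma~\ref{lem: strictly increasing seq b_k} produces a strictly increasing subsequence of $b_k^+$, which contradicts the Global ACC \cite[Theorem~1.4]{HMX14}. This ACC input on the Mori fiber space fibers is the actual content of the proof of (4); your proposal has no replacement for it.
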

\begin{proof}
	Suppose that the theorem does not hold. By Theorem \ref{thm: dcc limit epsilon-lc divisor}, there exist a sequence of $\Qq$-factorial pairs of dimension $d$, $(X_{k}/Z_k\ni z_k,B_{(k)}:=\sum_{i} b_{k,i}B_{k,i})$, and a sequence of projections $g_{k}:\bar\Ii\to \bar{\Ii}$, such that
	\begin{itemize}
	    \item $b_{k,i}\in\Ii,b_{k,i}+\frac{1}{k}\ge g_k(b_{k,i})\ge b_{k,i}$ for any $k,i$,
	    \item $(X_{k}/Z_k\ni z_k,B_{(k)})$ has an $(\epsilon,\Rr)$-complement $(X_{k}/Z_{k}\ni z_k,B_{(k)}+G_{k})$ with $\lfloor B_{(k)}+G_k\rfloor=\lfloor B_{(k)}\rfloor$ for any $k,$
	    \item either $(X_{k}/Z_{k}\ni z_k,B_{(k)}+G_{k})$ is dlt over a neighborhood of $z_k$ for any $k$ or $d\le2$,
	    \item $(X_{k}/Z_k\ni z_k,B_{(k)}':=\sum_{i} g_k(b_{k,i})B_{k,i})$ is $\epsilon$-lc over $z_k$ for any $k,$ and
	    \item $-(K_{X_{k}}+B_{(k)}')$ is not pseudo-effective over $Z_k$ for any $k$. 
	\end{itemize}
	Possibly shrinking $Z_k$ near $z_k$, by Lemma \ref{lem:run anti pair MMP}, $-(K_{X_{k}}+B_{(k)}')\sim_{\Rr,Z_k}u_k(K_{X_k}+\Delta_k)$ for some positive real number $u_k$ and $\Qq$-factorial pair $(X_k,\Delta_k)$ for any $k$, and if $(X_{k}/Z_{k}\ni z_k,B_{(k)}+G_{k})$ is dlt over a neighborhood of $z_k$, then so is $(X_k,\Delta_k)$. By \cite[Theorem 1.1]{Fuj12} and \cite{BCHM10}, we may run a $-(K_{X_{k}}+B_{(k)}')$-MMP with scaling of an ample divisor over $Z_k$, and reach a Mori fiber space $Y_k\to Z_k'$ over $Z_k$, such that $-(K_{Y_k}+B_{(Y_{k})}')$ is antiample over $Z_k'$, where $B_{(Y_{k})}'$ is the strict transform of $B_{(k)}'$ on $Y_k$. Since $-(K_{X_k}+B_{(k)})$ is pseudo-effective over $Z_k$, $-(K_{Y_k}+B_{(Y_{k})})$ is nef over $Z_k'$, where $B_{(Y_{k})}$ is the strict transform of $B_{(k)}$ on $Y_k$. 
	
	For each $k$, there exist a positive integer $k_j$ and a positive real number $0\le b_k^{+}\le 1$, such that $b_{k,k_j}\le b_k^{+}< g_k(b_{k,k_j})$, and $K_{F_k}+B_{F_k}^{+}\equiv 0$, where
	$$K_{F_k}+B_{F_k}^{+}:=\left(K_{Y_{k}}+(\sum_{i<k_j}g_k(b_{k,i})B_{Y_k,i})+b_k^{+}B_{Y_k,k_j}+(\sum_{i>k_j}b_{k,i}B_{Y_k,i})\right)|_{F_k},$$
	$B_{Y_k,i}$ is the strict transform of $B_{k,i}$ on $Y_k$ for any $i$, and $F_k$ is a general fiber of $Y_k\to Z_k'$. Since $(X_{k}/Z_k\ni z_k,B_{(k)})$ is $(\epsilon,\Rr)$-complementary, $(Y_{k},B_{(Y_{k})})$ is lc over $z_k$. Thus $(Y_{k}/Z_k\ni z_k,B_{(Y_{k})}')$ is lc over $z_k$ by Theorem \ref{thm: dcc limit epsilon-lc divisor}, hence $(F_k,B_{F_k}^{+})$ is lc.
	
	Since $g_k(b_{k,k_j})$ belongs to the DCC set $\bar{\Ii}$ for any $k,k_j$, possibly passing to a subsequence, we may assume that $g_k(b_{k,k_j})$ is increasing. Since $g_k(b_{k,k_j})-b_k^{+}>0$ and 
	$$\lim_{k\to +\infty} (g_k(b_{k,k_j})-b_k^{+})=0,$$ 
	by Lemma \ref{lem: strictly increasing seq b_k}, possibly passing to a subsequence, we may assume that $b_{k}^{+}$ is strictly increasing. 
	
	Now $K_{F_k}+B_{F_k}^{+}\equiv0$, the coefficients of $B_{F_k}^{+}$ belong to the DCC set $\bar{\Ii}\cup\{b_k^{+}\}_{k=1}^{\infty}$, and $b_{k}^{+}$ is strictly increasing. This contradicts the global ACC \cite[Theorem 1.4]{HMX14}.
	
	If one of $(i)$-$(iii)$ holds, then there exists a good minimal model $-(K_{X'}+\sum g(b_i)B_{i}')$ of $-(K_X+\sum g(b_i)B_i)$ over a neighborhood of $z$, where $B_{i}'$ is the strict transform of $B_i$ on $X'$. Since $(X/Z\ni z,\sum b_iB_i)$ is $(\epsilon,\Rr)$-complementary, $(X'/Z\ni z,\sum b_iB_{i}')$ is $(\epsilon,\Rr)$-complementary. Hence $(X'/Z\ni z,\sum b_iB_{i}')$ is $\epsilon$-lc over $z$, and $(X'/Z\ni z,\sum g(b_i)B_{i}')$ is $\epsilon$-lc over $z$.
	By Lemma \ref{lem semiamplecomplement}, $(X'/Z\ni z,\sum g(b_i)B_{i}')$ is $(\epsilon,\Rr)$-complementary, thus $(X/Z\ni z,\sum g(b_i)B_i)$ is also $(\epsilon,\Rr)$-complementary.
	
	Let $\tau:=\min\{\gamma'-\gamma>0\mid \gamma'\in\bar{\Ii},\gamma\in\Ii'\}$, we will show that $\tau$ has the required properties. We may write $B:=\sum b_iB_i$ and $B':=\sum b_i'B_i$, where $B_i\ge0$ are Weil divisors on $X$. It is enough to show that $g(b_i)\ge b_i'$ for any $i$. Indeed as $g(b_i)\in\Ii'$ and $b_i'-g(b_i)\le b_i'-b_i<\tau$, $g(b_i)\ge b_i'$ by the choice of $\tau$.
\end{proof}

\begin{lem}\label{lem: strictly increasing seq b_k}	
Let $\{a_k\}_{k=1}^{\infty}$ be an increasing sequence of real numbers, and $\{b_k\}_{k=1}^{\infty}$ a sequence of real numbers, such that $b_k<a_k$ for any $k$, and $\lim_{k\to+\infty}(a_k-b_k)=0$. Then possibly passing to a subsequence, we may assume that $b_k$ is strictly increasing.
\end{lem}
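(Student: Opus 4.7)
The plan is to show that $b_k$ itself accumulates to the same extended limit as $a_k$, and that each term $b_{k_0}$ is strictly below this limit, which will be enough to build a strictly increasing subsequence greedily.

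First I would observe that since $\{a_k\}$ is increasing, it admits a limit $a \in \Rr \cup \{+\infty\}$. Using the hypothesis $\lim_{k\to\infty}(a_k-b_k)=0$ together with $b_k = a_k - (a_k - b_k)$, one gets $b_k \to a$ in the extended real sense: if $a$ is finite this is immediate, and if $a=+\infty$ it follows because $b_k \ge a_k - |a_k-b_k|$ and the right-hand side tends to $+\infty$.

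Next, for any fixed index $k_0$, I would use that $b_{k_0} < a_{k_0}$ and that $a_{k_0} \le a$ (since $a_k$ is increasing with limit $a$) to conclude $b_{k_0} < a$; this is a genuine strict inequality, whether $a$ is finite or $+\infty$. Combined with $b_k \to a$, this forces the existence of some $k > k_0$ with $b_k > b_{k_0}$ (in the finite case, pick $k$ so that $|b_k - a| < a - b_{k_0}$; in the $+\infty$ case, pick $k$ so that $b_k > b_{k_0}$).

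Finally I would iterate: set $k_1 := 1$, and having chosen $k_1 < \cdots < k_j$, apply the previous step with $k_0 = k_j$ to obtain $k_{j+1} > k_j$ with $b_{k_{j+1}} > b_{k_j}$. The resulting subsequence $\{b_{k_j}\}_{j\ge 1}$ is strictly increasing, completing the proof. There is no real obstacle here — this is a short real-analysis argument, and the only mildly subtle point is keeping the argument uniform across the cases $a$ finite and $a = +\infty$, which the observation $b_{k_0}<a$ handles in both cases at once.
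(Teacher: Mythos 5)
Your proof is correct, and the overall structure matches the paper's: both rest on the observation that for any fixed $k_0$ there is a later index $k>k_0$ with $b_k>b_{k_0}$, which then lets one greedily extract a strictly increasing subsequence. Where you differ is in how that key step is established. You route it through the extended limit $a=\lim a_k$, show $b_k\to a$, note $b_{k_0}<a$ strictly, and then split into the cases $a$ finite and $a=+\infty$. The paper instead sets $c_j:=a_j-b_j>0$, uses $c_j\to 0$ to find $k'>k$ with $c_{k'}<c_k$, and then writes $b_{k'}=a_{k'}-c_{k'}\ge a_k-c_{k'}>a_k-c_k=b_k$, which handles both cases at once without ever mentioning the limit of $a_k$. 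The two arguments are morally identical; the paper's version is a touch leaner because it never introduces $a$ and so avoids the finite/infinite case distinction, which you yourself flag as the ``only mildly subtle point.'' One tiny caution: you should state that the $k>k_0$ you find in the finite-limit case can also be taken larger than any previously chosen index (e.g.\ by taking the maximum with $k_0+1$), which you implicitly do but is worth saying for the greedy construction to be airtight.
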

\begin{proof}
     It suffices to show that for any $k\ge 1,$ there exists an integer $k'>k$ such that $b_{k'}>b_k.$ Let $c_j=a_j-b_j$ for any $j,$ then $c_j>0$ and $\lim_{j\to +\infty}c_j=0$ by assumption. Thus there exists an integer $k'>k,$ such that $c_{k'}<c_k.$ Since $a_{k'}>a_k,b_{k'}=a_{k'}-c_{k'}>a_{k}-c_{k}=b_k.$
\end{proof}

The following theorem is a slight generalization of \cite[Main Proposition 2.1]{BirkarSho10}, that is, we assume Conjecture \ref{conj: ACC for aLCTs} instead of the ACC conjecture for MLDs, and we do not assume $(X\ni x,B')$ is lc near $x$.

\begin{thm}[Inversion of stability for $\epsilon$-lc pairs]\label{thm: inversion of stability e-lc pair}
	Let $d$ be a positive integer, $\epsilon$ a non-negative real number, and $\Ii\subseteq [0,1]$ a DCC set. Suppose that Conjecture \ref{conj: ACC for aLCTs} holds in dimension $d$ with $a=\epsilon$. There exists a positive real number $\tau$ depending only on $d,\epsilon$ and $\Ii$ satisfying the following. 
	
	Assume that $(X\ni x,B)$ and $(X\ni x,B')$ are two pairs of dimension $d$ such that
	\begin{enumerate}
	    \item $B\le B'$,  $||B-B'||<\tau,B'\in\Ii$, and
	    \item $(X\ni x,B)$ is $\epsilon$-lc at $x$.
	\end{enumerate}
	Then $(X\ni x,B')$ is also $\epsilon$-lc at $x$.
\end{thm}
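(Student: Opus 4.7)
I plan to argue by contradiction, adapting the method of \cite[Main Proposition 2.1]{BirkarSho10} by using Conjecture \ref{conj: ACC for aLCTs} in place of their ACC conjecture for minimal log discrepancies. Suppose no such $\tau$ exists; then for each $k \ge 1$ we obtain pairs $(X_k \ni x_k, B_k)$ and $(X_k \ni x_k, B_k')$ of dimension $d$ with $B_k \le B_k'$, $\|B_k - B_k'\| < 1/k$, $B_k' \in \Ii$, such that $(X_k \ni x_k, B_k)$ is $\epsilon$-lc at $x_k$ but $(X_k \ni x_k, B_k')$ is not. Write $B_k = \sum_i b_{k,i} B_{k,i}$ and $B_k' = \sum_i b_{k,i}' B_{k,i}$ with $b_{k,i} \le b_{k,i}' \in \Ii$; since $\Ii$ is DCC, after passing to a subsequence we may assume that for each fixed $i$ the sequence $\{b_{k,i}'\}_k$ is eventually monotone.

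For each $k$, consider the relative $\epsilon$-lc threshold
\[
t_k := \alct(X_k \ni x_k, B_k; B_k' - B_k) \in [0, 1)
\]
and the critical pair $C_k := B_k + t_k(B_k' - B_k)$, which satisfies $\mld(X_k \ni x_k, C_k) = \epsilon$, attained by some $E_k \in e(x_k)$. Following the strategy of the proof of Theorem \ref{thm: dcc limit epsilon-lc complementary}, the plan is to rewrite $t_k$ (or a closely related threshold obtained by first projecting $B_k$ downward onto the DCC set $\Ii \cup \{0\}$ via a version of Lemma \ref{lem: accdcc projection}) as an $\epsilon$-lc threshold of a pair whose boundary and perturbation both lie in DCC sets, so that Conjecture \ref{conj: ACC for aLCTs} combined with Lemma \ref{lem:localalctimplyrel} places the resulting sequence of thresholds in an ACC set. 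Passing to a further subsequence via Lemma \ref{lem: strictly increasing seq b_k}, and using $\|B_k - B_k'\| \to 0$, one then extracts a strictly increasing subsequence in this ACC set, the desired contradiction.

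The main obstacle is that the naive set $\{b' - b : b' \in \Ii,\ 0 \le b \le b'\}$ of possible perturbation coefficients is generally not DCC---for $\Ii = \{(n-1)/n : n \ge 1\}$, for instance, the differences $1/(n(n+1))$ accumulate at $0$. Overcoming this requires the careful Diophantine-style bookkeeping of Lemma \ref{lem: accdcc projection}: one chooses a downward projection $\tilde B_k \le B_k$ with coefficients in $\Ii \cup \{0\}$ so that $(X_k \ni x_k, \tilde B_k)$ remains $\epsilon$-lc (by monotonicity of $\mld$ in the boundary), and arranges the parameters so that the residual perturbation $B_k' - \tilde B_k$ sits in a set on which Conjecture \ref{conj: ACC for aLCTs} yields the required ACC control. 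This parallels and refines the reduction appearing in the proof of Theorem \ref{thm: dcc limit epsilon-lc divisor} and the ``In particular'' part of Theorem \ref{thm: dcc limit epsilon-lc complementary}.
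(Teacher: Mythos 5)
Your setup (the contradiction sequence, the use of Lemma \ref{lem: strictly increasing seq b_k}, and the appeal to Conjecture \ref{conj: ACC for aLCTs}) points in the right general direction, but the proposal leaves the decisive step unproved, and it is exactly the step you yourself flag as the ``main obstacle''. The threshold $t_k:=\alct(X_k\ni x_k,B_k;B_k'-B_k)$ is well defined (the convex combinations $(1-t)(K_{X_k}+B_k)+t(K_{X_k}+B_k')$ are $\Rr$-Cartier), but the perturbing divisor $B_k'-B_k$ has coefficients that do not lie in any DCC set, so Conjecture \ref{conj: ACC for aLCTs} simply does not apply to $t_k$, and no argument is given that the $t_k$ land in an ACC set or form a strictly increasing sequence. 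The proposed repair is vague on the key point: Lemma \ref{lem: accdcc projection} projects \emph{upward} ($g(\gamma)\ge\gamma$), and even after replacing $B_k$ by some $\tilde B_k\le B_k$ with coefficients in $\Ii\cup\{0\}$, the residual perturbation $B_k'-\tilde B_k$ still has coefficients accumulating at $0$ from above. One also cannot fall back on perturbing one prime component at a time, nor on thresholds with respect to $\Supp(B_k'-B_k)$: since $X_k$ is not assumed $\Qq$-factorial and the components $B_{k,i}$ are not assumed $\Qq$-Cartier, those intermediate combinations need not be $\Rr$-Cartier (this is precisely why the paper's Theorem \ref{thm: dcc limit epsilon-lc divisor} assumes each $B_i$ is a $\Qq$-Cartier Weil divisor). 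A smaller but real issue: ``for each fixed $i$ the sequence $\{b'_{k,i}\}_k$ is eventually monotone'' is not meaningful across different $X_k$ and is not the reduction you need; what is needed is that the coefficients of the \emph{smaller} boundaries $B_k$ themselves form a DCC set (this is the first paragraph of the paper's proof, using $b'_{k,i}\in\Ii$, $\|B_k-B_k'\|<1/k$ and Lemma \ref{lem: strictly increasing seq b_k}), so that one may enlarge $\Ii$.

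The paper closes the gap by a birational mechanism that your sketch has no analogue of: after enlarging $\Ii$, pass to a dlt modification $f_k:Y_k\to X_k$ of $(X_k,B_k)$, where $Y_k$ is $\Qq$-factorial so the one-component-at-a-time argument is legitimate and the relative statement Theorem \ref{thm: dcc limit epsilon-lc divisor} gives that $(Y_k/X_k\ni x_k,B'_{Y_k}+E_{Y_k})$ is $\epsilon$-lc over $x_k$ for $k\gg 0$; then, writing $E'_{Y_k}+B_k'-B_k=f_k^*(B_k'-B_k)$, one has $-(K_{Y_k}+B'_{Y_k}+E_{Y_k})\sim_{\Rr,X_k}E'_{Y_k}$, and running this MMP over $X_k$ contracts $E'_{Y_k}$ and terminates on a model $W_k$ where $K_{W_k}+B'_{W_k}+E_{W_k}$ is the crepant pullback of $K_{X_k}+B_k'$; finally Theorem \ref{thm: dcc limit epsilon-lc complementary} (whose proof goes through Mori fiber spaces and the Global ACC, not merely local $a$-lc thresholds) applied to $(W_k/X_k\ni x_k,B_{W_k}+E_{W_k})$ yields $\epsilon$-lc-ness over $x_k$, hence the contradiction on $X_k$. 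Without some substitute for this descent to a model computing the crepant pullback of $K_{X_k}+B_k'$, the purely local threshold plan you outline does not yield a contradiction, so as written the proposal has a genuine gap.
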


\begin{proof}
Suppose that the theorem does not hold, then there exist two sequence of pairs $(X_k\ni x_k,B_k),(X_k\ni x_k,B_k')$, such that $B_k\le B'_k$,  $||B_k-B'_k||<\frac{1}{k},B_k'\in\Ii$, $(X_k\ni x_k,B_k)$ is $\epsilon$-lc at $x_k$, and $(X_k\ni x_k,B_k')$ is not $\epsilon$-lc at $x_k$. 

 We may write $B_{k}:=\sum_{i=1}^{m_{k}}b_{ki}B_{ki}$ and $B'_{k}:=\sum_{i=1}^{m_{k}}b'_{ki}B_{ki}$ for any $k$, where $B_{ki}$ are distinct prime divisors on $X_{k}$ for any $k$. We first show that the set $\tilde{\Ii}=\{b_{ki}\mid k,i\}$ satisfies the DCC. Otherwise there exists a strictly decrease sequence $b_{i_{l}j_{l}}\in\tilde{\Ii},b_{i_{1}j_{1}}>b_{i_{2}j_{2}}>\dots.$ Since $i_{l}\in\Zz_{>0}$ and $b'_{i_{l}j_{l}}$ belongs to the DCC set $\Ii$, possibly passing to a subsequence, we may assume that $i_{l}$ is strictly increasing and $b'_{i_{l}j_{l}}$ is increasing, and hence $\lim_{l\to +\infty} (b'_{i_{l}j_{l}}-b_{i_{l}j_{l}})=0$. By Lemma \ref{lem: strictly increasing seq b_k}, possibly passing to a subsequence we may assume that $b_{i_{l}j_{l}}$ is strictly increasing, a contradiction. Possibly replacing $\Ii$ by $\Ii\cup \tilde{\Ii}$, we may assume that $B_k\in \Ii$ for any $k$.

Possibly shrinking $X_k$ near $x_k$, we may assume that $(X_k,B_k)$ is lc. Let $f_k:Y_k\to X_k$ be a dlt modification of $(X_k,B_k)$, and write 
$$K_{Y_k}+B_{Y_k}+E_{Y_k}=f_k^{*}(K_{X_k}+B_k),$$
where $B_{Y_k}$ is the strict transform of $B_k$, and $E_{Y_k}$ is the sum of reduced $f_k$-exceptional divisors.

By Theorem \ref{thm: dcc limit epsilon-lc divisor}, possibly passing to a subsequence, we may assume that $({Y_k}/X_k\ni x_k,B_{Y_k}'+E_{Y_k})$ is $\epsilon$-lc over $x_k$ for any $k$, where $B_{Y_k}'$ is the strict transform of $B_k'$. Possibly shrinking $X_k$ near $x_k$, by Lemma \ref{lem:run anti pair MMP}, there exist a positive real number $u_k$ and a dlt pair $(Y_k,\Delta_k)$, such that
$-(K_{Y_k}+B_{Y_k}'+E_{Y_k})\sim_{\Rr,X_k} u_k(K_{Y_k}+\Delta_k)$. Let $E_{Y_k}'+B_{k}'-B_k=f_k^{*}(B_k'-B_k)$. Then
$$-(K_{Y_k}+B_{Y_k}'+E_{Y_k})\sim_{\Rr,X_k}B_{Y_k}-B'_{Y_k}\sim_{\Rr,X_k}E_{Y_k}'.$$
By \cite[Theorem 1.8]{Birkar12}, we may a run a $-(K_{Y_k}+B_{Y_k}'+E_{Y_k})$-MMP with scaling of an ample divisor over $X_k$, and terminates with a model $W_k$ over $X_k$ such that $-(K_{W_k}+B_{W_k}'+E_{W_k})\sim_{\Rr,X_k}E_{W_k}'=0$, where $B_{W_k}',E_{W_k}$ and $E_{W_k}'$ are the strict transforms of $B_{Y_k}',E_{Y_k}$ and $E_{Y_k}'$ respectively. Thus $K_{W_k}+B_{W_k}'+E_{W_k}$ is the pullback of $K_{X_k}+B_k'$. Since $(W_k/X_k\ni x_k,B_{W_k}+E_{W_k})$ is $(\epsilon,\Rr)$-complementary, by Theorem \ref{thm: dcc limit epsilon-lc complementary}, possibly passing to a subsequence, we may assume that $({W_k}/X_k\ni x_k,B_{W_k}'+E_{W_k})$ is $\epsilon$-lc over $x_k$, hence $({X_k}\ni x_k,B_{k}')$ is $\epsilon$-lc at $x_k$, a contradiction.
\end{proof}
\begin{rem}
If $X$ is $\Qq$-factorial, then we may show that Theorem \ref{thm: inversion of stability e-lc pair} holds for pairs $(X/Z\ni z,B)$ and $(X/Z\ni z, B')$ by assuming Conjecture \ref{conj: ACC for aLCTs} in dimension $d$ with $a=\epsilon$.
\end{rem}

\subsection{Proof of Theorem \ref{thm: decompsable epsilon complement for surfaces} for surface germs}

    Let $(X/Z\ni z,B)$ be a surface germ which is lc over $z$, we define
    $$\mathcal{P}(X/Z\ni z,B):=\{\pld(X\ni x,B)\mid x\in X \text{ is a closed point},\pi(x)=z\}.$$

\begin{lem}\label{lemma finitelogdis}
    Let $\Ii\subseteq[0,1]$ and $\mathcal{P}\subseteq[0,1]$ be two finite sets. Then there exists a discrete set $\mathcal{LD}_0\subseteq\Rr_{\ge0}$ depending only on $\Ii$ and $\mathcal{P}$ satisfying the following.
     
     Assume that $(X/Z\ni z,B)$ is a surface germ such that
     \begin{itemize}
       \item $B\in\Ii,$
       \item $(X/Z\ni z,B)$ is lc over $z,$ and
       \item  $\mathcal{P}(X/Z\ni z,B)\subseteq{\mathcal{P}}\cup\{+\infty\}$.
     \end{itemize}
     Then $a(E,X,B)\in\mathcal{LD}_0$ for any prime divisor $E\in e(z)$. 
\end{lem}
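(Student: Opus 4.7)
My plan is to pass to the minimal resolution $f\colon Y\to X$ of $X$ and set $B_Y := f^*(K_X+B)-K_Y$; since $a(E,X,B)=a(E,Y,B_Y)$ for every $E\in e(z)$, the problem reduces to controlling log discrepancies with respect to the sub-pair $(Y,B_Y)$ (after a further log resolution to ensure snc).

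The main step is to show that the coefficients of $B_Y$ lie in a \emph{finite} set $C$ depending only on $\Ii$ and $\mathcal{P}$. Strict-transform components already have coefficients in the finite set $\Ii$, so I only have to control the log discrepancies $a(E_i,X,B)$ of $f$-exceptional primes $E_i$ lying over closed points $x\in f^{-1}(\bar{z})$. Since $\pld(X\ni x,B)\in\mathcal{P}\cup\{+\infty\}$, setting $\epsilon := \min(\mathcal{P}\setminus\{0\})$ (arguing separately when $\pld=+\infty$, i.e.\ $x$ is smooth), I invoke the paper's surface classification: by Lemma \ref{lemma 3.3} combined with Lemma \ref{lemma minimalreslwithbddvertices}, either the dual graph over $x$ has boundedly many vertices (yielding finitely many combinatorial types, in each of which the log discrepancies are explicit rational functions of the coefficients from $\Ii$ meeting the exceptional locus, hence lie in a finite set), or the dual graph is a long chain in $\mathfrak{C}_{\epsilon,\Ii}\cup\mathfrak{T}_{\epsilon,\Ii}$ of shape $\mathcal{DG}\{\mathcal{DG}_{ch}^{(m_1,q_1),u_1},\mathcal{DG}_{ch}^{2^A},\mathcal{DG}_{ch}^{(m_2,q_2),u_2}\}$. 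In the latter case the assumption $p:=\pld(X\ni x,B)\in\mathcal{P}$ together with the formula $p = p_A(\cdots)\in\mathcal{P}(\epsilon,\Ii)$ forces $A$ into a finite set (as $(m_i,q_i,u_i)$, $p_1,p_2\in\mathcal{I}_0$, and $\beta_1,\beta_2\in\Ll(\epsilon,\Ii)$ all range over finite sets); the recursion $2a_i = a_{i-1}+a_{i+1}$ along the $2^A$-middle of the chain then forces the interior log discrepancies to form an arithmetic progression with endpoints drawn from a finite set, producing only finitely many values. Combined with the boundedly many ``outer'' log discrepancies from the end chains $\mathcal{DG}_{ch}^{(m_i,q_i),u_i}$, the set $\{a(E_i,X,B)\}$ lies in a finite set, and so do the coefficients of $B_Y$.

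With $C$ in hand, take a log resolution $g\colon W\to Y$ of $(Y,B_Y)$ and set $B_W := g^*(K_Y+B_Y)-K_W$; since each new $g$-exceptional coefficient equals the multiplicity of the current boundary at a snc point (through which at most two components pass) minus one, an easy induction places the coefficients of $B_W$ in a finite set $C'\subseteq(-\infty,1]$ depending only on $C$. For any $E\in e(z)$, realize $E$ as the last exceptional in a sequence of point blowups $W_N\to\cdots\to W_0=W$ and prove by induction on $N$ that $a(E,X,B)=a(E,W,B_W)$ can be written as $m+\sum_j n_j(1-c_j)$ with $m, n_j\in\Zz_{\ge 0}$ and $c_j\in C'$: at each blowup the new log discrepancy equals $(2-k)+\sum_{j=1}^{k}(1-c_{w,j})$, where $k\in\{0,1,2\}$ is the number of snc components through the blown-up point $w$ and the $c_{w,j}$ are their coefficients, themselves log discrepancies of previously extracted divisors of the claimed form by induction. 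Setting
\[
\mathcal{LD}_0 := \Bigl\{m+\sum_j n_j(1-c_j): m, n_j\in\Zz_{\ge 0},\ c_j\in C'\Bigr\}\cap \Rr_{\ge 0}
\]
yields the desired discrete set: each $c_j\le 1$ gives $1-c_j\ge 0$, so any upper bound $R$ forces $m\le R$ and $n_j\le R/(1-c_j)$ whenever $1-c_j>0$ (summands with $c_j=1$ contribute zero and can be discarded), making $\mathcal{LD}_0\cap[0,R]$ finite. The main obstacle lies in the second paragraph: extracting from Lemma \ref{lemma 3.3} that \emph{every} log discrepancy along a long $2^A$-chain (not merely its minimum, the $\pld$) lands in a finite set parametrized by $\Ii$ and $\mathcal{P}$; once this is in hand, the combinatorial induction in the third paragraph is essentially automatic.
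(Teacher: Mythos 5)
Your overall architecture (crepant pullback to the minimal resolution, finiteness of the resulting coefficient set $C$, then the blowup induction producing values $m+\sum_j n_j(1-c_j)$, which indeed form a discrete set) is sound, and your third paragraph is fine. The gap is exactly where you place it, and your proposed derivation of the finiteness of $\{a(E_i,X,B)\}$ from Lemma \ref{lemma 3.3} does not work as stated, for two reasons. First, $0$ is allowed to lie in $\mathcal{P}$, so $\pld(X\ni x,B)=0$ can occur at closed points over $z$; Lemma \ref{lemma 3.3}, Lemma \ref{lemma minimalreslwithbddvertices} and your choice $\epsilon=\min(\mathcal{P}\setminus\{0\})$ all require $\pld\ge\epsilon>0$, and this case is not the same as the smooth case $\pld=+\infty$, so it is simply not covered. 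Second, and more seriously, the claim that $\pld\in\mathcal{P}$ ``forces $A$ into a finite set'' is false: writing $p_A=\beta_1+\frac{p_2(\beta_2-\beta_1)}{A+p_1+p_2}$, one gets boundedness of $A$ only when $\beta_1\neq\beta_2$ (and $p_2\neq0$); when $\frac{\alpha_1}{m_1-q_1}=\frac{\alpha_2}{m_2-q_2}$ the pld is independent of $A$, so arbitrarily long graphs satisfy all hypotheses (e.g.\ a chain of $(-2)$-curves of any length with a coefficient-$b$ component attached at each end has every $a_i=1-b$ and $\pld=1-b$), and in the tree case $\mathfrak{T}_{\epsilon,\Ii}$ the pld $=\frac{\alpha_1}{m_1-q_1}$ is independent of $A$ as well. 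In these degenerate cases your arithmetic progression has unboundedly many terms, and finiteness survives only because the progression is in fact constant (equal to the pld); that constancy is precisely what you would still have to prove, so the essential content of the lemma is left open, as you yourself flag in your last sentence.

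For comparison, the paper's proof does not analyze the minimal resolution at all: it applies Corollary \ref{cor verticesofpolytope} (which rests on Theorem \ref{thm locally linearity of mld} and Proposition \ref{prop pld}, and does cover $\pld=0$ and the unbounded-$A$ graphs) to write $K_X+B=\sum a_i(K_X+B^i)$ with $a_i$ in a finite set $\Ii_1$, the $B^i$ rational with coefficients in a finite set, and $I(K_X+B^i)$ Cartier near every closed point of the fiber over $z$; then for every $E\in e(z)$ at once, $a(E,X,B)=\sum a_i\,a(E,X,B^i)$ with $a(E,X,B^i)\in\frac1I\Zz_{\ge0}$, so $\mathcal{LD}_0:=\{\sum\alpha_i\beta_i\mid\alpha_i\in\Ii_1,\ \beta_i\in\frac1I\Zz_{\ge0}\}$ works, and no separate blowup induction is needed. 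If you want to keep your route, the cleanest repair of the second paragraph is to obtain your finite set $C$ from Corollary \ref{cor verticesofpolytope} (or from Proposition \ref{prop pld} after a rational decomposition), rather than from Lemma \ref{lemma 3.3} directly; alternatively, supplement your chain analysis by proving the constancy of the discrepancies along the $2^A$-part when the pld does not determine $A$, and treat $\pld=0$ separately.
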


\begin{proof}
    By Corollary \ref{cor verticesofpolytope}, there exist a finite set $\Ii_1$ of positive real numbers, a finite set $\Ii_2$ of non-negative rational numbers and a positive integer $I$ depending only on $\Ii$ and $\mathcal{P}$ such that 
    \begin{itemize}
        \item $\sum a_i=1,$
        \item $K_X+B=\sum a_i(K_X+B^i),$ and
        \item $I(K_X+B^i)$ is Cartier near $x$ for any $i$ and any closed point $x$ satisfying $\pi(x)=z$, where $\pi$ is the morphism $X\to Z$,
    \end{itemize}
    for some $a_i\in\Ii_1$ and $B^i\in\Ii_2$. In particular, $I(K_X+B^i)$ is Cartier over a neighborhood of $z$ for any $i$. Let 
    $$\mathcal{LD}_0:=\{\sum \alpha_i\beta_i\mid \alpha_i\in\Ii_1,\beta_i\in\frac{1}{I}\Zz_{\ge0}\}.$$
    Since $a(E,X,B)=\sum a_ia(E,X,B^i)$ and $a(E,X,B^i)\in\frac{1}{I}\Zz_{\ge0},a(E,X,B)\in \mathcal{LD}_0$ for any prime divisor $E\in e(z)$.
\end{proof}

\begin{defn}
     Let $(X,B)$ and $(X',B')$ be two pairs. We say that $(X',B')$ is a \emph{non-positive birational model} of $(X,B)$ if there exists a common birational resolution $p:W\to X'$ and $q:W\to X$, such that
     $$p^*(K_{X'}+B')\ge q^*(K_X+B).$$
\end{defn}

\begin{rem}
    By definition, if $(X',B')$ is a non-positive birational model of $(X,B),$ then any non-positive birational model of $(X',B')$ is a non-positive birational model of $(X,B).$
\end{rem}

 The proof of Theorem \ref{thm reducetofiniteandfixpld} is similar to \cite[Main Theorem 1.6]{Bir04}.
\begin{thm}\label{thm reducetofiniteandfixpld}
     Let $\epsilon$ be a non-negative real number and $\Ii\subseteq[0,1]$ a DCC set. Then there exists a finite set $\hat{\Ii}\subseteq[0,1]\cap (\Qq+\Span_{\Qq_{\ge0}}(\bar{\Ii}))$ depending only on $\Ii$ and $\epsilon$ satisfying the following.
     
     Assume that $(X/Z\ni z,B)$ is a surface germ such that $B\in\Ii,$ and $(X/Z\ni z,B)$ is $(\epsilon,\Rr)$-complementary. Then possibly shrinking $Z$ near $z$, there exists a boundary $B_Y$ on the minimal resolution $Y$ of $X$ such that
     \begin{enumerate}
        \item $(Y,B_Y)$ is a non-positive birational model of $(X,B)$,
        \item  $B_Y\in\hat{\Ii}$,
        \item $-(K_{Y}+B_Y)$ is semiample over $Z$, and
        \item  $(Y/Z\ni z,B_Y)$ is $(\epsilon,\Rr)$-complementary.
        \end{enumerate}
        \end{thm}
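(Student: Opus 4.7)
I will follow an approach analogous to Birkar's [Bir04, Main Theorem 1.6], proceeding in three main steps.

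\emph{Step 1 (Reduction to finite $\Ii$).} I would first apply Theorem \ref{thm: dcc limit epsilon-lc complementary}, which for $d=2$ is unconditional: there exist a finite set $\Ii'\subseteq\bar{\Ii}$ and a projection $g\colon\bar{\Ii}\to\Ii'$ such that, after replacing $B=\sum b_iB_i$ with $B':=\sum g(b_i)B_i\ge B$, the pair $(X/Z\ni z,B')$ is still $(\epsilon,\Rr)$-complementary, and moreover $-(K_X+B')$ admits a good minimal model over $Z$. We may therefore assume from the outset that $\Ii$ is finite.

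\emph{Step 2 (Finite coefficients on the log pullback).} Let $f\colon Y\to X$ be the minimal resolution with exceptional divisors $E_1,\ldots,E_n$, and write $K_Y+B_Y^{\sim}=f^{*}(K_X+B)$ with $B_Y^{\sim}:=f^{-1}_{*}B+\sum_j(1-a_j)E_j$ and $a_j:=a(E_j,X,B)$. Because $\Ii$ is finite, grouping components of $B$ with a common coefficient shows that the local coefficient vector of $B$ at each closed point of $X$ over $z$ takes values in a uniform finite set. Applying Corollary \ref{cor verticesofpolytope} to the germs $(X\ni x,B)$ yields, uniformly in $x$ over $z$, a decomposition
$$K_X+B=\sum_i a_i(K_X+B^i)$$
over a neighborhood of $z$, with the $a_i$ in a fixed finite set of positive reals, the $B^i$ rational boundaries drawn from a fixed finite set, and a uniform positive integer $I$ such that $I(K_X+B^i)$ is Cartier near $z$. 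Hence each partial log discrepancy $a_j$ lies in a uniform finite set, and the coefficients of $B_Y^{\sim}$ all lie in a finite set $\hat{\Ii}_0\subseteq[0,1]\cap(\Qq+\Span_{\Qq_{\ge0}}(\bar{\Ii}))$ depending only on $\Ii$ and $\epsilon$.

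\emph{Step 3 (Semiampleness via an $\Rr$-complement on $Y$).} Because $-(K_X+B)$ has a good minimal model over $Z$, there is an $(\epsilon,\Rr)$-complement $(X/Z\ni z,B^{+})$ whose coefficients can be arranged to lie in a finite set depending only on $\Ii$ and $\epsilon$ (by applying Corollary \ref{cor verticesofpolytope} to an enlarged pair encompassing $B^{+}$, together with the good minimal model). Set $B_Y:=f^{-1}_{*}B^{+}+\sum_j\bigl(1-a(E_j,X,B^{+})\bigr)E_j$. Then $K_Y+B_Y=f^{*}(K_X+B^{+})\sim_{\Rr}0$ over $Z$ near $z$, so $-(K_Y+B_Y)$ is trivially semiample over $Z$; the pair $(Y,B_Y)$ is $\epsilon$-lc, hence $(\epsilon,\Rr)$-complementary via itself; it is a non-positive birational model of $(X,B)$ since $B^{+}\ge B$; and the finite-set property for the coefficients of $B_Y$ follows from Step 2 applied to $B^{+}$ in place of $B$.

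\emph{Main obstacle.} The delicate point is in Step 3, namely constructing a complement $B^{+}$ whose coefficients lie in a uniform finite set. A priori an $\Rr$-complement could have arbitrary real coefficients. The resolution is to combine the good minimal model of $-(K_X+B)$ over $Z$ provided by Theorem \ref{thm: dcc limit epsilon-lc complementary} with the local rational polytope decomposition of Corollary \ref{cor verticesofpolytope} (applied after adjoining to $\Ii$ the finitely many new coefficients needed for the complement), ensuring that the constants entering the decomposition of $B^{+}$, and hence the coefficients of $B_Y=f^{-1}_*B^{+}+\sum_j(1-a(E_j,X,B^{+}))E_j$, are all drawn from a finite set $\hat{\Ii}\supseteq\hat{\Ii}_0$ determined by $\Ii$ and $\epsilon$ alone.
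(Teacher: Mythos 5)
Your Step 1 matches the paper's opening move. The genuine gap is in Step 2: you apply Corollary \ref{cor verticesofpolytope} ``uniformly in $x$ over $z$'' to produce a single integer $I$ and uniform finite output sets, but that corollary takes $\pld(X\ni x,B)=\epsilon'$ as a \emph{hypothesis}, and its outputs $I,\Ii_1,\Ii_2,\mathcal{M}$ depend on $\epsilon'$ and not only on $\epsilon$ and the coefficient vector $\bm{v}$. Even after $\Ii$ has been made finite, the set of attainable partial log discrepancies $\mathcal{P}ld(\Ii)\cap[\epsilon,+\infty)$ is merely an ACC set and is in general \emph{infinite} (Lemma \ref{lemma pld set}: the piece $\mathcal{P}(\epsilon,\Ii)$ accumulates from above at points of $\Ll(\epsilon,\Ii)$ as the chain length $A\to\infty$). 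Different closed points $x\in\pi^{-1}(z)$ can therefore realize unboundedly many distinct plds, each giving a different instance of Corollary \ref{cor verticesofpolytope}, and the coefficients $1-a(E_j,X,B)$ of your $B_Y^{\sim}$ need not lie in any finite set. This is exactly why Lemma \ref{lemma finitelogdis} --- which is what you actually need here --- explicitly assumes $\mathcal{P}(X/Z\ni z,B)\subseteq\mathcal{P}\cup\{+\infty\}$ for a given \emph{finite} set $\mathcal{P}$, rather than deriving it.

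What your proposal is missing is the paper's own Step 1, the technical heart of the theorem: an MMP-driven induction that replaces $(X,B)$ by non-positive birational models $(X_i,B_{X_i})$ so as to force all plds into a fixed finite set $\tilde{\mathcal{P}}$. Whenever some point $x_i$ has $\pld(X_i\ni x_i,B_{X_i})$ lying in a short interval $(b_i,b_i+\tau)$ above an accumulation point $b_i$ of the pld set, one extracts the pld place $E_i$, raises its coefficient from $1-a_i$ to $1-b_i$ (retaining $(\epsilon,\Rr)$-complementarity via Theorem \ref{thm: dcc limit epsilon-lc complementary}), and passes to a good minimal model $X_{i+1}$ of $-(K_{W_i}+B_{W_i}+(1-b_i)E_i)$; a counting argument on $\rho(Y_i)$ and on $d(X_i,B_{X_i})$ shows termination. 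Only \emph{after} this does Lemma \ref{lemma finitelogdis} yield a finite set $\hat{\mathcal{P}}$ of log discrepancies, and the crepant pullback of $K_{\tilde{X}}+\tilde{B}$ to $Y$ has coefficients in a finite $\hat{\Ii}$. Your Step 3's fix of ``adjoining to $\Ii$ the finitely many new coefficients needed for the complement'' is circular, as you yourself flag: there is no finite list to adjoin until an argument of the above kind has been supplied.
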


\begin{proof}
    \textbf{Step 1.} In this step, we show that there exist two finite sets $\tilde{\Ii}\subseteq[0,1]$ and  $\tilde{\mathcal{P}}\subseteq[0,1]$ depending only on $\Ii$ and $\epsilon$ such that there exists a non-positive birational model $(\tilde{X}/Z\ni z,\tilde{B})$ of $(X/Z\ni z,B)$ with the following properties:
    \begin{itemize}
        \item $Y$ dominates $\tilde{X}$,
        \item  $\tilde{B}\in\tilde{\Ii}$,
        \item $-(K_{\tilde{X}}+\tilde{B})$ is semiample over $Z$, 
        \item $\mathcal{P}(\tilde{X}/Z\ni z,\tilde{B})\subseteq\tilde{\mathcal{P}}\cup\{+\infty\}$, and
        \item  $(\tilde{X}/Z\ni z,\tilde{B})$ is $(\epsilon,\Rr)$-complementary.
    \end{itemize}
    
    Let $\pi$ be the morphsim $X\to Z$ and $\mathfrak{P}:=\{x\in\pi^{-1}(z)\text{ is a closed point}\mid \pld(X\ni x,0)=0\}.$
    Note that for any $x\in\mathfrak{P}$, $\pld(X\ni x,B)=\pld(X\ni x,0)=0$ and $12(K_X+B)$ is Cartier near $x$ by the classification of lc surface singularities. Let $f':X'\to X$ be the morphism such that $f'$ is the minimal resolution near $x$ for any $x\in \mathfrak{P}$, and is an isomophism over $X\backslash{\mathfrak{P}}$. In particular, $Y$ dominates $X'$. Note that if $\mathfrak{P}=\emptyset$, then $f'$ is the identity map.
    We may write 
    $K_{X'}+B':=(f')^*(K_X+B),$ then $B'\in\Ii\cup(\frac{1}{12}\Zz_{\ge0}\cap[0,1])$
    and $(X'/Z\ni z,B')$ is $(\epsilon,\Rr)$-complementary. Replacing $\Ii$ and $(X/Z\ni z,B)$ by $\Ii\cup(\frac{1}{12}\Zz_{\ge0}\cap[0,1])$ and $(X'/Z\ni z,B')$ respectively, we may assume that $\mathfrak{P}=\emptyset$. In particular, $X$ is $\Qq$-factorial klt over a neighborhood of $z$. 
    
    Let $(X/Z\ni z,B+G)$ be an $(\epsilon,\Rr)$-complement of $(X/Z\ni z,B)$. Possibly replacing $B,G$ by $B-B\wedge \lfloor B+G\rfloor+\lfloor B+G\rfloor,G-G\wedge \lfloor B+G\rfloor$ respectively, we may assume that $\lfloor B\rfloor=\lfloor B+G\rfloor$.
    
    By Theorem \ref{thm: dcc limit epsilon-lc complementary}, we may assume that $\Ii$ is a finite set, and $-(K_X+B)$ has a unique good minimal model $X''$ over $Z$ such that $-(K_{X''}+B'')$ is semiample over $Z$, where $B''$ is the strict transform of $B$. Possibly replacing $(X,B)$ by $(X'',B'')$, we may assume that $-(K_X+B)$ is semiample over $Z.$
     
     By Theorem \ref{cor accforpld}, $\mathcal{P}ld(\Ii)$ is an ACC set and hence $\Ii':=D(\Ii)\cup\{1-\gamma\ge0\mid\gamma\in\mathcal{P}ld(\Ii)\}$ satisfies the DCC. Let $\tau$ be the number defined in Theorem \ref{thm: dcc limit epsilon-lc complementary} depending only on $\Ii'$ and $\epsilon$. If $\epsilon=0$, let $\epsilon'=\tau$, otherwise let $\epsilon'=\epsilon.$
     Let 
     $$\tilde{\Ii}=\Ii\cup(D(\Ii)\cap[0,1-\epsilon'])\subseteq D(\Ii)\subseteq \Span_{\Qq\ge0}(\Ii)$$ 
     be the union of two finite sets. 
     Note that when $\epsilon>1,$ $[0,1-\epsilon']=\emptyset.$
     By Theorem \ref{cor accforpld}, $\mathcal{P}ld(\tilde{\Ii})$ is an ACC set, thus
     $$\tilde{\mathcal{P}}:=\mathcal{P}ld(\tilde{\Ii})\backslash (\lim\mathcal{P}ld(\tilde{\Ii}))^{\tau},$$
     is a finite set, 
     where $(\lim\mathcal{P}ld(\tilde{\Ii}))^{\tau}:=\cup_{b\in\lim\mathcal{P}ld(\tilde{\Ii)}}(b,b+\tau)$. By construction,
    $$(\lim\mathcal{P}ld(\tilde{\Ii}))^{\tau}\subseteq \cup_{b\in\lim\mathcal{P}ld(\epsilon',\tilde{\Ii)}}(b,b+\tau)\cup (0,\tau),$$
    where $\mathcal{P}ld(\epsilon',\tilde{\Ii}):=\mathcal{P}ld(\tilde{\Ii})\cap[\epsilon',+\infty)$. Let $\mathcal{P}_0:=\{1\}\cup\{1-b\mid b\in \lim \mathcal{P}ld(\epsilon',\tilde{\Ii})\}$. By Lemma \ref{lemma pld set} and Lemma \ref{lemma DI}, we have
     \begin{align*}
         \mathcal{P}_0\subseteq D(\tilde{\Ii})\cap[0,1-\epsilon']
         \subseteq D(D(\Ii))\cap[0,1-\epsilon']
         =D(\Ii)\cap[0,1-\epsilon']\subseteq \tilde{\Ii}.
     \end{align*}
     
     From now on, we construct a sequence of surface germs $(X_i/Z\ni z,B_{X_i})$,
     $$X:=X_1\dashrightarrow X_2\dashrightarrow\cdots\dashrightarrow X_l=\tilde{X},$$ 
     with the following properties:
     \begin{itemize}
        \item $B_{X_i}\in\tilde{\Ii}$,
        \item $-(K_{X_i}+B_{X_i})$ is semiample over $Z$,
        \item $(X_i/Z\ni z,B_{X_i})$ is $(\epsilon,\Rr)$-complementary, 
        \item $(X_i,B_{X_i})$ is a non-positive birational model of $(X,B)$,
        \item $\mathcal{P}(\tilde{X}/Z\ni z,\tilde{B})\subseteq\tilde{\mathcal{P}}\cup\{+\infty\}$, where $\tilde{B}:=B_{X_l}$, and
        \item $Y$ dominates $X_i$.
     \end{itemize}     
      Assume that the germ $(X_i/Z\ni z,B_{X_i})$ is constructed for some $i\ge1$. If $\mathcal{P}(X_i/Z\ni z,B_{X_i})\nsubseteq\tilde{\mathcal{P}}\cup\{+\infty\}$, then exists a closed point $x_i\in \pi_i^{-1}(z)$, such that $0<a_i:=\pld(X_i\ni x_i,B_{X_i})\in(\lim\mathcal{P}ld(\tilde{\Ii}))^{\tau}$ and $(X_i,B_{X_i})$ is dlt near $x_i$, where $\pi_i$ is the morphism $X_i\to Z$. There exist $ b_i\in \lim\mathcal{P}ld(\epsilon',\tilde{\Ii})\cup\{0\}$, and a prime divisor $E_i$ over $X_i$, such that ${\rm Center}_{X_i}E_i=\{x_i\}$, $b_i<a(E_i,X_i,B_{X_i})=a_i<b_i+\tau$, and the minimal resolution $f_i:Y_i\to X_i$ factors through $p_i: W_i\to X_i$, where $p_i$ is the extraction of $E_i$.
      Let
     $$K_{W_i}+B_{W_i}+(1-a_i)E_i=p_i^{*}(K_{X_i}+B_{X_i}),$$
     where $B_{W_i}$ is the strict transform of $B_{X_i}$. Since $(W_i/Z\ni z,B_{W_i}+(1-a_i)E_i)$ is $(\epsilon,\Rr)$-complementary and $-(K_{W_i}+B_{W_i}+(1-a_i)E_i)$ is semiample over $Z,$ by Lemma \ref{lem semiamplecomplement}, there exists an $\Rr$-Cartier divisor $G_{W_i}\ge0$ such that $(W_i/Z\ni z,B_{W_i}+(1-a_i)E_i+G_{W_i})$ is an $(\epsilon,\Rr)$-complement of $(W_i/Z\ni z,B_{W_i}+(1-a_i)E_i)$ and $\lfloor B_{W_i}+(1-a_i)E_i+G_{W_i}\rfloor=\lfloor B_{W_i}+(1-a_i)E_i\rfloor$. By Theorem \ref{thm: dcc limit epsilon-lc complementary}, $(W_i/Z\ni z,B_{W_i}+(1-b_i)E_i)$ is $(\epsilon,\Rr)$-complementary and $-(K_{W_i}+B_{W_i}+(1-b_i)E_i)$ has a unique good minimal model $X_{i+1}$ over $Z$, such that $-(K_{X_{i+1}}+B_{X_{i+1}})$ is semiample over $Z,$ where $B_{X_{i+1}}:=(q_i)_*(B_{W_i}+(1-b_i)E_i)$ and $q_i:W_i\to X_{i+1}$. 
     It follows that $(X_{i+1}/Z\ni z,B_{X_{i+1}})$ is $(\epsilon,\Rr)$-complementary. As $1-b_i\in \mathcal{P}_0\subseteq\tilde{\Ii}$, $B_{X_{i+1}}\in\tilde{\Ii}$. Since $Y$ dominates $X_i$ and $Y_i$ dominates $W_i$ and $X_{i+1}$, $Y$ dominates $X_{i+1}$.

     Let 
     $$d(X_i,B_{X_{i}}):=\sum_{r\in \tilde{\Ii}}|\{E\mid a(E,X_i,B_{X_{i}})>1-r,E\in\Bb_i\}|,$$
     where $\Bb_i:=\Ee(f_i)\cup\Supp (f_i)_*^{-1}B_{X_i},$ and $\Ee(f_i)$ is the set of exceptional divisors of $f_i$. It is clear that $d(X_i,B_{X_{i}})<+\infty$ since both $\Bb_i$ and $\tilde{\Ii}$ are finite sets. 

     Let $\rho_i:=\rho(Y_i)$. Then $\rho_{i+1}\le\rho_i$ as $Y_i$ is a resolution of $X_{i+1}$. We will prove the following claim.  
     \begin{claim} 
          If $\rho_i=\rho_{i+1}$, then $d(X_i,B_{X_{i}})-1\ge d(X_{i+1},B_{X_{i+1}}).$
     \end{claim} 
     Assume that the claim holds, then $\mathcal{P}(X_l/Z\ni z,B_{X_l})\subseteq\tilde{\mathcal{P}}\cup\{+\infty\}$ for some positive integer $l$ and we are done.
     
     \begin{proof}[Proof of the Claim]
     If $\rho_i=\rho_{i+1}$, then $Y_i$ is the minimal resolution of $X_{i+1}$ and $Y_i=Y_{i+1}$. Since $1-b_i\in\mathcal{P}_0\subseteq\tilde{\Ii}$, $b_i=a(E_i,X_{i+1},B_{X_{i+1}})<a(E_i,X_i,B_i)$, and $(X_{i+1},B_{X_{i+1}})$ is a non-positive birational model of $(X_i,B_{X_{i}})$, it suffices to show that $\Bb_i= \Bb_{i+1}$.
     
     Since $Y_i$ is the minimal resolution of $X_{i+1}$, any exceptional divisor of $W_i\to X_{i+1}$ belongs to $\Ee(f_{i+1})$, and $\Ee(f_i)\subseteq \Ee(f_{i+1})\cup \{E_i\}$. It suffices to show that any exceptional divisor $F_i$ of $W_i\to X_{i+1}$ belongs to $\Supp B_{W_i}$. Assume that $F_i$ is contracted in some step of $-(K_{W_i}+B_{W_i}+(1-b_i)E_i)$-MMP, $\phi: W_i'\to W_i''$ (here we still denote the divisor by $F_i$). Since $Y_i$ is the minimal resolution of $W_i''$ and $F_i$ is exceptional over $W_i''$, there exists $c\ge0$ such that 
     $$ K_{W_i'}+cF_i=\phi^{*}K_{W_i''}.$$ 
     By the negativity lemma, $F_i^2<0$, and $K_{W_i'}\cdot F_i\ge0$. Since $-(K_{W_i}+B_{W_i}+(1-a_i)E_i)=-p_i^{*}(K_{X_i}+B_{X_{i}})$ is nef over $Z$ and $-(K_{W_i'}+B_{W_i'}+(1-b_i)E_i')\cdot F_i<0$, $E_i\cdot F_i>0$ and
     $$(K_{W_i'}+B_{W_i'}+(1-t) E_{W_i'})\cdot F_i=0,$$
     for some $t\in(b_i,a_i]$, where $B_{W_i'}$ and $E_{W_i'}$ are the strict transforms of $B_{W_i}$ and $E_i$ on $W_i'$ respectively. Therefore, $B_{W_i'}\cdot F_i<0$ which implies that $F_i\subseteq\Supp B_{W_i'}$. 
     \end{proof}
     \medskip
     
     \textbf{Step 2.} According to Lemma \ref{lemma finitelogdis}, there exists a finite set $\hat{\mathcal{P}}$ depending only on $\tilde{\Ii}$ and $\tilde{P}$ and such that $a(E,\tilde{X},\tilde{B})\in\hat{\mathcal{P}}$ for any prime divisor $E\in e(z)$ with $a(E,\tilde{X},\tilde{B})\le1$. Let
     \begin{align*}
     \hat{\Ii}:=&\tilde{\Ii}\cup(\{1-\alpha\ge0\mid \alpha\in\hat{\mathcal{P}}\}\cap (\Qq+\Span_{\Qq_{\ge0}}\tilde{\Ii}))\\
     \subseteq &[0,1]\cap (\tilde{\Ii}\cup(\Qq+\Span_{\Qq_{\ge0}}\tilde{\Ii})))\\
     \subseteq &[0,1]\cap (\Qq+\Span_{\Qq_{\ge0}}(\bar{\Ii})).
     \end{align*}
     We will show that $\hat{\Ii}$ has the required properties.
     
    We may write 
     $K_{Y}+B_Y:=h^*(K_{\tilde{X}}+\tilde{B})$, where $h$ is the morphism $Y\to\tilde{X}$. Since $Y\to X$ is the minimal resolution of $X$ and $(\tilde{X},\tilde{B})$ is a non-positive birational model of $(X,B)$, $B_Y$ is a boundary and $B_Y\in\hat{\Ii}$. Since $K_{Y}+B_Y$ is the crepant pullback of $K_{\tilde{X}}+\tilde{B}$, it satisfies $(1),(3),(4)$, and we are done.
     \end{proof}

\begin{defn}
     A curve $C$ on $X/Z$ is called \emph{extremal} if it generates an extremal ray $R$ of $\overline{\rm NE}(X)$ which defines a contraction morphism $X\to Y/Z$ and if for some ample divisor $H$, we have $H\cdot C=\min\{H\cdot \Sigma\mid  \Sigma\in [R]\}$, where $\Sigma$ ranges over curves generating $R$.
\end{defn}

     We will need the following lemma. The proof is very similar to that of \cite[Proposition 2.9]{HanLiu18}, see also \cite{Sho96lcflip,Sho09,Birkar11}.

\begin{lem}\label{lemma uniformnefpolytope}
     Let $I,d,m$ be three positive integers, $\bm{v}:=(v_1^0,\ldots,v^0_m)\in\Rr^m$ a point, $V\subseteq\Rr^m$ the rational envelope of $\bm{v}$, $\Ll\subseteq V$ a rational polytope, $\bm{v}_j:=(v_{1}^j,\ldots,v_{m}^j)$ the vertices of $\Ll$. Then there exists an open set $U\ni \bm{v}$ of $V$ depending only on $I,d,m,\bm{v}$ and $\bm{v}_j$ satisfying the following.

     Assume that $(X,B:=\sum_{i=1}^m v_i^0B_i)$ is an lc pair of dimension $d$, and $X\to Z$ is a contraction such that
     \begin{itemize}
       \item $B_i\ge0$ is a reduced Weill divisor for any $i$,
       \item $(X,B^j:=\sum_{i=1}^m v_{i}^jB_i)$ is lc for any $j$,
       \item $I(K_X+B^j)$ is Cartier for any $j$, and
       \item $-(K_X+B)$ is nef over $Z$.
     \end{itemize}
     Then $-(K_X+\sum_{i=1}^m u_iB_i)$ is nef over $Z$ for any $(u_1,\ldots,u_m)\in U$.
\end{lem}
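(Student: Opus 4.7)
The plan is to argue by contradiction, adapting Shokurov-style bounded nef polytope arguments (cf.\ \cite{Sho96lcflip,Birkar11,HanLiu18}). Suppose the conclusion fails. Then one can extract a sequence of data $(X_k/Z_k, B_{(k)}:=\sum_{i=1}^m v_i^0 B_{k,i})$ satisfying the hypotheses, together with points $\bm{u}_k\in V$ converging to $\bm{v}$, such that $-(K_{X_k}+\sum_i u_{k,i}B_{k,i})$ fails to be nef over $Z_k$ for every $k$. Thus there exist extremal rays $R_k$ of $\overline{\NE}(X_k/Z_k)$ with $(K_{X_k}+\sum_i u_{k,i}B_{k,i})\cdot R_k>0$. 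After subdividing $\Ll$ and enlarging $I$ by a fixed multiple if necessary, I may assume $\Ll$ is a rational simplex with $\bm{v}$ in its relative interior, so that the barycentric coordinates $\lambda_j$ are uniquely defined $\Qq$-affine functions on the affine hull of $\Ll$ (hence on $V$), with $a_j:=\lambda_j(\bm{v})>0$ and $\sum_j a_j=1$.

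Let $C_k^{\star}$ denote the primitive integral class generating $R_k$ in $N_1(X_k/Z_k,\Zz)$, and set $\ell_{k,j}:=(K_{X_k}+B_k^j)\cdot C_k^{\star}\in \tfrac{1}{I}\Zz$. The nef hypothesis gives $\sum_j a_j\ell_{k,j}\le 0$, whereas the failure of nefness at $\bm{u}_k$ reads $\sum_j \lambda_j(\bm{u}_k)\ell_{k,j}>0$; in particular some index $j_k$ has $\ell_{k,j_k}<0$. The cone theorem for log canonical pairs (Ambro--Fujino) applied to $(X_k,B_k^{j_k})$ produces a rational curve $C$ generating $R_k$ with $-(K_{X_k}+B_k^{j_k})\cdot C\le 2d$; writing $[C]=n[C_k^{\star}]$ with $n\in\Zz_{\ge 1}$ yields $\ell_{k,j_k}\ge -2d$, and the same argument applies to every $j$ with $\ell_{k,j}<0$. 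For indices $j$ with $\ell_{k,j}\ge 0$, the estimate
\[
    a_j\ell_{k,j}\le -\sum_{j'\ne j}a_{j'}\ell_{k,j'}\le 2d\sum_{j'}a_{j'}=2d,
\]
combined with $a_j>0$, gives $\ell_{k,j}\le 2d/a_j$. Hence each $\ell_{k,j}$ lies in the finite set $\tfrac{1}{I}\Zz\cap[-2d,\,2d/\min_{j'}a_{j'}]$.

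Passing to a subsequence, $\ell_{k,j}=\ell_j$ is independent of $k$ for every $j$, and the function $\phi:V\to\Rr$, $\phi(\bm{u}):=\sum_j \lambda_j(\bm{u})\ell_j$, is a fixed $\Qq$-affine function. Combining $\phi(\bm{v})\le 0$, $\phi(\bm{u}_k)>0$, and $\bm{u}_k\to\bm{v}$ forces $\phi(\bm{v})=0$. But $V$ is the rational envelope of $\bm{v}$, so any $\Qq$-affine function on $V$ vanishing at $\bm{v}$ must vanish identically---otherwise its zero locus would be a strictly smaller $\Qq$-affine subspace containing $\bm{v}$, contradicting the minimality of $V$. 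Thus $\phi\equiv 0$ on $V$, contradicting $\phi(\bm{u}_k)>0$.

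The main obstacle is the uniform boundedness of the intersection numbers $\ell_{k,j}$: the cone theorem only controls the length of an extremal ray with respect to a single pair $(X_k,B_k^{j})$, and it is crucial to work with the \emph{primitive} integral class $C_k^{\star}$ of $R_k$ (not an arbitrary generating curve) so that the per-$j$ length bound passes to a bound on $\ell_{k,j}$ for all $j$. The matching upper bound in the positive regime rests on strict positivity of the barycentric coordinates $a_j$, which motivates the preliminary reduction to the case when $\bm{v}$ lies in the interior of a simplex.
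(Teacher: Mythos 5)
Your proposal is correct in substance but follows a genuinely different route from the paper's. The paper works with a single fixed pair and is effective: for an extremal curve $C$ of minimal degree against an ample divisor with $(K_X+B)\cdot C<0$, it transfers the cone-theorem length bound from the curves $\Sigma_j$ attached to each vertex pair $(X,B^j)$ to $C$ via the ratio $\frac{H\cdot C}{H\cdot \Sigma_j}\le 1$, uses the hypothesis that $I(K_X+B^j)$ is Cartier to see that $(K_X+B)\cdot C$ takes only finitely many values, hence $(K_X+B)\cdot C<-\alpha$ for a uniform $\alpha$, and then extracts an explicit $\delta=\frac{\alpha\beta}{2d}$ (with $\beta$ the distance from $\bm{v}$ to the faces of $\Ll$ not containing it) by a convexity argument: if some $\bm{u}$ with $\|\bm{u}-\bm{v}\|<\delta$ reversed the sign, the point where the ray from $\bm{u}$ through $\bm{v}$ exits $\partial\Ll$ would violate the length bound $-2d$. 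Curves with $(K_X+B)\cdot C=0$ are handled there by exactly your rational-envelope rigidity. Your argument instead runs a compactness/contradiction over a sequence of pairs, uses the primitive class on the ray together with the Cartier index to force the intersection vector $(\ell_{k,j})_j$ into a finite set, passes to a subsequence, and concludes by rigidity of $\Qq$-affine functions on the rational envelope. Both proofs rest on the same two pillars (length of extremal rays bounded by $2d$ for lc pairs, and discreteness coming from $I$); the paper's version buys an explicit neighborhood, yours trades effectivity for avoiding the extremal-curve/ratio bookkeeping. Your preliminary reduction to a simplex is legitimate: new vertices are rational convex combinations of the $\bm{v}_j$, lc-ness is preserved since log discrepancies are affine in the coefficients, and the Cartier index is multiplied by a factor depending only on $\bm{v},\bm{v}_j$; moreover $\bm{v}$ automatically lies in the relative interior of any rational polytope in $V$ containing it, since a proper face spans a proper rational affine subspace of $V$.

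One step needs reordering. When you introduce the primitive integral class $C_k^{\star}$ generating $R_k$, you have not yet shown that $R_k$ contains any nonzero integral class: an arbitrary extremal ray of $\overline{\mathrm{NE}}(X_k/Z_k)$ on which $K_{X_k}+\sum_i u_{k,i}B_{k,i}$ is positive need not be rational, since the cone theorem only describes the $(K+\Delta)$-negative part of the cone. The repair is already contained in your own argument: the inequalities $(K_{X_k}+B_{(k)})\cdot R_k\le 0$ and $(K_{X_k}+\sum_i u_{k,i}B_{k,i})\cdot R_k>0$, read against the identity $K_X+\sum_i u_iB_i=\sum_j\lambda_j(\bm{u})(K_X+B^j)$ with $a_j=\lambda_j(\bm{v})>0$, already force $(K_{X_k}+B_k^{j_k})\cdot R_k<0$ for some $j_k$ without any mention of $C_k^{\star}$; then the cone theorem for the lc pair $(X_k,B_k^{j_k})$ shows that $R_k$ is generated by a rational curve, so the primitive integral generator exists, and the rest of your proof (the bound $\ell_{k,j}\ge -2d$ via $[C]=n[C_k^{\star}]$, the upper bound $2d/a_j$, the finite set, the subsequence, and the rational-envelope contradiction) goes through verbatim. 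Note also that the positivity of $\lambda_j(\bm{u}_k)$ is never actually used, so no truncation of the sequence is needed.
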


\begin{proof}
     For any curve $C_0$ such that $(K_X+B)\cdot C_0=0$, we have 
     $-(K_X+\sum_{i=1}^m u_iB_i)\cdot C_0=0$ for any $(u_1,\ldots,u_m)\in V$ as $V$ is the rational envelope of $\bm{v}$. Thus it suffices to show that there exists an open set $U$ of $V$, such that if $(K_X+B)\cdot R<0$ for some extremal ray $R$, then $(K_X+\sum_{i=1}^m u_iB_i)\cdot R<0$ for any $(u_1,\ldots,u_m)\in U$. 

     If $\bm{v}\in\Qq^{m},$ then $V=\{\bm{v}\}$ and there is nothing to prove. Thus we may assume that $\bm{v}\notin\Qq^{m}$.
     We may find positive real numbers $a_j$ and lc pairs $(X,B^{j})$ with such that $\sum a_j=1$, and
     $$K_X+B=\sum a_j(K_X+B^j).$$
     By the cone theorem, there is a curve $\Sigma_j$ generating $R$ such that $(K_X+B^j)\cdot\Sigma_j\ge-2d$ for any $j$. Let $C$ be an extremal curve of $R$, and $H$ an ample Cartier divisor, such that $H\cdot C=\min\{H\cdot \Sigma\mid \Sigma\in [R]\}$.

     Since both $C$ and $\Sigma_j$ generate $R$, we have
     $$(K_X+B^j)\cdot C=((K_X+B^j)\cdot\Sigma_j)\left(\frac{H\cdot C}{H\cdot \Sigma_j}\right)\ge-2d,$$
     for any $j$. Thus
     \begin{align*}
        0&>(K_X+B)\cdot C=\sum a_j(K_X+B^j)\cdot C
        \\&\ge -\sum_{j\neq j_0}a_j\cdot 2d+a_{j_0}(K_X+B^{j_0})\cdot C
        \ge -2d+a_{j_0}(K_X+B^{j_0})\cdot C,
     \end{align*}
     for any $j_0$. Since $I(K_X+B^{j_0})$ is Cartier, there are only finitely many possibilities for the numbers $a_{j_0}(K_X+B^{j_0})\cdot C$, and hence for $(K_X+B)\cdot C$. Thus there is a positive real number $\alpha$ which only depends on $I,d,m,\bm{v}$ and $\bm{v}_j$, such that $(K_X+B)\cdot C<-\alpha$.

     Let $||.||$ be a norm on $V$, $\Ff_{\bm{v}}$ the set of faces of $\Ll$ which does not contain $\bm{v}$, and $\beta:=dist(\Ff_{\bm{v}},\bm{v})$. Let $\delta:=\frac{\alpha \beta}{2d}$. We claim that 
     $$U:=\{\bm{u}\mid\bm{u}\in V,||\bm{v}-\bm{u}||<\delta\}$$ 
     has the required properties. Suppose on the contrary that there exist a pair $(X/Z,B:=\sum_{i=1}^m v_i^0B_i)$ satisfying the conditions, an extremal curve $C_{1}$ and $D:=\sum_{i=1}^m c_iB_i$ with $(c_1,\dots,c_m)\in U$ such that $(K_X+B)\cdot C_{1}<0$ while $(K_X+D)\cdot C_{1}\ge0$. Let $R_{1}'$ be the ray going from $D$ to $B$, and let $D'$ be the intersection point of $R_{1}'$ and $\partial\Ll$. We have
     $$(K_X+D')\cdot C_{1}\leq \frac{\beta+\delta}{\delta} (K_X+B)\cdot C_{1} <-(1+\frac{2d}{\alpha}) \alpha < -2d,$$
     which contradicts the cone theorem. 
\end{proof}

\begin{lem}\label{lemma antinefpolytope2}
     Let $\epsilon$ be a non-negative real number, $\Ii\subseteq[0,1]$ and $\mathcal{P}\subseteq[0,1]$ two finite sets. Then there exist finite sets $\Ii_1\subseteq(0,1],\Ii_2\subseteq[0,1]\cap\Qq,\mathcal{M}\subseteq\Qq_{\ge0}$ and a positive integer $I$ depending only on $\Ii,\mathcal{P}$ and $\epsilon$ satisfying the following.
     
     Assume that $(X/Z\ni z,B)$ is a $\Qq$-factorial surface germ such that
     \begin{itemize}
       \item $B\in\Ii,$
       \item $-(K_X+B)$ is semiample over $Z,$
       \item $\mathcal{P}(X/Z\ni z,B)\subseteq\mathcal{P}\cup\{+\infty\},$ and
       \item  $(X/Z\ni z,B)$ is $(\epsilon,\Rr)$-complementary.
     \end{itemize}
     Possibly shrinking $Z$ near $z$, there exist $a_i\in\Ii_1$, $B^i\in\Ii_2$ and $\epsilon_i\in\mathcal{M}$ such that
     \begin{enumerate}
       \item $\sum a_i=1,$
       \item $\sum a_i\epsilon_i     \ge\epsilon$,
       \item  $K_X+B=\sum a_i(K_X+B^i),$
       \item $(X/Z\ni z,B^i)$ is $(\epsilon_i,\Rr)$-complementary for any $i$,
       \item $-I(K_X+B^i)$ is Cartier and semiample over $Z$ for any $i$, and
       
       \item if $\epsilon\in\Qq$, then  $\epsilon=\epsilon_i$ for any $i$, and if $\epsilon\neq 1$, then $\epsilon_i\neq1$ for any $i$.
     \end{enumerate}
\end{lem}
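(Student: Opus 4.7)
The plan is to pass to the rational envelope of the coefficient vector. Writing $\Ii=\{\alpha_1,\dots,\alpha_k\}$ and $\bm{\alpha}=(\alpha_1,\dots,\alpha_k)\in\Rr^k$, every $B\in\Ii$ on $X$ has the form $B=\sum_{i=1}^{k}\alpha_iD_i$, where $D_i\ge0$ is the sum of the prime components of $B$ with coefficient $\alpha_i$. Let $V\subseteq\Rr^k$ be the rational envelope of $\bm{\alpha}$; it depends only on $\Ii$. Applying Theorem \ref{thm:uniformpolytopeforsurfacemlds} to $\bm{\alpha}$ with parameter $\epsilon$, I obtain a rational polytope $\bm{\alpha}\in P\subseteq V$ with vertices $\bm{\alpha}_l\in\Qq^k$, positive reals $a_l$ satisfying $\sum a_l=1$ and $\sum a_l\bm{\alpha}_l=\bm{\alpha}$, and non-negative rationals $\epsilon_l$ with $\sum a_l\epsilon_l\ge\epsilon$, all depending only on $\Ii$ and $\epsilon$. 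Setting $B^l:=\sum_i(\bm{\alpha}_l)_iD_i$, the theorem ensures that at every closed point $x\in\pi^{-1}(z)$ the map $\bm{\beta}\mapsto\mld(X\ni x,\sum\beta_iD_i)$ is linear on $P$ and $\mld(X\ni x,B^l)\ge\epsilon_l$. Since $(X/Z\ni z,B)$ is $(\epsilon,\Rr)$-complementary, $\mld(X\ni x,B)\ge\epsilon$ at each such $x$, so the theorem applies. If $\epsilon\in\Qq$, I take $\epsilon_l=\epsilon$; by further shrinking $P$ I enforce $\epsilon_l<1$ whenever $\epsilon<1$.

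Next, Theorem \ref{thm locally linearity of mld}(2) gives linearity of PLDs on $P$, so the hypothesis $\mathcal{P}(X/Z\ni z,B)\subseteq\mathcal{P}\cup\{+\infty\}$ pins $\pld(X\ni x,B^l)$ to a finite rational set depending only on $\Ii,\mathcal{P},\epsilon,\bm{\alpha}_l$. Proposition \ref{prop pld} then produces a uniform positive integer $I$ such that $I(K_X+B^l)$ is Cartier over a neighborhood of $\pi^{-1}(z)$. A final shrinking of $P$ via Lemma \ref{lemma uniformnefpolytope} guarantees $-(K_X+B^l)$ is nef over $Z$ for every $l$.

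To upgrade nefness to semiampleness, let $g\colon X\to Y$ be the contraction associated to the semiample divisor $-(K_X+B)$ over $Z$, with $-(K_X+B)\sim_{\Rr,Z}g^*H$ and $H$ ample over $Z$. For any curve $C$ contracted by $g$, the identity $-(K_X+B)\cdot C=0$ is a $\Qq$-linear relation on $\bm{\alpha}$ with rational coefficients $(D_i\cdot C,\,-K_X\cdot C)$, and by the defining property of the rational envelope it extends to all of $V$; hence $-(K_X+B^l)\cdot C=0$ as well. Thus the nef $\Qq$-Cartier divisor $-I(K_X+B^l)$ is trivial on fibers of $g$ and descends to a nef $\Qq$-Cartier $-IH^l$ on $Y$ with $\sum a_lH^l=H$ ample over $Z$. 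Choosing $P$ small enough in $V$ so that each $H^l$ stays in the ample cone of $Y/Z$, I deduce that $-I(K_X+B^l)=g^*(-IH^l)$ is Cartier and semiample over $Z$. Lemma \ref{lem semiamplecomplement} then supplies the desired $(\epsilon_l,\Rr)$-complement of $(X/Z\ni z,B^l)$.

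The hard part is the last semiampleness step, since $\rho(Y/Z)$ can grow with $X$ and the naive openness of the ample cone on $Y$ is not uniform. The resolution is that the perturbations $H^l-H$ are $\Qq$-Cartier of index dividing the fixed $I$ and parametrised by the fixed polytope $P\subseteq V$; a cone-theorem style intersection bound against the extremal rays of $Y/Z$, in the spirit of the proof of Lemma \ref{lemma uniformnefpolytope}, then suffices to keep each $H^l$ ample after shrinking $P$, uniformly in $X$.
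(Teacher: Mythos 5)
Your first half is essentially the paper's argument: decompose $B$ via a uniform rational polytope around the coefficient vector (the paper channels this through Corollary \ref{cor verticesofpolytope}, you through Theorem \ref{thm:uniformpolytopeforsurfacemlds} directly, same content), pin down the PLDs via the hypothesis $\mathcal{P}(X/Z\ni z,B)\subseteq\mathcal{P}\cup\{+\infty\}$, obtain the uniform Cartier index $I$ from Proposition \ref{prop pld}, and use Lemma \ref{lemma uniformnefpolytope} to keep $-(K_X+B^l)$ nef. One small omission: Theorem \ref{thm:uniformpolytopeforsurfacemlds} is only stated for $\epsilon>0$, so your argument as written does not cover $\epsilon=0$; the paper's Corollary \ref{cor verticesofpolytope} is formulated to include $\epsilon=0$. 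This is fixable but should be addressed.

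The genuine divergence, and the genuine gap, is in the passage from nef to semiample. The paper's route is short: by Lemma \ref{lem semiamplecomplement} it has a complement $B+G$ with $\lfloor B+G\rfloor=\lfloor B\rfloor$, by Lemma \ref{lem:run anti pair MMP} it writes $-(K_X+B^l)\sim_{\Rr,Z}u_l(K_X+\Delta_l)$ for a surface pair $(X,\Delta_l)$ on the fixed $\Qq$-factorial $X$, and then invokes abundance for $\Qq$-factorial log surfaces \cite[Theorem 8.1]{Fuj12}. No contraction to the ample model $Y$ is ever taken, and no descent or ampleness claim on $Y$ is needed. Your route goes through $g\colon X\to Y$, and the two steps you compress are precisely where the argument is incomplete. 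First, the descent itself: you assert that the nef, $g$-numerically trivial $\Qq$-Cartier divisor $-I(K_X+B^l)$ ``descends to a nef $\Qq$-Cartier $-IH^l$ on $Y$.'' This is not automatic. $Y$ is the ample model and need not be $\Qq$-factorial; and the standard descent of a $g$-numerically trivial Cartier divisor along a birational contraction rests on vanishing of $R^1g_*\Oo_X$, which for surfaces holds when $X$ has rational singularities (e.g.\ klt), but $X$ here is only assumed $\Qq$-factorial with an lc complement, so when $\epsilon=0$ the relevant singularities can be non-rational. For a fiber-type $g$ the descent question is of a different nature and is likewise not addressed. Second, even granting descent, the ``cone-theorem style intersection bound'' you invoke to keep $H^l$ ample needs a pair structure on $Y$ relative to which the $g$-images of the extremal rays are negative, and you have not produced one; the cone-theorem estimate used in Lemma \ref{lemma uniformnefpolytope} gives preservation of nefness, not of ampleness, and the strict positivity lower bound $H\cdot C\ge\alpha>0$ uniformly over the (unbounded) set of extremal rays of $Y/Z$ is exactly what is missing. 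The paper's substitution of abundance for this whole descent-and-ampleness argument is not a cosmetic choice; it is what closes the gap.
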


\begin{proof}
   Possibly shrinking $Z$ near $z$, by Corollary \ref{cor verticesofpolytope} and Lemma \ref{lemma uniformnefpolytope}, there exist a positive integer $I$, a finite set $\Ii_1$ of positive real numbers and two finite sets $\Ii_2,\mathcal{M}$ of non-negative rational numbers depending only on $\Ii,\mathcal{P}$ and $\epsilon$ such that there exist $a_i\in\Ii_1,B^{i}\in\Ii_2$ and $\epsilon_i\in\mathcal{M}$ satisfying (1)--(3), (5)--(6). Moreover, $(X/Z\ni z,B^{i})$ is $\epsilon_i$-lc over $z$ for any $i$. It suffices to show that $(X/Z\ni z,B^{i})$ is $(\epsilon_i,\Rr)$-complementary over $z$ for any $i$.
 
     By Lemma \ref{lem semiamplecomplement}, $(X/Z\ni z,B)$ has an $(\epsilon,\Rr)$-complement $(X/Z\ni z,B+G)$ such that $\lfloor B+G\rfloor=\lfloor B\rfloor$. Possibly shrinking $Z$ near $z$, by Lemma \ref{lem:run anti pair MMP}, $-(K_X+B^i)\sim_{\Rr,Z}u_i(K_X+\Delta_i)$ for some positive real number $u_i$ and surface pair $(X,\Delta_i)$.
     
     By the relative abundance theorem for $\Qq$-factorial log surfaces \cite[Theorem 8.1]{Fuj12}, $-(K_X+B^{i})$ is semiample over $Z$ for any $i$. The lemma follows from Lemma \ref{lem semiamplecomplement}.
\end{proof}
 \begin{thm}\label{thm:decomp comp for closed points}
     Theorem \ref{thm: decompsable epsilon complement for surfaces} holds for the case when $\dim z=0.$
\end{thm}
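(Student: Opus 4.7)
The plan is to combine the two main ingredients already available: Theorem \ref{thm reducetofiniteandfixpld}, which replaces $(X/Z \ni z,B)$ by a geometrically controlled birational model, and Lemma \ref{lemma antinefpolytope2}, which produces the desired decomposition once the coefficients, PLDs, and semiampleness are under control.

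First, I will apply Theorem \ref{thm reducetofiniteandfixpld} to $(X/Z\ni z,B)$. After possibly shrinking $Z$ near $z$, this yields a finite set $\hat{\Ii}\subseteq[0,1]$ depending only on $\Ii$ and $\epsilon$, together with a boundary $B_Y$ on the minimal resolution $f\colon Y\to X$ such that $B_Y\in\hat{\Ii}$, $(Y,B_Y)$ is a non-positive birational model of $(X,B)$ (so $K_Y+B_Y\ge f^{*}(K_X+B)$ taking $W=Y$), $-(K_Y+B_Y)$ is semiample over $Z$, and $(Y/Z\ni z,B_Y)$ is $(\epsilon,\Rr)$-complementary. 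Because $Y$ is a smooth surface, every closed point of $Y$ is smooth, hence $\pld(Y\ni y,B_Y)=+\infty$ for all closed points $y$, and therefore $\mathcal{P}(Y/Z\ni z,B_Y)\subseteq\{+\infty\}$. In particular, $Y$ is $\Qq$-factorial, so all hypotheses of Lemma \ref{lemma antinefpolytope2} are met on $(Y/Z\ni z,B_Y)$ with input sets $\hat{\Ii}$ and $\mathcal{P}=\emptyset$.

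Next, applying Lemma \ref{lemma antinefpolytope2}, I obtain finite sets $\Ii_1\subseteq(0,1]$, $\Ii_2\subseteq[0,1]\cap\Qq$, $\mathcal{M}\subseteq\Qq_{\ge0}$ (and a positive integer $I$) depending only on $\hat{\Ii}$ and $\epsilon$, hence only on $\Ii$ and $\epsilon$, together with $a_i\in\Ii_1$, $B_Y^i\in\Ii_2$ and $\epsilon_i\in\mathcal{M}$ satisfying $\sum a_i=1$, $\sum a_i\epsilon_i\ge\epsilon$, $K_Y+B_Y=\sum a_i(K_Y+B_Y^i)$, each $(Y/Z\ni z,B_Y^i)$ is $(\epsilon_i,\Rr)$-complementary, and the rationality conditions in (5) of Theorem \ref{thm: decompsable epsilon complement for surfaces}. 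I then push this decomposition down to $X$: setting $B^i:=f_{*}B_Y^i$, the coefficients of $B^i$ form a subset of those of $B_Y^i$ and so $B^i\in\Ii_2$. Applying $f_{*}$ to the decomposition above and using $f_{*}f^{*}=\id$ on divisors together with $K_Y+B_Y\ge f^{*}(K_X+B)$ gives
\[
\sum a_i(K_X+B^i)=f_{*}\Bigl(\sum a_i(K_Y+B_Y^i)\Bigr)=f_{*}(K_Y+B_Y)\ge f_{*}f^{*}(K_X+B)=K_X+B,
\]
which verifies property (3). Finally, Lemma \ref{lemma pullbaclcomplement}(1) applied to the birational morphism $f\colon Y\to X$ (with $B^i$ the strict transform of $B_Y^i$) upgrades each $(\epsilon_i,\Rr)$-complementarity statement from $Y$ to $X$, giving property (4). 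Properties (1), (2), (5) are inherited directly from Lemma \ref{lemma antinefpolytope2}, completing the proof.

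There is no real obstacle beyond routine bookkeeping: the reduction to a finite-coefficient semiample model on the smooth minimal resolution is handled by Theorem \ref{thm reducetofiniteandfixpld}, the mildly technical construction of the decomposition lives inside Lemma \ref{lemma antinefpolytope2}, and the only thing specific to the case $\dim z=0$ used here is that working on a smooth model makes the PLD hypothesis vacuous. The key conceptual step is recognizing that smoothness of $Y$ lets us feed $\mathcal{P}=\emptyset$ into Lemma \ref{lemma antinefpolytope2} and that pushforward along the minimal resolution transports both the decomposition and the complementarity back to $X$ without loss.
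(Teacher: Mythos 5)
Your proposal is correct and follows essentially the same route as the paper: reduce via Theorem \ref{thm reducetofiniteandfixpld} to the boundary $B_Y$ on the minimal resolution, apply Lemma \ref{lemma antinefpolytope2} there, and push the decomposition down to $X$ using the non-positive birational model property together with Lemma \ref{lemma pullbaclcomplement}(1). Your explicit observation that smoothness of $Y$ makes the PLD hypothesis vacuous (so $\mathcal{P}=\emptyset$ suffices) is a slightly cleaner way of phrasing the same application made in the paper.
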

\begin{proof}
     According to Theorem \ref{thm reducetofiniteandfixpld}, there exists a finite set $\hat{\Ii}\subseteq[0,1]$ depending only on $\Ii$ and $\epsilon,$ such that there exists a boundary $B_Y$ on the minimal resolution $Y$ of $X$ satisfying properties (1)--(4) in Theorem \ref{thm reducetofiniteandfixpld}.

     By Lemma \ref{lemma antinefpolytope2}, there exist a finite set $\Ii_1\subseteq(0,1]$ and two finite sets $\Ii_2,\mathcal{M}$ of non-negative rational numbers depending only on $\hat{\Ii}$ and $\hat{\mathcal{P}}$ such that 
     \begin{itemize}
       \item $\sum a_i=1,$
       \item $\sum a_i\epsilon_i\ge\epsilon$,
       \item  $K_{Y}+B_Y=\sum a_i(K_{Y}+B_Y^i)$,
       \item $(Y/Z\ni z,B_Y^i)$ is $(\epsilon_i,\Rr)$-complementary for any $i$, and
       \item if $\epsilon\in\Qq$, then  $\epsilon=\epsilon_i$ for any $i$, and if $\epsilon\neq 1$, then $\epsilon_i\neq1$ for any $i$,
     \end{itemize}
     for some $a_i\in\Ii_1,B_Y^i\in\Ii_2$ and $\epsilon_i\in\mathcal{M}.$ 
     
     Let $B^i$ be the strict transform of $B_Y^i$ on $X$, then $B^i\in\Ii_2$ and $(X/Z\ni z,B^i)$ is $(\epsilon_i,\Rr)$-complementary for any $i$. Since $(Y/Z\ni z,B_Y)$ is a non-positive birational model of $(X/Z\ni z,B)$, $\sum a_i(K_X+B^i)\ge K_X+B$. 
\end{proof}

\subsection{Proof of Theorem \ref{thm: decompsable epsilon complement for surfaces}}
\begin{lem}\label{lemma Amb99rel}

    Suppose that $(X/Z\ni z,B)$ is lc over $z$. Then there exists an open subset $U$ of $Z$ such that $U\cap \bar{z}\neq \emptyset$ and
    $$\mld(X/Z\ni z_{cp},B)=\mld(X/Z\ni z,B)+\dim {z}$$
    for any closed point $z_{cp}\in \bar{z}\cap U.$ 

\end{lem}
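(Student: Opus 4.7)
The plan is to pass to a log resolution of $(X,B)$ and reduce the identity to a dimension count on a log smooth pair. Let $f\colon Y\to X$ be a log resolution, write $K_Y+B_Y=f^{*}(K_X+B)$, set $\mu:=\pi\circ f$, and let $V_1,\dots,V_t$ be the irreducible components of $\mu^{-1}(\bar z)$ with generic points $\eta_1,\dots,\eta_t$. I call $V_i$ \emph{horizontal} if $\mu(\eta_i)=z$ and \emph{vertical} otherwise.

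The key step is to choose $U\subseteq Z$ open with $U\cap\bar z\neq\emptyset$ so that, for every closed point $z_{cp}\in U\cap\bar z$:
\begin{itemize}
\item[(i)] no vertical $V_j$ meets $\mu^{-1}(U)$;
\item[(ii)] for every horizontal $V_i$, every irreducible component of $V_i\cap\mu^{-1}(z_{cp})$ has dimension $\dim V_i-\dim z$;
\item[(iii)] for every horizontal $V_i$ and every component $B_j$ of $B_Y$ with $V_i\not\subseteq B_j$, the generic point of any irreducible component of $V_i\cap\mu^{-1}(z_{cp})$ does not lie on $B_j$.
\end{itemize}
Each of (i)--(iii) excludes a proper closed subset of $\bar z$: (i) since $\mu(V_j)\subsetneq\bar z$ is closed in $\bar z$ by properness of $\mu$; (ii) by upper semicontinuity of fibre dimension applied to $V_i\to\bar z$; and (iii) by the same semicontinuity applied to $B_j\cap V_i\to\bar z$ (using $\dim(B_j\cap V_i)<\dim V_i$ when this map is dominant), or by removing $\mu(B_j\cap V_i)\cap\bar z$ when it is not. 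As only finitely many $V_i$ and $B_j$ are involved, a common $U$ exists.

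With $U$ so chosen, the identity follows from a direct computation. By Lemma~\ref{lem:mldisattained} and Lemma~\ref{lem:logsmoothmld},
$$\mld(X/Z\ni w,B)=\min_{y\in\mu^{-1}(w)}\bigl(\operatorname{codim}(y,Y)-\mult_y B_Y\bigr)\qquad\text{for }w\in\{z,z_{cp}\},$$
and the monotonicity $\mld(Y\ni y,B_Y)\ge\mld(Y\ni y',B_Y)$ under specialization $y'\leadsto y$ on the log smooth pair localises each minimum to generic points of irreducible components of $\mu^{-1}(w)$. By (i) these all sit inside horizontal $V_i$, so $\mld(X/Z\ni z,B)=\min_i\bigl(\operatorname{codim}(\eta_i,Y)-\mult_{\eta_i}B_Y\bigr)$. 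For the generic point $w$ of an irreducible component of $V_i\cap\mu^{-1}(z_{cp})$, (ii) gives $\operatorname{codim}(w,Y)=\operatorname{codim}(\eta_i,Y)+\dim z$, while (iii) combined with the obvious implication $B_j\supseteq V_i\Rightarrow B_j\ni w$ gives $\mult_w B_Y=\mult_{\eta_i}B_Y$. Minimising over $i$ and such $w$ yields $\mld(X/Z\ni z_{cp},B)=\mld(X/Z\ni z,B)+\dim z$.

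The one real obstacle is packaging (iii) cleanly: one must simultaneously control \emph{all} horizontal components $V_i$ of $\mu^{-1}(\bar z)$ against \emph{all} components $B_j$ of $B_Y$ that fail to contain them, even when $B_j\cap V_i$ still dominates $\bar z$. Once (iii) is in hand, the rest is routine bookkeeping on the log resolution via Lemma~\ref{lem:logsmoothmld}.
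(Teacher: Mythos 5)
The decisive step in your argument is the ``localisation'' claim, and it fails. The monotonicity you invoke is false on a log smooth pair: if $y'\leadsto y$ is a specialization, then $\operatorname{codim}(y,Y)$ grows but $\mult_y B_Y$ can grow faster, so $\mld(Y\ni y,B_Y)$ can strictly drop; consequently the infimum defining $\mld(X/Z\ni w,B)$ in Lemma \ref{lem:mldisattained} runs over \emph{all} points of the fibre and need not be attained at generic points of irreducible components of $\mu^{-1}(\bar z)$ (resp.\ of $\mu^{-1}(z_{cp})$). Concretely, take $Y=X=\mathbb{A}^3$ with coordinates $(u,v,t)$, $Z=\mathbb{A}^2$ with coordinates $(u,v)$, $\pi$ the projection, $z$ the generic point of $\{v=0\}\subseteq Z$, and $B=B_Y=\frac{3}{4}\{t=0\}+\frac{3}{4}\{t=v\}$, which is already log smooth. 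Here $\mu^{-1}(\bar z)=\{v=0\}$ is irreducible and horizontal with generic point $\eta$, and $\operatorname{codim}(\eta)-\mult_\eta B_Y=1$; but the generic point $y$ of $\{t=v=0\}$ also satisfies $\mu(y)=z$ and gives $2-\frac{3}{2}=\frac{1}{2}$, so $\mld(X/Z\ni z,B)=\frac{1}{2}$, not the value $1$ your formula $\min_i(\operatorname{codim}(\eta_i)-\mult_{\eta_i}B_Y)$ produces. The same happens over a general closed point $z_{cp}\in\bar z$: there $\mld(X/Z\ni z_{cp},B)=3-\frac{3}{2}=\frac{3}{2}$, attained at the closed point of the fibre where both boundary components meet it, whereas (ii)+(iii) compute $2$ at the generic point of the fibre. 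Thus your proof establishes an equality between two quantities which are in general not the relative mld's; conditions (i)--(iii) are tailored to the component generic points and, via (iii), deliberately push the computation away from the deeper strata of $\Supp B_Y$, which is exactly where the minimum may be achieved.

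This is not a local fix: any correct argument has to keep track of all snc strata, which is what the paper's proof does. There, after enlarging the support so that $\mu^{-1}(\bar z)$ is a union of components $B_i$ of the snc divisor (with log discrepancies $a_i$, possibly $a_i=1$) and imposing the fibre-dimension control (generic flatness) on \emph{every} connected component $C$ of every intersection $\cap_{i\in\mathcal{J}}B_i$ meeting $\bar z$, the lower bound $\mld(X/Z\ni z_{cp},B)\ge\mld(X/Z\ni z,B)+\dim z$ is proved for an \emph{arbitrary} point $y$ of the fibre by the count $\dim y\le\dim C_y-\dim z$ combined with $\operatorname{codim} C_y=|\mathcal{I}(y)|$ and the fact that $y$ lies on some $B_j$ with $\mu(B_j)=\bar z$, whence $a_j\ge\mld(X/Z\ni z,B)$; the reverse bound uses an mld place $B_1$ over $z$ (via Lemma \ref{lem:mldisattained}) and a maximal-dimensional component of $B_1\cap\mu^{-1}(z_{cp})$, at whose generic point only $B_1$ passes. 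Your proposal has no analogue of either mechanism (nor of the case distinction $\dim Z=\dim z$ versus $\dim Z>\dim z$ that the paper must treat separately), so the bookkeeping at the level of the $V_i$ and $\eta_i$ alone cannot yield the statement.
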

	
When $X=Z$ and $X\to Z$ is the identity map, we obtain \cite[Proposition 2.3]{Ambro99}. The proof of Lemma \ref{lemma Amb99rel} is similar to that of \cite[Proposition 2.3]{Ambro99}.

\begin{proof}
    
    We may assume that $(X,B)$ is lc and $\dim{z}>0$. 
    Let $g:Y\to X$ be a log resolution of $(X,B)$,     $$K_Y+B_Y:=K_Y+\sum_{i\in \mathcal{I}}(1-a_i)B_i=g^{*}(K_X+B),$$
    such that $\sum_{i\in\mathcal{I}} B_i$ is snc, $B_i$ are distinct prime divisors, $1\in \mathcal{I}$, $\mu(B_1)=\bar{z}$, $a_1=\mld(X/Z\ni z,B_1)$, and for each $i\in\mathcal{I}\backslash\{1\}$, $B_i$ is either the strict transform of an irreducible component of $\Supp B$ or a $g$-exceptional divisor, where $a_i:=a(B_i,X,B)$. Let $\mu=\pi\circ g$, where $\pi$ is the morphism $X\to Z$. Possibly replacing $Y$ by a higher model, and shrinking $Z$ near ${z}$, we may further assume that
	\begin{itemize}
		\item there exists a non-empty subset $\mathcal{I}_z\subseteq\mathcal{I}$ such that $\mu(B_i)=\bar{z}$ for any $i\in \mathcal{I}_z$, 
		\item either $\dim z=\dim Z$ and $\mathcal{I}=\mathcal{I}_z$ or $\dim z<\dim Z$ and $\mu^{-1}(\bar{z})=\bigcup_{i\in\mathcal{I}_z}B_i$, and
		\item for any $\mathcal{J}\subseteq \mathcal{I}$ and any connected component $C$ of $\cap_{i\in \mathcal{J}} B_i$, either $\bar{z}\subseteq \mu(C)$ or $\bar{z}\cap \mu(C)=\emptyset$.
	\end{itemize}   

	Let $\mathcal{S}$ be the set of connected components $C$ of $\cap_{i\in \mathcal{J}} B_i$, such that $\mu(C)=\bar{z}$ for some $\mathcal{J}\subseteq \mathcal{I}$. Since the flat locus of $\mu$ is an open subset, and any flat morphism is open, there exists an open subset $U$ of $Z$, such that the dimension of any fiber of $\mu|_{C\cap \mu^{-1}U}:C\cap \mu^{-1}U\to \bar{z}\cap U$ is equal to $\dim C-\dim {z}$ for any $C\in \mathcal{S}$.	
	\medskip
	
	We claim that $U$ has the required properties. We first show that 
    $$\mld(X/Z\ni z_{cp},B)\ge \dim{z}+\mld(X/Z\ni z,B)$$ for any closed point $z_{cp}\in \bar{z}\cap U.$
	For any point $y\in\mu^{-1}(z_{cp})$, let $\mathcal{I}(y):=\{i\in\mathcal{I}\mid y\in B_i\}$, and $C_{y}$ the unique connected component of $\cap_{i\in \mathcal{I}(y)} B_i$ containing $y$. 
	
	If $\dim Z=\dim z$ and $|\mathcal{I}(y)|=0$, then $\mld(X/Z\ni z,B)\le1\le\dim X-\dim z$ and $\mld(Y\ni y,B_Y)=\dim X\ge\dim z+\mld(X/Z\ni z,B)$. Otherwise, $|\mathcal{I}(y)|>0$, and there exist $j\in \mathcal{I}_{z}$ and $C_y\in \mathcal{S}$, such that $y\in C_{y}\subseteq B_j$. Thus
	$$ \dim y\le \dim(C_{y}\cap \mu^{-1}(z_{cp}))=\dim C_{y}-\dim {z}.$$
	By Lemma \ref{lem:logsmoothmld}, we have
	\begin{align*}
	   \mld(Y\ni y,B_Y)&=\dim X-\dim y-\sum_{i\in \mathcal{I}(y)} (1-a_i)\\&\ge \dim X-\dim C_{y}+\dim\ {z}-|\mathcal{I}(y)|+\sum_{i\in \mathcal{I}(y)} a_i\\
	                        &=\dim{z}+\sum_{i\in \mathcal{I}(y)} a_i\ge \dim{z}+a_j\\&\ge \dim{z}+\mld(X/Z\ni z,B).
	\end{align*}
	Thus $\mld(X/Z\ni z_{cp},B)\ge \dim{z}+\mld(X/Z\ni z,B).$
	
	It suffices to show that
	$$\mld(X/Z\ni z_{cp},B)\le \dim{z}+\mld(X/Z\ni z,B)$$ 
	for any closed point $z_{cp}\in \bar{z}\cap U.$ 
	
	Let $y$ be the generic point of an irreducible component of $B_1\cap \mu^{-1}(z_{cp})$ of maximal dimension, i.e., $\dim y=\dim B_1-\dim{z}=\dim X-1-\dim{z}$. Since
	$$\dim X-1-\dim{z}=\dim y\le \dim C_{y}-\dim {z}\le \dim X-1-\dim{z},$$
	and the equalities hold if and only if $\mathcal{I}(y)=\{1\}$. By Lemma \ref{lem:logsmoothmld}, we have
	\begin{align*}
	 	\mld(X/Z\ni z_{cp},B)&\le\mld(Y\ni y,K_Y+B_Y)\\
	 	                                       &=\dim X-\dim y-(1-a_1)\\
	 	                                       &=\dim{z}+\mld(X/Z\ni z,B).
	\end{align*} 
\end{proof}

\begin{conj}[{\cite[Conjecture 2.4]{Ambro99}}, LSC for MLDs]\label{conj: LSC for mlds}
	For any pair $(X,B)$ and non-negative integer $d$, the function
	$$x\mapsto \mld(X\ni x,B)$$
	is lower semi-continuous for $d$-dimensional points $x\in X$.
\end{conj}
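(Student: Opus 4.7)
The approach is to reduce the LSC statement to an analysis on a log resolution. Fix a pair $(X,B)$ and a non-negative integer $d$. Let $f\colon Y\to X$ be a log resolution of $(X,B)$ and write $K_Y+B_Y=f^{*}(K_X+B)$. By Lemma \ref{lem:logsmoothmld}, for any point $y\in Y$ the log-smooth formula gives $\mld(Y\ni y,B_Y)=\operatorname{codim}_Y y-\operatorname{mult}_y B_Y$, a quantity depending only on which snc stratum of $\operatorname{Supp} B_Y$ contains $y$. Moreover, by pulling back divisors, every divisor $E$ over $X$ that computes $\mld(X\ni x,B)$ may, after further blow-ups of $Y$, be exhibited on a smooth model, so by Lemma \ref{lem:mldisattained},
\[
\mld(X\ni x,B)\;=\;\min_{y}\,\mld(Y\ni y,B_Y),
\]
where $y$ ranges over points of $Y$ with $f(\overline{\{y\}})=\overline{\{x\}}$, the minimum being attained on any sufficiently fine log resolution.

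The plan is then to combine this representation with Lemma \ref{lemma Amb99rel} in order to reduce LSC at $d$-dimensional points to LSC at closed points. Indeed, given a $d$-dimensional point $z\in X$, Lemma \ref{lemma Amb99rel} provides an open subset $U\subseteq X$ with $\overline{\{z\}}\cap U\neq\emptyset$ on which every closed point $z_{cp}\in \overline{\{z\}}\cap U$ satisfies $\mld(X\ni z_{cp},B)=\mld(X\ni z,B)+d$. If LSC were known at closed points in dimension $\dim X$, then for any $c<\mld(X\ni z,B)$ one could apply LSC at such a $z_{cp}$ to the threshold $c+d$, producing an open $V\ni z_{cp}$ on which every closed point $z_{cp}'$ satisfies $\mld(X\ni z_{cp}',B)>c+d$. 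A second application of Lemma \ref{lemma Amb99rel} to any nearby $d$-dimensional point $z'$ with $z_{cp}'\in\overline{\{z'\}}\cap V\cap U_{z'}$ then yields $\mld(X\ni z',B)>c$, establishing LSC at $z$.

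The main obstacle is precisely the remaining LSC at closed points, which is Ambro's original conjecture and is open in dimension $\ge 3$. The difficulty is structural: while $\mld(Y\ni\,\cdot\,,B_Y)$ is controlled by the snc stratification of $B_Y$ (and is in fact \emph{upper} semi-continuous on $Y$, since under specialization the codimension and the multiplicity both jump, with the former dominating), the infimum defining $\mld(X\ni\,\cdot\,,B)$ is taken over a family of competitors that varies with $x$, and a sequence of closed specializations can, a priori, force the MLD to be computed by divisors of unbounded discrepancy order, not realized on any fixed log resolution. A conjectural route, in the spirit of this paper, is to combine the ACC for MLDs with boundedness of $(\epsilon,n)$-complements (Conjecture \ref{conj a complement}) and the uniform rational polytope statement (Theorem \ref{thm:uniformpolytopeforsurfacemlds}) to constrain, in families, the finite set of divisors that can compute the MLD, thereby forcing the MLD function to be locally constructible and hence LSC. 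Executing this plan requires extensions of Theorem \ref{thm: decompsable epsilon complement for surfaces} and Theorem \ref{thm:uniformpolytopeforsurfacemlds} to dimension $\ge 3$, which are not presently available, so at this stage the proposal yields only a conditional proof of the conjecture modulo these higher-dimensional inputs.
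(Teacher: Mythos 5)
There is a mismatch you should be aware of at the outset: the statement you are addressing is stated in the paper as a \emph{conjecture} (Ambro's LSC conjecture), not as a theorem, and the paper offers no proof of it — it only records, by citation to \cite{Ambro99}, that it is known for $\dim X\le 3$, and then uses it (via Proposition \ref{prop:equivrelmld}) as an input for the relative statement in low dimension. So there is no proof in the paper to match your argument against, and your proposal, by your own admission, does not prove the statement either: it is a reduction plus a speculative program.

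Concretely, the part of your write-up that is actually carried out — the reduction of LSC at $d$-dimensional points to LSC at closed points, using the equality $\mld(X\ni z_{cp},B)=\mld(X\ni z,B)+\dim z$ at general closed points $z_{cp}\in\overline{\{z\}}$ — is correct but is not new content: it is exactly the mechanism of Lemma \ref{lemma Amb99rel} (Ambro's Proposition 2.3) and mirrors how the paper passes between closed and non-closed points in Proposition \ref{prop:equivrelmld} and Proposition \ref{prop:dec eps com for closed point imply gene}. The genuine gap is that LSC at closed points \emph{is} the whole content of the conjecture, and your proposed route to it — combining the ACC for MLDs, Conjecture \ref{conj a complement}, and higher-dimensional analogues of Theorem \ref{thm:uniformpolytopeforsurfacemlds} and Theorem \ref{thm: decompsable epsilon complement for surfaces} to force the divisors computing the MLD into a bounded family and conclude constructibility — is not accompanied by any argument; none of these inputs is available in dimension $\ge 3$, and even granting them you give no mechanism converting ``bounded complements'' into semicontinuity of $x\mapsto\mld(X\ni x,B)$. (A smaller point: your parenthetical claim that $\mld(Y\ni\cdot,B_Y)$ is \emph{upper} semi-continuous on a log smooth model conflates points of varying dimension; for points of a fixed dimension the codimension term is constant and the multiplicity is upper semi-continuous, so the MLD is \emph{lower} semi-continuous there — this is precisely the easy log smooth case of the conjecture.) If your goal is to align with the paper, the honest statement is that the conjecture should simply be cited as known for $\dim X\le 3$ from \cite{Ambro99} and left open in general, rather than presented with a conditional proof sketch.
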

Conjecture \ref{conj: LSC for mlds} is known for $\dim X\le 3$ \cite[Proposition 2.5,Theorem 0.1]{Ambro99}.

\begin{conj}[LSC for relative MLDs]\label{conj: LSC for relative mlds}
	For any pair $(X/Z\ni z,B)$ and non-negative integer $d$, the function
	$$z\mapsto \mld(X/Z\ni z,B)$$
	is lower semi-continuous for $d$-dimensional points $z\in Z$.
\end{conj}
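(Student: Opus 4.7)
My plan is to reduce Conjecture \ref{conj: LSC for relative mlds} to the absolute LSC Conjecture \ref{conj: LSC for mlds}, using Lemma \ref{lemma Amb99rel} as the crucial bridge. In particular, such a reduction would give Conjecture \ref{conj: LSC for relative mlds} unconditionally whenever the absolute LSC is known, which covers surfaces (the setting of interest in this paper) and threefolds by Ambro's work.

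Fix a $d$-dimensional point $z_0\in Z$; we may assume that $(X/Z\ni z_0,B)$ is lc over $z_0$, otherwise $\mld(X/Z\ni z_0,B)=-\infty$ and there is nothing to prove. Applying Lemma \ref{lemma Amb99rel} to $(X/Z\ni z_0,B)$ yields an open $U_0\subseteq Z$ with $\overline{z_0}\cap U_0\ne\emptyset$ such that
$$\mld(X/Z\ni z_{cp},B)=\mld(X/Z\ni z_0,B)+d$$
for every closed point $z_{cp}\in\overline{z_0}\cap U_0$. The same lemma, applied to any $d$-dimensional point $z$ whose closure meets $U_0$, provides an open set $U_z$ along which the analogous identity $\mld(X/Z\ni z_{cp}',B)=\mld(X/Z\ni z,B)+d$ holds for closed points $z_{cp}'\in\overline{z}\cap U_z$. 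Through this ``$+d$'' translation, LSC for the relative mld at $z_0$ among $d$-dimensional points reduces to LSC of the function $z_{cp}\mapsto\mld(X/Z\ni z_{cp},B)$ on closed points of $Z$.

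To treat the latter, one uses the identity $\mld(X/Z\ni z_{cp},B)=\min_{x\in\pi^{-1}(z_{cp})}\mld(X\ni x,B)$ for the structure morphism $\pi:X\to Z$, so that Conjecture \ref{conj: LSC for mlds} on $X$ supplies LSC fibrewise. Combined with a constructibility argument -- one stratifies $Z$ into finitely many locally closed pieces on which the ``mld-type'' of the generic fibre of $\pi$ is constant, and applies the absolute LSC on $X$ stratum by stratum -- one concludes the desired LSC for the function $z_{cp}\mapsto\mld(X/Z\ni z_{cp},B)$. The main obstacle will be this last step: one must verify that taking the minimum of an LSC function along the fibres of $\pi$ preserves semicontinuity in the base, and that the open sets $U_z$ from Lemma \ref{lemma Amb99rel} can be chosen to depend on $z$ in a controlled (for example, constructible) manner as $z$ varies. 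These issues are essentially the content of Ambro's arguments in the absolute setting, and I expect the combinatorial stratification trick to transfer to the relative setting without essential changes, conditional on Conjecture \ref{conj: LSC for mlds}.
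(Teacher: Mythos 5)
There is a genuine gap, and it sits exactly where you flag it yourself. The paper does not prove this conjecture outright; what it proves (Proposition \ref{prop:equivrelmld}) is that Conjecture \ref{conj: LSC for mlds} implies Conjecture \ref{conj: LSC for relative mlds}, and the whole content of that implication is the step you leave as an ``expected'' stratification argument. Your reduction via Lemma \ref{lemma Amb99rel} to closed points of $Z$ is workable but buys nothing: for a closed point $z_{cp}$ one still has $\mld(X/Z\ni z_{cp},B)=\inf_{x\in\pi^{-1}(z_{cp})}\mld(X\ni x,B)$, where the minimizing point $x$ can have any dimension $d_0$, and this dimension can jump as $z_{cp}$ moves. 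Since Conjecture \ref{conj: LSC for mlds} only controls the function $x\mapsto\mld(X\ni x,B)$ along points of a \emph{fixed} dimension (across dimensions the mld is not LSC at all --- by \cite[Proposition 2.3]{Ambro99} it differs by $\dim$ between a subvariety's generic point and its general closed points), the assertion that ``the minimum of an LSC function along the fibres is LSC'' is precisely the statement to be proved, not a routine fact, and the proposed stratification by ``mld-type of the generic fibre'' is not an argument. So the closed-point case is no easier than the general case, and the detour through Lemma \ref{lemma Amb99rel} does not reduce the difficulty.

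The paper's route is shorter and avoids Lemma \ref{lemma Amb99rel} entirely: assuming LSC fails, take $d$-dimensional points $z_i$ with $z\in\overline{\{z_i\}}$ and $\lim_i\mld(X/Z\ni z_i,B)<\mld(X/Z\ni z,B)$, choose points $x_i\in f^{-1}(z_i)$ computing the relative mlds (Lemma \ref{lem:mldisattained}), pass to a subsequence so that all $x_i$ have the same dimension $d_0$, and then invoke Lemma \ref{lem: limitpointrellsc} --- proved by induction on $\dim X$ using upper semicontinuity of fibre dimension --- to produce a $d_0$-dimensional point $x\in\overline{\{x_i\}}\cap f^{-1}(z)$. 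Applying Conjecture \ref{conj: LSC for mlds} in dimension $d_0$ together with the inequality $\mld(X\ni x,B)\ge\mld(X/Z\ni z,B)$ gives the contradiction. The missing ingredient in your proposal is precisely this ``constant-dimension limit point'' device; without it (or an honest substitute), the fibrewise-minimum step does not go through.
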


\begin{lem}\label{lem: limitpointrellsc}
Let $f:X\to Z$ be a proper morphism, $z_i$ a sequence of $d$-dimensional points, $x_i\in f^{-1}(z_i)$ a sequence of $d_0$-dimensional points. Suppose that $z\in \overline{\{z_i\}}$ is a $d$-dimensional point, then there exists a $d_0$-dimensional point $x\in\overline{\{x_i\}}\cap f^{-1}(z)$.
\end{lem}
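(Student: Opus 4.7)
The plan is to combine a Noetherian decomposition of closures with Chevalley's upper semi-continuity of fiber dimension. Since $Z$ is a Noetherian quasi-projective variety, $\overline{\{z_i\}}$ decomposes into finitely many irreducible components $V_1,\ldots,V_k$, and some $V_{j_0}$ contains the $d$-dimensional point $z$ by irreducibility of $\overline{\{z\}}$. Set $I_0:=\{i:z_i\in V_{j_0}\}$. Since $V_{j_0}$ is an irreducible component we have $V_{j_0}\not\subseteq\bigcup_{j\neq j_0}V_j\supseteq\overline{\{z_i\}_{i\notin I_0}}$; combining this with the decomposition $\overline{\{z_i\}}=\overline{\{z_i\}_{i\in I_0}}\cup\overline{\{z_i\}_{i\notin I_0}}$ and irreducibility of $V_{j_0}$ forces $V_{j_0}=\overline{\{z_i\}_{i\in I_0}}$. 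If $I_0$ is finite, irreducibility further gives $V_{j_0}=\overline{\{z_i\}}$ for some $i\in I_0$; then $z_i$ is the $d$-dimensional generic point of $V_{j_0}$ and must coincide with $z$, so $x=x_i$ suffices. Thus we may replace the original sequence by its subsequence indexed by the infinite set $I_0$.

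Taking $T:=\overline{\{x_i\}_{i\in I_0}}\subseteq X$, properness (hence closedness) of $f$ yields $f(T)=\overline{\{z_i\}_{i\in I_0}}=V_{j_0}$. Decomposing $T$ into irreducible components $T_1,\ldots,T_m$, each $f(T_l)$ is irreducible and closed with union $V_{j_0}$, so irreducibility of $V_{j_0}$ produces some $T_{l_0}$ with $f(T_{l_0})=V_{j_0}$. Applying the same closure argument one more time inside $X$ to the indices with $x_i\in T_{l_0}$, we may assume $\overline{\{x_i\}}=T_{l_0}$ while still $z\in V_{j_0}=f(T_{l_0})$; thus $f|_{T_{l_0}}\colon T_{l_0}\to V_{j_0}$ is a proper surjective morphism of irreducible varieties.

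The crucial step is Chevalley's upper semi-continuity of fiber dimension applied to $f|_{T_{l_0}}$. For each $x_i$, the equalities $\dim x_i=d_0$ and $\dim f(x_i)=\dim z_i=d$ give $\operatorname{trdeg}(k(x_i)/k(z_i))=d_0-d$, so the closure of $x_i$ in the fiber $(f|_{T_{l_0}})^{-1}(z_i)$ has dimension $d_0-d$ over $k(z_i)$. Hence every $x_i$ belongs to the closed locus
\[
S:=\bigl\{y\in T_{l_0}:\dim_y (f|_{T_{l_0}})^{-1}(f(y))\ge d_0-d\bigr\},
\]
and since $S$ is closed it contains $T_{l_0}=\overline{\{x_i\}}$ entirely.

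Finally, $z\in V_{j_0}=f(T_{l_0})$ makes the fiber $(f|_{T_{l_0}})^{-1}(z)=f^{-1}(z)\cap T_{l_0}$ non-empty, and the previous step guarantees that its local dimension at any point is $\ge d_0-d$. Pick an irreducible component $C$ of this fiber with $\dim C\ge d_0-d$ and, by dimension theory for Noetherian irreducible schemes, choose an irreducible closed subset $C'\subseteq C$ of dimension exactly $d_0-d$. Its generic point $x$ lies in $T_{l_0}\subseteq\overline{\{x_i\}}$, satisfies $f(x)=z$, and has
\[
\dim x=\dim z+\operatorname{trdeg}(k(x)/k(z))=d+(d_0-d)=d_0,
\]
as required. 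The main obstacle is arranging the two-step Noetherian reduction so that the $d$-dimensional specialisation $z$ remains in the closure of the sequence after passing to subsequences; once this is set up, upper semi-continuity of fiber dimension quickly produces the desired $d_0$-dimensional point.
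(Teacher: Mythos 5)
Your argument is correct and follows essentially the same route as the paper: reduce to the closure of the points $x_i$, invoke Chevalley's upper semi-continuity of local fiber dimension together with properness of $f$, and finish with a transcendence-degree dimension count to produce a $d_0$-dimensional point over $z$. The only difference is organizational: the paper packages the reduction as an induction on $\dim X$ (replacing $X$ by $\overline{\{x_i\}}$), whereas you carry it out explicitly via irreducible components and subsequences, which amounts to a more detailed write-up of the same proof.
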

\begin{proof}
We prove the lemma by induction on the dimension of $X$. By the upper semi-continuity of fiber dimensions, $\dim f^{-1}(z)\ge d_0$. Let $X_0:=\overline{\{x_i\}}$. If $X_0=X$, then for any $d_0$-dimensional point $x\in f^{-1}(z)$, $x\in \overline{\{x_i\}}$. In particular, the lemma holds when $\dim X=1$. If $\dim X_0<\dim X$, then replacing $X$ by $X_0$, we are done by the induction.
\end{proof}

\begin{prop}\label{prop:equivrelmld}
Conjecture \ref{conj: LSC for mlds} implies Conjecture \ref{conj: LSC for relative mlds}. In particular, Conjecture \ref{conj: LSC for relative mlds} holds when $\dim X\le 3$.
\end{prop}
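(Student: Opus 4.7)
The plan is to reduce the relative LSC statement to the absolute one via the tautological identity
$$\mld(X/Z\ni z,B)=\inf_{x\in \pi^{-1}(z)} \mld(X\ni x,B),$$
which follows immediately from the definition of $e(z)$: a prime divisor $E$ over $X$ lies in $e(z)$ if and only if the generic point of $\Center_X E$ lies in $\pi^{-1}(z)$. Combining this identity with Lemma \ref{lem:mldisattained}, whenever $(X/Z\ni z,B)$ is lc over $z$ we may find a scheme-theoretic point $x\in\pi^{-1}(z)$ with $\mld(X\ni x,B)=\mld(X/Z\ni z,B)$.

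To verify LSC, I will fix a $d$-dimensional point $z\in Z$ together with a sequence of $d$-dimensional points $z_i\in Z$ satisfying $z\in\overline{\{z_i\}}$, and aim to show $\mld(X/Z\ni z,B)\le \liminf_i \mld(X/Z\ni z_i,B)$. Since the non-lc locus of $(X,B)$ is closed in $X$ and $\pi$ is proper, its image is closed in $Z$ and the inequality is automatic whenever the relative mld is $-\infty$; I may therefore assume that $(X/Z\ni z,B)$ and each $(X/Z\ni z_i,B)$ are lc. Using the first paragraph I pick $x_i\in\pi^{-1}(z_i)$ realizing $\mld(X\ni x_i,B)=\mld(X/Z\ni z_i,B)$, and pass to a subsequence so that $d_0:=\dim x_i$ is independent of $i$. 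Lemma \ref{lem: limitpointrellsc} applied to $\pi$ produces a $d_0$-dimensional point $x\in\overline{\{x_i\}}\cap \pi^{-1}(z)$, and Conjecture \ref{conj: LSC for mlds} applied to $(X,B)$ at $d_0$-dimensional points yields $\mld(X\ni x,B)\le \liminf_i \mld(X\ni x_i,B)$. Combining this with $\mld(X/Z\ni z,B)\le \mld(X\ni x,B)$, a direct consequence of the identity and $x\in\pi^{-1}(z)$, gives the desired inequality.

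The ``in particular'' clause is then immediate: Conjecture \ref{conj: LSC for mlds} is known in dimension at most $3$ by Ambro, so the implication just established furnishes Conjecture \ref{conj: LSC for relative mlds} in the same range. The only delicate point is the first-paragraph identification: because the paper's convention for $\mld(X\ni x,B)$ restricts attention to divisors whose center is exactly $\overline{\{x\}}$ rather than merely contains $x$, one should double-check that the infimum over every scheme-theoretic point of $\pi^{-1}(z)$ really recovers the relative mld. Once this bookkeeping is secured, the rest is a clean application of Lemma \ref{lem: limitpointrellsc} and the hypothesis, with no further obstruction.
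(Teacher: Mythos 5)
Your proposal is correct and follows essentially the same route as the paper: pick points $x_i$ in the fibers over $z_i$ computing the relative mlds, apply Lemma \ref{lem: limitpointrellsc} to produce a $d_0$-dimensional limit point $x\in\pi^{-1}(z)$, and conclude via Conjecture \ref{conj: LSC for mlds} together with $\mld(X/Z\ni z,B)\le\mld(X\ni x,B)$. Your extra care about the non-lc ($-\infty$) case and about the identity $\mld(X/Z\ni z,B)=\inf_{\pi(x)=z}\mld(X\ni x,B)$ (which does hold, since $E\in e(z)$ exactly when the generic point of $\Center_X(E)$ maps to $z$, and attainment is Lemma \ref{lem:mldisattained}) are harmless refinements of the same argument.
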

\begin{proof} Suppose that there exists a sequence of $d$-dimensional points $z_i\in Z$, such that $z\in\overline{\{z_i\}}$, $\dim z=d$, and $\lim_{i\to+\infty}\mld(X/Z\ni z_i,B)<\mld (X/Z\ni z,B)$. 
Let $x_i\in X$, such that $\mld(X\ni x_i,B)=\mld(X/Z\ni z_i,B)$. Possibly passing to a subsequnce, we may assume that $x_i$ is a sequence of $d_0$-dimensional points. By Lemma \ref{lem: limitpointrellsc}, there exists a $d_0$-dimensional point $x\in\overline{\{x_i\}}\cap f^{-1}(z)$. By Conjecture \ref{conj: LSC for mlds}, 
\begin{align*}
\lim_{i\to+\infty}\mld(X/Z\ni z_i,B)&=\lim_{i\to+\infty}\mld(X\ni x_i ,B)\\
&\ge\mld (X\ni x,B)\ge\mld(X/Z\ni z,B),
\end{align*}
a contradiction.
\end{proof}

\begin{prop}\label{prop:dec eps com for closed point imply gene}
Problem \ref{problem: decompsable epsilon complements} for the case when $\dim z=0$ implies Problem \ref{problem: decompsable epsilon complements}.
\end{prop}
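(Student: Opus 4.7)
The plan is to deduce the case $\dim z > 0$ from the closed-point case by specializing to a suitably generic closed point $z_{cp}\in\bar z$, and then transferring the resulting decomposition back to $z$ via Lemma \ref{lemma Amb99rel} in both directions.

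Let $(X/Z\ni z, B^+)$ be an $(\epsilon,\mathbb{R})$-complement of $(X/Z\ni z, B)$. First I would apply Lemma \ref{lemma Amb99rel} to $(X/Z\ni z, B^+)$ to obtain an open subset $U_0\subseteq Z$ with $\bar z\cap U_0\ne\emptyset$ along which $\mld(X/Z\ni z_{cp}, B^+)=\mld(X/Z\ni z, B^+)+\dim z\ge \epsilon+\dim z$ for every closed $z_{cp}\in \bar z\cap U_0$. Since any Zariski open neighborhood of a closed specialization automatically contains its generalization, $K_X+B^+\sim_{\mathbb{R}}0$ persists over a neighborhood of $z_{cp}$, and therefore $(X/Z\ni z_{cp}, B)$ becomes $(\epsilon+\dim z,\mathbb{R})$-complementary. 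Next, apply the assumed closed-point case of Problem \ref{problem: decompsable epsilon complements} to $(X/Z\ni z_{cp}, B)$ with parameter $\epsilon+\dim z$; since $\dim z$ ranges over the finite set $\{0,1,\ldots,\dim X-1\}$, the output finite sets $\Gamma_1,\Gamma_2,\mathcal{M}$ still depend only on $\Gamma$ and $\epsilon$. This gives data $a_i\in\Gamma_1$, $B^i\in\Gamma_2$, $\epsilon_i\in\mathcal{M}$ with $\sum a_i=1$, $\sum a_i\epsilon_i\ge\epsilon+\dim z$, $\sum a_i(K_X+B^i)\ge K_X+B$, and with each $(X/Z\ni z_{cp}, B^i)$ being $(\epsilon_i,\mathbb{R})$-complementary.

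To transfer these complements back to $z$, observe that the potential divisors $B^i$ lie in a finite family of the form $\sum_j\gamma_j B_{i,j}$ with $\gamma_j\in\Gamma_2$ and $B_{i,j}$ an irreducible component of $\Supp B$. Hence one may enumerate this finite family upfront, apply Lemma \ref{lemma Amb99rel} to each pair $(X/Z\ni z, B^i)$ to obtain open sets $U_i$, and arrange at the outset that $z_{cp}\in U_0\cap\bigcap_i U_i\cap\bar z$. For every $B^i$ actually arising, this forces $\mld(X/Z\ni z, B^i)=\mld(X/Z\ni z_{cp}, B^i)-\dim z\ge\epsilon_i-\dim z$. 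Setting $\epsilon'_i:=\max(\epsilon_i-\dim z,0)$, and combining the $\epsilon'_i$-lc property at $z$ with the semiampleness of $-(K_X+B^i)$ over $Z$ provided by Lemma \ref{lemma antinefpolytope2}, Lemma \ref{lem semiamplecomplement} furnishes an $(\epsilon'_i,\mathbb{R})$-complement of $(X/Z\ni z, B^i)$. The identities $\sum a_i=1$ and $\sum a_i(K_X+B^i)\ge K_X+B$ carry over verbatim, and the mld inequality gives $\sum a_i\epsilon'_i\ge\sum a_i\epsilon_i-\dim z\ge\epsilon$. The rationality conditions from (5) of Theorem \ref{thm: decompsable epsilon complement for surfaces} transfer from the closed-point case, with a small perturbation inside $\mathcal{M}$ needed in the borderline situation $\epsilon+\dim z=1$ in order to preserve $\epsilon'_i\ne 1$ when $\epsilon\ne 1$.

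The main obstacle is the apparent circular dependence between the choice of $z_{cp}$ and the decomposition data $(a_i,B^i,\epsilon_i)$: each relevant open set $U_i$ depends on the specific $B^i$, but $z_{cp}$ must be fixed before invoking the decomposition. This is resolved by the crucial observation that, for fixed $(X,B)$ and fixed universal $\Gamma_2$, the collection of candidate $B^i$ is finite and can be listed in advance, so $\bigcap_i U_i$ is a well-defined non-empty open subset of $Z$ independent of which particular decomposition eventually arises.
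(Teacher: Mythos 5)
Your transfer-back step does not close. The up-front enumeration of the $B^i$ assumes they are supported on $\Supp B$, but the statement of Problem \ref{problem: decompsable epsilon complements} imposes no such constraint; without it the candidate family is infinite and $\bigcap_i U_i$ is not guaranteed to be open, so the circularity you identify between choosing $z_{cp}$ and obtaining the decomposition is not actually resolved. Independently, the step producing the complement at $z$ relies on ``semiampleness of $-(K_X+B^i)$ over $Z$ provided by Lemma \ref{lemma antinefpolytope2}'', but that lemma requires $-(K_X+B)$ semiample over $Z$, a $\mathbb{Q}$-factorial surface, and control on $\mathcal{P}(X/Z\ni z,B)$, none of which are hypotheses of the Proposition; knowing only $\mld(X/Z\ni z,B^i)\ge\epsilon_i'$ does not make $(X/Z\ni z,B^i)$ $(\epsilon_i',\Rr)$-complementary.

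The missing idea is lower semi-continuity of relative mlds (Proposition \ref{prop:equivrelmld}). The paper applies the closed-point case at a first generic $z_{cp}$, obtaining finitely many $B^i$ together with explicit complements $G^i$; it then applies Lemma \ref{lemma Amb99rel} \emph{post hoc} to each $(X/Z\ni z,B^i+G^i)$, obtaining a single open set $U_2$, and uses LSC to find a second closed point $z_{cp}'\in\bar z\cap U_2$ with $\mld(X/Z\ni z_{cp}',B^i+G^i)\ge\mld(X/Z\ni z_{cp},B^i+G^i)\ge\dim z+\epsilon_i$ for all $i$. Combined with the equality from Lemma \ref{lemma Amb99rel} at $z_{cp}'$, this yields $\mld(X/Z\ni z,B^i+G^i)\ge\epsilon_i$, so $B^i+G^i$ --- the complement already in hand from $z_{cp}$, not a new one built via Lemma \ref{lem semiamplecomplement} --- is the desired $(\epsilon_i,\Rr)$-complement at $z$. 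This makes the second application of Lemma \ref{lemma Amb99rel} legitimately depend on the output decomposition, sidestepping both the enumeration and the semiampleness requirements, and works at the level of generality the Proposition is stated in.
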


\begin{proof}
By Lemma \ref{lemma Amb99rel}, there exists an open subset $U_1$ of $Z$, such that $U_1\cap \bar{z}\neq\emptyset$ and    
$$\mld(X/Z\ni z_{cp},B)=\mld(X/Z\ni z,B)+\dim {z}$$
for any closed point $z_{cp}\in \bar{z}\cap U_1.$
Since we assume that Problem \ref{problem: decompsable epsilon complements} holds for the case when $\dim z=0$, there exist a finite set $\Ii_1$ of positive real numbers, two finite sets $\Ii_2,\mathcal{M}$ of non-negative rational numbers such that
\begin{itemize}
    \item $\sum a_i=1,$
    \item $\sum a_i\epsilon_i\ge\epsilon,$
    \item $\sum a_i(K_X+B^i)\ge K_X+B,$ and
    \item $(X/Z\ni z_{cp},B^i)$ is $(\epsilon_i+\dim z,\Rr)$-complementary for any $i,$
\end{itemize}
for some $a_i\in\Ii_1,B^i\in\Ii_2$ and $\epsilon_i\in\mathcal{M}.$

Let $(X/Z\ni z_{cp},B^i+G^i)$ be an $(\epsilon_i+\dim z,\Rr)$-complement of $(X/Z\ni z_{cp},B^i)$. By Lemma \ref{lemma Amb99rel} again, there exists an open subset $U_2$ of $Z$, such that $U_2\cap \bar{z}\neq\emptyset$ and    
$$\mld(X/Z\ni z_{cp}',B^i+G^i)=\mld(X/Z\ni z,B^i+G^i)+\dim {z}$$
    for any closed point $z_{cp}'\in \bar{z}\cap U_2$ and any $i.$

By Proposition \ref{prop:equivrelmld}, there exists a closed point $z_{cp}'\in \bar{z}\cap U_2$, such that $\mld(X/Z\ni z_{cp}',B^i+G^i)\ge \mld(X/Z\ni z_{cp},B^i+G^i)\ge \dim z+\epsilon_i$ for any $i$. Hence $\mld(X/Z\ni z,B^i+G^i)\ge \epsilon_i$ and $(X/Z\ni z,B^i)$ is $(\epsilon,\Rr)$-complementary for any $i$
\end{proof}

\begin{proof}[Proof of Theorem \ref{thm: decompsable epsilon complement for surfaces}]
Theorem \ref{thm: decompsable epsilon complement for surfaces} follows from Theorem \ref{thm:decomp comp for closed points} and Proposition \ref{prop:dec eps com for closed point imply gene}.
\end{proof}

\section{Proofs of main results}
\subsection{Diophantine approximation}

\begin{lem}\label{lem: linearindependentdiufantu}
	Let $p_0,l,c$ be positive integers, $\epsilon_1$ a positive real number, $\bm{r}_0=(r_1,\ldots,r_{c})\in\Rr^{c}$ a point, such that $r_0=1,r_1,\ldots,r_{c}$ are linearly independent over $\Qq$, and $\bm{e}=(e_1,\ldots,e_{c})\in\Rr^{c}$ a nonzero vector. Then there exist a positive integer $n_0$, and a point $\bm{r}_0'=(r_1',\ldots,r_{c}')\in \Rr^{c}$, such that 
	\begin{enumerate}
		\item $p_0|n_0$,
		\item $n_0\bm{r}'_0\in l\Zz^{c}$,
		\item $||\bm{r}_0-\bm{r}_0'||_{\infty}<\frac{\epsilon_1}{n_0},$
		 and
		\item $||\frac{\bm{r}_0-\bm{r}_0'}{||\bm{r}_0-\bm{r}_0'||_{\infty}}-\frac{\bm{e}}{||\bm{e}||_{\infty}}||_{\infty}<\epsilon_1.$
	\end{enumerate}	
\end{lem}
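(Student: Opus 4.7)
The plan is to reduce the statement to a classical Kronecker-type simultaneous Diophantine approximation problem in which the approximation error is also constrained to lie in a prescribed direction.

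First I would parametrize the candidate approximations. Write $n_0=p_0lm$ for a positive integer $m$ to be chosen; then condition (1) becomes automatic, and condition (2) is equivalent to asking that $\bm{r}_0'=\bm{a}/(p_0m)$ for some $\bm{a}\in\Zz^{c}$. Setting $\bm{b}:=p_0m\bm{r}_0-\bm{a}$, we have $\bm{r}_0-\bm{r}_0'=\bm{b}/(p_0m)$, so conditions (3) and (4) together are equivalent to
$$0<\|\bm{b}\|_\infty<\frac{\epsilon_1}{l}\quad\text{and}\quad\left\|\frac{\bm{b}}{\|\bm{b}\|_\infty}-\frac{\bm{e}}{\|\bm{e}\|_\infty}\right\|_\infty<\epsilon_1.$$
The task thus becomes: find $m\in\Zz_{>0}$ and $\bm{a}\in\Zz^{c}$ such that $\bm{b}=p_0m\bm{r}_0-\bm{a}$ lies in the nonempty open cone-shaped region
$$R:=\left\{\bm{x}\in\Rr^{c}\setminus\{\bm{0}\}\,:\,\|\bm{x}\|_\infty<\tfrac{\epsilon_1}{l},\ \left\|\tfrac{\bm{x}}{\|\bm{x}\|_\infty}-\tfrac{\bm{e}}{\|\bm{e}\|_\infty}\right\|_\infty<\epsilon_1\right\}.$$

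Next, since $\bm{e}\ne\bm{0}$, I would pick $\bm{x}_0:=t\bm{e}\in R$ with $t>0$ small enough that some open ball $U\ni\bm{x}_0$ is contained in $R$ and embeds injectively into the torus $T:=\Rr^{c}/\Zz^{c}$ under the quotient map $\pi$; then $\pi(U)$ is a nonempty open subset of $T$.

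Finally, because $r_0=1,r_1,\ldots,r_c$ are $\Qq$-linearly independent, so are $1,p_0r_1,\ldots,p_0r_c$. By Kronecker's density theorem, the orbit $\{p_0m\bm{r}_0\bmod\Zz^{c}:m\in\Zz_{>0}\}$ is dense in $T$, so there exists $m$ with $p_0m\bm{r}_0\bmod\Zz^{c}\in\pi(U)$. Writing $p_0m\bm{r}_0=\bm{a}+\bm{b}$ with $\bm{a}\in\Zz^{c}$ and $\bm{b}\in U\subseteq R$, and setting $n_0:=p_0lm$ together with $\bm{r}_0':=\bm{a}/(p_0m)$, verifies all four conclusions. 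No serious obstacle arises once the reduction is in place; the only point to be careful about is to choose $\bm{x}_0$ close enough to $\bm{0}$ that $\pi|_U$ is injective, which is possible because $R$ contains arbitrarily small nonzero vectors pointing in the direction of $\bm{e}$.
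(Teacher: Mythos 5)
Your proposal is correct and takes essentially the same route as the paper: both reduce to Kronecker's density theorem, with a small target region near the origin shaped so that the approximation error points (approximately) in the direction of $\bm{e}$; your cone $R$ and open ball $U$ are a geometric repackaging of the paper's explicit choices of $\bm{\alpha}$ and $\epsilon_2$. One small remark: you do not actually need $\pi|_U$ to be injective --- $\pi(U)$ is open simply because the quotient map $\pi$ is open, and once $\pi(p_0 m\bm{r}_0)\in\pi(U)$ some preimage of it lies in $U\subseteq R$, which is all that is used.
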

\begin{proof}
	Without loss of generality, we may assume that $\bm{e}\in\Rr_{\ge0}^{c}$ and $e_1=||\bm{e}||_{\infty}>0$. Let $\alpha_1=\frac{1}{2l}\epsilon_1$. Since $0\le e_i\le e_1$ for any $2\le i\le c$, we may pick positive real numbers $\alpha_i$, such that $\alpha_i<\alpha_1$, and
    $$|\frac{\alpha_i}{\alpha_1}-\frac{e_i}{e_1}|<\epsilon_1$$
    for any $2\le i\le c$.  

    By the continuity of functions $\frac{\alpha_i+x}{\alpha_1-x}$ and $\frac{\alpha_i-x}{\alpha_1+x}$, there exists a positive real number $\epsilon_2$, such that $\epsilon_2<\alpha_i<\alpha_1-2\epsilon_2$ for any $2\le i\le c$, and $$\max_{1\le i\le c}\{|\frac{\alpha_i+\epsilon_2}{\alpha_1-\epsilon_2}-\frac{e_i}{e_1}|,|\frac{\alpha_i-\epsilon_2}{\alpha_1+\epsilon_2}-\frac{e_i}{e_1}|\}<\epsilon_1.$$
    Let $\bm{\alpha}=(\alpha_1,\ldots,\alpha_{c})$. By Kronecker's theorem or Weyl's equidistribution theorem, there exist a positive integer $n_1$, and a point $\bm{\beta}=(\beta_1,\ldots,\beta_{c})\in \Zz^{c}$, such that
    $$||\frac{n_1p_0\bm{r}_0}{l}-\bm{\beta}-\bm{\alpha}||_{\infty}<\epsilon_2.$$
    We will show that $n_0:=n_1p_0$ and $\bm{r}_0':=\frac{l}{n_0}\bm{\beta}$ have the required properties. 

    It is clear that $p_0|n_0$, $n_0\bm{r}_0'\in l\Zz^{c}$, and
        $$0<\frac{l}{n_0}(\alpha_i-\epsilon_2)<r_i-r_i'=\frac{l}{n_0}(\frac{n_1p_0r_i}{l}-\beta_i)<\frac{l}{n_0}(\alpha_i+\epsilon_2)<\frac{\epsilon_1}{n_0}$$
    for any $1\le i\le c$. In particular, $r_i-r_i'<\frac{l}{n_0}(\alpha_i+\epsilon_2)<\frac{l}{n_0}(\alpha_1-\epsilon_2)<r_1-r_1'$ for any $2\le i\le c.$
    Hence $||\bm{r}_0-\bm{r}_0'||_{\infty}=r_1-r_1'<\frac{\epsilon_1}{n_0}$, and
    \begin{align*}
    ||\frac{\bm{r}_0-\bm{r}_0'}{||\bm{r}_0-\bm{r}_0'||_{\infty}}-\frac{\bm{e}}{||\bm{e}||_{\infty}}||_{\infty}&=\max_{1\le i\le c}\{|\frac{r_i-r_i'}{ r_1-r_1'}-\frac{e_i}{e_1}|\}\\&
    \le \max_{1\le i\le c}\{|\frac{\alpha_i-\epsilon_2}{\alpha_1+\epsilon_2}-\frac{e_i}{e_1}|,|\frac{\alpha_i+\epsilon_2}{\alpha_1-\epsilon_2}-\frac{e_i}{e_1}|\}<\epsilon_1.
    \end{align*}
\end{proof}

\begin{lem}\label{lemma rational direction2}
     Let $\epsilon$ be a non-negative real number, $n_0$ a positive integer, $\Ii\subseteq[0,1],\Ii_1\subseteq(0,1],\Ii_2\subseteq[0,1]\cap\Qq,\mathcal{M}\subseteq\Qq_{\ge0}$ finite sets such that $n_0\Ii_2\subseteq\Zz$. Then there exists a positive integer $n$ depending only on $\epsilon,n_0,\Ii,\Ii_1,\Ii_2$ and $\mathcal{M}$ satisfying the following.

     Assume that $a_i\in \Ii_1,b_{ij}\in \Ii_2$ and $\epsilon_i\in\mathcal{M}$ $(1\le i\le k,1\le j\le s)$ such that 
     \begin{itemize}
        \item $\sum_{i=1}^k a_i=1$, 
        \item $\sum_{i=1}^k a_i\epsilon_i\ge\epsilon$, and
        \item $\sum_{i=1}^ka_ib_{ij}\in\Ii$ for any $1\le j\le s$.
     \end{itemize}
     Then there exists a point $\bm{a}'=(a_1',\ldots,a_k')\in\Rr_{>0}^k$ such that     
     \begin{enumerate}
        \item $n_0|n$,
        \item $\sum_{i=1}^k a_i'=1$,
        \item $n\bm{a}'\in n_0\Zz^k,$
        \item $\sum_{i=1}^k a_i'\epsilon_i\ge\epsilon,$ and 
        \item $n\sum_{i=1}^k a_i'b_{ij}= n\lfloor \sum_{i=1}^k a_ib_{ij} \rfloor+\lfloor (n+1)\{\sum_{i=1}^k a_ib_{ij}\}\rfloor$ for any $1\le j\le s$.
     \end{enumerate} 
     
     Moreover, if there exist real numbers $r_0=1,\dots,r_c$ which are linearly independent over $\Qq$ such that $\Ii\cup\{\epsilon\}\subseteq\Span_{\Qq_{\ge0}}(\{r_0,\dots,r_c\}),$ then we can additionally require that $\sum_{i=1}^k a_i'b_{ij}\ge \sum_{i=1}^k a_ib_{ij}$ for any $1\le j\le s.$
\end{lem}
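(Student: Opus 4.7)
The plan is to adapt the simultaneous Diophantine approximation of Lemma~\ref{lem: linearindependentdiufantu} to perturb the $a_i$'s into rationals of controlled denominator while preserving the linear constraints.

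First I would choose a $\mathbb{Q}$-linearly independent basis $r_0=1,r_1,\ldots,r_c$ of reals such that $\Gamma_1\cup\{\epsilon\}\subseteq\mathrm{Span}_{\mathbb{Q}}(r_0,\ldots,r_c)$; such a basis exists because $\Gamma_1\cup\{\epsilon\}$ is finite, and in the ``Moreover'' case the given basis already contains $\Gamma$ in its $\mathbb{Q}_{\ge 0}$-span. For each $\gamma\in \Gamma_1\cup\{\epsilon\}$, write $\gamma=\sum_{m=0}^c\beta_{\gamma,m}r_m$ with $\beta_{\gamma,m}\in\mathbb{Q}$, and let $l$ be a positive integer clearing the denominators of all $\beta_{\gamma,m}$ and of the elements of $n_0\Gamma_2$.

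Next I would apply Lemma~\ref{lem: linearindependentdiufantu} to $\mathbf{r}_0=(r_1,\ldots,r_c)$, with $p_0:=n_0l$, a small $\epsilon_1$ to be chosen, and a direction vector $\mathbf{e}$ specified below. This yields an integer $n$ with $n_0\mid n$ and a rational approximation $\mathbf{r}_0'=(r_1',\ldots,r_c')$ such that $n\mathbf{r}_0'\in l\mathbb{Z}^c$ and $\mathbf{r}_0-\mathbf{r}_0'$ is small (of order $\epsilon_1/n$) along a prescribed direction. Setting $r_0':=1$ and $a_i':=\sum_{m=0}^c\beta_{a_i,m}r_m'$, the identities $\sum a_i=1$ and $\sum a_i\epsilon_i\ge\epsilon$ force, via $\mathbb{Q}$-linear independence of $r_0,\ldots,r_c$, that the coefficients of $r_m$ (for $m\ge 1$) in $\sum_i a_i-1$ and in $\sum_i a_i\epsilon_i-\epsilon$ vanish identically. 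Therefore the perturbation $r_m\mapsto r_m'$ preserves (2) and (4) automatically. Conditions (1) and (3) follow from the choice of $p_0$ and $l$, and positivity $a_i'>0$ follows for $\epsilon_1$ sufficiently small.

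The heart of the proof is condition (5). Expanding gives $d_j:=\sum a_i'b_{ij}=c_j+\sum_{m=1}^c\beta_{c_j,m}(r_m'-r_m)$ where $\beta_{c_j,m}=\sum_i b_{ij}\beta_{a_i,m}$ and $c_j=\sum a_ib_{ij}\in\Gamma$. Since $\Gamma$ is finite and $n$ is divisible by large enough factors from $l$, both sides of (5) are integers, and the target integer $n\lfloor c_j\rfloor+\lfloor(n+1)\{c_j\}\rfloor$ differs from $nc_j$ by a quantity that is uniformly bounded and explicit. I would then use the direction freedom of Lemma~\ref{lem: linearindependentdiufantu} --- choosing $\mathbf{e}$ based on the finitely many possible values of $c_j\in\Gamma$ and the fixed rational data $\beta_{c_j,m}$ --- so that the perturbation $\sum_m\beta_{c_j,m}(r_m'-r_m)$ lands in the correct residue class modulo $1/n$ to force equality in (5). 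For the ``Moreover'' assertion, the hypothesis $\Gamma\cup\{\epsilon\}\subseteq\mathrm{Span}_{\mathbb{Q}_{\ge 0}}(r_0,\ldots,r_c)$ guarantees $\beta_{c_j,m}\ge 0$; taking $\mathbf{e}$ with all nonnegative components makes $d_j-c_j\ge 0$, giving the monotonicity $\sum a_i'b_{ij}\ge\sum a_ib_{ij}$.

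The main obstacle I anticipate is precisely this exact equality in (5), since it must hold simultaneously for all $j=1,\ldots,s$ with only a $c$-dimensional direction $\mathbf{e}$ to tune. The plan is to exploit that the requirement only involves the finite set of possible $c_j$ values and the finite set of rational vectors $(\beta_{c_j,m})_m$; this reduces matching (5) to a system of congruences whose solvability in $n$ (uniformly in the input) follows from the simultaneous approximation theorem applied with $p_0$ absorbing all required divisibility, together with the freedom in choosing $\epsilon_1$.
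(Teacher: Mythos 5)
Your overall framework (express the $a_i$ as $\mathbb{Q}$-linear functions of a $\mathbb{Q}$-linearly independent tuple and apply Lemma \ref{lem: linearindependentdiufantu}) is the same as the paper's, but two steps do not work as you state them. First, condition (4) is \emph{not} preserved automatically: $\sum_i a_i\epsilon_i\ge\epsilon$ is an inequality, and $\mathbb{Q}$-linear independence only kills the coefficients of $r_1,\ldots,r_c$ when the combination vanishes identically. In the borderline case $\sum_i a_i\epsilon_i=\epsilon$ with $\epsilon$ irrational (e.g.\ $\epsilon\in\Ii_1$, $\epsilon_1=0$, $\epsilon_2=1$, $a_2=\epsilon$), a generic small perturbation of the $a_i$ destroys (4). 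This is precisely what the direction vector $\bm{e}$ in the paper is for: one first chooses $\bm{e}$ and $\epsilon'$ so that the linear function $\epsilon(\bm{r})=\sum_i\epsilon_i a_i(\bm{r})$ satisfies $\epsilon(\bm{r})\ge\epsilon(\bm{r}_0)\ge\epsilon$ along directions $\epsilon'$-close to $\bm{e}$ (and, in the ``Moreover'' case, also $b_j(\bm{r})\ge b_j(\bm{r}_0)$), and only then invokes Lemma \ref{lem: linearindependentdiufantu}(4). You reserve $\bm{e}$ entirely for (5) and the ``Moreover'' part, so as written your argument for (4) has a real gap.

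Second, and more seriously, your treatment of (5) is off-track. Equality in (5) is not a residue-class matching problem to be solved by tuning $\bm{e}$ through ``a system of congruences'' — and your own worry that you must satisfy $s$ such conditions with only a $c$-dimensional direction shows the plan does not close. The actual mechanism is much simpler: choosing the lattice in Lemma \ref{lem: linearindependentdiufantu} to be $ln_0\Zz^c$ (your choice $n\bm{r}_0'\in l\Zz^c$ with $p_0=n_0l$ only gives $na_i'\in\Zz$, not $na_i'\in n_0\Zz$, so even (3) and the integrality of $n\sum_i a_i'b_{ij}$ fail as stated), one gets $na_i'\in n_0\Zz$, hence $n\sum_i a_i'b_{ij}\in\Zz$ because $n_0b_{ij}\in\Zz$. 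Then one takes the approximation error so small that $n\sum_i|a_i'-a_i|b_{ij}<\epsilon''$ with $\epsilon''<\min\{c_j,1-c_j\}$ over the finitely many $c_j=\sum_i a_ib_{ij}\in\Ii\setminus\{0,1\}$ (the cases $c_j\in\{0,1\}$ being trivial). Since $(n+1)c_j=nc_j+c_j$, the integer $n\sum_i a_i'b_{ij}$ is forced to equal $\lfloor(n+1)c_j\rfloor=n\lfloor c_j\rfloor+\lfloor(n+1)\{c_j\}\rfloor$, with no congruence conditions and no use of $\bm{e}$ at all; here the finiteness of $\Ii$ is the key input you did not exploit. So the missing ideas are: (i) use the direction $\bm{e}$ for (4) (and for monotonicity in the ``Moreover'' part), and (ii) obtain (5) from integrality over the lattice $n_0\Zz$ plus an error bound measured against the distance of the finitely many $c_j$ to $\{0,1\}$, rather than from any residue matching.
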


\begin{proof}    
    There exist positive integers $c,l,M$, a point $\bm{r}_0=(r_1,\dots,r_{c})\in\Rr^{c}$, and $\Qq$-linear functions $a_i(\bm{r}):\Rr^{c}\to \Rr$ depending only on $\Ii_1$ such that $r_0=1,r_1,\dots,r_{c}$ are linearly independent over $\Qq,\Ii_1\subseteq\Span_{\Qq}(\{r_0,\dots,r_c\})$, $a_i(\bm{r}_0)=a_i,$ $la_i(\bm{r})$ is a $\Zz$-linear function and 
    $$|a_i(\bm{r})-a_i(\bm{0})|\le M||\bm{r}||_{\infty}$$ 
    for any $i,$ where $\bm{0}=(0,\dots,0)\in\Rr^{c}.$
    
    Since $\epsilon(\bm{r}):=\sum_{i=1}^k\epsilon_ia_i(\bm{r})$ is a $\Qq$-linear function, there exist a point $\bm{e}\in\Rr^{c}$ and a positive real number $\epsilon'$ such that $\epsilon(\bm{r})\ge \epsilon(\bm{r}_0)\ge\epsilon$ for any $\bm{r}\in\Rr^{c}$ satisfying
    $||\frac{\bm{r}_0-\bm{r}}{||\bm{r}_0-\bm{r}||_\infty} -\frac{\bm{e}}{||\bm{e}||_\infty}||_\infty<\epsilon'.$  Moreover, if $\Ii\cup\{\epsilon\}\subseteq\Span_{\Qq_{\ge0}}(\{r_0,\dots,r_c\}),$ then we can additionally require that
     $b_j(\bm{r})\ge b_j(\bm{r}_0)=\sum_{i=1}^ka_ib_{ij}$ 
     for any $1\le j\le s$ and $\bm{r}\in\Rr^c$ satisfying the same conditions, where $b_j(\bm{r}):=\sum_{i=1}^kb_{ij}a_i(\bm{r})$.
    
    Let $\epsilon''$ be a positive real number such that 
    $$\epsilon''<\min_{\gamma_1\in\Ii,\gamma_2\in\Ii_1}\{\gamma_1>0,1-\gamma_1>0,\gamma_2,1\},$$
    and $\epsilon'''$ a positive real number such that $\epsilon'''<\min\{\frac{\epsilon''^2}{M},\epsilon'\}.$ By Lemma \ref{lem: linearindependentdiufantu}, there exist an integer $n$ and a point $\bm{r}_0'\in\Rr^{c}$, such that
    \begin{itemize}
       \item $ln_0|n$,
       \item $n\bm{r}_0'\in ln_0\Zz^{c}$,
       \item $||\bm{r}_0-\bm{r}_0'||_{\infty}<\frac{\epsilon'''}{n}$, and
       \item $||\frac{\bm{r}_0-\bm{r}_0'}{||\bm{r}_0-\bm{r}_0'||_\infty}
             -\frac{\bm{e}}{||\bm{e}||_\infty}||_\infty<\epsilon'''.$
    \end{itemize}
    
    Let $a_i'=a_i(\bm{r}_0')$ for any $1\le i\le k$ and $\bm{a}'=(a_1',\dots,a_k')$. Since $\sum_{i=1}^k a_i=1$ and $1,r_1,\dots,r_{c}$ are linearly independent over $\Qq,\sum_{i=1}^k a_i(\bm{r})=1$ for any $\bm{r}\in\Rr^{c}$. In particular, $\sum_{i=1}^k a_i'=1.$ 
    Moreover, we have $ na_i'=n_0\frac{n}{ln_0}la_i(\bm{r}_0')\in n_0\Zz$ for any $1\le i\le k,$ and 
    $$||\bm{a}-\bm{a}'||_\infty\le M||\bm{r}_0-\bm{r}'_0||_\infty< M\frac{\epsilon'''}{n}<\frac{\epsilon''^2}{n},$$
    where $\bm{a}:=(a_1,\dots,a_k)$. In particular, $a_i'$ are positive real numbers, since 
    $$a_i'\ge a_i-|a_i-a_i'|\ge a_i-||\bm{a}-\bm{a}'||_\infty>a_i-\frac{\epsilon''^2}{n}>0.$$ 
    We have $\sum_{i=1}^k a_i'\epsilon=\epsilon(\bm{r}_0')\ge \epsilon$, since
    $||\frac{\bm{r}_0-\bm{r}_0'}{||\bm{r}_0-\bm{r}_0'||_\infty}-\frac{\bm{e}}{||\bm{e}||_\infty}||_\infty<\epsilon'''<\epsilon'.$

    It suffices to show $(5)$. If $\sum_{i=1}^k a_ib_{ij}=1$ for some $j$, then $b_{ij}=1$ for any $1\le i\le k$ as $\sum_{i=1}^k a_i=1$ and $1\ge b_{ij}\ge0.$ Thus $\sum_{i=1}^k a_i'b_{ij}=1$.   
    Hence we may assume that $1>\sum_{i=1}^k a_ib_{ij}>0$ and $\sum_{i=1}^k a_i'b_{ij}<1$. Since $n\sum_{i=1}^k a_i'b_{ij}=\sum_{i=1}^k \frac{n}{n_0}a_i'\cdot(n_0b_{ij})\in\Zz$, we only need to show that
    $$n\sum_{i=1}^k a_i'b_{ij}+1>(n+1)\sum_{i=1}^k a_ib_{ij}\ge n\sum_{i=1}^k a_i'b_{ij}.$$    
    The above inequalities hold since $\sum_{i=1}^k a_ib_{ij}\in\Ii$, $k\epsilon''<\sum_{i=1}^k a_i=1$, and
    $$ n\sum_{i=1}^k |a_i'-a_i|b_{ij}< nk\cdot\frac{\epsilon''^2}{n}<\epsilon''<\min\{\sum_{i=1}^k a_ib_{ij},1-\sum_{i=1}^k a_ib_{ij}\}.$$
     
     If $\Ii\cup\{\epsilon\}\subseteq\Span_{\Qq_{\ge0}}(\{r_0,\dots,r_c\}),$ then by our choice of $\bm{a}'$, $\sum_{i=1}^ka_i'b_{ij}=b_j(\bm{r}_0')\ge b_j(\bm{r}_0)=\sum_{i=1}^ka_ib_{ij}$ for any $1\le j\le s$.
\end{proof}

\begin{lem}\label{lem: convexset n vectors}
	Let $\mathcal{D}$ be a compact convex set in $\Rr^n$, then there exist $n+1$ points $\bm{v}_1,\ldots,\bm{v}_{n+1}$ in $\Rr^n$, such that $\mathcal{D}$ is contained in the interior of convex hull of $\bm{v}_1,\ldots,\bm{v}_{n+1}$.
	
	Moreover, there exists a positive real number $\epsilon$, such that for any $\bm{v}_1',\ldots,\bm{v}_{n+1}'\in\Rr^n$, if $||\bm{v}_i-\bm{v}_i'||<\epsilon$ for any $1\le i\le n+1$, then $\mathcal{D}$ in contained in the convex hull of $\bm{v}_1',\ldots,\bm{v}_{n+1}'$.
\end{lem}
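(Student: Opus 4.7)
The plan is to split the lemma into its two assertions and handle each by a standard compactness/continuity argument.

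For the existence statement, I would first use compactness of $\mathcal{D}$ to enclose it in an open ball $B(0,R)\subseteq\Rr^n$. Then I exhibit $n+1$ affinely independent points whose convex hull contains $B(0,R)$ in its interior; for instance, take $\bm{v}_i = M(\bm{e}_i - \tfrac{1}{n+1}\sum_j\bm{e}_j)$ for $1\le i\le n$ and $\bm{v}_{n+1} = -M\tfrac{1}{n+1}\sum_j\bm{e}_j$ (or, equivalently, the vertices of a regular $n$-simplex centered at the origin) and pick $M$ large enough. Affine independence and the explicit geometry of the standard simplex give $B(0,R)\subseteq\mathrm{int}\,\mathrm{conv}(\bm{v}_1,\ldots,\bm{v}_{n+1})$, as required.

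For the perturbation statement, the key observation is that, since we have arranged the $\bm{v}_i$ to be affinely independent, every point $\bm{p}\in\Rr^n$ has a unique system of barycentric coordinates $\lambda_1(\bm{p};\bm{v}_1,\ldots,\bm{v}_{n+1}),\ldots,\lambda_{n+1}(\bm{p};\bm{v}_1,\ldots,\bm{v}_{n+1})$ with $\sum_i\lambda_i=1$ and $\bm{p}=\sum_i\lambda_i\bm{v}_i$, and these coordinates depend jointly continuously on the data $(\bm{p},\bm{v}_1,\ldots,\bm{v}_{n+1})$ (they are obtained by solving an invertible affine system whose inverse varies continuously with the $\bm{v}_i$ on the open set where the matrix is nonsingular). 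For each $\bm{p}\in\mathcal{D}$, since $\bm{p}\in\mathrm{int}\,\mathrm{conv}(\bm{v}_i)$, all $\lambda_i(\bm{p};\bm{v}_1,\ldots,\bm{v}_{n+1})$ are strictly positive. By the continuity in $\bm{p}$ and the compactness of $\mathcal{D}$, there is a uniform lower bound
\[
\delta:=\min_{1\le i\le n+1}\min_{\bm{p}\in\mathcal{D}}\lambda_i(\bm{p};\bm{v}_1,\ldots,\bm{v}_{n+1})>0.
\]

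Finally, by continuity of the barycentric coordinates in the vertices (uniformly on the compact set $\mathcal{D}$), I can choose $\epsilon>0$ small enough that whenever $||\bm{v}_i-\bm{v}_i'||<\epsilon$ for all $i$, the $\bm{v}_i'$ remain affinely independent and
\[
\bigl|\lambda_i(\bm{p};\bm{v}_1',\ldots,\bm{v}_{n+1}')-\lambda_i(\bm{p};\bm{v}_1,\ldots,\bm{v}_{n+1})\bigr|<\delta/2
\]
for every $\bm{p}\in\mathcal{D}$ and every $i$. Then $\lambda_i(\bm{p};\bm{v}_1',\ldots,\bm{v}_{n+1}')>\delta/2>0$, which shows $\bm{p}\in\mathrm{conv}(\bm{v}_1',\ldots,\bm{v}_{n+1}')$, and hence $\mathcal{D}\subseteq\mathrm{conv}(\bm{v}_1',\ldots,\bm{v}_{n+1}')$.

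There is no real obstacle; both assertions are classical facts of convex geometry. The only delicate point is ensuring that the barycentric coordinates vary continuously jointly in the point and the vertices, which reduces to the fact that the inverse of an invertible matrix depends continuously on its entries.
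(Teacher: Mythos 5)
Your proof is correct, and it splits the lemma the same way the paper does, but the mechanism for the ``Moreover'' part differs. For the first assertion both you and the authors simply place $\mathcal{D}$ inside a large explicit simplex: the paper takes $\bm{v}_i=(0,\ldots,3M,\ldots,0)$ and $\bm{v}_{n+1}=(-3M,\ldots,-3M)$ and verifies interior containment by writing an explicit convex combination with coefficient sum strictly less than $1$, while you take a scaled copy of the standard simplex centered at the origin and invoke its inradius; these are the same idea. For the perturbation statement the paper argues via the distance function: it sets $d=\mathrm{dist}(\partial\mathcal{V},\mathcal{D})>0$, asserts that this distance varies continuously with the vertices, and concludes that for small perturbations $\mathrm{dist}(\partial\mathcal{V}',\mathcal{D})>0$, hence $\mathcal{D}\subseteq\mathcal{V}'$ (implicitly using that $\mathcal{D}$ is connected and still meets $\mathcal{V}'$). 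You instead use barycentric coordinates: affine independence gives unique coordinates depending continuously on the point and the vertices, compactness of $\mathcal{D}$ gives a uniform positive lower bound $\delta$, and a perturbation smaller than the modulus of continuity keeps all coordinates above $\delta/2$, hence nonnegative. Your route is slightly longer but more quantitative and self-contained, as it bypasses the step where positivity of the distance to $\partial\mathcal{V}'$ must be upgraded to containment; the paper's route is shorter at the cost of leaving that step (and the continuity of the distance in the vertices) to the reader. Both are complete modulo routine details, so either argument serves.
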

\begin{proof}
	Since $\mathcal{D}$ is a bounded set, there exists a positive real number $M$, such that  
	$$\mathcal{D}\subseteq \{(x_1,\ldots,x_n)\in\Rr^n\mid n\sum_{i=1}^n|x_i|<M\}.$$
	Let $\bm{v}_{n+1}:=(-3M,\ldots,-3M)$ and $\bm{v}_i:=(0,\ldots,3M,\ldots,0)$ for $1\le i\le n$, where $3M$ is the $i$-th coordinate.
	
	For any point $\bm{b}=(b_1,\ldots,b_n)\in\mathcal{D}$, there exists a positive real number $0<a<\frac{1}{3n}$, such that $3aM+b_i\ge 0$ for any $1\le i\le n$. We have 
	$$\bm{b}=\sum_{i=1}^n\frac{3aM+b_i}{3M}\bm{v}_{i}+a\bm{v}_{n+1}+(1-a-\sum_{i=1}^n\frac{3aM+b_i}{3M})(\frac{1}{n+1}\sum _{i=1}^{n+1} \bm{v}_i).$$
	Since 
	$$a+\sum_{i=1}^n\frac{3aM+b_i}{3M}< \frac{1}{3}+\frac{2M}{3M}=1,$$
	$\mathcal{D}$ is contained in the interior of convex hull $\mathcal{V}$ of $\bm{v}_1,\ldots,\bm{v}_{n+1}$. Let $d=dist(\partial\mathcal{V},\mathcal{D})>0$. Since $d$ is a continuous function of $\bm{v}_1,\ldots,\bm{v}_{n+1}$, there exists a positive real number $\epsilon$, such that for any  $||\bm{v}_i-\bm{v}_i'||<\epsilon$, $d'=dist(\partial\mathcal{V}',\mathcal{D})>0$, where $\mathcal{V}'$ is the convex hull of $ \bm{v}_1',\ldots,\bm{v}_{n+1}'$. In particular, $\mathcal{D}$ in contained in the convex hull of $\bm{v}_1',\ldots,\bm{v}_{n+1}'$.
\end{proof}
\begin{lem}\label{lem:finitesetmonotonic}
Let $\Ii$ be a set of non-negative real numbers. Then the following are equivalent.
\begin{enumerate}
		\item For any finite set $\Ii_0$ of $\Ii$, there exist real numbers $r_0=1,r_1,\ldots,r_c$ which are linearly independent over $\Qq$, such that $$\Ii_0\subseteq \Span_{\Qq_{\ge0}}(\{r_0,r_1,\ldots,r_c\}),$$
	\item $\Span_{\Qq_{\ge0}}(\Ii\backslash\Qq)\cap (\Qq\backslash\{0\})=\emptyset$.
\end{enumerate}
Moreover, if $\Gamma$ satisfies one of the above two conditions and $\hat{\Gamma}\subseteq[0,1]\cap (\Qq+\Span_{\Qq_{\ge0}}(\Ii))$ for some finite set $\hat{\Gamma}$, then $\hat{\Gamma}$ also satisfies one of the above two conditions.
\end{lem}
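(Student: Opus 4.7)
The plan is to establish the equivalence of $(1)$ and $(2)$, and then deduce the ``moreover'' clause as a consequence of $(2)$.

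For $(1)\Rightarrow(2)$, I will argue by contradiction. Suppose $\sum_i q_i\alpha_i = q \in \Qq\setminus\{0\}$ for some $\alpha_i\in\Ii\setminus\Qq$ and $q_i\in\Qq_{\ge 0}$ not all zero. Applying $(1)$ to $\Ii_0=\{\alpha_1,\ldots,\alpha_k\}$ yields a $\Qq$-linearly independent system $r_0=1,r_1,\ldots,r_c$ with $\alpha_i=\sum_j s_{ij}r_j$, $s_{ij}\in\Qq_{\ge 0}$. Comparing the $r_j$-coefficients for $j\ge 1$ in the identity $\sum_i q_i\alpha_i=qr_0$ gives $\sum_i q_is_{ij}=0$, and non-negativity forces $s_{ij}=0$ whenever $q_i>0$. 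Thus some $\alpha_i$ equals $s_{i0}\in\Qq$, a contradiction.

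For $(2)\Rightarrow(1)$, given a finite subset $\Ii_0\subseteq\Ii$, separate it into its rational elements (which already lie in $\Qq_{\ge 0}\cdot 1$) and its irrational elements $\alpha_1,\ldots,\alpha_t$. Let $V=\Span_\Qq\{1,\alpha_1,\ldots,\alpha_t\}$ and $\bar V = V/\Qq\cdot 1$. The central observation is that the cone $\bar C=\Span_{\Qq_{\ge 0}}\{\bar\alpha_1,\ldots,\bar\alpha_t\}$ is strictly convex in $\bar V$: any nonzero $v\in\bar C\cap(-\bar C)$ lifts to a relation $\sum(q_i+q'_i)\alpha_i\in\Qq$ with non-negative rational coefficients, which by $(2)$ must vanish, and since each irrational $\alpha_i$ is positive, all coefficients are zero. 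Any strictly convex rational polyhedral cone in $\bar V$ is contained in a simplicial cone generated by some $\Qq$-basis $\bar r_1,\ldots,\bar r_c$ of $\bar V$ (take $c$ linearly independent rational functionals from the full-dimensional dual cone $\bar C^{\vee}$ and use the dual basis). Lifting produces a $\Qq$-basis $\{1,r_1,\ldots,r_c\}$ of $V$ with $\alpha_i=s_{i0}+\sum_{j\ge 1}s_{ij}r_j$, $s_{ij}\ge 0$ for $j\ge 1$, and $\sum_{j\ge 1}s_{ij}>0$ by irrationality of $\alpha_i$. Replacing $r_j$ by $r_j-N$ for a sufficiently large rational $N>0$ preserves $\Qq$-linear independence of $\{1,r_1-N,\ldots,r_c-N\}$ while turning the new constant coefficient $s_{i0}+N\sum_{j\ge 1}s_{ij}$ non-negative for every $i$, yielding $(1)$.

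For the ``moreover'' clause, assume $\Ii$ satisfies $(2)$ and write each $\gamma\in\hat\Ii\setminus\Qq$ as $\gamma=q_\gamma+\sum_k q_{\gamma,k}a_{\gamma,k}$ with $q_\gamma\in\Qq$, $q_{\gamma,k}\in\Qq_{\ge 0}$, and $a_{\gamma,k}\in\Ii$. If $\sum_\gamma r_\gamma\gamma\in\Qq\setminus\{0\}$ for some $r_\gamma\in\Qq_{\ge 0}$, isolating the contribution from those $a_{\gamma,k}\in\Ii\setminus\Qq$ gives a non-negative rational combination of elements of $\Ii\setminus\Qq$ lying in $\Qq$; by $(2)$ for $\Ii$ this combination equals $0$, and positivity of each such $a_{\gamma,k}$ forces $r_\gamma q_{\gamma,k}=0$ for every such pair. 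Then for any $\gamma$ with $r_\gamma>0$, only the rational $a_{\gamma,k}$'s survive in the expansion of $\gamma$, making $\gamma$ rational — contradiction. I expect the main subtlety to be the simplicialization step in $(2)\Rightarrow(1)$: verifying that a strictly convex rational polyhedral cone in $\bar V$ is enclosed in a simplicial cone generated by a $\Qq$-basis, together with the affine shift needed to make the constant-term coefficient non-negative without destroying $\Qq$-linear independence.
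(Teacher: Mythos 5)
Your proof is correct, and while the direction $(1)\Rightarrow(2)$ coincides with the paper's coefficient-comparison argument, your route for $(2)\Rightarrow(1)$ and for the ``moreover'' part is genuinely different. For $(2)\Rightarrow(1)$ the paper works with the rational coefficient vectors $\bm{a}_i'\in\Qq^c$ of the irrational elements: it shows that $\bm{0}$ is not in their convex hull, separates by a rational hyperplane via Hahn--Banach, and then uses Lemma \ref{lem: convexset n vectors} to enclose the compact slice of the cone in a simplex with rational vertices, which produces the enclosing simplicial rational cone; you instead pass to the quotient $\bar V=V/\Qq\cdot 1$, use condition (2) to see that the cone generated by the classes $\bar\alpha_i$ is pointed, and get the enclosing simplicial cone from polyhedral duality (a rational basis of the full-dimensional dual cone together with its dual basis). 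Both arguments end with the identical shift $r_j\mapsto r_j-N$ by a large rational to make the coefficient of $1$ non-negative, so the endgame is the same; your duality step trades the paper's separation argument and Lemma \ref{lem: convexset n vectors} for standard facts about rational polyhedral cones. The only point you should make explicit there is that pointedness of the $\Qq_{\ge0}$-cone implies pointedness of the real cone spanned by the rational vectors $\bar\alpha_i$ (a nontrivial non-negative real relation among rational vectors yields a nontrivial non-negative rational one, since the solution cone is rational polyhedral), so that $\bar C^{\vee}$ is indeed full-dimensional; this is routine, not a gap. For the ``moreover'' clause, the paper exhibits condition (1) for $\hat{\Gamma}$ directly, writing all its elements over one finite spanning set and shifting (this uses finiteness of $\hat{\Gamma}$ and $\hat{\Gamma}\subseteq[0,1]$), whereas you verify condition (2) for $\hat{\Gamma}$ and invoke the equivalence; your variant is a bit more economical and does not even need $\hat{\Gamma}$ to be finite.
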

\begin{proof}
Suppose that $\Ii$ satisfies (1). For any $a\in\Span_{\Qq_{\ge0}}(\Ii\backslash\Qq)\cap(\Qq\backslash\{0\})$, there exist positive rational numbers $\lambda_i$, and $\alpha_i\in\Ii\backslash\Qq$, such that $a=\sum_{i=1}^m\lambda_i\alpha_i$. By (1), there exist positive real numbers $r_0=1,r_1,\ldots,r_c$ which are linearly independent over $\Qq$, such that $\alpha_i\in \Span_{\Qq_{\ge0}}(\{r_0,r_1,\ldots,r_c\})$ for any $1\le i\le m$. Since $a\in\Qq$, $\alpha_i\in \Span_{\Qq_{\ge0}}(\{r_0,r_1,\ldots,r_c\})\cap (\Ii\backslash\Qq)$ for any $1\le i\le m$, which implies that $a=0$, a contradiction.

Suppose that $\Ii$ satisfies (2). For any finite set $\Ii_0=\{\alpha_1,\ldots,\alpha_m\}$ of $\Ii$, there exist positive real numbers $1,r_1',\ldots,r_c'$ which are linearly independent over $\Qq$, such that $\Ii_0\subseteq \Span_{\Qq}(\{1,r_1',\ldots,r_c'\})$. Let $\bm{r}'=(r_1',\ldots,r_c')$. Then there exist $\bm{a}_1',\ldots,\bm{a}_m'\in\Qq^c$, such that $\alpha_i-{\bm{a}_i}'\cdot{\bm{r}}'\in\Qq$ for any $1\le i\le m$. Possibly reordering the indices, we may assume that $\alpha_1,\ldots,\alpha_{m'}\notin\Qq$ and $\alpha_{m'+1},\ldots,\alpha_{m}\in\Qq$  for some $0\le m'\le m$. Then $\bm{a}'_1,\ldots,\bm{a}'_{m'}\neq \bm{0}$ and $\bm{a}'_{m'+1}=\ldots=\bm{a}'_m=\bm{0}$, where $\bm{0}=(0,0,\ldots,0)$. 

If $m'=0$, then $c=0$, $\Ii_0$ is a finite set of rational numbers, and $r_0=1,(1)$ holds. Thus we may assume $m'\ge 1$. 

Suppose that $\bm{0}$ belongs to the convex hull of $\{\bm{a}'_1,\ldots,\bm{a}'_{m'}\}$. There exist non-negative rational numbers $\lambda_i$, such that $\sum_{i=1}^{m'}\lambda_i=1$, and $\sum_{i=1}^{m'} \lambda_i\bm{a}_i'=\bm{0}$. Then $\sum_{i=1}^{m'}\lambda_i\bm{a}_i'\cdot \bm{r}'=\bm{0}\cdot\bm{r}'=0$ and $0\neq\sum_{i=1}^{m'}\lambda_i\alpha_i=\sum_{i=1}^{m'}\lambda_i(\alpha_i-{\bm{a}_i}'\cdot{\bm{r}}')\in\Qq$, which contradicts  $\Span_{\Qq_{\ge0}}(\Ii\backslash\Qq)\cap (\Qq\backslash\{0\})=\emptyset$. Hence $\bm{0}$ does not belong to the convex hull of $\{\bm{a}'_1,\ldots,\bm{a}'_{m'}\}$.

By Hahn-Banach theorem, there exist a rational point $\bm{t}\in\Qq^{c}$ and a rational number $b>0$, such that the hyperplane $H:=\{\bm{x}\in\Rr^{c}\mid \bm{t}\cdot\bm{x}=b\}$ intersects the segment $\overrightarrow{\bm{0}\bm{a}'_i}$ for any $1\le i\le m'$. Let $\mathcal{C}$ be the convex cone generated by $\overrightarrow{\bm{0}\bm{a}'_1},\ldots,\overrightarrow{\bm{0}\bm{a}'_{m'}}$. Then $\mathcal{C}\cap H$ is a compact convex set in $H$. By Lemma \ref{lem: convexset n vectors}, there exist rational points $\bm{v}_1,\ldots,\bm{v}_{c}\in H$, such that $\mathcal{C}\cap H$ is contained in the convex hull of $\bm{v}_1,\ldots,\bm{v}_{c}$. Hence $\mathcal{C}$ is contained in the cone generated by $\overrightarrow{\bm{0}\bm{v}_1},\ldots,\overrightarrow{\bm{0}\bm{v}_{c}}$. 
Since $\bm{v}_j$, $\bm{a}_i'$ are rational points, there exist non-negative rational numbers $a_{ij}$, such that $\bm{a}_i'=\sum_{j=1}^{c} a_{ij}\bm{v}_j$ for any $1\le i\le m'$. 

Let $r_j=\bm{v}_j\cdot \bm{r}'$ and $\bm{r}=(r_1,\ldots,r_c)$.
Then 
$$\alpha_i-{\bm{a}_i}'\cdot{\bm{r}}'=\alpha_i-\sum_{j=1}^{c} a_{ij}\bm{v}_j\cdot\bm{r}'=\alpha_i-\sum_{j=1}^{c}a_{ij}r_j\in\Qq$$
for any $1\le i\le m'$. Possibly replacing $(r_1,\dots,r_c)$ by $(r_1-t,\ldots,r_c-t)$ for some rational numbers $t\gg 0$, we may assume that $\alpha_i-\sum_{j=1}^{c}a_{ij}r_j\ge 0$ for any $1\le i\le m'$. Therefore $\alpha_i\in\Span_{\Qq_{\ge0}}(\{r_0,\ldots,r_c\})$ for any $1\le i\le m$, and we are done.

Finally, if $\Gamma$ satisfies (1) and $\hat{\Gamma}\subseteq[0,1]\cap (\Qq+\Span_{\Qq_{\ge0}}(\Ii))$ for some finite set $\hat{\Gamma}$, then there exist a finite set $\Gamma_0\subseteq \Ii$, and real numbers  $r_0'=1,r_1',\ldots,r_c'$ which are linearly independent over $\Qq$, such that 
\begin{align*}
\hat{\Gamma}&\subseteq[0,1]\cap (\Qq+\Span_{\Qq_{\ge0}}(\Ii_0))\\
&\subseteq[0,1]\cap(\Qq+\Span_{\Qq_{\ge0}}(\{r_0',\ldots,r_c'\}).
\end{align*}
Since $\hat{\Gamma}$ is a finite set, there exists a rational real numbers $t\gg 0$, such that 
$$\hat{\Gamma}\subseteq  \Span_{\Qq_{\ge0}}(\{r_0,\ldots,r_c\}),$$
where $r_i:=r_i'-t$ for any $0\le i\le c$.
\end{proof}

\subsection{Complements for surface germs}
\begin{lem}\label{lem: indexofbpf}
Let $\epsilon$ be a non-negative real number, $M$ and $I$ two positive integers. Then there exists a positive integer $n_0$ depending only on $\epsilon,I$ and $M$ satisfying the following.
     
     Assume that $(X/Z\ni z,B)$ is a surface germ such that
     \begin{itemize}
        \item $(X/Z\ni z,B)$ is $(\epsilon,\Rr)$-complementary,
        \item $-I(K_X+B)$ is Cartier and semiample over $Z$, and
         \item either $\epsilon=0$ or the multiplicity of any fiber of any minimal elliptic fibration of the minimal resolution of $X$ over $Z$ is bounded from above by $M$.
     \end{itemize}
     Then $(X/Z\ni z,B)$ has a monotonic $(\epsilon,n_0)$-complement.
\end{lem}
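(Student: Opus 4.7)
The plan is to produce a bounded integer $n_0$ such that the linear system $|-n_0(K_X+B)/Z|$ is base point free over $Z$ near $z$, and then choose a general section that avoids the (finitely many) mld centers of $(X/Z\ni z,B)$. First I would reduce to the case where $z$ is a closed point, following the argument of Proposition \ref{prop:dec eps com for closed point imply gene} (using Lemma \ref{lemma Amb99rel} and Proposition \ref{prop:equivrelmld}); the $(\epsilon,\Rr)$-complementarity then ensures that $(X/Z\ni z,B)$ has only finitely many mld centers by Lemma \ref{lemma mldcenterfinite}. Next, I would pass to the minimal resolution $f:Y\to X$ and set $K_Y+B_Y=f^{*}(K_X+B)$, so that $-I(K_Y+B_Y)$ is still Cartier and semiample over $Z$, and any monotonic $n_0$-complement of $(Y/Z\ni z,B_Y)$ descends to one of $(X/Z\ni z,B)$ by pushforward. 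The hypothesis on multiple fibers now applies directly to the minimal elliptic fibrations on $Y$.

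Next I would bound the index. Since $-I(K_Y+B_Y)$ is Cartier and semiample over $Z$, it defines a morphism $\phi\colon Y\to W$ over $Z$ with $-I(K_Y+B_Y)\sim_{Z}\phi^{*}A$ for some $A$ ample over $Z$. If $-(K_Y+B_Y)$ is big over the generic fiber of $Y\to Z$, effective base-point-freeness on surfaces provides an $n_0$ depending only on $I$. Otherwise the image $\phi(Y)$ is a curve over $Z$, and when $\epsilon>0$ the general fiber of $\phi$ is necessarily elliptic (since a ruled case would give $-(K_Y+B_Y)$ big, contradicting $\epsilon>0$ on a non-rational surface). Applying Kodaira's canonical bundle formula to the minimal elliptic fibration and invoking the $M$-bound on multiple fibers, one finds that the index of $-(K_Y+B_Y)$ divides a bounded multiple of $I\cdot\mathrm{lcm}\{1,\dots,M\}$. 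The case $\epsilon=0$ is handled by the existence of an $\Rr$-complement together with the classification of surface singularities, which forces $B_Y$ to have coefficients in a finite set determined by $I$. In either case, we obtain $n_0$ depending only on $\epsilon,I,M$ with $I\mid n_0$ such that $-n_0(K_Y+B_Y)$ is base point free over $Z$, and further divisible so that $n_0 b\in\Zz$ for every coefficient $b$ of $B_Y$.

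Finally, I would choose a general section $G_Y\in|-n_0(K_Y+B_Y)/Z|$ avoiding the finitely many mld centers of $(Y/Z\ni z,B_Y)$, and set $B_Y^{+}:=B_Y+\tfrac{1}{n_0}G_Y$. Then $n_0(K_Y+B_Y^{+})\sim 0$ over $Z$, monotonicity $B_Y^{+}\ge B_Y$ is automatic, and the $\epsilon$-lc property over $z$ is preserved because $G_Y$ misses every mld center. The divisibility of $n_0$ ensures that $\{(n_0+1)b\}=b$ and $\lfloor(n_0+1)b\rfloor=n_0 b$ for each coefficient of $B_Y$, so the coefficient condition $n_0B_Y^{+}\ge n_0\lfloor B_Y\rfloor+\lfloor(n_0+1)\{B_Y\}\rfloor$ in Definition \ref{defn complement}(3) reduces to $B_Y^{+}\ge B_Y$. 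Pushing $B_Y^{+}$ forward via $f$ gives the desired monotonic $(\epsilon,n_0)$-complement of $(X/Z\ni z,B)$.

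The main obstacle is the effective index bound in the elliptic fibration case: without the hypothesis on $M$, Example \ref{ex: counterexample without bdd multiple fiber} shows that $n_0$ cannot be bounded, so the crux of the argument is the careful analysis of Kodaira's formula for multiple fibers of elliptic surfaces. Once that effective bound is in place, the remaining steps are essentially a cleanup exercise combining semiampleness with avoidance of the finite collection of mld centers.
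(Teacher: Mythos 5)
Your overall outline matches the paper's strategy at a high level — pass to the minimal resolution, analyze the Stein factorization $\phi\colon Y\to W$ of the semiample map, bound the index in each case, and then pick a general section avoiding mld centers (this last step is exactly Lemma~\ref{lem semiamplecomplement}). However, there are two genuine gaps in your case analysis.

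First, your treatment of the $\dim W=1$ case is incorrect. You claim that when $\epsilon>0$ the general fiber of $\phi$ must be elliptic because ``a ruled case would give $-(K_Y+B_Y)$ big, contradicting $\epsilon>0$ on a non-rational surface.'' But if $W$ is a curve and $K_Y+B_Y$ is pulled back from $W$, then $-(K_Y+B_Y)$ is \emph{trivial} on fibers, not big, so this reasoning collapses; and in fact $\Pp^1$-fibrations with $\epsilon>0$ (e.g.\ $Y=\Pp^1\times\Pp^1$) absolutely occur. The paper handles this subcase by running a $K_X$-MMP over $W$ to reach a Mori fiber space $\tilde X\to W$, noting the MMP is $(K_X+B)$-trivial so the Cartier index is preserved, applying Fujino's effective base-point-freeness over $W$, and then adjusting the bound (by a factor of $3$) to pass from over-$W$ to over-$Z$. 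Your argument simply drops this case.

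Second, your $\epsilon=0$ treatment is insufficient. You say the $\Rr$-complement plus classification of surface singularities ``forces $B_Y$ to have coefficients in a finite set determined by $I$,'' but the coefficients of $B_Y$ already lie in $\frac{1}{I}\Zz\cap[0,1]$ by hypothesis — that is not the obstruction. The obstruction when $\epsilon=0$ is a multiple fiber with unbounded multiplicity $m_{Q_1}$, which makes an effective base-point-freeness bound impossible (cf.\ Example~\ref{ex: counterexample without bdd multiple fiber}). The paper sidesteps this by a direct construction: it writes the relative canonical bundle formula, puts $G=\sum_{m_{Q_i}>1}\frac{1-b_i}{m_{Q_i}}g^{*}Q_i$, and checks (using the global numerical constraint from $\Rr$-complementarity when $\dim Z=0$ and \cite[Proposition~1.1]{Fujimoto90}) that $(X/Z\ni z,B+G)$ is a monotonic $I$-complement. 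Without this explicit complement, the $\epsilon=0$ elliptic-fibration case is not closed. You would need to supply this construction to complete the argument.
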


\begin{proof}

Possibly shrinking $Z$ near $z$, we may assume that $(X,B)$ is lc. Let $f:Y\to X$ be the minimal resolution, and we may write $K_Y+B_Y:=f^*(K_X+B)$. Note that $(Y/Z\ni z,B_Y)$ is $(\epsilon,\Rr)$-complementary and $-I(K_Y+B_Y)$ is Cartier and semiample over $Z$. Possibly replacing $(X,B)$ by $(Y,B_Y)$, we may assume that $X$ is smooth. Let $m$ be the smallest positive integer, such that $-m(K_X+B)$ is base point free over $Z$. Let $g:X\to W$ be the morphism defined by $|-m(K_X+B)|$, possibly replacing $g$ by its Stein factorization, we may assume that $g$ is a contraction. In the following, we either find a positive integer $m'$ depending only on $I$ and $M$ such that $-m'(K_X+B)$ is base point free over $Z$ and hence $(X/Z\ni z,B)$ has a monotonic $(\epsilon,m')$-complement by Lemma \ref{lem semiamplecomplement} or construct a monotonic complement directly.
\medskip

\noindent (1)
If $\dim W=2$, then $K_X+B=g^{*}(K_W+B_W)$, where $B_W$ is the strict transform of $B$. Note that $\mathcal{P}(W/Z\ni z,B_W)\subseteq(\frac{1}{I}\Zz_{\ge0}\cap[0,1])\cup\{+\infty\}$ is a finite set. By Lemma \ref{prop pld}, the Cartier index of $K_W+B_W$ is bounded from above by $I_1$. Since $-(K_W+B_W)$ is ample over $Z$, by \cite[Theorem 2.2.4]{Fujino09}, there exists $m_1$ which only depends on $I_1$, such that $-m_1(K_W+B_W)$ is base point free over $Z$. Hence $-m_1(K_X+B)$ is base point free over $Z$.
\medskip

\noindent (2)
If $\dim W=1$, then by the canonical bundle formula \cite{Kodaria60,Kodaira63}, either $W=Z$ or $\dim Z=0$ and $W$ is a curve of genus $\le 1$.
Note that any general fiber of $X\to W$ is $\mathbb{P}^1$ or an elliptic curve. In the former case, we may run a $K_X$-MMP over $W$ and terminates with a Mori fiber space $\tilde{X}\to W$. Since the MMP is $(K_X+B)$-trivial, by the cone theorem,  $I(K_{\tilde{X}}+\tilde{B})$ is Cartier, where $\tilde{B}$ is the strict transform of $B$. By \cite[Theorem 2.2.4]{Fujino09}, there exists a positive integer $m_2$ which only depends on $I$ such that $-m_2(K_{\tilde{X}}+\tilde{B})$ is base point free over $W$. Hence $-3m_2(K_X+B)$ is base point free over $Z$ (c.f. \cite[IV.3.2]{GTM52}). 

 In the latter case, we may contract $(-1)$ curves, and get a relatively minimal elliptic fibration $\tilde{X}\to W$. Replacing $X$ by $\tilde{X}$, we may assume that $g:X\to W$ is a minimal elliptic fibration over $Z$. The canonical bundle formula implies that
 $$K_X+B\sim g^{*}(K_W+L+\sum_{Q_i\in\mathcal{Q}}\frac{m_{Q_i}-1+b_i}{m_{Q_i}}Q_i),$$
where $b_i\in\frac{1}{I}\Zz_{\ge0}\cap[0,1]$, $L$ is a nef Cartier divisor on $W$ and $\mathcal{Q}$ is the set of closed points $Q$ such that either $g^*Q$ is a multiple fiber with multiplicity $m_Q>1$ or $\Supp g^*Q\subseteq\Supp B$. If $m_{Q}\le IM$ for any closed point $Q\in W$, then there exists a positive integer $m_2':=I(IM)!$, such that $-m_2'(K_X+B)$ is base point free over $W$. Hence $-3m_2'(K_X+B)$ is base point free over $Z$. If $\epsilon=0,$ then we may assume that $X\to W$ has a multiple fiber with multiplicity $m_{Q_1}>I$. Let 
$G:=\sum_{Q_i\in\mathcal{Q},m_{Q_i}>1}\frac{1-b_i}{m_{Q_i}}g^*Q_i$. By the classification of minimal elliptic fibrations, $(X,B+G)$ is lc. We will show that $(X/Z\ni z,B+G)$ is a monotonic $I$-complement of $(X/Z\ni z,B)$.
If $W=Z$, we have 
$$I(K_X+B+G)\sim Ig^{*}(K_W+L+\sum_{Q_i\in\mathcal{Q},m_{Q_i}>1} Q_i+
\sum_{Q_i\in\mathcal{Q},m_{Q_i}=1}b_iQ_i).$$
Suppose that $\dim Z=0$, then $W$ is $\Pp^1$. By \cite[Proposition 1.1]{Fujimoto90}, $L$ is a Cartier divisor of degree 1, and $X\to W$ has only one multiple fiber. Since $m_{Q_1}>I$ and $(X,B)$ is $\Rr$-complementary, $$\deg(K_W+L+\sum_{Q_i\in\mathcal{Q}}\frac{m_{Q_i}-1+b_i}{m_{Q_i}}Q_i)=-1+\frac{m_{Q_1}-1+b_1}{m_{Q_1}}+\sum_{Q_i\in\mathcal{Q},m_{Q_i}=1} b_i\le 0,$$
and $\sum_{m_{Q_i}=1,Q_i\in\mathcal{Q}} b_i$=0. Hence
$$K_X+ B+G\sim g^*(K_W+L+Q_1)\sim 0.$$

\medskip

\noindent (3)
If $\dim W=0$, then there exists a positive integer $m_3$ which only depends on $I$ such that $m_3(K_X+B)\sim 0$ (c.f. \cite[Corollary 1.11]{PS09}). 
\medskip

Let $n_0:=3m_1m_2m_2'm_3I$, and we are done.
\end{proof}

\begin{thm}\label{thm: epsilonlccompclosedpoint}
     Theorem \ref{thm epsilonlccompforsurfaces} holds for the case when $\dim z=0.$
\end{thm}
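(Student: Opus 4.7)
The plan is to chain together the three main technical ingredients developed in the paper (the DCC-to-finite reduction, the existence of a ``good'' birational model with decomposed $\mathbb{R}$-complement on the minimal resolution, and a Diophantine approximation step) exactly in the manner outlined in Section~2.

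First, I reduce to the case where $\Gamma$ is finite. By Theorem~\ref{thm: dcc limit epsilon-lc complementary} applied in dimension $2$ (whose hypothesis on Conjecture~\ref{conj: ACC for aLCTs} is known in dimension two by Lemma~\ref{lem:localalctimplyrel}), there exist a finite set $\Gamma' \subseteq \bar{\Gamma}$ and a projection $g: \bar{\Gamma}\to \Gamma'$ depending only on $\epsilon$ and $\Gamma$ such that $(X/Z\ni z,\sum g(b_i)B_i)$ is still $(\epsilon,\mathbb{R})$-complementary. Replacing $B$ by $\sum g(b_i)B_i$, I may assume $B\in \Gamma'$; a monotonic $(\epsilon,n)$-complement of the enlarged pair pulls back to the desired $(\epsilon,n)$-complement of the original, using the argument at the end of Theorem~\ref{thm: dcc limit epsilon-lc complementary}.

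Next, I pass to a good birational model. By Theorem~\ref{thm reducetofiniteandfixpld}, there exist a finite set $\hat\Gamma\subseteq [0,1]\cap (\mathbb{Q}+\operatorname{Span}_{\mathbb{Q}_{\ge 0}}(\bar{\Gamma}))$ and a boundary $B_Y$ on the minimal resolution $Y\to X$ such that $B_Y\in \hat\Gamma$, $(Y,B_Y)$ is a non-positive birational model of $(X,B)$, the divisor $-(K_Y+B_Y)$ is semiample over $Z$, and $(Y/Z\ni z,B_Y)$ is $(\epsilon,\mathbb{R})$-complementary. By Lemma~\ref{lemma pullbaclcomplement}(2), any $(\epsilon,n)$-complement of $(Y/Z\ni z, B_Y)$ descends to one of $(X/Z\ni z,B)$, so it suffices to complement $(Y/Z\ni z,B_Y)$. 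Applying the construction inside Theorem~\ref{thm reducetofiniteandfixpld}, the finite set $\mathcal{P}(Y/Z\ni z,B_Y)$ of partial log discrepancies along exceptional curves lies in some finite $\tilde{\mathcal{P}}\cup \{+\infty\}$, so Lemma~\ref{lemma antinefpolytope2} produces finite sets $\Gamma_1\subseteq (0,1]$, $\Gamma_2\subseteq [0,1]\cap \mathbb{Q}$, $\mathcal{M}\subseteq \mathbb{Q}_{\ge 0}$ and a positive integer $I$ (depending only on $\epsilon$ and $\Gamma$), together with $a_i\in \Gamma_1$, $B_Y^i\in \Gamma_2$, $\epsilon_i\in \mathcal{M}$ with $\sum a_i=1$, $\sum a_i\epsilon_i\ge \epsilon$, $K_Y+B_Y=\sum a_i(K_Y+B_Y^i)$, each $(Y/Z\ni z, B_Y^i)$ is $(\epsilon_i,\mathbb{R})$-complementary, $-I(K_Y+B_Y^i)$ is Cartier and semiample over $Z$, and if $\epsilon\in \mathbb{Q}$ then $\epsilon_i=\epsilon$ for all $i$ (so in particular $\epsilon_i\ne 1$ whenever $\epsilon\ne 1$).

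Now I produce monotonic integral complements of each piece and recombine. By Lemma~\ref{lem: indexofbpf} applied to $(Y/Z\ni z,B_Y^i)$ (using that the bound $M$ on multiple fibers passes to $Y$ since $Y$ is the minimal resolution), there is a single positive integer $n_0$ depending only on $\epsilon$, $I$, $M$, and $p$ (after enlarging $n_0$ so that $p\mid n_0$ and $I n_0\Gamma_2\subseteq \mathbb{Z}$) such that each $(Y/Z\ni z,B_Y^i)$ has a monotonic $(\epsilon_i,n_0)$-complement $(Y/Z\ni z,B_Y^i+G^i)$. I then apply Lemma~\ref{lemma rational direction2} to the data $\{a_i\}, \{B_Y^i+G^i\}, \{\epsilon_i\}$: this yields a positive integer $n$ with $n_0\mid n$ and positive rationals $a_i'$ with $\sum a_i'=1$, $\sum a_i'\epsilon_i\ge \epsilon$, $na_i'\in n_0\mathbb{Z}$, and the key congruence property (5). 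Setting $B_Y^+ := \sum a_i'(B_Y^i+G^i)$, property (5) of Lemma~\ref{lemma rational direction2} guarantees that $nB_Y^+\ge n\lfloor B_Y\rfloor + \lfloor (n+1)\{B_Y\}\rfloor$, while $n(K_Y+B_Y^+)\sim 0$ over a neighborhood of $z$ because each $n_0(K_Y+B_Y^i+G^i)\sim 0$ and $na_i'/n_0\in \mathbb{Z}$; the $\epsilon$-lc condition follows because $(Y/Z\ni z,B_Y^+)$ is a convex combination with weights $a_i'$ of $\epsilon_i$-lc pairs and $\sum a_i'\epsilon_i\ge \epsilon$. Pushing down to $X$ via Lemma~\ref{lemma pullbaclcomplement} gives the desired $(\epsilon,n)$-complement. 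For the ``moreover'' clause, the Diophantine hypothesis $\operatorname{Span}_{\mathbb{Q}_{\ge 0}}(\bar\Gamma\cup\{\epsilon\}\setminus \mathbb{Q})\cap (\mathbb{Q}\setminus\{0\})=\emptyset$ and Lemma~\ref{lem:finitesetmonotonic} ensure the moreover clause of Lemma~\ref{lemma rational direction2} applies, giving $B_Y^+\ge B_Y$ and hence the descended complement dominates $B$.

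The principal obstacle is the last step: aligning the weights $a_i'$ so that the single integer $n$ simultaneously controls the Cartier index of each $K_Y+B_Y^i+G^i$, satisfies the fractional-part inequality required by Definition~\ref{defn complement}(3), preserves $\epsilon$-lc-ness in the convex combination, and (under the Diophantine condition) gives monotonicity; this is precisely the role of Lemma~\ref{lemma rational direction2}, so the main work is in checking that its hypotheses on the finite sets $\Gamma_1,\Gamma_2,\mathcal{M}$ are met uniformly and that the ``moreover'' part is available whenever the assumption on $\epsilon$ and $\bar{\Gamma}$ holds.
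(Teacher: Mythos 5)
Your overall route is the paper's: reduce via Theorem \ref{thm reducetofiniteandfixpld} to the minimal resolution $Y$ with finite coefficient set and $-(K_Y+B_Y)$ semiample over $Z$ (note that on the smooth model every partial log discrepancy is $+\infty$, so the hypothesis of Lemma \ref{lemma antinefpolytope2} on $\mathcal{P}(Y/Z\ni z,B_Y)$ is automatic), decompose with Lemma \ref{lemma antinefpolytope2}, complement each rational piece with Lemma \ref{lem: indexofbpf}, recombine by Lemma \ref{lemma rational direction2}, and push the resulting complement down to $X$. Your preliminary invocation of Theorem \ref{thm: dcc limit epsilon-lc complementary} is redundant and, as stated, not literally applicable, since that theorem assumes $X$ is $\Qq$-factorial and that the $(\epsilon,\Rr)$-complement satisfies $\lfloor B+G\rfloor=\lfloor B\rfloor$; Theorem \ref{thm reducetofiniteandfixpld} already performs this reduction internally after the necessary normalizations, so you can simply drop that step.

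The genuine problem is in the final Diophantine step, specifically the ``moreover'' (monotonicity) clause. You feed Lemma \ref{lemma rational direction2} the divisors $B_Y^i+G^i$. For the main statement this can be made to work: the coefficients of $\sum a_i(B_Y^i+G^i)$ lie in a finite set determined by $\Gamma_1$ and $n_0$ (the number of summands is at most $1/\min\Gamma_1$), and conclusion (5) together with the monotonicity of $t\mapsto n\lfloor t\rfloor+\lfloor (n+1)\{t\}\rfloor$ on $[0,1]$ gives condition (3) of Definition \ref{defn complement}. But the ``moreover'' clause of Lemma \ref{lemma rational direction2} requires the set containing the combined coefficients $\sum_i a_ib_{ij}$ --- in your setup, the coefficients of $B_Y+\sum a_iG^i$ --- together with $\epsilon$ to lie in $\Span_{\Qq_{\ge0}}(\{r_0,\ldots,r_c\})$. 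The hypothesis of Theorem \ref{thm epsilonlccompforsurfaces} controls only $\bar{\Gamma}\cup\{\epsilon\}$, and via the containment $\hat{\Gamma}\subseteq[0,1]\cap(\Qq+\Span_{\Qq_{\ge0}}(\bar{\Gamma}))$ from Theorem \ref{thm reducetofiniteandfixpld} and Lemma \ref{lem:finitesetmonotonic} it transfers to the coefficients of $B_Y$, not to those of $B_Y+\sum a_iG^i$: the weights $a_i$ produced by Lemma \ref{lemma antinefpolytope2} are $\Qq$-affine in the irrational data with coefficients of both signs, so distinct coefficients of $\sum a_iG^i$ can have cancelling irrational parts, and a positive rational combination of the irrational coefficients of $B_Y+\sum a_iG^i$ can equal a nonzero rational, in which case Lemma \ref{lem:finitesetmonotonic} fails for that set and the moreover clause is unavailable; your claim $B_Y^+\ge B_Y$ is therefore unjustified as written. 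The repair is exactly the paper's choice: apply Lemma \ref{lemma rational direction2} to the $B_Y^i$ alone, so the relevant coefficient set is $\hat{\Gamma}$, obtaining $B':=\sum a_i'B_Y^i$ with $nB'\ge n\lfloor B_Y\rfloor+\lfloor(n+1)\{B_Y\}\rfloor$ and, under the span hypothesis, $B'\ge B_Y$; only afterwards set $B_Y^+:=B'+\sum a_i'G^i$, which preserves both the complement condition and the monotonicity.
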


\begin{proof}
     According to Theorem \ref{thm reducetofiniteandfixpld}, we may assume that $\Ii$ is a finite set, $-(K_X+B)$ is semiample over a neighborhood of $z,X$ is smooth and either $\epsilon=0$ or the multiplicity of any fiber of any minimal elliptic fibration of $X$ over $Z$ is bounded from above by $M$. Possibly shrinking $Z$ near $z$, we may assume that $-(K_X+B)$ is semiample over $Z$.
     
     By Lemma \ref{lemma antinefpolytope2}, there exist a finite set $\Ii_1\subseteq(0,1]$, two finite sets $\Ii_2\subseteq[0,1]$ and $\mathcal{M}$ of non-negative rational numbers and a positive integer $I$ depending only on $\Ii$ and $\epsilon$, such that 
     \begin{itemize}
       \item $\sum a_i\epsilon_i     \ge\epsilon$,
       \item  $K_X+B=\sum a_i(K_X+B^i),$
       \item $(X/Z\ni z,B^i)$ is $(\epsilon_i,\Rr)$-complementary for any $i$, and
       \item $-I(K_X+B^i)$ is Cartier and semiample over $Z$ for any $i$,
       
     \end{itemize}
     for some $a_i\in\Ii_1,B^i\in\Ii_2$ and $\epsilon_i\in\mathcal{M}$. By Lemma \ref{lem: indexofbpf}, there exists a positive integer $n_0$ which only depends on $\epsilon_i,I$ and $M$ such that $({X}/Z\ni z,{B}^i)$ has an $(\epsilon_i,n_0)$-complement $({X}/Z\ni z,{B}^i+{G}^i)$ for some $\Qq$-Cartier divisor ${G}^i\ge0$ and any $i$.
     Let ${G}:=\sum a_i{G}^i$.
     
     By Lemma \ref{lemma rational direction2}, there exists a positive integer $n$ depending only on $\epsilon,p,n_0,\Ii$, $\Ii_1,\Ii_2,\mathcal{M}$ such that there exist positive rational numbers $a_i'$ with the following properties:
     \begin{itemize}
         \item $pn_0|n$,
         \item $\sum a_i'=1$,
         \item $\sum a_i'\epsilon_i\ge\epsilon$,
         \item $na_i'\in n_0\Zz$ for any $i$, and
         \item $nB'\ge n\lfloor B\rfloor+\lfloor (n+1)\{B\}\rfloor$, where $B':=\sum a_i'{B}^i$.
     \end{itemize}
     Let ${G}':=\sum a_i'{G}^i$, then
     $$
     n(K_{{X}}+B'+G')=n\sum a_i'(K_{X}+{B}^i+{G}^i)
     =\sum \frac{a_i'n}{n_0}\cdot n_0(K_{X}+{B}^i+{G}^i)\sim_Z0
     $$
     and
     $$a(E,X,B'+G')=\sum a_i'(E,X,B^i+G^i)\ge a_i'\epsilon_i\ge\epsilon$$
     for any $E\in e(z)$.
     Hence $(X/Z\ni z,B'+G')$ is an $(\epsilon,n)$-complement of $(X/Z\ni z,B)$.

     Moreover, if $\Span_{\Qq_{\ge0}}(\bar{\Ii}\cup\{\epsilon\}\backslash\Qq)\cap (\Qq\backslash\{0\})=\emptyset$, then ${B}'\ge B$ by Lemma \ref{lemma rational direction2} and Lemma \ref{lem:finitesetmonotonic}.
\end{proof}

\subsection{Proofs of Theorem \ref{thm epsilonlccompforsurfaces}, Corollary \ref{cor main} and Theorem \ref{thm: complementdim1}}

\begin{prop}\label{Prop: a complement conj closed point imply general}Let $n$ be a positive integer, $\epsilon$ a non-negative real number and $(X/Z\ni z,B)$ an $(\epsilon,\Rr)$-complementary pair. Suppose that for any closed point $z_{cp}\in\bar{z}$, if $(X/Z\ni z_{cp},B)$ is $(\epsilon+\dim z,\Rr)$-complementary, then $(X/Z\ni z_{cp},B)$ has an $(\epsilon+\dim z,n)$-complement $(X/Z\ni z_{cp},B_{z_{cp}}^{+})$. Then $(X/Z\ni z,B)$ has an $(\epsilon,n)$-complement $(X/Z\ni z,B^{+})$ for some $B^{+}=B_{z_{cp}}^{+}$. 

In particular, Conjecture \ref{conj a complement} for the case when $\dim z=0$ implies Conjecture \ref{conj a complement}.
\end{prop}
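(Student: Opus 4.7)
The plan is to adapt the argument of Proposition \ref{prop:dec eps com for closed point imply gene}: we pick a sufficiently general closed point $z_{cp}\in\bar z$, apply the hypothesis there, and then transfer the resulting complement back to the generic point $z$ using Lemma \ref{lemma Amb99rel} together with the LSC of relative MLDs (Proposition \ref{prop:equivrelmld}).

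First, I fix any $(\epsilon,\Rr)$-complement $(X/Z\ni z, B+G)$, so that $\mld(X/Z\ni z, B+G)\ge \epsilon$. Applying Lemma \ref{lemma Amb99rel} to this pair, I obtain an open subset $U_1\subseteq Z$ with $U_1\cap\bar z\neq\emptyset$ such that
$$\mld(X/Z\ni z_{cp}, B+G)=\mld(X/Z\ni z, B+G)+\dim z\ge \epsilon+\dim z$$
for every closed point $z_{cp}\in U_1\cap\bar z$. Thus $(X/Z\ni z_{cp}, B+G)$ witnesses that $(X/Z\ni z_{cp}, B)$ is $(\epsilon+\dim z,\Rr)$-complementary, and the hypothesis produces an $(\epsilon+\dim z, n)$-complement $(X/Z\ni z_{cp}, B^+)$, where I set $B^+:=B^+_{z_{cp}}$.

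I next verify that $(X/Z\ni z, B^+)$ is an $(\epsilon,n)$-complement of $(X/Z\ni z, B)$. The divisor inequality $nB^+\ge n\lf B\rf+\lf (n+1)\{B\}\rf$ in Definition \ref{defn complement} is a global condition on $X$ and is unaffected by changing the base point. The lc property of $(X,B^+)$ and the linear equivalence $n(K_X+B^+)\sim 0$ hold over some open neighborhood $V\subseteq Z$ of $z_{cp}$; since $z_{cp}$ is a specialization of the generic point $z\in \bar z$, every open neighborhood of $z_{cp}$ also contains $z$, and so both conditions hold over a neighborhood of $z$. It remains to check $\mld(X/Z\ni z, B^+)\ge \epsilon$.

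To establish this, I apply Lemma \ref{lemma Amb99rel} a second time, now to $(X/Z\ni z, B^+)$, obtaining an open subset $U_2\subseteq Z$ with $U_2\cap\bar z\neq\emptyset$ and
$$\mld(X/Z\ni z'_{cp}, B^+)=\mld(X/Z\ni z, B^+)+\dim z$$
for every closed $z'_{cp}\in U_2\cap\bar z$. The subtle point, which I expect to be the main obstacle, is that $U_2$ depends on the divisor $B^+$, hence on the initial choice of $z_{cp}$, so $z_{cp}$ itself need not lie in $U_2$. Invoking Proposition \ref{prop:equivrelmld} (LSC for relative MLDs) exactly as in the proof of Proposition \ref{prop:dec eps com for closed point imply gene}, I find a closed point $z'_{cp}\in U_2\cap\bar z$ such that $\mld(X/Z\ni z'_{cp}, B^+)\ge \mld(X/Z\ni z_{cp}, B^+)\ge \epsilon+\dim z$, and combining with the displayed equality gives $\mld(X/Z\ni z, B^+)\ge \epsilon$. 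The ``in particular'' assertion then follows immediately: any integer $n$ that works for Conjecture \ref{conj a complement} in the closed-point case also works in general, with the same complement produced by the procedure above.
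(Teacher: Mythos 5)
Your proposal is correct and follows essentially the same route as the paper's proof: shrink $Z$ so a global $(\epsilon,\Rr)$-complement exists, use Lemma \ref{lemma Amb99rel} once to produce a closed point $z_{cp}$ where $(X/Z\ni z_{cp},B)$ is $(\epsilon+\dim z,\Rr)$-complementary, apply the hypothesis there, then use Lemma \ref{lemma Amb99rel} a second time together with the lower semi-continuity statement of Proposition \ref{prop:equivrelmld} to transfer the bound $\mld\ge\epsilon+\dim z$ back to the generic point $z$. Your explicit remark that $U_2$ depends on $B^{+}$ (so $z_{cp}$ need not lie in $U_2$, which is exactly why LSC is invoked) and your check of conditions (1)--(3) of the $n$-complement over a neighborhood of $z$ are the same points the paper handles, just spelled out a bit more.
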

\begin{proof}
Possibly shrinking $Z$ near $z$, we may assume that there exists an $\Rr$-Cartier divisor $G\ge0$, such that $K_X+B+G\sim_{\Rr,Z}0$ and $(X/Z\ni z,B+G)$ is $\epsilon$-lc over $z$. By Lemma \ref{lemma Amb99rel}, there exists an open subset $U_1$ of $Z$, such that $U_1\cap \bar{z}\neq\emptyset$, $$\mld(X/Z\ni z_{cp},B+G)=\mld(X/Z\ni z,B+G)+\dim {z},$$
and $(X/Z\ni z_{cp},B)$ is $(\epsilon+\dim z,\Rr)$-complementary for any closed point $z_{cp}\in \bar{z}\cap U_1.$

By assumption, there exists an $(\epsilon+\dim {z},n)$-complement $(X/Z\ni z_{cp},B^{+}_{z_{cp}})$ of $(X/Z\ni z_{cp},B)$. By Lemma \ref{lemma Amb99rel} again, there exists an open subset $U_2$ of $Z$, such that $U_2\cap \bar{z}\neq\emptyset$ and    
$$\mld(X/Z\ni z_{cp}',B^{+}_{z_{cp}})=\mld(X/Z\ni z,B^{+}_{z_{cp}})+\dim {z}$$
for any closed point $z_{cp}'\in \bar{z}\cap U_2.$

By Proposition \ref{prop:equivrelmld}, there exists a closed point $z_{cp}'\in \bar{z}\cap U_2$, such that $\mld(X/Z\ni z_{cp}',B^{+}_{z_{cp}})\ge \mld(X/Z\ni z_{cp},B^{+}_{z_{cp}})\ge \dim z+\epsilon$. Hence $\mld(X/Z\ni z,B^{+}_{z_{cp}})\ge \epsilon$ and $(X/Z\ni z,B^{+}_{z_{cp}})$ is an $(\epsilon,n)$-complement of $(X/Z\ni z,B)$.
\end{proof}

\begin{proof}[Proof of Theorem \ref{thm epsilonlccompforsurfaces}] 
Theorem \ref{thm epsilonlccompforsurfaces} follows from Theorem \ref{thm: epsilonlccompclosedpoint} and Proposition \ref{Prop: a complement conj closed point imply general}.
\end{proof}

\begin{ex}
     Let $b\in(0,\frac{1}{2})$ be an irrational number, $\Ii:=\{0,b\}$, $\epsilon_1:=b$ and $\epsilon_2:=1-b.$ We have $\Span_{\Qq_{\ge0}}(\bar{\Ii}\cup\{\epsilon_1\}\backslash\Qq)\cap (\Qq\backslash\{0\})=\emptyset$, and $1\in \Span_{\Qq_{\ge0}}(\bar{\Ii}\cup\{\epsilon_2\}\backslash\Qq)\cap (\Qq\backslash\{0\})$. By Theorem \ref{thm epsilonlccompforsurfaces}, $(\Pp^2,b\Pp^1)$ has
     a monotonic $(\epsilon_1,n_1)$-complement, and $(\Pp^2,b\Pp^1)$ is $(\epsilon_2,n_2)$-complementary
     for some positive integers $n_1,n_2$. Since $\mld(\Pp^2,b\Pp^1)=\epsilon_2$, $(\Pp^2,b\Pp^1)$ does not have any monotonic $(\epsilon_2,n)$-complement for any positive integer $n$.
\end{ex}

\begin{ex}
     Let $b\in(0,1)$ be an irrational number and $\epsilon:=\frac{1-b}{2}$. Let $(X\ni x,B:=bB_1)$ be a surface germ where $B_1$ is a prime divisor, $f:Y\to X$ the minimal resolution of $X$ with exceptional divisors $E_1,\dots,E_{m+1}$ such that the dual graph of $f$ is a chain, $E_i$ intersects $E_{i+1}$ for any $1\le i\le m$, $E_1\cdot E_1=-3,B_{Y,1}\cdot E_{1}=1$ and $E_i\cdot E_i=-2,B_{Y,1}\cdot E_i=0$ for any $2\le i\le m+1$, where $B_{Y,1}$ is the strict transform of $B_1$ on $Y$. By Lemma \ref{lemma 3.3}, we have
     $$\pld(X\ni x,tB_1)=\frac{(1-t)(m+1)+1}{2m+3}$$
     for any $0<t<1$. In particular,
     $$\pld(X\ni x,B)=\frac{(1-b)(m+1)+1}{2m+3}>\epsilon.$$
     
     By Lemma \ref{lemma rational direction2}, there exist a positive integer $n$ and a positive real number $b'<b$ such that $2|n,nb'\in2\Zz$ and $nb'=\lfloor (n+1)b \rfloor$. Let $b'':=\frac{1+b'}{2}$, then $nb''\in\Zz$ and $1-b''=\frac{1-b'}{2}$. Let $D$ be a prime divisor on $X$ and $D_{Y}$ the strict transform of $D$ on $Y$ such that $D_Y\cdot E_{m+1}=1$ and $D_Y\cdot E_i=0$ for any $1\le i\le m$. Let $B^+:=b'B_1+b''D$, we have
     $$a(E_i,X,B^+)=\frac{1-b'}{2}>\epsilon$$
     for any $1\le i\le m+1$. 
     When $m>2\lfloor\frac{1}{\epsilon}\rfloor$, $\mld(X\ni x,B^+)=\pld(X\ni x,B^+)>\epsilon$ by Lemma \ref{lemma pld=mld}. Thus $(X\ni x,B^+)$ is an $(\epsilon,n)$-complement of $(X\ni x,B)$.
\end{ex}

\begin{proof}[Proof of Corollary \ref{cor main}]
     Suppose on the contrary that there exist a sequence of surface pairs $(X_i/Z_i\ni z_i,B_{(i)}:=\sum_{j=1}^sb_{ij}B_{ij})$ satisfying the conditions and a strictly increasing sequence of positive integers $n_i,$ where 
     $$n_i:=\min\{n\mid (X_i/Z_i\ni z_i,B_{(i)})\text{ is }(\epsilon,n)\text{-complementary},p|n\}.$$ 
     Since $b_{ij}\in[0,1],$ possibly passing to a subsequence, we may assume that $\lim_{i\to\infty}b_{ij}=b_j$ for some real number $b_j\in[0,1]$ and either $\{b_{ij}\}_{i=1}^{\infty}$ is a decreasing sequence or an increasing sequence for any $1\le j\le s.$ 
     
     Let $b_{ij}':=\min\{b_{ij},b_j\}$ for any $i,j$, and $B_{(i)}':=\sum_{j=1}^sb_{ij}'B_{ij}$ for any $i.$ Since the set $\{b_{ij}'\}_{i,j}$ satisfies the DCC and $(X_i/Z_i\ni z_i, B_{(i)}')$ is $(\epsilon,\Rr)$-complementary, by Theorem \ref{thm epsilonlccompforsurfaces}, there exists a positive integer $p|n'$ depending only on $p,M,\epsilon$ and $\{b_{ij}'\}_{i,j}$ such that $(X_i/Z_i\ni z_i,B_{(i)}')$ is $(\epsilon,n')$-complementary for any $i$. There exists a positive integer $N_0$ such that $b_{ij}-b_j<\frac{1-\{(n'+1)b_j\}}{n'+1}$ for any $i\ge N_0$ and any $1\le j\le s$ as $\lim_{i\to \infty}b_{ij}=b_j$ for any $1\le j\le s$. Since either $b_{ij}'=b_{ij}$ or $1\ge b_{ij}>b_j=b_{ij}'$ and 
     $(n'+1)b_{ij}'<(n'+1)b_{ij}<1+\lfloor (n'+1)b_{ij}'\rfloor$, we have
     $$\lfloor(n'+1)\{b_{ij}\}\rfloor=\lfloor(n'+1)\{b_{ij}'\}\rfloor,\, n'\lfloor b_{ij}\rfloor=n'\lfloor b_{ij}'\rfloor$$ for any $1\le j\le s$ and any $i\ge N_0$, and $(X_i/Z_i\ni z_i,B_{(i)})$ is $(\epsilon,n')$-complementary for any $i\ge N_0$, a contradiction.
\end{proof}

\begin{proof}[Proof of Theorem \ref{thm: complementdim1}] 
It suffices to show the case when $\dim Z=\dim z=0$. When
$X$ is an elliptic curve, then $B=0$, $K_X\sim 0$, and we may let $n=p$. Thus we may assume that $X=\Pp^1$ and $\epsilon<1$. Let $B:=\sum_{k=1}^s b_kB_k$, where $B_k$ are distinct closed points. We show that possibly replacing $b_iB_i+b_jB_j$ by $bB_i:=(b_i+b_j)B_i$, we may assume that $b_i+b_j> 1-\epsilon$ for any $i\neq j.$ In particular, we have $\frac{2-\frac{2}{s}}{s-1}+\frac{2}{s}> 1-\epsilon$ and $s\le\lfloor \frac{4}{1-\epsilon}\rfloor$. Indeed, suppose that $b_i+b_j\le1-\epsilon$ for some $i\neq j$. If $(X,bB_i+\sum_{k\neq i,j}b_kB_k)$ has an $(\epsilon,n)$-complement $(X,b^{+}B_i+\sum_{k\neq i,j}b_k^+B_k)$ for some positive integer $n$. Then
$$nb^{+}\ge \lfloor(n+1)(b_i+b_j)\rfloor\ge \lfloor(n+1)b_i\rfloor+ \lfloor(n+1)b_j\rfloor.$$ 
Let $b_i^+:=\frac{\lfloor(n+1)b_i\rfloor}{n}$ and $b_j^+:=\frac{nb^{+}-\lfloor(n+1)b_i\rfloor}{n}$. Then $(X,\sum_{k=1}^sb_k^+B_k)$ is an $(\epsilon,n)$-complement of $(X,B)$.

Let $C$ be an elliptic curve. By Corollary \ref{cor main}, there exists a positive integer $N$ depending only on $\epsilon$ and $p$, such that $(\Pp^1\times C,B\times C)$ has an $(\epsilon,n)$-complement $(\Pp^1\times C,B_{C}^{+})$ for some positive integer $n$ satisfying $n\le N$ and $p|n$. Restricting to a general fiber $\Pp^1$, $(\Pp^1,B^{+}:=B_{C}^{+}|_{\Pp^1})$ is an $(\epsilon,n)$-complement of $(\Pp^1,B)$.
\end{proof}
\section{Applications of Conjecture \ref{conj a complement} and open problems}

\subsection{Applications of Conjecture \ref{conj a complement}} In this subsection, we will show that Conjecture \ref{conj a complement} implies Birkar-Borisov-Alexeev-Borisov Theorem, the global index conjecture of Calabi-Yau varieties, Shokurov's index conjecture and Shokurov-M\textsuperscript{c}Kernan conjecture. 

Recall that Birkar-Borisov-Alexeev-Borisov Theorem \cite{Bir16a, Bir16b} states that $\epsilon$-lc Fano varieties of dimension $d$ form a bounded family for positive real number $\epsilon$ and positive integer $d$. 

\begin{prop}\label{Prop: comp conj implies bbab}
	Conjecture \ref{conj a complement} in dimension $d$ implies Birkar-Borisov-Alexeev-Borisov Theorem in dimension $d$.
\end{prop}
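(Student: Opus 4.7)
The plan is to deduce the Birkar-Borisov-Alexeev-Borisov Theorem from Conjecture \ref{conj a complement} applied with trivial boundary to an $\epsilon$-lc Fano variety, and then to invoke the known boundedness of log Calabi--Yau pairs of bounded index. Let $X$ be an $\epsilon$-lc Fano variety of dimension $d$; I must show that $X$ belongs to a bounded family.

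First I would verify that $(X/\Spec \Cc, 0)$ is $(\epsilon', \Rr)$-complementary for $\epsilon' := \epsilon/2$. Since $-K_X$ is ample, for $m \gg 0$ and a general $H \in |-mK_X|$ the effective $\Qq$-divisor $G := \frac{1}{m}H$ satisfies $K_X + G \sim_{\Qq} 0$, and a Bertini-type perturbation argument shows that for $m$ chosen uniformly large (in a manner controlled only by $\epsilon$ and $d$) the pair $(X, G)$ is $\epsilon'$-lc. With data $(d, p, \epsilon, \Ii) = (d, 1, \epsilon', \{0\})$ the hypotheses of Conjecture \ref{conj a complement} are then satisfied: $-K_X$ is big (indeed ample), $B = 0 \in \{0\}$ is DCC, and $(X, 0)$ is $(\epsilon', \Rr)$-complementary. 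The conjecture produces a positive integer $n$ depending only on $d$ and $\epsilon$, together with an $(\epsilon', n)$-complement $(X, B^+)$ of $(X, 0)$, so that $n(K_X + B^+) \sim 0$ and $B^+ \ge 0$ has coefficients in the fixed finite set $\frac{1}{n}\Zz_{\ge 0} \cap [0, 1]$.

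To conclude boundedness of $X$, I would invoke the known boundedness of $d$-dimensional $\epsilon'$-lc log Calabi--Yau pairs with bounded Cartier index and coefficients in a fixed finite set. From $n(K_X + B^+) \sim 0$ we see that $-K_X$ is $n$-Cartier; combined with the $\epsilon'$-lc singularities, the standard package of ACC for log canonical volumes \cite{HMX14}, together with effective birationality and bounded-volume arguments from \cite{Bir16a}, yields that the family of such pairs $(X, B^+)$ is log bounded, and hence so is the family of underlying varieties $X$.

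The main obstacle is this last step: passing from a bounded-index log Calabi--Yau structure to log-boundedness is itself a substantial theorem, not a formal consequence of what precedes it. In effect, Conjecture \ref{conj a complement} cleanly supplies the ``bounded-complement'' half of the BAB argument, while the remaining ``bounded-CY-pair'' half must still be extracted from the existing literature. The first step of exhibiting a uniform $\epsilon'$-lc $\Rr$-complement of $(X, 0)$ also deserves care, but it is a standard perturbation argument using ampleness of $-K_X$.
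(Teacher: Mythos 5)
Your proof follows the paper's strategy, and the gap you flag at the end is genuine. The paper closes it by citing \cite[Theorem 1.3]{HX15} (Hacon--Xu, \emph{Boundedness of log Calabi--Yau pairs of Fano type}), which says precisely that for fixed $d$, $\epsilon>0$, and a positive integer $N$, the projective varieties $X$ of dimension $d$ admitting a big boundary $B$ with $(X,B)$ $\epsilon$-lc and $N(K_X+B)\sim 0$ form a bounded family. The ingredients you cite (HMX14, Bir16a) feed into the proof of that theorem, but you should invoke it directly rather than reassemble it. Also note that bigness of the boundary is essential in that statement and you omit it from your paraphrase; in your situation $B^+\sim_{\Qq}-K_X$ is ample, so this is satisfied, but you need to say so.

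Two smaller points of comparison. The paper takes as its starting hypothesis the pairs form of BAB --- $(X,B)$ $\epsilon$-lc with $B\geq 0$ big and $K_X+B\sim_{\Rr}0$ --- which is an equivalent formulation of the statement. It then passes to a small $\Qq$-factorization $\tilde X\to X$ so that $K_{\tilde X}$ is $\Rr$-Cartier; the pair $(\tilde X,0)$ is then $(\epsilon,\Rr)$-complementary on the nose (via the pullback of $B$), so no Bertini perturbation is needed and no factor of $\tfrac{1}{2}$ is lost. Your reduction starting from a Fano variety $X$ also works, but your assertion that $m$ (making $-mK_X$ base-point free) can be ``chosen uniformly large (in a manner controlled only by $\epsilon$ and $d$)'' is not correct: the minimal such $m$ depends on $X$. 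Fortunately it is also unnecessary, since you only need one $G$ to exist, not a uniform bound, in order to conclude that $(X,0)$ is $(\epsilon/2,\Rr)$-complementary.
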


\begin{proof}
   Let $X$ be a projective variety of dimension $d$ such that $(X,B)$ is $\epsilon$-lc for some big $\Rr$-divisor $B\ge0$ and positive real number $\epsilon$, and $K_X+B\sim_{\Rr}0$. Let $\tilde{X}\to X$ be a small $\Qq$-factorization. Since $(\tilde{X},0)$ is $(\epsilon,\Rr)$-complementary, by Conjecture \ref{conj a complement}, there exists a positive integer $n$ which only depends on $\epsilon$ and $d$, such that $(\tilde{X},0)$ is $(\epsilon,n)$-complementary. Thus $(X,B^{+})$ is $\epsilon$-lc and $n(K_X+B^{+})\sim 0$ for some boundary $B^{+}$. Now the boundedness of $X$ follows from \cite[Theorem 1.3]{HX15}.
\end{proof}
It is also expected that Conjecture \ref{conj a complement} should imply a conjecture due to M\textsuperscript{c}Kernan and Prokhorov \cite[Conjecture 3.9]{MP04}, which is a generalization of Birkar-Borisov-Alexeev-Borisov Theorem.

\medskip

Let $\Ii\subseteq[0,1]\cap \Qq$ be a finite set, $T$ the set of log Calabi-Yau varieties of dimension $d$,
$$T:=\{(X,B)\mid \dim X=d, B\in\Ii, (X,B) \text{ is lc, }K_X+B\equiv 0\}.$$
The global index conjecture of Calabi-Yau varieties predicts that there is an integer $n$ which only depends on $\Ii$ and $d$, such that $n(K_X+B)\sim 0$ for any $(X,B)\in T$. 

If the global index conjecture of Calabi-Yau varieties holds, it is possible to show the birational boundedness of Calabi-Yau varieties. When $\dim X=3$, it is illustrated in \cite{CDHJS18}. Recently, by assuming the global index conjecture, Birkar, Di Cerbo and Savaldi announced that they can show the birational boundedness of Calabi-Yau varieties when $X$ is projective klt, there is an elliptic fibration $X\to Y$ admitting a rational section, and $Y$ is rationally connected. 

\begin{prop}\label{prop:global index}
	Conjecture \ref{conj a complement} in dimension $d$ implies the global index conjecture of Calabi-Yau varieties in dimension $d$. In particular, the global index conjecture of Calabi-Yau varieties holds in dimension 2.
\end{prop}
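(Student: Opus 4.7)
The plan is to apply Conjecture~\ref{conj a complement} in dimension $d$ to every $(X,B)\in T$, taking $Z=\Spec\Cc$ (so $\dim z=0$), $\epsilon=0$, $p=1$, and the given finite rational set $\Ii$. The first step is to verify the complementarity hypothesis: since $(X,B)$ is lc with rational boundary and $K_X+B\equiv 0$, log abundance (classical for surfaces, invoked as a standing assumption in higher dimensions) yields $K_X+B\sim_{\Rr}0$, so that $B^+:=B$ itself exhibits $(X,B)$ as $(0,\Rr)$-complementary.

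Conjecture~\ref{conj a complement} then produces a positive integer $n$, depending only on $d$ and $\Ii$, and a $(0,n)$-complement $(X,B^+)$ of $(X,B)$; in particular $(X,B^+)$ is lc and $n(K_X+B^+)\sim 0$. Because $\Ii\cup\{0\}\subseteq\Qq$, the Diophantine hypothesis
\[
\Span_{\Qq_{\ge 0}}(\bar{\Ii}\cup\{0\}\setminus\Qq)\cap(\Qq\setminus\{0\})=\emptyset
\]
is vacuous (its left-hand span is $\{0\}$), so the ``Moreover'' clause of the conjecture additionally guarantees $B^+\ge B$.

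It then remains to argue that in fact $B^+=B$. From $n(K_X+B^+)\sim 0$ we get $K_X+B^+\equiv 0$, which combined with $K_X+B\equiv 0$ gives $B^+-B\equiv 0$. As $B^+-B\ge 0$ is an effective $\Rr$-Cartier divisor on the projective variety $X$, pairing with $H^{d-1}$ for any ample divisor $H$ forces each coefficient of $B^+-B$ to vanish, so $B^+=B$ and therefore $n(K_X+B)\sim 0$, with $n$ depending only on $d$ and $\Ii$, as required.

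For the ``In particular'' assertion, Theorem~\ref{thm epsilonlccompforsurfaces} establishes Conjecture~\ref{conj a complement} in dimension two (condition~(3) there is vacuous when $\epsilon=0$), and log abundance is classical for lc surfaces, so the argument above applies unconditionally. The only genuine obstacle to the higher-dimensional implication is thus the very first step, namely passing from $K_X+B\equiv 0$ to $K_X+B\sim_{\Rr}0$; this is a form of log abundance and remains open in general.
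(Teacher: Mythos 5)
Your argument is essentially the paper's proof: apply Conjecture \ref{conj a complement} with $\epsilon=0$ over a point, obtain an $n$-complement $(X,B^{+})$ with $B^{+}\ge B$ (the paper gets monotonicity by choosing $p$ with $p\Ii\subseteq\Zz$, so that $nB^{+}\ge n\lfloor B\rfloor+\lfloor (n+1)\{B\}\rfloor=nB$; your appeal to the ``Moreover'' clause with $p=1$ works equally well), and then conclude $B^{+}=B$ from $B^{+}-B\ge 0$ and $B^{+}-B\equiv 0$, exactly as in the paper. The one point to correct is your closing hedge: the passage from $K_X+B\equiv 0$ to $K_X+B\sim_{\Rr}0$ for projective lc pairs is not open in higher dimensions --- it is the abundance theorem for numerically trivial log canonical (even semi-log canonical) divisors, due to Gongyo, building on Ambro and Fujino--Gongyo, and for rational $B$ it even yields $K_X+B\sim_{\Qq}0$. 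Hence the hypothesis that $(X,B)$ is $(0,\Rr)$-complementary is verified unconditionally and the implication holds in every dimension without any extra log abundance assumption; the paper's proof uses this fact silently, so your making the verification explicit is welcome once the incorrect ``remains open'' claim is replaced by the citation.
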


\begin{proof}
	Let $\epsilon=0$, $Z$ a closed point, and $p$ a integer such that $p\Ii\subseteq\Zz$. By Conjecture \ref{conj a complement}, there exists a positive integer $p|n$, such that $n(K_X+B^{+})\sim 0$ for some lc pair $(X,B^{+})$, where $B^{+}\ge B$. Since $K_X+B\equiv0$, we conclude that $B^{+}=B$ and $n(K_X+B)\sim 0$. The last claim follows from Theorem \ref{thm epsilonlccompforsurfaces}.
\end{proof}

\medskip

The following index conjecture is due to Shokurov.

\begin{conj}[Shokurov's index conjecture]\label{conj:index}
	Let $d$ be a positive integer, $\epsilon$ a non-negative real number and $\Ii\subseteq[0,1]\cap \Qq$ a finite set. Then there exists a positive integer $I$ depending only on $\epsilon$ and $\Ii$ satisfying the following.
	
	Assume that $(X\ni x,B)$ is a pair such that
	\begin{enumerate}
	    \item $\dim X=d,$
	    \item $B\in\Ii$, and
	    \item $\mld(X\ni x,B)=\epsilon.$
	\end{enumerate}
	Then $I(K_X+B)$ is Cartier near $x$.
\end{conj}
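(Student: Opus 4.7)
The plan is to apply Conjecture \ref{conj a complement} directly to the germ $(X\ni x,B)$, and then to show that the resulting monotonic $(\epsilon,n)$-complement must already coincide with $B$ on a neighborhood of $x$, which yields the desired bound $I=n$ on the Cartier index of $K_X+B$.

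First, one may reduce to $\epsilon\in\Qq$: if $\epsilon\notin\Qq$ and $\Ii\subseteq\Qq$, the conclusion $I(K_X+B)$ Cartier would force $\mld(X\ni x,B)$ to be rational, contradicting $\mld(X\ni x,B)=\epsilon$; so no pair satisfies the hypothesis and the statement is vacuous. Since $x$ is a closed point, after shrinking $X$ to an affine neighborhood of $x$, every $\Rr$-Cartier divisor on $X$ is an $\Rr$-linear combination of principal divisors, and in particular $K_X+B\sim_{\Rr}0$ near $x$. Thus $(X\ni x,B)$ is already an $(\epsilon,\Rr)$-complement of itself, and is $(\epsilon,\Rr)$-complementary. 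The other hypotheses of Conjecture \ref{conj a complement} with $Z=X$, $\pi=\id_X$, $z=x$ are immediate (the bigness condition (2) is trivial since the fibers of $\id_X$ are points), and the Span condition in the ``Moreover'' clause is satisfied because $\Ii\cup\{\epsilon\}\subseteq\Qq$. Hence there is an $(\epsilon,n)$-complement $(X\ni x,B^{+})$ of $(X\ni x,B)$, with $B^{+}\ge B$ and $n$ depending only on $d,\epsilon,\Ii$, satisfying $n(K_X+B^{+})\sim 0$ on a neighborhood of $x$.

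The core step is to show $B^{+}=B$ on some neighborhood of $x$. By Lemma \ref{lem:mldisattained} there is a prime divisor $E\in e(x)$ with $a(E,X,B)=\epsilon$, and by the definition of $e(x)$ with $\pi=\id_X$ we have $\Center_X E=\{x\}$. Let $f:Y\to X$ be a birational model extracting $E$ and set $G:=B^{+}-B\ge 0$. Pulling back $K_X+B^{+}=K_X+B+G$ to $Y$ and reading off the coefficient of $E$ gives
\[
a(E,X,B^{+})=a(E,X,B)-\mult_E f^{*}G=\epsilon-\mult_E f^{*}G.
\]
Since $(X\ni x,B^{+})$ is $\epsilon$-lc we have $a(E,X,B^{+})\ge\epsilon$, so $\mult_E f^{*}G\le 0$. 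On the other hand $G\ge 0$ together with $f(E)=\{x\}$ forces $\mult_E f^{*}G\ge 0$, with equality precisely when $x\notin\Supp(G)$. Therefore $x\notin\Supp(G)$, so on a smaller neighborhood $U\ni x$ disjoint from $\Supp(G)$ we have $B^{+}|_U=B|_U$, and hence $n(K_X+B)|_U=n(K_X+B^{+})|_U\sim 0$. In particular $n(K_X+B)$ is Cartier near $x$, and we may take $I=n$.

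Beyond invoking Conjecture \ref{conj a complement}, the only delicate point is the identification $B^{+}=B$ near $x$ via an mld place $E$; the monotonicity of the complement together with $\Center_X E=\{x\}$ makes this automatic. Combining this argument with Theorem \ref{thm epsilonlccompforsurfaces} gives Conjecture \ref{conj:index} unconditionally in dimension two.
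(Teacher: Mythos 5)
Your argument is in substance the paper's own proof of Proposition \ref{prop: epsiloncomp conj implies index} (Conjecture \ref{conj a complement} implies Conjecture \ref{conj:index}, with dimension two unconditional via Theorem \ref{thm epsilonlccompforsurfaces}): apply the complement conjecture to the germ over itself, obtain a monotonic $(\epsilon,n)$-complement, and use $\mld(X\ni x,B)=\epsilon$ to force $B^{+}=B$ near $x$, so $I=n$ works; your mld-place computation just makes explicit the step the paper states in one line. The differences are minor and repairable rather than gaps: the paper secures $B^{+}\ge B$ by taking $p$ with $p\Gamma\subseteq\Zz$ and $p\mid n$ (so no reduction to $\epsilon\in\Qq$ is needed and the proof is uniform in $\epsilon$), whereas your vacuity argument for irrational $\epsilon$ is stated circularly — the correct justification is that a divisor with rational coefficients which is $\Rr$-Cartier is $\Qq$-Cartier, so the mld would be rational (alternatively, note the Span condition in the ``Moreover'' part holds for irrational $\epsilon$ anyway, since a nonzero $\Qq_{\ge0}$-combination of $\{\epsilon\}$ is never a nonzero rational). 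Also, the claim that on an affine neighborhood every $\Rr$-Cartier divisor is an $\Rr$-combination of principal divisors is false as stated and unnecessary: $K_X+B\sim_{\Rr}0$ over $Z=X$ is automatic for the identity morphism; and the closed-point assumption on $x$ is neither granted by the statement nor needed, since the same pullback computation works with $\Center_X E=\bar{x}$ for an arbitrary point $x$.
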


\begin{prop}\label{prop: epsiloncomp conj implies index}
	Conjecture \ref{conj a complement} in dimension $d$ implies Conjecture \ref{conj:index} in dimension $d$. In particular, Conjecture \ref{conj:index} holds in dimension 2.
\end{prop}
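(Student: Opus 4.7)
The plan is to apply Conjecture \ref{conj a complement} directly to the germ $(X\ni x, B)$, viewed as $(X/X\ni x, B)$ with $\pi=\operatorname{id}$, and then exploit the hypothesis $\mld(X\ni x, B)=\epsilon$ to force the resulting complement to coincide with $B$ near $x$. First I would check that $(X \ni x, B)$ is $(\epsilon,\Rr)$-complementary with $B$ itself as a complement: since $\Ii\subseteq\Qq$ the $\Rr$-Cartier divisor $K_X+B$ is in fact $\Qq$-Cartier, so after shrinking $X$ to an affine neighborhood of $x$ on which some integer multiple of $K_X+B$ is principal we have $K_X+B\sim_{\Qq}0$ near $x$, and by hypothesis $(X\ni x, B)$ is $\epsilon$-lc at $x$. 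Applying Conjecture \ref{conj a complement} with $p$ chosen so that $p\Ii\subseteq\Zz$ then yields an $(\epsilon,n)$-complement $(X\ni x, B^+)$ of $(X \ni x, B)$ with $n$ bounded only in terms of $d,\epsilon, p, \Ii$. Since $\Ii\subseteq\Qq$, the set $\Span_{\Qq_{\ge 0}}(\bar\Ii\cup\{\epsilon\}\setminus\Qq)\cap(\Qq\setminus\{0\})$ is empty (trivially if $\epsilon\in\Qq$, and otherwise because $\Qq_{\ge 0}\epsilon$ meets $\Qq$ only at $0$), so the ``Moreover'' clause of Conjecture \ref{conj a complement} lets us arrange $B^+\ge B$.

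The crux of the argument is to show $B^+=B$ near $x$, after which $I:=n$ satisfies $I(K_X+B)=n(K_X+B^+)\sim 0$ near $x$, so $I(K_X+B)$ is Cartier near $x$ as required. By Lemma \ref{lem:mldisattained} there exists a prime divisor $E\in e(x)$ with $a(E,X,B)=\epsilon$. Because $B^+\ge B$, pullback gives $a(E,X,B^+)\le a(E,X,B)=\epsilon$, while $(X\ni x, B^+)$ being $\epsilon$-lc forces $a(E,X,B^+)\ge\epsilon$; hence $\mult_E f^*(B^+-B)=0$ on any birational model $f\colon Y\to X$ extracting $E$. Setting $D:=B^+-B\ge 0$, I would assume for contradiction that $x\in\Supp D$: since $x$ is a closed point and $E\in e(x)$ has center $\{x\}$ over $X$, we have $f(E)=\{x\}\subseteq\Supp D$, whence $E\subseteq f^{-1}(\Supp D)=\Supp f^*D$ and $\mult_E f^*D>0$, contradicting the previous equality. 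Therefore $D=0$ near $x$.

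For the ``In particular'' statement, I would invoke Theorem \ref{thm epsilonlccompforsurfaces}: in the germ case $(X/X\ni x, B)$ condition $(3)$ of that theorem is vacuously satisfied because the minimal resolution $Y\to X$ is birational and carries no minimal elliptic fibration over $Z=X$, so Conjecture \ref{conj a complement} holds in dimension $2$. The main delicate point in the argument is upgrading the local equality $a(E,X,B^+)=a(E,X,B)$ at a single mld place to the global vanishing $B^+=B$ in a neighborhood of $x$; this works precisely because effective $\Qq$-Cartier divisors passing through a closed point contribute positive multiplicity along every exceptional divisor with center at that point, leaving no room for cancellation.
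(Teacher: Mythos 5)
Your argument is correct and follows the same strategy as the paper's proof: apply Conjecture \ref{conj a complement} to the germ $(X\ni x,B)$ (noting that it is trivially $(\epsilon,\Rr)$-complementary since $K_X+B\sim_{\Rr}0$ on a small neighborhood of $x$), obtain $B^+\ge B$, use the hypothesis $\mld(X\ni x,B)=\epsilon$ together with an mld place $E\in e(x)$ to force $B^+=B$ near $x$, and take $I=n$. The one small deviation is that you derive $B^+\ge B$ from the ``Moreover'' monotonicity clause, whereas the paper gets it directly from condition (3) in the definition of an $n$-complement: since $p\Ii\subseteq\Zz$ and $p\mid n$ one has $n\lfloor B\rfloor+\lfloor(n+1)\{B\}\rfloor=nB$, so $nB^+\ge nB$ without any appeal to the ``Moreover'' clause; both routes work, and your careful justification of $B^+=B$ via the mld place is a useful detail that the paper leaves implicit.
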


\begin{proof}
	Let $p$ be a positive integer such that $p\Ii\subseteq\Zz$. By Conjecture \ref{conj a complement}, there exists a positive integer $p|n$, such that $n(K_X+B^{+})$ is Cartier near $x$, and $\mld(X\ni x,B^{+})\ge \epsilon$, where $nB^{+}\ge n\lfloor B\rfloor+\lfloor (n+1)\{B\}\rfloor\ge nB.$ Since $\mld(X\ni x,B)=\epsilon$, we conclude that $B^{+}=B$ near $x$, and we may choose $I=n$. The last claim follows from Theorem \ref{thm epsilonlccompforsurfaces}.
\end{proof}

The following conjecture on Fano type fibrations has a close relation with Conjecture \ref{conj a complement}. Conjecture \ref{conj:ShoMc} is equivalent to \cite[Conjecture 1.2]{Bir16crelle} and \cite[Conjecture 1.2]{AlexeevBorisov14}, but in a slightly different form.

\begin{conj}[Shokurov-M\textsuperscript{c}Kernan]\label{conj:ShoMc}
	Let $d$ be a positive integer and $\epsilon$ a positive real number. Then there exists a positive real number $\delta$ depending only on $d$ and $\epsilon$ satisfying the following. 
	
	Assume that $(X,B)$ is a pair of dimension $d$ and $\pi:X\to Z$ is a contraction such that 
	\begin{enumerate}
	   \item $(X,B)$ is $\epsilon$-lc,
	   \item $X$ is of Fano type over $Z$, and
	   \item $K_X+B\sim_{\Rr}0$ over $Z$.	
	\end{enumerate}
	Then $B_Z\le 1-\delta$, where $B_Z$ is the discriminant part of the canonical bundle formula of $K_X+B$ on $Z$. 
\end{conj}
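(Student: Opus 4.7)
The plan is to deduce Conjecture \ref{conj:ShoMc} from Conjecture \ref{conj a complement} in dimension $d$, continuing the chain of applications initiated by Propositions \ref{Prop: comp conj implies bbab}, \ref{prop:global index}, and \ref{prop: epsiloncomp conj implies index}. Fix $(X,B)$ and $\pi\colon X\to Z$ as in the hypotheses, and for a prime divisor $P\subseteq Z$ with generic point $z$ note that $\coeff_P(B_Z) = 1-c_P$ with $c_P := \lct_z(X,B;\pi^*P)$. It therefore suffices to produce $\delta = \delta(d,\epsilon)>0$ with $c_P \ge \delta$ for every such $P$.

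First I would observe that $(X/Z\ni z,0)$ is $(\epsilon,\Rr)$-complementary: the pair $(X,B)$ itself witnesses this, since $B\ge 0$, $(X,B)$ is $\epsilon$-lc, and $K_X+B\sim_{\Rr,Z} 0$. Applying Conjecture \ref{conj a complement} with $\Ii=\{0\}$ and $p=1$ yields an $(\epsilon,n)$-complement $(X/Z\ni z, B^+)$ of $(X/Z\ni z,0)$ with $n$ depending only on $d$ and $\epsilon$. After passing to a small $\Qq$-factorialization of $X$ (valid because $X$ is klt over $Z$, being of Fano type), we may further assume $X$ is $\Qq$-factorial, so the coefficients of $B^+$ lie in $\tfrac{1}{n}\Zz\cap[0,1-\epsilon]$.

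Next I would bound the discriminant $B_Z^+$ away from $1$. Writing $\pi^*P = \sum_i a_iE_i$ over a neighborhood of $z$ and setting $b_i^+ := \mult_{E_i}B^+$, one has $\coeff_P(B_Z^+) = 1-c_P^+$ where $c_P^+ = \min_i \tfrac{1-b_i^+}{a_i} \ge \tfrac{\epsilon}{\max_i a_i}$. To bound $\max_i a_i$ uniformly I would appeal to Birkar-Borisov-Alexeev-Borisov (Proposition \ref{Prop: comp conj implies bbab}): a general fiber of $\pi$ is an $\epsilon$-lc weak Fano of dimension at most $d$ and so lies in a bounded family; combined with $n(K_X+B^+)\sim 0$ near $z$, effective adjunction of Prokhorov-Shokurov type forces $\max_i a_i\le N$ for some $N=N(d,\epsilon)$, giving $c_P^+ \ge \epsilon/N$.

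The final step --- and the main obstacle --- is to transfer this bound from $B^+$ to $B$. Since $B^+$ is a complement of the zero boundary, one does not automatically have $B^+\ge B$, so $c_P^+$ and $c_P$ are a priori unrelated. To bridge this I would re-run the argument for $(X/Z\ni z, tB)$ with $t\in(0,1)\cap\Qq$, replacing $\Ii=\{0\}$ by a finite rational set which approximates the coefficients of $tB$ well enough (via the ACC for $a$-log canonical thresholds in dimension $d$ and the Diophantine-approximation techniques of Sections 5--6) so that the ``Moreover" clause of Conjecture \ref{conj a complement} supplies $B^+\ge tB$; then $c_P^+ \le c_P/t$, and letting $t\uparrow 1$ propagates the uniform lower bound to $c_P$. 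The technical heart of the argument lies precisely here: Conjecture \ref{conj:ShoMc} imposes no DCC hypothesis on the coefficients of $B$, so a direct application of Conjecture \ref{conj a complement} to $(X/Z\ni z,B)$ is not available, and an approximation bootstrap of the above type is unavoidable.
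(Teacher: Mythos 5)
Your opening reduction (pass to the generic point $z$ of $P$, note $\coeff_P(B_Z)=1-\lct(X/Z\ni z,B;\pi^*P)$, and feed a complement of the \emph{zero} boundary into Conjecture \ref{conj a complement} plus BBAB) is in the right spirit, but two steps fail. First, the identity $\coeff_P(B_Z^+)=1-\min_i\tfrac{1-b_i^+}{a_i}$ is not correct: the discriminant is an lc threshold over $z$, computed over \emph{all} divisors $E\in e(z)$, not only the components $E_i$ of $\pi^{*}P$ on $X$; exceptional divisors over intersections of fiber components (with $\mult_E\pi^{*}P$ unbounded) can compute it. So bounding $\max_i a_i$ and the coefficients $b_i^+$ gives only $1-\coeff_P(B_Z^+)\le\min_i\tfrac{1-b_i^+}{a_i}$, an inequality in the useless direction; what is actually needed is the lc-threshold bound of \cite[Proposition 3.1]{Bir16crelle} for the bounded log Calabi--Yau fibration $(X,B^+)\to Z$, which is the result the paper invokes (your ``effective adjunction'' step concludes a multiplicity bound, which neither follows in the same way nor suffices). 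Second, and more seriously, the transfer from $B^{+}$ to $B$ does not close. Conjecture \ref{conj a complement} can only be applied to a boundary whose coefficients lie in a DCC set fixed in advance, so any approximation must be of the form $B'\le B$ (e.g.\ rounding down to $\tfrac1m\Zz$); monotonicity then yields $B^{+}\ge B'$, and a lower bound on $\lct(X/Z\ni z,B^{+};\pi^{*}P)$ only bounds $\lct(X/Z\ni z,B';\pi^{*}P)$ from below, which says nothing about $\lct(X/Z\ni z,B;\pi^{*}P)$ since $B'\le B$. The scaling inequality $c_P^{+}\le c_P/t$ you use to repair this is false: for a divisor $E$ computing the threshold with $A=a(E,X,0)$, $b=\mult_E B$, $m=\mult_E\pi^{*}P$, one has $t\cdot\tfrac{A-tb}{m}>\tfrac{A-b}{m}$ as soon as $b(1+t)>A$, so letting $t\uparrow1$ does not propagate the bound.

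The paper's proof of Proposition \ref{prop: epsilon comp conj implies MS conj} avoids any comparison between the complement and $B$. Fix $0<a<\epsilon$ and let $t:=\alct(X/Z\ni z,B;\pi^{*}z)$; choose $E\in e(z)$ with $a(E,X,B+t\pi^{*}z)=a$ and extract it, $g:Y\to X$, so that $E$ is an honest divisor on $Y$ and appears in $g^{*}(\pi^{*}z)=\sum m_iF_i$. Conjecture \ref{conj a complement} is applied to $(Y/Z\ni z,0)$ to produce an $(a,n)$-complement $B_a$; BBAB makes the general log fiber $(F,B_a|_F)$ bounded, and \cite[Proposition 3.1]{Bir16crelle} gives $\lct(Y/Z\ni z,B_a;g^{*}(\pi^{*}z))\ge\delta_a$, whence $m_i\le 1/\delta_a$ for every component, in particular for $E$. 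Finally the crepant identity $1-a=t\,m_E+\mult_E B_Y$ together with $\mult_E B_Y\le 1-\epsilon$ (this is the only place $\epsilon$-lc-ness of $(X,B)$ enters, and only for the single divisor $E$) yields $t\ge(\epsilon-a)\delta_a$, hence $\lct(X/Z\ni z,B;\pi^{*}z)\ge(\epsilon-a)\delta_a=:\delta$. If you want to salvage your write-up, you should replace both problematic steps by this mechanism: work on the extraction of the threshold-computing divisor and convert the lct bound for the complement of the zero boundary into a multiplicity bound for that one divisor, rather than trying to force a complement with $B^{+}\ge B$.
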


\begin{prop}\label{prop: epsilon comp conj implies MS conj}
Conjecture \ref{conj a complement} in dimension $d$ implies Conjecture \ref{conj:ShoMc} in dimension $d$.	
\end{prop}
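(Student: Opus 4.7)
The plan is to apply Conjecture \ref{conj a complement} at the generic point of each prime divisor of $Z$, producing a complement of bounded index from which one can extract a lower bound on the log canonical threshold controlling the discriminant. Fix a prime divisor $P\subseteq Z$ with generic point $\eta_P$, and write the coefficient of $P$ in $B_Z$ as $1-t_P$, where $t_P$ is the lct of $\pi^{*}P$ with respect to $(X,B)$ at $\eta_P$. Since $(X,B)$ is $\epsilon$-lc it is in particular klt over $\eta_P$, so $t_P>0$. The goal is to show $t_P\ge\delta$ for some $\delta=\delta(d,\epsilon)$, from which $B_Z\le 1-\delta$ follows.

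First I would verify that $(X/Z\ni\eta_P, 0)$ satisfies the hypotheses of Conjecture \ref{conj a complement} with $\Gamma=\{0\}$ and $p=1$: $\dim X=d$; $-K_X$ is big over $Z$ because $X$ is of Fano type over $Z$; $\Gamma=\{0\}$ is DCC; and $(X,B)$ itself witnesses that $(X/Z\ni\eta_P, 0)$ is $(\epsilon,\Rr)$-complementary, because $(X,B)$ is $\epsilon$-lc globally (hence over $\eta_P$) and $K_X+B\sim_\Rr 0$ over $Z$. Conjecture \ref{conj a complement} in dimension $d$ then produces a positive integer $n=n(d,\epsilon)$ and an $(\epsilon,n)$-complement $(X/Z\ni\eta_P,B^{+})$ of $(X/Z\ni\eta_P,0)$. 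Thus $n(K_X+B^{+})\sim 0$ over some neighborhood $U$ of $\eta_P$, $(X,B^{+})$ is $\epsilon$-lc over $\eta_P$ and hence klt, and $nB^{+}$ is an effective integral divisor.

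Next I would use the canonical bundle formula for the klt-trivial fibration $(X,B^{+})\to U$, writing $K_X+B^{+}\sim_\Qq \pi^{*}(K_U+B^{+}_U+M^{+}_U)$, so the coefficient of $B^{+}_U$ at $P$ equals $1-t^{+}_P$, where $t^{+}_P$ is the lct of $\pi^{*}P$ with respect to $(X,B^{+})$ at $\eta_P$. By the standard index estimates for klt-trivial fibrations (following Kawamata's canonical bundle formula and the work of Prokhorov--Shokurov in \cite{PS09}), the relation $n(K_X+B^{+})\sim 0$ over $U$ forces $t^{+}_P\in\tfrac{1}{N}\Zz$ for an integer $N=N(d,n)$ depending only on $d$ and $n$, and since $t^{+}_P>0$ one concludes $t^{+}_P\ge 1/N$.

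The remaining step, which I expect to be the main obstacle, is to transfer this bound from the auxiliary complement $(X,B^{+})$ back to the original pair $(X,B)$. Because Conjecture \ref{conj a complement} was applied to $(X/Z\ni\eta_P,0)$ rather than to $(X/Z\ni\eta_P,B)$, the complement only satisfies $B^{+}\ge 0$ and need not dominate $B$, so the monotonicity inequality $t^{+}_P\le t_P$ does not come for free. To circumvent this I would re-run the argument applying Conjecture \ref{conj a complement} to $(X/Z\ni\eta_P,B)$ and invoking the ``Moreover'' clause to arrange $B^{+}\ge B$, which immediately yields $t^{+}_P\le t_P$ and hence $t_P\ge 1/N$. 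This requires placing the coefficients of $B$ in a DCC set $\Ii(d,\epsilon)$ depending only on $d$ and $\epsilon$, together with verifying the Diophantine hypothesis of Lemma \ref{lem:finitesetmonotonic}; the existence of such a uniform DCC set for pairs satisfying the hypotheses of Conjecture \ref{conj:ShoMc} will follow from a reduction, via the global ACC \cite[Theorem 1.5]{HMX14} applied on a general fiber of $\pi$ after base change, combined with the fact that all coefficients of $B$ lie in $[0,1-\epsilon]$ by the $\epsilon$-lc assumption. Setting $\delta=1/N$ completes the proof.
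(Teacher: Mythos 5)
Your outline correctly identifies the high-level strategy (apply Conjecture~\ref{conj a complement} over a codimension-one point of $Z$ and then bound an lct), but there are two genuine gaps in the argument that the paper avoids by a different construction.

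First, the claim that ``standard index estimates for klt-trivial fibrations'' yield $t^+_P\in\frac{1}{N}\Zz$ with $N=N(d,n)$ is not correct as stated. Having $n(K_X+B^+)\sim 0$ over $U$ and $(X,B^+)$ $\epsilon$-lc near the fiber does not by itself bound the lct of $\pi^*P$ from below: the Cartier index of $\pi^*P$ is a priori unbounded, and the Prokhorov--Shokurov effective adjunction is not applicable without some boundedness input. The paper supplies exactly this input via the Birkar--Borisov--Alexeev--Borisov theorem: the $\epsilon$-lc condition on the complement makes the general fiber $(F,B_a|_F)$ log bounded, and only then does \cite[Proposition~3.1]{Bir16crelle} give a uniform lower bound $\delta_a$ on the relative lct. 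Your phrasing suggests this step is routine, when it is precisely the crux where BBAB is used.

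Second, your plan to transfer the bound from $B^+$ back to $B$ by invoking the ``Moreover'' clause does not work, because Conjecture~\ref{conj:ShoMc} imposes no DCC condition on the coefficients of $B$ — they are arbitrary real numbers in $[0,1-\epsilon]$. The global ACC does not reduce an arbitrary real coefficient set to a fixed finite or DCC set $\Ii(d,\epsilon)$; it only constrains coefficients that already lie in a DCC set. The paper sidesteps this entirely: it fixes $0<a<\epsilon$, extracts a divisor $E$ with $a(E,X,B+t\pi^*z)=a$ to a model $Y$, applies Conjecture~\ref{conj a complement} to $(Y/Z\ni z,\mathbf{0})$ (so $\Ii=\{0\}$ and no monotonicity issue arises), bounds the multiplicity $m_i$ of $g^*(\pi^*z)$ via $\delta_a m_i\le 1$, and then reads off $t\ge(\epsilon-a)/m_i\ge(\epsilon-a)\delta_a$ directly from the identity $1-a=tm_i+\mult_{F_i}B_Y\le tm_i+1-\epsilon$. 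This extraction trick replaces your monotonicity argument and is what makes the proof close; without it (or something playing the same role) your proposal is incomplete.
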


\begin{proof}
Possibly replacing $X$ by its small $\Qq$-factorization, we may assume that $X$ is $\Qq$-factorial. Let $z\in Z$ be a codimension one point. Possibly shrinking $Z$ near $z$, we may assume that $z$ is smooth. Fix a positive real number $0<a<\epsilon$, let $t:=\alct(X/Z\ni z,B;\pi^{*}z)$. 

Let $E\in e(z)$, such that $a(E,X,B+t\pi^{*}z)=a$. Let $g:Y\to X$ be a morphism which extracts $E$, and we may write $K_Y+B_Y:=g^*(K_X+B)$. Note that if $E$ is on $X$, then $g=\id$. Since $(Y/Z\ni z,0)$ is $(a,\Rr)$-complementary, by Conjecture \ref{conj a complement}, there exists a positive integer $n$ which only depends on $a,d$, such that $(Y/Z\ni z,0)$ has an $(a,n)$-complement $(Y/Z\ni z,B_{a})$ for some boundary $B_{a}$. By Birkar-Borisov-Alexeev-Borisov Theorem \cite{Bir16a, Bir16b}, $(F,B_{a}|_{F})$ is log bounded, where $F$ is a general fiber of $Y\to Z$. By \cite[Proposition 3.1]{Bir16crelle}, there exists a positive real number $\delta_{a}$, such that $t_a:=\lct(Y/Z\ni z,B_{a};g^{*}(\pi^{*}z))\ge \delta_{a}$. Thus $\delta_{a} m_iF_i\le F_i$ for any irreducible component $F_i$ of $g^{*}(\pi^{*}z)$, where $g^{*}(\pi^{*}z):=\sum m_iF_i$. Since $(X,B)$ is $\epsilon$-lc, by the construction of $Y$, 
$$1-a=tm_i+\mult_{F_i}B_Y\le tm_i+1-\epsilon$$
for some $i$. Therefore,
$$\lct(X/Z\ni z,B;\pi^{*}z)\ge t\ge \frac{\epsilon-a}{m_i}\ge (\epsilon-a)\delta_{a},$$
and $\delta:=(\epsilon-a)\delta_{a}$ has the required properties.
\end{proof}

\subsection{Open problems} In this subsection, we ask several questions which are related to our paper. Some questions arise from previous joint works of the second author with his collaborators.  

\medskip

Our first question is about the existence of $(\epsilon,\Rr)$-decomposed complements, that is whether Theorem \ref{thm: decompsable epsilon complement for surfaces} holds for higher dimensional varieties or not.

\begin{problem}[Existence of $(\epsilon,\Rr)$-decomposed complements]\label{problem: decompsable epsilon complements}
     Let $d$ be a positive integer, $\epsilon$ a non-negative real number and $\Ii\subseteq[0,1]$ a DCC set. Then there exist finite sets $\Ii_1\subseteq(0,1],\Ii_2\subseteq[0,1]\cap\Qq$ and $\mathcal{M}\subseteq\Qq_{\ge0}$ depending only on $d,\Ii$ and $\epsilon$ satisfying the following.
     
     Assume that $(X/Z\ni z,B)$ is a pair such that
     \begin{itemize}
       \item $\dim X=d$,
       \item $B\in\Ii$, and
       \item $(X/Z\ni z,B)$ is $(\epsilon,\Rr)$-complementary.
     \end{itemize}
     Then there exist $a_i\in\Ii_1$, $B^i\in\Ii_2$ and $\epsilon_i\in\mathcal{M}$ with the following properties:
     \begin{enumerate}
       \item $\sum a_i=1,$
       \item $\sum a_i\epsilon_i\ge\epsilon$,
       \item $\sum a_i(K_X+B^i)\ge K_X+B$,
       \item $(X/Z\ni z,B^i)$ is $(\epsilon_i,\Rr)$-complementary for any $i$, and
       \item If $\epsilon\in\Qq$, then $\epsilon=\epsilon_i$ for any $i$, and if $\epsilon\neq 1$, then $\epsilon_i\neq1$ for any $i$.
     \end{enumerate}
In particular, there exists $B':=\sum a_iB^{i}\ge B$, such that $(X/Z\ni z,B')$ is $(\epsilon,\Rr)$-complementary, and the coefficients of $B'$ belong to a finite set $\Ii'$ depending only on $d,\Ii$ and $\epsilon$.
\end{problem}

When $\epsilon=0$ and $X$ is of Fano type over $Z$, Problem \ref{problem: decompsable epsilon complements} was proved by Han-Liu-Shokurov \cite[Theorem 1.13]{HLS19}. 

When $\epsilon=0$ and $\dim Z=0$, Problem \ref{problem: decompsable epsilon complements} implies the Global ACC \cite[Theorem 1.5]{HMX14} directly. We will show that Problem \ref{problem: decompsable epsilon complements} implies the ACC for ($\epsilon,\Rr$)-complementary thresholds and the ACC for a-log canonical thresholds (Conjecture \ref{conj: ACC for aLCTs}). Thus Problem \ref{problem: decompsable epsilon complements} could be regarded as a relative version of the ACC properties of singularities. 

\begin{defn}[$(\epsilon,\Rr)$-complementary thresholds]\label{defn (epsilon,R)-complementary thresholds} 
     Suppose that $(X/Z\ni z, B)$ is $(\epsilon,\Rr)$-complementary for some non-negative real number $\epsilon$. The \emph{$(\epsilon,\Rr)$-complementary threshold} of $(X/Z\ni z,B)$ with respect to an $\Rr$-Cartier divisor $M\ge0$ is defined as
     \begin{align*}
     \eRcomp(X/Z\ni z,B;M):=\sup\{&t\ge0\mid (X/Z\ni z,B+tM)\\
     &\text{ is $(\epsilon,\Rr)$-complementary}\}.
     \end{align*}
     In particular, if $X=Z$, $\pi:X\to Z$ is the identity map and $\epsilon=a$, then we obtain the \emph{$a$-lc threshold} (Definition \ref{defn alct}).
\end{defn}

\begin{problem}[ACC for ($\epsilon,\Rr$)-complementary thresholds]\label{problem: ACC for (epsilon,R)-complementary thresholds}
       Fix a positive integer $d$ and a non-negative real number $\epsilon$. Let $\Ii_1$ and $\Ii_2$ be two DCC sets, then 
     \begin{align*}
     \epsilon\text{-}\operatorname{RCOMP}(d,\Ii_1,\Ii_2)&=\{\eRcomp(X/Z\ni z,B;M)\mid (X/Z\ni z,B)\\
     &\text{ is $(\epsilon,\Rr)$-complementary}, \dim X=d,B\in\Ii_1,M\in\Ii_2\}
     \end{align*}
     satisfies the ACC. 
\end{problem}

\begin{prop}\label{prop: decomposed epsilon comp imply acc for alct}
Problem \ref{problem: decompsable epsilon complements} in dimension $d$ implies the ACC for $(\epsilon,\Rr)$-complementary thresholds (Problem \ref{problem: ACC for (epsilon,R)-complementary thresholds}) and the ACC for a-log canonical thresholds (Conjecture \ref{conj: ACC for aLCTs}) in dimension $d$. In particular, Problem \ref{problem: ACC for (epsilon,R)-complementary thresholds} holds in dimension $2$.
\end{prop}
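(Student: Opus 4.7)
The plan is to deduce both ACC statements from the single input that given any $(\epsilon,\Rr)$-complementary pair with DCC coefficients, one can enlarge the boundary to a divisor with coefficients in a finite set. I would handle the complementary-threshold statement first, and then reduce the $a$-lct statement to it.

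Assume a strictly increasing sequence of thresholds $t_i:=\eRcomp(X_i/Z_i\ni z_i,B_i;M_i)$ with $B_i\in\Ii_1$ and $M_i=\sum_j m_{ij}M_{ij}$, $m_{ij}\in\Ii_2$. Observing that coefficients of a pair are $\le 1$, the sequence $\{t_i\}$ is automatically bounded; being strictly increasing, it is itself a DCC set, so the coefficients of $B_i+t_iM_i$ lie in a single bounded DCC set $\Ii$ depending only on $\Ii_1$, $\Ii_2$ and the bound. Each $(X_i/Z_i\ni z_i,B_i+t_iM_i)$ is $(\epsilon,\Rr)$-complementary by choice of $t_i$, so Problem~\ref{problem: decompsable epsilon complements} (``In particular'' part) produces $B_i'\ge B_i+t_iM_i$ with coefficients in a finite set $\Ii'=\Ii'(d,\Ii,\epsilon)$ and with $(X_i/Z_i\ni z_i,B_i')$ still $(\epsilon,\Rr)$-complementary.

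Define $s_i:=\max\{s\ge 0\mid B_i+sM_i\le B_i'\}=\min_j (b'_{ij}-b^0_{ij})/m_{ij}$, where $b'_{ij}$ (resp.\ $b^0_{ij}$) is the coefficient of $M_{ij}$ in $B_i'$ (resp.\ $B_i$). Since $B_i+s_iM_i\le B_i'$ and an $(\epsilon,\Rr)$-complement of $B_i'$ is also an $(\epsilon,\Rr)$-complement of $B_i+s_iM_i$, the maximality of the threshold gives $s_i\le t_i$; the reverse inequality is immediate from $B_i+t_iM_i\le B_i'$. Hence $t_i=s_i$ belongs to the set $S:=\{(b'-b)/m\mid b'\in\Ii',\ b\in\Ii_1\cup\{0\},\ m\in\Ii_2\setminus\{0\}\}$. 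A standard subsequence argument shows $S$ satisfies the ACC: in any sequence in $S$, $b'$ may be made constant (finiteness), and $b$, $m$ nondecreasing (DCC), whence $(b'-b)/m$ becomes nonincreasing, contradicting strict increase. This rules out strictly increasing $\{t_i\}$ and proves Problem~\ref{problem: ACC for (epsilon,R)-complementary thresholds}. Since Theorem~\ref{thm: decompsable epsilon complement for surfaces} verifies Problem~\ref{problem: decompsable epsilon complements} in dimension two, the last sentence of the proposition follows.

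For Conjecture~\ref{conj: ACC for aLCTs}, the idea is that when $x$ is a closed point, $K_X+B$ is $\Rr$-Cartier near $x$ and therefore $\sim_\Rr 0$ near $x$, so the germ $(X\ni x,B)$ is automatically $(a,\Rr)$-complementary whenever it is $a$-lc; consequently $\alct(X\ni x,B;M)=\eRcomp(X\ni x,B;M)$ at closed points, and ACC follows from the preceding step with $\epsilon=a$. For a general, non-closed point $x$ of codimension $e$, I would invoke Lemma~\ref{lemma Amb99rel}: shrinking $Z$ around $\bar x$ and choosing a closed point $x_{cp}\in\bar x$ in the appropriate open locus, one has $\mld(X\ni x_{cp},B+tM)=\mld(X\ni x,B+tM)+e$ for all $t$ in the lc range, which implies $\alct_a(X\ni x,B;M)=\alct_{a+e}(X\ni x_{cp},B;M)$; since $e\in\{0,1,\dots,d\}$, the general-point ACC then reduces to a finite union of closed-point ACC statements. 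The main subtleties I expect are (i) justifying that the coefficient-DCC set for $B_i+t_iM_i$ really is DCC uniformly in $i$, which rests on the elementary but crucial observation that a bounded strictly increasing sequence is DCC, and (ii) keeping track of the fact that the threshold value $t_i$ is realized as the \emph{minimum} over components $M_{ij}$ of $(b'_{ij}-b^0_{ij})/m_{ij}$, so the ACC of the achieved minimum value (not of the full set $S$) is what one needs—this is what the subsequence refinement above handles.
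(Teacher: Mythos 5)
Your treatment of the ACC for $(\epsilon,\Rr)$-complementary thresholds is essentially the paper's argument: starting from a strictly increasing sequence $t_i$, you place the coefficients of $B_i+t_iM_i$ in a DCC set, invoke the ``in particular'' part of Problem \ref{problem: decompsable epsilon complements} to obtain $B_i'\ge B_i+t_iM_i$ with coefficients in a finite set $\Ii'$, and use maximality of the threshold to find a component of $M_i$ along which the coefficients of $B_i+t_iM_i$ and $B_i'$ agree; your $s_i=\min_j(b'_{ij}-b^0_{ij})/m_{ij}$ together with $t_i=s_i$ is exactly that identification, so $t_i$ lies in a set of numbers $(b'-b)/m$ with $b'$ in a finite set and $b,m$ in DCC sets, whose nonnegative part satisfies the ACC (do restrict to the nonnegative part: for a negative numerator an increasing denominator increases the quotient). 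The genuine divergence is your deduction of Conjecture \ref{conj: ACC for aLCTs}: the paper disposes of it in one sentence, because by Definition \ref{defn (epsilon,R)-complementary thresholds} the $a$-lc threshold is literally the $(a,\Rr)$-complementary threshold in the special case $X=Z$, $\pi=\id$ --- relative $\Rr$-triviality over the identity morphism is vacuous at \emph{every} point, closed or not, so $(X\ni x,B)$ is $(a,\Rr)$-complementary if and only if it is $a$-lc at $x$, and no reduction to closed points is needed. Your detour through Lemma \ref{lemma Amb99rel} is therefore unnecessary, and as written it is also a little loose: the open subset produced by that lemma depends on the boundary, so asserting $\mld(X\ni x_{cp},B+tM)=\mld(X\ni x,B+tM)+e$ for one closed point and \emph{all} $t$ in the lc range requires an additional argument (for example, applying the lemma only to the boundary $B+t^{*}M$ at the threshold value and arguing the two thresholds agree from there). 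Since your closed-point identification already works verbatim at non-closed points, the cleanest repair is simply to delete that reduction; with that, your proof matches the paper's.
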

\begin{proof}
Since Conjecture \ref{conj: ACC for aLCTs} is a speical case of Problem \ref{problem: ACC for (epsilon,R)-complementary thresholds}, it suffices to show Problem \ref{problem: ACC for (epsilon,R)-complementary thresholds}.

Suppose on the contrary, there exist two DCC sets $\Ii_1,\Ii_2$ and a sequence pairs $(X_i/Z_i\ni z_i,B_i)$ of dimension $d$ which is $(\epsilon,\Rr)$-complementary, and $\Rr$-Cartier divisors $M_i$, such that 
$B_i\in \Ii_1,M_i\in\Ii_2$, and $t_i:=\eRcomp(X_i/Z_i\ni z_i,B_i;M_i)$ is strictly increasing. In particular, the coefficients of $B_i+t_iM_i$ belong to a DCC set $\Ii$. Since we assume that Problem \ref{problem: decompsable epsilon complements} is true, there exists a divisor $B'_i\ge B_i+t_iM_i$, such that the coefficients of $B_i'$ belong to a finite set $\Ii'$, and $(X_i/Z_i\ni z_i,B_i')$ is $(\epsilon,\Rr)$-complementary. By the construction of $t_i$, there exists a prime divisor $D_i$, such that $\Supp D_i\subseteq\Supp M_i$, and $\mult_{D_i} (B_i+t_iM_i)=\mult_{D_i}B_i'\in \Ii'$ for any $i$. Thus $t_i$ belongs to an ACC set, a contradiction.

The last claim follows from Theorem \ref{thm: decompsable epsilon complement for surfaces}.
\end{proof}

It is expected that Problem \ref{problem: ACC for (epsilon,R)-complementary thresholds} will imply the ACC for minimal log discrepancies. For recent works towards the relation between the ACC for a-log canonical thresholds and the ACC for minimal log discrepancies, we refer readers to \cite{Kawatika18,LJH18}.

Recall that the ACC for log canonical thresholds polytopes holds by \cite{HLQ17}. As a generalization, we may ask whether
$(\epsilon,\Rr)$-complementary thresholds polytopes are indeed polytopes or not and the ACC for $(\epsilon,\Rr)$-complementary thresholds polytopes holds or not, where the $(\epsilon,\Rr)$-complementary thresholds polytope $P(X/Z\ni z, \Delta;D_1,\ldots,D_s)$ of $D_1,\ldots,D_s$ with respect to $(X/Z\ni z,\Delta)$ is defined by
\begin{align*}
P(X, \Delta; &D_1,\ldots,D_s)\coloneqq\{(t_1,\ldots,t_s) \in \mathbb{R}_{\geq 0}^s\mid \\
&(X/Z\ni z,\Delta+t_1 D_1+\ldots+t_s D_s)\text{ is $(\epsilon,\Rr)$-complementary}\}.
\end{align*}
A more ambitious conjecture predicts that the set of volumes of $P(X/Z\ni z, \Delta;D_1,\ldots,D_s)$ should satisfy the ACC. This conjecture is unknown even in dimension 1.

\medskip

As we mentioned in the introduction, it is expected that Conjecture \ref{conj a complement} should hold for surface pairs when $\Gamma$ is not necessarily a DCC set. 

\begin{problem}\label{problem: [0,1]surface complement}    
     Let $\epsilon$ be a non-negative real number and $p$ a positive integer. Then there exists a positive integer $N$ depending only on $\epsilon$ and $p$ satisfying the following.

     Assume that $(X/Z\ni z,B)$ is a surface pair such that
     \begin{enumerate}     
     \item  either $\epsilon=0$ or $-K_X$ is big over $Z$, and
       \item $(X/Z\ni z,B)$ is $(\epsilon,\Rr)$-complementary.
      \end{enumerate}
      Then $(X/Z\ni z,B)$ is $(\epsilon,n)$-complementary for some positive integer $n$ satisfying $n\le N$ and $p|n$.
\end{problem} 

 For a fixed point $\bm{v}=(v_1^{0},\ldots,v_m^{0})\in \Rr^m$ and a positive real number $\epsilon$, Theorem \ref{thm:uniformpolytopeforsurfacemlds} says that for any surface germ $(X\ni x,\sum_{i=1}^m v^0_iB_i)$ which is $\epsilon$-lc at $x$, $\mld(X\ni x,\sum_{i=1}^m v_iB_i)$ is a linear function in a uniform rational polytope containing $\bm{v}$. It would be interesting to ask if Theorem \ref{thm:uniformpolytopeforsurfacemlds} holds in higher dimensions.

\begin{problem}\label{conj: linearmld}
	Let $\epsilon$ be a positive real number, $d,m$ two positive integers and $\bm{v}=(v_1^0,\ldots,v_m^0)\in\Rr^m$ a point. Then there exist a rational polytope $\bm{v}\in P\subseteq\Rr^m$ with vertices $\bm{v}_j=(v_1^j,\ldots,v_m^j)$, positive real numbers $a_j$ and positive real numbers $\epsilon_j$ depending only on $d,\epsilon,m$ and $\bm{v}$ satisfying the following. 
	\begin{enumerate}
	 \item $\sum a_j=1,\sum a_j\bm{v}_j=\bm{v}$ and $\sum a_j\epsilon_j\ge\epsilon$.
	 \item Assume that $(X\ni x,B:=\sum_{i=1}^m v_i^0B_{i})$ is a pair such that
	 \begin{itemize}
	     \item $\dim X=d$,
	     \item $B_1,\ldots,B_m\ge0$ are Weil divisors on $X$, and
	     \item $(X\ni x,B)$ is $\epsilon$-lc at $x$.
	 \end{itemize}
	 Then the function $P\to \Rr$ defined by
   	 $$(v_1,\ldots,v_m)\mapsto \mld(X\ni x,\sum_{i=1}^m v_iB_{i})$$
  	 is a linear function, and
  	 $$\mld(X\ni x,\sum_{i=1}^m v_i^jB_{i})\ge \epsilon_j$$
   	 for any $j$.	
	\end{enumerate}
	Moreover, if $\epsilon\in \Qq$, then we may pick $\epsilon_j=\epsilon$ for any $j$. 
\end{problem}

Problem \ref{conj: linearmld} implies the following. 
\begin{problem}\label{conj: epsilonlcrationalpolytope}
	Let $\epsilon$ be a rational non-negative real number, $d,m$ two positive integers and $\bm{v}=(v_1^0,\ldots,v_m^0)\in\Rr^m$ a point. Then there exist a uniform rational polytope $P\subseteq\Rr^m$ containing $\bm{v}$ depending only on $\epsilon,d,m$ and $\bm{v}$ satisfying the following. 
	
	Assume that $(X\ni x,B:=\sum_{i=1}^m v_i^0B_{i})$ is a pair such that
	\begin{enumerate}
	    \item $\dim X=d$,
	     \item $B_1,\ldots,B_m\ge0$ are Weil divisors on $X$, and
	     \item $(X\ni x,B)$ is $\epsilon$-lc at $x$.
	\end{enumerate}
	 Then $(X\ni x,\sum_{i=1}^m v_iB_{i})$ is $\epsilon$-lc at $x$ for any point $(v_1,\ldots,v_m)\in P$.	
\end{problem}
When $\epsilon=0$, Problem \ref{conj: epsilonlcrationalpolytope} was proved by Nakamura \cite[Theorem 1.6]{nakamura2016} and Han-Liu-Shokurov \cite[Theorem 5.8]{HLS19}. This result is related to the ACC for log canonical thresholds and its accumulation points. It is expected that Problem \ref{conj: epsilonlcrationalpolytope} is related to the ACC for a-log canonical thresholds and the ACC for minimal log discrepancies. 

   More generally, it seems that if the ACC properties of singularities holds for some invariant, then there are uniform rational polytopes for this invariant. Problem \ref{conj: linearmld} could be regarded as the question about the correspondence between the ACC for minimal log discrepancies and uniform rational polytopes for minimal log discrepancies under this general principle. We may also ask whether pseudo-effective thresholds, anti-pseudo-effective thresholds, $(\epsilon,\Rr)$-complementary thresholds satisfy this principle or not. 
   
   \medskip

    In the following, we will give an affirmative answer to Problem \ref{conj: linearmld} on a fixed $X/Z\ni z$ such that $(X/Z\ni z,\Delta)$ is klt over a neighborhood of $z$ for some boundary $\Delta$, and we do not assume that $\epsilon$ is positive.

\begin{lem}\label{lemma kltpairhasbddci}
     Suppose that $(X,B)$ is a klt pair. Then there is a positive integer $I$ depending only on $(X,B)$ such that if $D$ is a $\Qq$-Cartier Weil divisor on $X,$ then $ID$ is Cartier.
\end{lem}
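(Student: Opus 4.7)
The plan is to combine the local finiteness of the class group modulo Picard group at klt singularities with a Noetherian compactness argument. For any $\Qq$-Cartier Weil divisor $D$ on $X$ and any point $x\in X$, the germ of $D$ at $x$ defines a class $[D]_x \in \operatorname{Cl}(\mathcal{O}_{X,x})$, and $D$ is Cartier at $x$ if and only if $[D]_x$ lies in the image of $\Pic(\mathcal{O}_{X,x})$. Since $mD$ is Cartier for some positive integer $m$, the class $[D]_x$ is torsion in $T_x := \operatorname{Cl}(\mathcal{O}_{X,x})/\Pic(\mathcal{O}_{X,x})$, and the Cartier index of $D$ at $x$ divides the order of $[D]_x$ in $T_x$. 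Thus the lemma is equivalent to producing an integer $I$ that annihilates the torsion of every $T_x$ uniformly in $x$.

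The key input I would invoke is that for a klt singularity the torsion subgroup of $T_x$ is finite; call its order $n_x$. This is a standard consequence of the klt hypothesis, obtained from the rationality of klt singularities together with the finiteness of the local algebraic fundamental group of a klt singularity (after Kollár, and strengthened in later work). Granting this, $n_x D$ is automatically Cartier in a Zariski neighborhood of $x$ for every $\Qq$-Cartier Weil divisor $D$ on $X$.

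Finally, to produce a single integer $I$ valid at every point, I would stratify $X$ into finitely many locally closed subvarieties along which the formal type of the singularity of $(X,B)$ is constant; such a stratification exists because $X$ is Noetherian and the non-$\Qq$-factorial locus decomposes into finitely many pieces of constant formal singularity type. On each stratum $n_x$ is constant, so the set $\{n_x \mid x\in X\}$ is finite, and I set $I := \lcm\{n_x \mid x \in X\}$. Then $ID$ is Cartier at every point of $X$, hence Cartier on $X$, proving the lemma. The main obstacle is the local finiteness of the torsion of $T_x$ for klt singularities; once this non-trivial singularity-theoretic input is granted, the passage from pointwise to uniform is a routine Noetherian compactness reduction.
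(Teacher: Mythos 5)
Your reduction of the lemma to bounding torsion in the local class groups is fine: over the local ring $\mathcal{O}_{X,x}$ the Picard group is trivial, so the Cartier index of $D$ at $x$ is exactly the order of $[D]_x$ in $\operatorname{Cl}(\mathcal{O}_{X,x})$, and the pointwise finiteness you invoke is a true (but deep) theorem --- it ultimately rests on the finiteness of local \'etale fundamental groups of klt singularities, conjectured by Koll\'ar and proved by Xu, rather than on rationality --- which you may grant as an input. The genuine gap is the uniformity step. The stratification you appeal to, namely finitely many locally closed pieces along which the formal isomorphism type of the singularity of $(X,B)$ is constant, does not exist in general: formal types can move in moduli along a positive-dimensional component of the singular locus. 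For example, the total space $X$ of a flat family over a curve of cones over smooth cubic (or low-degree del Pezzo) surfaces whose moduli vary is klt, its singular locus is the section of vertices, and the projectivized normal cone of the singular locus at a vertex point recovers the corresponding del Pezzo surface; hence the complete local rings at two vertex points with non-isomorphic surfaces are non-isomorphic, and no dense open subset of the stratum has constant formal type. Consequently ``Noetherian compactness'' does not upgrade the pointwise finiteness $n_x<\infty$ to $\sup_{x\in X} n_x<\infty$; to repair this one needs an actual equisingularity or boundedness input (for instance, topological local triviality along a Whitney--Thom--Mather stratification, which keeps the link, hence the local fundamental group and with it the torsion bound, constant along each stratum), and your write-up does not supply such an argument.

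For comparison, the paper's proof avoids local class groups entirely and gets the uniform $I$ directly from MMP machinery: it first passes to a small $\Qq$-factorialization (the cone theorem shows the Cartier index is unchanged under this step), then runs a $(K_W+(1-\epsilon)E)$-MMP over $X$ from a resolution $W$, terminating at $X$ itself, and bounds by induction on the steps of this MMP the Cartier index of all Weil divisors on each intermediate model, using the cone, contraction and rationality theorems together with the boundedness of lengths of extremal rays. So the two routes differ substantially, and in your route the part that is missing is precisely the uniformity over all points of $X$ that the paper's inductive argument produces.
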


\begin{proof}
     Let $(X', B')$ be a small $\Qq$-factorization of $(X, B).$ Let $D'$ be the pullback of $D$. There is a $\Qq$-divisor $L'\ge 0$ on $X'$ which is anti-ample over $X.$ Possibly rescaling $L'$, we may assume $(X', B'+L')$ is klt. In particular, $X' \to X$ is a $(K_{X'}+B'+L')$-negative contraction of an extremal face of the Mori-Kleiman cone of $X'.$ By the cone theorem, the Cartier index of $D'$ and $D$ are the same. Replacing $(X,B)$ by $(X',B')$, we may assume that $X$ is $\Qq$-factorial and $B=0.$

     Let $f: W\to X$ be a resolution of $X,$ and we may write
     $$K_{W}+(1-\epsilon)E=f^{*} K_{X}+F,$$
     where $F\ge 0,E$ is the sum of the $f$-exceptional divisors and $0<\epsilon<\mld(X,0)$ is a rational number. Since $X$ is klt, $(W, (1-\epsilon)E)$ is a klt pair. Let $M:=K_{W}+(1-\epsilon)E .$ By \cite{BCHM10}, we may run an $M$-MMP over $X$ with scaling of an ample divisor,
     $$W_{0}:=W\dashrightarrow W_{1}\dashrightarrow \cdots\dashrightarrow W_{k}$$
     and the MMP terminates with the minimal model $W_{k}=X$ on which $M_{k}$ is nef, where $M_{k}$ is the strict transform of $M$ on $W_{k}$. Let $l_j$ be a positive integer such that $l_jM_{j}$ is Cartier for any $1\le j\le k$. 

     We show by induction on $j$ that there exists a positive integer $I_{j},$ such that $I_{j} G_{j}$ is Cartier for any Weil divisor $G_{j}$ on $W_{j},$ and we get $I=I_{k}$ as desired. When $j=0,$ the claim is trivial, since $W_{0}$ is smooth. Suppose that the claim holds for $\le j .$ Let $G_{j+1}$ be any Weil divisor on $W_{j+1}, G_{j}$ the strict transform of $G_{j+1}$ on $W_{j}.$ 

     Let $W_{j}\to Z_{j}$ be the $M_j$-negative contraction in the MMP, $R_j$ the corresponding extremal ray. (If $W_{j} \to W_{j+1}$ is a divisorial contraction, then $Z_{j}=W_{j+1}$, otherwise, it is a flipping contraction.) If $G_{j}$ is numerically trivial over $Z_{j},$ then by the cone theorem, $I_{j} G_{j+1}$ is Cartier, and we are done. We may assume that $G_{j}$ is not numerically trivial over $Z_{j}.$ If $G_{j}$ is ample over $Z_{j}$, let $I_{j}'=I_{j},$ otherwise, let $I_{j}'=-I_{j}.$ Let $H_{j}$ be the pullback of an ample Cartier divisor on $Z_{j}$. Possibly replacing $H_{j}$ by some multiple, we may assume that $H_{j}+I_{j}'G_{j}$ is an ample Cartier divisor. For any positive integer $n$, let
     $$r_j(n):=\max\{t\in\Rr\mid (n+1)H_{j}+I'_{j}G_{j}+\frac{t}{m_j}M_{j}\text{ is nef}\},$$
     where $m_{j}=(l_{j}(\dim X+1))!.$ By the rationality theorem, $r_j(n)$ is a positive integer. Since $H_{j}$ is nef, $r_j(n)$ is a non-decreasing function of $n$. Since $r_j(n)\le m_{j}\frac{I'_{j}(G_{j}\cdot R_j)}{-(M_{j}\cdot R_j)}$, there exist two positive integers $r_j,n_j$, such that $r_j(n)=r_j$ for any $n>n_{j}$. Let 
     $$D_j=(n+1)H_{j}+I'_{j}G_{j}+\frac{r_j}{m_{j}}M_{j},$$
     for some $n>\max\{\frac{2r_j\dim X}{m_{j}},n_j\}.$ By the length of extremal rays and the rationality theorem, $D_j\cdot R_j=0$. By the cone theorem, $I_j(m_j(n+1)H_{j+1}+m_jI_{j}'G_{j+1}+r_jM_{j+1})$ is Cartier on $W_{j+1},$ and hence $m_jl_{j+1}I_{j}^2 G_{j+1}$ is Cartier. Let $I_{j+1}=m_jl_{j+1}I_{j}^2,$ we finish the induction.
\end{proof}

\begin{thm}
Let $\epsilon$ be a non-negative real number, $m$ a positive integer and $\bm{v}=(v_1^0,\ldots,v_m^0)\in\Rr^m$ a point. Let $(X/Z\ni z,\Delta)$ be a pair which is klt over a neighborhood of $z$. Then there exist a rational polytope $\bm{v}\in P\subseteq\Rr^m$ with vertices $\bm{v}_j=(v_1^j,\ldots,v_m^j)$, positive real numbers $a_j$ and positive rational numbers $\epsilon_j$ depending only on $\epsilon,m,\bm{v}$ and $(X/Z\ni z,\Delta)$ satisfying the following.  
	\begin{enumerate}
	   \item $\sum a_j=1,\sum a_j\bm{v}_j=\bm{v}$ and $\sum a_j\epsilon_j\ge\epsilon$.
	   \item Assume that $(X/Z\ni z,\sum_{i=1}^m v_i^0B_{i})$ is $\epsilon$-lc over $z$, where $B_1,\ldots,B_m\ge0$ are Weil divisors. Then the function $P\to \Rr$ defined by
  	   $$(v_1,\ldots,v_m)\mapsto\mld(X/Z\ni z,\sum_{i=1}^m v_iB_{i})$$
	   is a linear function, and
	   $$\mld(X/Z\ni z,\sum_{i=1}^m v_i^jB_{i})\ge \epsilon_j$$
	   for any $j$.	
	\end{enumerate}
	Moreover, if $\epsilon\in \Qq$, then we may pick $\epsilon_j=\epsilon$ for any $j$.
\end{thm}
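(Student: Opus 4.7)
The statement is essentially Theorem \ref{thm:uniformpolytopeforsurfacemlds} upgraded to arbitrary dimension, at the price of restricting to a \emph{fixed} klt base $(X/Z\ni z,\Delta)$. The plan is therefore to imitate the surface proof step by step, replacing the surface-specific ingredient (Theorem \ref{thm locally linearity of mld}) with its dimension-free analogue Lemma \ref{lemma bddcimldlinear}. The only reason one needs to fix $(X/Z\ni z,\Delta)$ in higher dimension is that Lemma \ref{lemma bddcimldlinear} demands \emph{a priori} control on both the Cartier index of Weil divisors on $X$ and on $\mld(X/Z\ni z,C)$: the klt hypothesis delivers the first via Lemma \ref{lemma kltpairhasbddci} (after shrinking $Z$ so that $(X,\Delta)$ is a klt pair, one obtains an integer $I_0$ such that $I_0D$ is Cartier for every $\Qq$-Cartier Weil divisor $D$ on $X$), and the second is essentially free: fix any log resolution of $(X,0)$ and any prime divisor $E_0$ over $X$ with $\pi(\Center_X E_0)=\bar z$, and set $M_0:=a(E_0,X,0)$; then for every boundary $C$ one has $\mld(X/Z\ni z,C)\le a(E_0,X,C)\le a(E_0,X,0)=M_0$.

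With $I_0$ and $M_0$ in hand I would repeat the opening move of the proof of Theorem \ref{thm:uniformpolytopeforsurfacemlds}: pick $\Qq$-linearly independent reals $r_0=1,r_1,\dots,r_c$ and $\Qq$-linear functions $s_1,\dots,s_m\colon\Rr^c\to\Rr$ with $s_i(\bm{r}_0)=v_i^0$, where $\bm{r}_0=(r_1,\dots,r_c)$, so that $\bm{r}\mapsto(s_1(\bm{r}),\dots,s_m(\bm{r}))$ identifies $\Rr^c$ with the rational envelope $V\subseteq\Rr^m$ of $\bm{v}$. The case $c=0$ is trivial, so assume $c\ge 1$. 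Setting $B(\bm{r}):=\sum_{i=1}^m s_i(\bm{r})B_i$, Lemma \ref{lemma bddcimldlinear} then yields a positive real number $\delta$, a $\Qq$-linear function $f(\bm{r})$ with $f(\bm{r}_0)\ge\epsilon$ (and $f\equiv\epsilon$ when $\epsilon\in\Qq$), and a single prime divisor $E\in e(z)$ such that
\[
\mld(X/Z\ni z,B(\bm{r}))=a(E,X,B(\bm{r}))\ge f(\bm{r})\quad\text{whenever }\|\bm{r}-\bm{r}_0\|_\infty\le\delta.
\]
Since $B(\bm{r})$ is affine in $\bm{r}$, the function $\bm{r}\mapsto a(E,X,B(\bm{r}))$ is $\Qq$-linear, so the mld is a $\Qq$-linear function of $\bm{r}$ on the whole $\delta$-box, giving the required local linearity.

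To finish I would pick rational numbers $r_{i,1}<r_i<r_{i,2}$ with $\max\{r_i-r_{i,1},\,r_{i,2}-r_i\}\le\delta$, form the rational box $U_c:=\prod_{i=1}^c[r_{i,1},r_{i,2}]$, and define $P$ as the image of $U_c$ under $\bm{r}\mapsto(s_1(\bm{r}),\dots,s_m(\bm{r}))$. Then $P\subseteq V\subseteq\Rr^m$ is a rational polytope containing $\bm{v}$; taking $\bm{r}_j$ to be the vertices of $U_c$, $\bm{v}_j:=(s_1(\bm{r}_j),\dots,s_m(\bm{r}_j))$ the corresponding vertices of $P$, $\epsilon_j:=f(\bm{r}_j)$, and positive reals $a_j$ chosen so that $\sum a_j=1$ and $\sum a_j\bm{r}_j=\bm{r}_0$, conditions (1), (2), and the ``moreover'' part for $\epsilon\in\Qq$ all follow exactly as in the surface case. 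The main obstacle, which is overcome once and for all by the klt hypothesis via Lemma \ref{lemma kltpairhasbddci}, is securing the bounded Cartier index needed to apply Lemma \ref{lemma bddcimldlinear} in arbitrary dimension; after that the argument is essentially formal. The one mild wrinkle is the requirement that each $\epsilon_j$ be a \emph{positive} rational, which is automatic once $\delta$ is shrunk enough to keep $f$ positive on the $\delta$-box when $\epsilon>0$, and is handled by the continuity of $f$ when $\epsilon=0$ and $B(\bm{r}_0)$ is klt at $z$.
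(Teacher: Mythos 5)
Your proposal follows essentially the same route as the paper: reduce to Notation $(\star\star)$ via the rational envelope, feed the fixed germ into Lemma \ref{lemma bddcimldlinear} (with the klt hypothesis supplying the Cartier-index bound through Lemma \ref{lemma kltpairhasbddci} and a uniform bound $M_0$ on relative mlds), and then conclude with the same rational box $U_c$, vertices, $\epsilon_j:=f(\bm{r}_j)$ and weights $a_j$ as in the proof of Theorem \ref{thm:uniformpolytopeforsurfacemlds}.

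The one place where your shortcut is not quite right as written is the bound $M_0$. You set $M_0:=a(E_0,X,0)$ and use $\mld(X/Z\ni z,C)\le a(E_0,X,C)\le a(E_0,X,0)$; but the hypothesis only gives that $K_X+\Delta$ is $\Rr$-Cartier, not that $K_X$ itself is $\Qq$-Cartier, so $a(E_0,X,0)$ need not be defined and the comparison $a(E_0,X,C)\le a(E_0,X,0)$ (which needs the difference $C$ to be an effective $\Rr$-Cartier divisor) is not available in general. The paper resolves exactly this point by first shrinking $Z$ and replacing $X$ with a small $\Qq$-factorialization of $(X,\Delta)$ — harmless since the morphism is small and crepant, so relative log discrepancies over $z$ are unchanged — after which $M_0:=\mld(X/Z\ni z,0)$ makes sense and dominates $\mld(X/Z\ni z,C)$ for every boundary $C$; your divisor $E_0$ would also work on that model. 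With this preliminary step restored, your argument matches the paper's proof. (Your closing remark about forcing the $\epsilon_j$ to be positive when $\epsilon=0$ is not really needed: the paper simply takes $\epsilon_j=f(\bm{r}_j)$, and when $\epsilon=0$ this is $0$, so no extra shrinking or continuity argument enters.)
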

The proof is very similar to the proof of Theorem \ref{thm:uniformpolytopeforsurfacemlds}.
\begin{proof}
      Possibly shrinking $Z$ near $z$ and replacing $X$ by a small $\Qq$-factorization of $(X,\Delta)$, we may assume that $X$ is $\Qq$-factorial and klt. In particular, $M_0:=\mld(X/Z\ni z,0)>0$ and $\mld(X/Z\ni z,C)\le M_0$ for any boundary $C$. By Lemma \ref{lemma kltpairhasbddci}, there exists a positive integer $I_0$ such that $I_0D$ is Cartier for any Weil divisor $D$ on $X$.
      
      There exist $\Qq$-linearly independent real numbers $r_0=1,r_1,\ldots,r_c$ for some $0\le c\le m$, and $\Qq$-linear functions $s_i(\bm{r}):\Rr^{c}\to \Rr$ such that $s_i(\bm{r}_0)=v_i^0$ for any $1\le i\le m$, where $\bm{r}_0=(r_1,\dots,r_c)$. Note that the map $\Rr^c\to V$ defined by
     $$\bm{r}\mapsto (s_1(\bm{r}),\dots,s_m(\bm{r}))$$    
     is one-to-one, where $V\subseteq\Rr^m$ is the rational envelope of $\bm{v}$.
      
      If $c=0$, $P=V=\{\bm{v}\}$ and there is nothing to prove. Suppose that $c\ge1$. Let $B(\bm{r}):=\sum_{i=1}^m s_i(\bm{r})B_i$, then $B(\bm{r}_0)=\sum_{i=1}^m v_i^0B_{i}.$ By assumption, $(X/Z\ni z,B(\bm{r}_0))$ is $\epsilon$-lc over $z$. By Lemma \ref{lemma bddcimldlinear}, there exist a positive real number $\delta$ and a $\Qq$-linear function $f(\bm{r})$ depending only on $\epsilon,I_0,M_0,c,m,\bm{r}_0,$ $s_i(\bm{r})$ such that $f(\bm{r}_0)\ge\epsilon$, and
      $$\mld(X/Z\ni z,B(\bm{r}))=a(E,X,B(\bm{r}))\ge f(\bm{r})$$
      for some prime divisor $E\in e(z)$ and any $\bm{r}\in\Rr^c$ satisfying $||\bm{r}-\bm{r}_0||_\infty\le\delta$. Note that if $\epsilon\in\Qq$, then $f(\bm{r})=\epsilon$ for any $\bm{r}\in\Rr^c$.
      We may find $2^c$ positive rational numbers $r_{i,1},r_{i,2}$ such that $r_{i,1}<r_i<r_{i,2}$ and $\max\{r_i-r_{i,1},r_{i,2}-r_i\}\le\delta$ for any $1\le i\le c$. By our choice of $\delta$, the function $\Rr^c\to \Rr$ defined by
      $$\bm{r}\mapsto \mld(X/Z\ni z,B(\bm{r}))$$
      is a linear function on $\bm{r}\in U_c:=[r_{1,1},r_{1,2}]\times\cdots\times[r_{c,1},r_{c,2}]$. 
      
      Let $\bm{r}_j$ be the vertices of $U_c,$ and $\epsilon_j:=f(\bm{r}_j),v_i^j:=s_i(\bm{r}_j)$ for any $i,j$. Note that if $\epsilon\in\Qq$, then $\epsilon_j=f(\bm{r}_j)=\epsilon$ for any $j$.
      Let $P:=\{(s_1(\bm{r}),\dots,s_m(\bm{r}))\mid \bm{r}\in U_c\}\subseteq V$. Then the function $P\to \Rr$ defined in the theorem is a linear function, $(v_1^j,\dots,v_m^j)$ are vertices of $P,$ and
      $$\mld(X/Z\ni z,\sum_{i=1}^m v_i^jB_i)=\mld(X/Z\ni z,B(\bm{r}_j))\ge f(\bm{r}_j)\ge\epsilon_j$$ 
      for any $j$. 
      
      Finally, we may find positive real numbers $a_j$ such that $\sum a_j=1$ and $\sum a_j\bm{r}_j=\bm{r}_0$. Then $\sum a_j\bm{v}_j=\bm{v}$ and $\sum a_j\epsilon_j\ge\epsilon$ as $\sum_j a_j v_i^j=\sum_j a_js_i(\bm{r}_j)=s_i(\sum_j a_j\bm{r}_j)=s_i(\bm{r}_0)=v_i^0$ for any $1\le i\le m,$ and $\sum a_j\epsilon_j=\sum a_jf(\bm{r}_j)=f(\sum a_j\bm{r}_j)=f(\bm{r}_0)\ge\epsilon.$
\end{proof}

Roughly speaking, we show in Lemma \ref{lemma uniformnefpolytope} that there exists a uniform rational anti-nef polytope by assuming that the Cartier index of the $\Rr$-Cartier nef divisor is bounded. Problem \ref{conj: uniformnefness} is about whether the assumption on the Cartier index is necessary or not.

\begin{problem}\label{conj: uniformnefness}
	Let $d,m$ be two positive integers and $\bm{v}=(v_1^0,\ldots,v_m^0)\in\Rr^m$ a point. Then there exists a uniform rational polytope $P\subseteq\Rr^m$ containing $\bm{v}$ depending only on $d,m$ and $\bm{v}$ satisfying the following.  
	
	Assume that $(X/Z\ni z,B:=\sum_{i=1}^m v_i^0B_{i})$ is a pair of dimension $d$ such that
	\begin{itemize}
	    \item $X$ is of Fano type over $Z$,
	    \item $B_1,\ldots,B_m\ge0$ are Weil divisors on $X$,
	    \item $(X/Z\ni z,B)$ is lc over $z$, and
	    \item $-(K_X+B)$ is nef over $Z$.
	\end{itemize}
	Then $(X/Z\ni z,B':=\sum_{i=1}^m v_iB_{i})$ is lc over $z$ and $-(K_X+B')$ is nef over $Z$ for any $(v_1,\ldots,v_m)\in P$.
\end{problem}

We may show Problem \ref{conj: uniformnefness} when one of the following conditions holds:
\begin{enumerate}
	\item the Picard number of $X$ is $1$ (\cite[Theorem 5.17]{HLS19}), or
	\item $(X,B)$ is $\epsilon$-lc (Birkar-Borisov-Alexeev-Borisov Theorem), or 
	\item the Cartier indices of $K_X,B_i$ are bounded from above (Lemma \ref{lemma uniformnefpolytope}).
\end{enumerate}

\medskip

Our next question is about whether Nakamura's conjecture \cite[Conjecture 1.1]{mustata-nakamura18} holds for DCC coefficients or not.

\begin{problem}\label{problem: DCCmustatanakamura}
   Let $\Ii\subseteq[0,1]$ be a DCC set and $X\ni x$ a klt germ. Then there exists an integer $N$ depending only on $X\ni x$ and $\Gamma$ satisfying the following.
     
   Assume that $(X,B:=\sum_{i=1}^mb_iB_i)$ is a pair such that
     \begin{enumerate}
         \item $B_1,\dots,B_m$ are distinct prime divisors on $X$,
         \item $(X,B)$ is lc, and
         \item $b_i\in \Ii$ for any $1\le i\le m$.
     \end{enumerate}
     Then there is a prime divisor $E$ over $X$ such that $\mld(X\ni x,B)=a(E,X,B)$ and $a(E,X,0)\le N.$
\end{problem}

\medskip

Finally, we expect the following slightly stronger version of Conjecture \ref{conj:ShoMc} holds.

\begin{problem}\label{problem: MSconj}
	Let $d$ be a positive integer and $a<\epsilon$ two non-negative real numbers. Then there exists a positive real number $\delta$ depending only on $d$, $a$ and $\epsilon$ satisfying the following. 
	
	Assume that $(X/Z\ni z,B)$ is a pair of dimension $d$ such that 
	\begin{enumerate}
	   \item ${\rm codim}\ z=1$,
	   \item $X$ is of Fano type over $Z$, and
	   \item $(X/Z\ni z,B)$ is an $(\epsilon,\Rr)$-complement of itself.
	\end{enumerate}
	Then possibly shrinking $Z$ near $z$, $\alct(X/Z\ni z,B;f^{*}z)\ge \delta$.
\end{problem}
Let $d=3$, $\epsilon=1$, $a=0$ and $\dim Z=2$ in Problem \ref{problem: MSconj}, a conjecture of Shokurov predicts that the optimal value of $\delta$ is $\frac{1}{2}$ (c.f. \cite[10.6.2. Conjecture]{Prokhorov18}). This conjecture is related to Iskovskikh's conjecture on $\Qq$-conic bundles \cite{Iskovskikh96}, which was proved by Mori and Prokhorov \cite[Theorem 1.2.7]{MoriProkhorov08}.
\begin{appendices}
\section{Structure of surface singularities}

Let $\mathcal{DG}$ be an extended dual graph with vertices $v_i$ $(1\le i\le n)$ which corresponds to the curves $E_i$ $(1\le i\le n)$. We define $\Delta(\mathcal{DG})$ as the absolute value of the determinant of the matrix $(E_i\cdot E_j)_{1\le i,j\le n}$, and set $\Delta(\emptyset)=1$. 

Assume that $\mathcal{DG}_1$ and $\mathcal{DG}_2$ are subgraphs of $\mathcal{DG}$, we define $\mathcal{DG}_1\sqcup\mathcal{DG}_2$ as the disjoint union of $\mathcal{DG}_1$ and $\mathcal{DG}_2$. We define $\mathcal{DG}-\mathcal{DG}_1$ as the subgraph of $\mathcal{DG}$ obtained by deleting from $\mathcal{DG}$ the vertices of $\mathcal{DG}_1$ and all the edges incident to them.

\begin{figure}[ht]
     
     \begin{tikzpicture}
         
         \draw (-1.5,0) ellipse (2 and 0.25);
         \draw (0.75,0) circle (0.1);
         \draw (-1.4,0) ellipse (2.5 and 0.45);
         \draw [<-](-2,0.24)--(-3,1);          
         \draw [->](-1.4,1)--(-1.4,0.45);         
         \node [left] at (-2.9,1) {\footnotesize$q$};
         \node [above] at (-1.4,0.9) {\footnotesize$m$};
         \node [below] at (-1,-0.8) {\footnotesize$$};
 
     \end{tikzpicture}
     \caption{}
     \label{fig:1}    
\end{figure}
     We need the following well-known description of the ground chains {\cite[3.1.11]{Fli92}}. Every chain with positive integer weights $w_1, \ldots, w_n \geq 2$ $(n\ge 2)$ corresponds in a unique way to the pair $(m, q),$ where $m=\Delta(\mathcal{DG})$ and $1 \leq q<m$ is an integer 
coprime to $m$ defined by:
     $$\frac{m}{q}=w_n-\frac{1}{w_{n-1}-\frac{1}{\cdots w_{1}}}.$$
     If $n=1,$ we set $m=\Delta(\mathcal{DG}),q=1$. If $n=0,$ set $m=1,q=0$.

\medskip

We introduce the following notation.

\noindent{\bf Notation.}
    Let $\mathcal{DG}$ be an extended dual graph with simple edges and rational vertices (curves). 
    \begin{enumerate}
      \item By $\mathcal{DG}=\mathcal{DG}_{ch}^{(m,q),u}$ we mean that $\mathcal{DG}$ is a chain whose ground graph corresponds to the pair $(m,q)$ and $u$ is determined by the external part. In particular, by $\mathcal{DG}=\mathcal{DG}_{ch}^{2^A}$ we mean that $\mathcal{DG}$ is a chain with $A$ vertices of weight $2$ and it has no external part.
          
      \item By $\mathcal{DG}=\mathcal{DG}_{tr}^{2^A}$ we mean that $\mathcal{DG}$ is a tree with $A+2$ vertices of weight $2$ and it has no external part. Moreover, it has exactly one fork which has three branches and two of the branches have only one vertex of weight $2$.
    \end{enumerate} 

\begin{figure}[ht]  
          \begin{tikzpicture}
         \draw (-1,0) ellipse (1.5 and 0.25);   
         \draw (0.75,0) circle (0.1);
         \draw (-0.9,0) ellipse (2 and 0.45);   
         \node [below] at (-0.9,-1.4) {\footnotesize$\mathcal{DG}_{ch}^{(m,q),u}$};
         \draw [<-](-2,0.18)--(-3,1);             
         \draw [<-](-0.9,0.45)--(-0.9,1);         
         \node [left] at (-2.9,1) {\footnotesize$q$};
         \node [above] at (-0.9,0.9) {\footnotesize$m$};
         
         \node [left] at (-3,-1.1) {\footnotesize$u:$};
         \draw (1.5,-1.1) circle (0.2);       
         \draw (0.75,-0.1)--(1.5,-0.9);          
         \draw (1.359,-0.959)--(1.641,-1.241);
         \draw (1.359,-1.241)--(1.641,-0.959);
         
         \draw (0.7,-1.1) circle (0.2);       
         \draw (0.7,-0.9)--(-0.5,-0.43);
         \draw (0.559,-0.959)--(0.841,-1.241);
         \draw (0.559,-1.241)--(0.841,-0.959);
         
         \draw (-2.5,-1.1) circle (0.2);      
         \draw (-2.5,-0.9)--(-1.5,-0.43);
         \draw (-2.641,-0.959)--(-2.359,-1.241);
         \draw (-2.641,-1.241)--(-2.359,-0.959);
         \node at (-0.9,-1) {\footnotesize$\cdots \cdots$};  
         

         \draw (2.5,0) circle (0.1);
         \node [above] at (2.5,0.1) {\footnotesize$2$}; 
         \draw (2.6,0)--(3,0);
         \draw (3.1,0) circle (0.1);
         \node [above] at (3.1,0.1) {\footnotesize$2$};
         \draw [dashed](3.2,0)--(4,0);
         \draw (4.1,0) circle (0.1);
         \node [above] at (4.1,0.1) {\footnotesize$2$};
         \draw (4.2,0)--(4.6,0);
         \draw (4.7,0) circle (0.1);
         \node [above] at (4.8,0.1) {\footnotesize$2$};
         \node [below] at (3.6,-1.4) {\footnotesize$\mathcal{DG}_{ch}^{2^A}$};  
         
         \draw (6.5,0) circle (0.1);
         \node [above] at (6.5,0.1) {\footnotesize$2$}; 
         \draw (6.6,0)--(7,0);
         \draw (7.1,0) circle (0.1);
         \node [above] at (7.1,0.1) {\footnotesize$2$};
         \draw [dashed](7.2,0)--(8,0);
         \draw (8.1,0) circle (0.1);
         \node [above] at (8.1,0.1) {\footnotesize$2$};
         \draw (8.2,0)--(8.6,0);
         \draw (8.7,0) circle (0.1);
         \node [right] at (8.8,0) {\footnotesize$2$};
         \node [below] at (7.6,-1.4) {\footnotesize$\mathcal{DG}_{tr}^{2^A}$};  
         \draw (8.7,0.5) circle (0.1);      
         \draw (8.7,-0.5) circle (0.1);  
         \draw (8.7,0.1)--(8.7,0.4);                    
         \draw (8.7,-0.1)--(8.7,-0.4);  
         \node [right] at (8.8,0.5) {\footnotesize$2$};          
         \node [right] at (8.8,-0.5) {\footnotesize$2$};             
                         
   \end{tikzpicture}          
\end{figure}

     Let $(X\ni x,B:=\sum_{i=1}^{s} b_iB_i)$ be an lc surface germ, where $B_i$ are distinct prime divisors. Let $f:Y\to X$ be the minimal resolution of $X$ with exceptional divisors $E_1,\dots,E_n$. Suppose that the dual graph $\mathcal{DG}$ of $f$ is a chain, and $v_i$ is adjacent to $v_{i+1}$ for any $1\le i\le n-1$. We define
     $$\alpha(\mathcal{DG},\bm{b}):=1-\sum_{k=1}^n (B_Y\cdot E_k)\Delta(\mathcal{DG}-\{v_k,\dots ,v_n\}),$$
     where $B_Y$ is the strict transform of $B$ on $Y$ and $\bm{b}:=(b_1,\dots,b_s)$. 
     
\medskip

\begin{lem}[{\cite[lemma 3.3]{Ale93}}]\label{lemma 3.3}

     Let $\epsilon\le1$ be a positive real number and $\Ii\subseteq[0,1]$ a $($not necessarily DCC$)$ set such that $\inf_{\gamma\in\Ii}\{\gamma>0\}>0$. Then there exists a positive integer $N_0$ depending only on $\epsilon$ and $\Ii$ satisfying the following. 
     
     Assume that $(X\ni x,B:=\sum_{i=1}^{s} {b_iB_i})$ is an lc surface germ such that
     \begin{itemize}      
        \item $B_1,\dots,B_s$ are distinct prime divisors,  
        \item $b_i\in\Ii$ for any $1\le i\le s,$ and
        \item $\pld(X,B)\ge \epsilon.$         
     \end{itemize}     
     Then the dual graph of the minimal resolution of $X$ belongs to one of the following sets.     
     \begin{enumerate}     
        \item $\mathfrak{F}_{\epsilon,\Ii}$: consists of finitely many graphs $\mathcal{DG}$ with at most $N_0-1$ vertices. In particular, there exists a positive integer $N(\epsilon,\Ii)$ depending only on $\epsilon$ and $\Ii$ such that $\Delta(\mathcal{DG})\le N(\epsilon,\Ii)$ for any $\mathcal{DG}\in \mathfrak{F}_{\epsilon,\Ii}$.    
               
        \item $\mathfrak{C}_{\epsilon,\Ii}$: consists of chains $\mathcal{DG}=\mathcal{DG}\{\mathcal{DG}_{ch}^{(m_1,q_1),u_1},\mathcal{DG}_{ch}^{2^A},\mathcal{DG}_{ch}^{(m_2,q_2),u_2}\}$, where $A\in\Zz_{\ge1},\mathcal{DG}_{ch}^{(m_i,q_i),u_i}\in \mathfrak{F}_{\epsilon,\Ii}$ $(i=1,2)$, $\mathcal{DG}-\mathcal{DG}_{ch}^{2^A}=\mathcal{DG}_{ch}^{(m_1,q_1),u_1}$ $ \sqcup\mathcal{DG}_{ch}^{(m_2,q_2),u_2}$ such that $m_i\le\lfloor\frac{2}{\min\{\epsilon,\epsilon_0\}}\rfloor^{\lfloor\frac{2}{\epsilon}\rfloor}$ $(i=1,2)$. Moreover, we have
              \begin{align*}
                 \pld(X,B)=\min\{&\frac{(A+\frac{m_2}{m_2-q_2})\frac{\alpha_1}{m_1-q_1}+\frac{q_1}{m_1-q_1}\frac{\alpha_2}{m_2-q_2}}{A+\frac{q_1}{m_1-q_1}+\frac{m_2}{m_2-q_2}},\\&
                  \frac{(A+\frac{m_1}{m_1-q_1})\frac{\alpha_2}{m_2-q_2}+\frac{q_2}{m_2-q_2}\frac{\alpha_1}{m_1-q_1}}{A+\frac{q_2}{m_2-q_2}+\frac{m_1}{m_1-q_1}}
                  \},
              \end{align*}
              where $\alpha_i:=\alpha(\mathcal{DG}_{ch}^{(m_i,q_i),u_i},\bm{b}:=(b_1,\dots,b_s))$ $(i=1,2)$ which are independent of $A$. Moreover, if $\Ii$ satisfies the DCC, then we may additionally require that
    $$\min\{\frac{\alpha_1}{m_1-q_1},\frac{\alpha_2}{m_2-q_2}\}\ge\epsilon.$$
              
        \item $\mathfrak{T}_{\epsilon,\Ii}$: consists of trees $\mathcal{DG}=\mathcal{DG}\{\mathcal{DG}_{ch}^{(m_1,q_1),u_1},\mathcal{DG}_{tr}^{2^A}\}$, where $\mathcal{DG}_{ch}^{(m_1,q_1),u_1}$ $\in\mathfrak{F}_{\epsilon,\Ii}, \mathcal{DG}-\mathcal{DG}_{ch}^{(m_1,q_1),u_1}=\mathcal{DG}_{tr}^{2^A}$ and $m_1\le\lfloor\frac{2}{\min\{\epsilon,\epsilon_0\}}\rfloor^{\lfloor\frac{2}{\epsilon}\rfloor}$. Moreover, $\Delta(\mathcal{DG})\le N(\epsilon,\Ii)$ and
        $$\pld(X,B)=\frac{\alpha_1}{m_1-q_1},$$
        where $\alpha_1:=\alpha(\mathcal{DG}_{ch}^{(m_1,q_1),u_1},\bm{b}:=(b_1,\dots,b_s))$ which is independent of $A$.                    
    \end{enumerate} 
   
\end{lem}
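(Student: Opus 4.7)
The plan is to analyze the dual graph $\mathcal{DG}$ combinatorially via Lemma \ref{lemma concave} and separate the analysis into the ``small graph'' case and the ``large graph'' case. Throughout, write $w_k = -(E_k\cdot E_k)$ and $a_k = a(E_k,X,B)$, so that $\pld(X,B) = \min_k a_k \geq \epsilon$.

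First, I would establish the basic boundedness inputs. By Lemma \ref{lemma concave}(1), every weight satisfies $w_k \leq \lfloor 2/\epsilon\rfloor$. Next, using Lemma \ref{lemma concave}(4) together with (2) and (5), I would show that $\mathcal{DG}$ is either a chain or a tree whose complement of $(-2)$-curves lies in a bounded ``core''. More precisely: walking from any vertex $v$ with $w_v \geq 3$ along a path of $(-2)$-curves toward a leaf, Lemma \ref{lemma concave}(5) (applied to $\epsilon$) bounds the length by $\lfloor 1/\epsilon\rfloor$; iterating this with Lemma \ref{lemma concave}(3) at each fork then bounds both the number of forks and the total length of ``$(-2)$-arms'' attached to any weight-$\geq 3$ vertex, except for a single long $(-2)$-chain that can survive in the middle.

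Second, using the assumption $\epsilon_0 := \inf_{\gamma\in\Gamma}\{\gamma>0\} > 0$, I would bound the total ``mass'' of the boundary $B_Y$ meeting the $f$-exceptional locus. Each exceptional $E_k$ can intersect $\Supp B_Y$ only finitely often without forcing $a_k < \epsilon$ (again by the adjunction inequality $(\ref{4.1})$ used in Lemma \ref{lemma concave}), and each non-zero coefficient is at least $\epsilon_0$. Combining this with the fork/arm bounds from the previous step yields a threshold $N_0 = N_0(\epsilon,\Gamma)$ such that if $\mathcal{DG}$ has fewer than $N_0$ vertices it falls into the finite family $\mathfrak{F}_{\epsilon,\Gamma}$, and the bound $\Delta(\mathcal{DG}) \leq N(\epsilon,\Gamma)$ follows since the weights and the number of vertices are bounded.

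Third, for $\mathcal{DG}$ with at least $N_0$ vertices, the combined analysis forces the existence of a maximal $(-2)$-subchain $\mathcal{DG}_{ch}^{2^A}$ of length $A \geq 1$, outside of which the weights are controlled. The residual graph $\mathcal{DG} - \mathcal{DG}_{ch}^{2^A}$ decomposes into at most two ``ground'' pieces, each of which must itself be a bounded chain (giving case $\mathfrak{C}_{\epsilon,\Gamma}$) with the possible exception of a single ``small tree'' configuration where Lemma \ref{lemma concave}(3) forces the equality $a_{k_0}=a_{k_3}$ and pins down the shape $\mathcal{DG}_{tr}^{2^A}$ (giving case $\mathfrak{T}_{\epsilon,\Gamma}$). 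To pin down the $(m_i,q_i)$, I would invoke the classical description of ground chains recalled before the lemma, which translates a chain of weights into the pair $(m,q) = (\Delta,\cdot)$ via a continued fraction; the weight bounds of Step 1 give $m_i \leq \lfloor 2/\min(\epsilon,\epsilon_0)\rfloor^{\lfloor 2/\epsilon\rfloor}$.

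Finally, for the explicit formula for $\pld$, I would use that along the $(-2)$-chain the log discrepancies are affine-linear (since $2a_k = a_{k-1}+a_{k+1}$ whenever $w_k=2$ and $B_Y\cdot E_k=0$), so the minimum is attained at an endpoint. Solving the two-sided linear system by Cramer's rule and expressing the cofactors in terms of $\Delta$ of subgraphs (as in the standard formulas for surface singularities) yields the two candidate values
$$\frac{(A+\tfrac{m_2}{m_2-q_2})\tfrac{\alpha_1}{m_1-q_1}+\tfrac{q_1}{m_1-q_1}\tfrac{\alpha_2}{m_2-q_2}}{A+\tfrac{q_1}{m_1-q_1}+\tfrac{m_2}{m_2-q_2}}$$
and its symmetric counterpart, with $\alpha_i = \alpha(\mathcal{DG}_{ch}^{(m_i,q_i),u_i},\bm{b})$; the min of these two gives $\pld$. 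For the tree case $\mathfrak{T}_{\epsilon,\Gamma}$, the symmetry of $\mathcal{DG}_{tr}^{2^A}$ collapses the two candidates into the single expression $\alpha_1/(m_1-q_1)$. When $\Gamma$ is DCC, the bound $\alpha_i/(m_i-q_i)\geq \epsilon$ follows from $\pld\geq \epsilon$ since this quantity is precisely the log discrepancy at the $(-2)$-chain endpoint attached to the $i$-th ground graph.

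The main obstacle is Step 3: showing rigorously that after discarding the long $(-2)$-chain, each ``ground'' side is either itself a bounded chain or forces the exact tree shape $\mathcal{DG}_{tr}^{2^A}$. This is the heart of the combinatorics of lc surface singularities, and requires iterating Lemma \ref{lemma concave}(2)--(5) along every branch emanating from any hypothetical extra fork and using the DCC-type input $\epsilon_0>0$ to exclude configurations that would otherwise be compatible with $\pld\geq\epsilon$.
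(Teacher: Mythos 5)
Your proposal takes a genuinely different route from the paper: the paper proves this lemma in three lines by citing \cite[Lemma 3.3]{Ale93} for all parts of the statement except the very last sentence of item (2) (the ``Moreover, if $\Gamma$ satisfies the DCC'' refinement), which is the only new piece. You instead sketch a from-scratch proof of the entire structure theorem. Your Steps 1--4 are broadly consistent with Alexeev's original combinatorial argument and, modulo the admitted gap in Step 3, could be made to work, though they reproduce a known result rather than supply the missing addendum.

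However, there is a concrete gap precisely at the one point the paper actually proves. Your final paragraph claims that when $\Gamma$ is DCC, $\alpha_i/(m_i-q_i)\ge\epsilon$ ``follows from $\pld\ge\epsilon$ since this quantity is precisely the log discrepancy at the $(-2)$-chain endpoint attached to the $i$-th ground graph.'' This is false. Writing $\beta_i:=\alpha_i/(m_i-q_i)$, $p_1:=q_1/(m_1-q_1)$, $p_2:=m_2/(m_2-q_2)$ and supposing $\beta_1\le\beta_2$, the log discrepancy at the chain endpoint (equivalently $\pld$) is
$$\frac{(A+p_2)\beta_1+p_1\beta_2}{A+p_1+p_2}=\beta_1+\frac{p_1(\beta_2-\beta_1)}{A+p_1+p_2},$$
which is strictly larger than $\beta_1$; the quantity $\beta_1$ is only the \emph{limit} of $\pld$ as $A\to\infty$, not the log discrepancy of any divisor on the minimal resolution. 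Thus $\pld\ge\epsilon$ does not yield $\beta_1\ge\epsilon$. Moreover, your argument as stated never invokes the DCC hypothesis, which is a clear sign something is off: the DCC is essential here. The correct argument (as in the paper) is: if $\beta_1<\epsilon$, then since $\beta_1$ lies in a DCC set, the gap $\epsilon-\beta_1\ge\epsilon'>0$ is uniformly bounded below; the displayed identity then forces $A\le p_1(\beta_2-\beta_1)/\epsilon'$, so $A$ is bounded and the whole graph is small, contradicting that $\mathcal{DG}$ has many vertices. Without DCC, $\beta_1$ could approach $\epsilon$ from below and $A$ could be arbitrarily large, which is why the refinement $\beta_i\ge\epsilon$ genuinely needs the DCC hypothesis.
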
      
     \begin{figure}[ht]     
     \begin{tikzpicture}        
         \draw (-1,0) ellipse (1.5 and 0.25);   
         \draw (0.75,0) circle (0.1);
         \draw (-0.9,0) ellipse (2 and 0.45);   
         \draw (1.1,0)--(1.4,0);
         \draw (1.5,0) circle (0.1);
         \node [above] at (1.5,0.1) {\footnotesize$2$}; 
         \draw (1.6,0)--(2,0);
         \draw (2.1,0) circle (0.1);
         \node [above] at (2.1,0.1) {\footnotesize$2$};
         \draw (2.2,0)--(2.6,0);
         \draw (2.7,0) circle (0.1);
         \node [above] at (2.7,0.1) {\footnotesize$2$};
         \node [below] at (2.7,-1.4) {\footnotesize$\mathcal{DG}\{\mathcal{DG}_{ch}^{(m_1,q_1),u_1},
                                     \mathcal{DG}_{ch}^{2^A},\mathcal{DG}_{ch}^{(m_2,q_2),u_2}\}$};
         \draw [dashed](2.8,0)--(3.7,0);
         \draw [<-](-2,0.18)--(-3,1);             
         \draw [<-](-0.9,0.45)--(-0.9,1);         
         \node [left] at (-2.9,1) {\footnotesize$q_1$};
         \node [above] at (-0.9,0.9) {\footnotesize$m_1$};
         \draw [<-](0.75,0.1)--(1.5,0.9);
         \node [above] at (1.5,0.9) {\footnotesize$v_p{\rm (min)}$};         
         \draw (1.5,-1.1) circle (0.2);       
         \draw (0.75,-0.1)--(1.5,-0.9);          
         \draw (1.359,-0.959)--(1.641,-1.241);
         \draw (1.359,-1.241)--(1.641,-0.959);
         
         \draw (0.7,-1.1) circle (0.2);       
         \draw (0.7,-0.9)--(-0.5,-0.43);
         \draw (0.559,-0.959)--(0.841,-1.241);
         \draw (0.559,-1.241)--(0.841,-0.959);
         
         \draw (-2.5,-1.1) circle (0.2);      
         \draw (-2.5,-0.9)--(-1.5,-0.43);
         \draw (-2.641,-0.959)--(-2.359,-1.241);
         \draw (-2.641,-1.241)--(-2.359,-0.959);
         \node at (-0.9,-1) {\footnotesize$\cdots \cdots$};       
         \draw (6.8,0) ellipse (2 and 0.45);         
         \draw (6.9,0) ellipse (1.5 and 0.25);       
         \draw (5.1,0) circle (0.1);                 
         \draw (4.3,0) circle (0.1);
         \draw (3.8,0) circle (0.1);
         \draw (4.4,0)--(4.8,0);
         \draw (3.9,0)--(4.2,0);
         \node [above] at (4.3,0.1) {\footnotesize$2$}; 
         \node [above] at (3.8,0.1) {\footnotesize$2$}; 
         \node [right] at (8.5,1) {\footnotesize$q_2$};
         \node [above] at (6.8,0.9) {\footnotesize$m_2$};         
         \draw [<-](7.7,0.2)--(8.6,1);    
         \draw [<-](6.8,0.45)--(6.8,1);    
         \draw (4.3,-1.1) circle (0.2);
         \draw (5.1,-0.1)--(4.3,-0.9);       
         \draw (4.16,-0.959)--(4.44,-1.241);
         \draw (4.16,-1.241)--(4.44,-0.959);  
         
         \draw (8.4,-1.1) circle (0.2);     
         \draw (8.4,-0.9)--(7.5,-0.43);
         \draw (8.259,-0.959)--(8.541,-1.241);
         \draw (8.259,-1.241)--(8.541,-0.959);
         
         \draw (5.1,-1.1) circle (0.2);     
         \draw (5.1,-0.9)--(6.1,-0.43);
         \draw (4.96,-0.959)--(5.24,-1.241);
         \draw (4.96,-1.241)--(5.24,-0.959);         
         \node at (6.8,-1) {\footnotesize$\cdots \cdots$};       
    \end{tikzpicture}
    \begin{tikzpicture}
                                                    
         \draw (-1,0) ellipse (1.5 and 0.25);   
         \draw (0.75,0) circle (0.1);
         \node [below] at (0.75,-1.4) {\footnotesize$\mathcal{DG}\{\mathcal{DG}_{ch}^{(m_1,q_1),u_1},      
                                      \mathcal{DG}_{tr}^{2^A}\}$};
         \draw (-0.9,0) ellipse (2 and 0.45);   
         \draw (1.1,0)--(1.4,0);
         \draw (1.5,0) circle (0.1);
         \node [above] at (1.5,0.1) {\footnotesize$2$}; 
         \draw (1.6,0)--(2,0);
         \draw (2.1,0) circle (0.1);
         \node [above] at (2.1,0.1) {\footnotesize$2$};
         \draw (2.2,0)--(2.6,0);
         \draw (2.7,0) circle (0.1);
         \node [above] at (2.7,0.1) {\footnotesize$2$};
         \draw [dashed](2.8,0)--(3.7,0);
         \draw [<-](-2,0.18)--(-3,1);             
         \draw [<-](-0.9,0.45)--(-0.9,1);         
         \node [left] at (-2.9,1) {\footnotesize$q_1$};
         \node [above] at (-0.9,0.9) {\footnotesize$m_1$};
         \draw [<-](3.8,0.1)--(2.5,0.9);
         \draw [<-](0.8,0.1)--(2.1,0.9);
         \node [above] at (2.3,0.9) {\footnotesize${\rm (min)}$};        
         \draw (1.5,-1.1) circle (0.2);       
         \draw (0.75,-0.1)--(1.5,-0.9);          
         \draw (1.359,-0.959)--(1.641,-1.241);
         \draw (1.359,-1.241)--(1.641,-0.959);
         
         \draw (0.7,-1.1) circle (0.2);       
         \draw (0.7,-0.9)--(-0.5,-0.43);
         \draw (0.559,-0.959)--(0.841,-1.241);
         \draw (0.559,-1.241)--(0.841,-0.959);
         
         \draw (-2.5,-1.1) circle (0.2);      
         \draw (-2.5,-0.9)--(-1.5,-0.43);
         \draw (-2.641,-0.959)--(-2.359,-1.241);
         \draw (-2.641,-1.241)--(-2.359,-0.959);
         \node at (-0.9,-1) {\footnotesize$\cdots \cdots$};
                
         \draw (3.9,0) circle (0.1);
         \draw (3.9,0.6) circle (0.1);
         \draw (3.9,-0.6) circle (0.1);
         
         \draw (3.9,0.1)--(3.9,0.5);
         \draw (3.9,-0.1)--(3.9,-0.5);
         
         \node [right] at (4,0) {\footnotesize$2$};
         \node [right] at (4,0.6) {\footnotesize$2$};
         \node [right] at (4,-0.6) {\footnotesize$2$};
    \end{tikzpicture}
     
    \caption{}
    \label{fig:2}   
    \end{figure}

\begin{proof}
     By \cite[lemma 3.3]{Ale93}, we only need to show the last statement in $(2)$. Since $\Ii$ satisfies the DCC, there exists a positive real number $\epsilon'$ depending only on $\epsilon$ and $\Ii$ such that $\epsilon-\frac{1-b}{l}\ge\epsilon'$ for any positive real number $\frac{1-b}{l}<\epsilon$, where $b\in\Ii_+,l\in\Zz_{>0}$. We show that the statement holds if $\mathcal{DG}$ has more than $N_0':=2N_0+\lfloor\frac{2\lfloor\frac{2}{\min\{\epsilon,\epsilon_0\}}\rfloor^{\lfloor\frac{2}{\epsilon}\rfloor}}{\epsilon'}\rfloor+1$ vertices. Otherwise we may assume that $\frac{\alpha_1}{m_1-q_1}<\epsilon$, and hence $\epsilon-\frac{\alpha_1}{m_1-q_1}\ge\epsilon'$.
     It follows that
     $$A\le\frac{\frac{m_2}{m_2-q_2}\frac{\alpha_1}{m_1-q_1}+\frac{q_1}{m_1-q_1}\frac{\alpha_2}{m_2-q_2}}{\epsilon-\frac{\alpha_1}{m_1-q_1}}<\frac{2\lfloor\frac{2}{\min\{\epsilon,\epsilon_0\}}\rfloor^{\lfloor\frac{2}{\epsilon}\rfloor}}{\epsilon'}.$$
     Since $\mathcal{DG}_{ch}^{(m_i,q_i),u_i}\in \mathfrak{F}_{\epsilon,\Ii}$ $(i=1,2)$, $\mathcal{DG}$ has at most $N_0'-1$ vertices, a contradiction. Replacing $N_0$ by $N_0'$, we are done.
\end{proof}

\begin{rem}
In Lemma \ref{lemma 3.3}(2), if $\frac{\alpha_1}{m_1-q_1}\le \frac{\alpha_2}{m_2-q_2}$, then
$$\pld(X,B)=\frac{(A+\frac{m_2}{m_2-q_2})\frac{\alpha_1}{m_1-q_1}+\frac{q_1}{m_1-q_1}\frac{\alpha_2}{m_2-q_2}}{A+\frac{q_1}{m_1-q_1}+\frac{m_2}{m_2-q_2}},$$
since
\begin{align*}
&\frac{(A+\frac{m_2}{m_2-q_2})\frac{\alpha_1}{m_1-q_1}+\frac{q_1}{m_1-q_1}\frac{\alpha_2}{m_2-q_2}}{A+\frac{q_1}{m_1-q_1}+\frac{m_2}{m_2-q_2}}-\frac{(A+\frac{m_1}{m_1-q_1})\frac{\alpha_2}{m_2-q_2}+\frac{q_2}{m_2-q_2}\frac{\alpha_1}{m_1-q_1}}{A+\frac{q_2}{m_2-q_2}+\frac{m_1}{m_1-q_1}}\\
=&\frac{(A+1)(\frac{\alpha_1}{m_1-q_1}-\frac{\alpha_2}{m_2-q_2})}{A+\frac{q_1}{m_1-q_1}+\frac{m_2}{m_2-q_2}}\le0.
\end{align*}
\end{rem}

Theorem \ref{cor accforpld} is an easy consequence of Lemma \ref{lemma 3.3}.
\begin{thm}[{\cite[Theorem 3.2]{Ale93}}]\label{cor accforpld}
     Let  $\Gamma\subseteq[0,1]$ be a DCC set. Then $ \mathcal{P}ld(\Gamma)$ satisfies the ACC.
\end{thm}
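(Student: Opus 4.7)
The plan is to argue by contradiction using the structural classification in Lemma~\ref{lemma 3.3}. Suppose there is a strictly increasing sequence $\{p_k\}_{k=1}^{\infty} \subseteq \mathcal{P}ld(\Gamma)$, realized by lc surface germs $(X_k\ni x_k, B_k)$ with $B_k\in\Gamma$ and $\pld(X_k\ni x_k, B_k) = p_k$. Since any singular surface germ has $\pld\le 1$, I may assume $p_k\in(0,1]$. Setting $\epsilon := p_1/2$, every $p_k\ge\epsilon$, so Lemma~\ref{lemma 3.3} applies. After passing to a subsequence, the dual graph of the minimal resolution of $X_k$ lies in exactly one of the three classes $\mathfrak{F}_{\epsilon,\Gamma}$, $\mathfrak{C}_{\epsilon,\Gamma}$, $\mathfrak{T}_{\epsilon,\Gamma}$.

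In each case I will pass to a further subsequence so that all combinatorial data (the underlying graph, the integers $m_i^*, q_i^*$, and in particular the number $s$ of external vertices) become constant in $k$; the only variable data are the coefficients $b_{k,1},\ldots,b_{k,s}$ of $B_k$ and, in cases $(2)$ and $(3)$, the length $A_k$ of the chain of $(-2)$-curves. This is possible because $\mathfrak{F}_{\epsilon,\Gamma}$ is finite, and in cases $(2)$ and $(3)$ the non-chain-of-$2$'s sub-graphs are drawn from $\mathfrak{F}_{\epsilon,\Gamma}$. Since $\Gamma^s$ is DCC, I may pass to a further subsequence so that each $b_{k,j}$ is non-decreasing in $k$.

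With these reductions, the formulas of Lemma~\ref{lemma 3.3} make $p_k$ monotone in the $b_{k,j}$'s. For case~$(1)$, adjunction on the minimal resolution expresses each log discrepancy as $a(E_i,X_k,B_k)=1-d_i-\sum_j c_{ij}b_{k,j}$, with $d_i,c_{ij}\ge 0$ depending only on the fixed graph (positivity of $c_{ij}$ follows from monotonicity of log discrepancies in the boundary), so $p_k=\min_i a(E_i,X_k,B_k)$ is non-increasing, contradicting strict monotonicity. For case~$(3)$, $p_k = \alpha_{1,k}/(m_1^*-q_1^*)$ with $\alpha_{1,k}=1-\sum_j b_{k,j}N_{1,j}$ for non-negative integers $N_{1,j}$ fixed by the graph, and again $p_k$ is non-increasing. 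After possibly interchanging the labels $1\leftrightarrow 2$ along a subsequence (swapping the two ends of the chain), case~$(2)$ reduces to
\[
p_k = \beta_{1,k}(1-\lambda_k)+\beta_{2,k}\lambda_k, \qquad \lambda_k=\frac{c^*}{A_k+e^*}\in[0,1],
\]
with $\beta_{1,k}\le\beta_{2,k}$ and both $\beta_{i,k}=\alpha_{i,k}/(m_i^*-q_i^*)$ non-increasing in $k$.

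The main obstacle is the sub-case of $(2)$ in which $A_k\to\infty$, where I cannot fix $A$ by a subsequence. My plan there is a limit argument. The sequence $\{p_k\}$ is strictly increasing and bounded, hence converges to some $p^*$ with $p_k<p^*$ for all $k$; the sequence $\{\beta_{1,k}\}$ is non-increasing and bounded below by $\epsilon$, hence converges to some $\beta^*$ with $\beta_{1,k}\ge\beta^*$ for all $k$. Since $\lambda_k\to 0$ and $\beta_{2,k}-\beta_{1,k}$ is bounded, $p_k-\beta_{1,k}\to 0$, so $p^*=\beta^*$. But then $p_k\ge\beta_{1,k}\ge\beta^*=p^*>p_k$, a contradiction. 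Hence $A_k$ must be bounded, and passing to a subsequence with $A_k$ constant reduces the displayed formula to a convex combination with fixed weights of two non-increasing sequences, which is again non-increasing---contradicting strict monotonicity. This exhausts all three cases.
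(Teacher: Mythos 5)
Your proof is correct and fills in the details that the paper deliberately leaves out: the paper's entire argument is the one-line remark ``Theorem \ref{cor accforpld} is an easy consequence of Lemma \ref{lemma 3.3}'' together with the citation to Alexeev, and your proof is precisely that deduction carried through. Your reduction (pass to a subsequence with $p_k\ge\epsilon:=p_1/2>0$, invoke the trichotomy $\mathfrak{F}_{\epsilon,\Gamma}\cup\mathfrak{C}_{\epsilon,\Gamma}\cup\mathfrak{T}_{\epsilon,\Gamma}$, fix the combinatorial type along a subsequence, and use that each $\alpha_i$ is affine in $\bm{b}$ with non-positive linear coefficients together with the DCC on $\Gamma$ to make all $\beta$'s non-increasing) is exactly the intended route. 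The one point that needed care is the $\mathfrak{C}_{\epsilon,\Gamma}$ sub-case with $A_k\to\infty$, and your limit argument handles it cleanly; note that your use of the lower bound $\beta_{1,k}\ge\epsilon$ relies on the ``Moreover'' clause in Lemma \ref{lemma 3.3}(2) for DCC coefficients (without it, $\beta_{1,k}$ being non-increasing would not a priori give a convergent sequence), so it is worth citing that clause explicitly. Two tiny housekeeping items you glossed over but which are easy: a strictly increasing sequence can contain $+\infty$ at most once, so the smooth germs can be discarded; and the Lemma \ref{lemma 3.3} remark justifying $\pld=p_A$ when $\beta_1\le\beta_2$ should be invoked by name.
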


\begin{lem}\label{lemma minimalreslwithbddvertices}
     Let $\epsilon \leq 1$ be a positive real number and $N$ a positive integer. Then there is a positive integer $N_{0}$ depending only on $\epsilon$ and $N$ satisfying the following.
     
     Assume that $\left(X \ni x, B:=\sum b_{i} B_{i}\right)$ is an lc surface germ such that
     \begin{enumerate}
         \item $B_{i}$ are distinct prime divisors on $X$, and
         \item $+\infty>\pld(X\ni x,B)\ge\epsilon.$
     \end{enumerate}
     Let $f:Y\to X$ be the minimal resolution of $X$ with exceptional divisors $E_1,\dots,E_{n}$, and $n\le N.$ Then
     $$a(E_{j}, X, B)=\frac{l_{j_0}-\sum_i l_{j_i} b_{i}}{l_j}$$
     for some positive integers $l_{j_0}\le N_{0},l_{j_i},l_j$, and any $1\le j\le n.$
\end{lem}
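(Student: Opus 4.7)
The plan is to solve, via Cramer's rule, the standard linear system determining the log discrepancies $a_j := a(E_j, X, B)$ on the minimal resolution, and then bound the numerator of the resulting rational expression using bounds on the dual graph.

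I would first write $K_Y + B_Y + \sum_{k=1}^n (1-a_k)E_k = f^{*}(K_X+B)$, where $B_Y$ is the strict transform of $B$. Since $\pld(X\ni x,B)\ge\epsilon>0$ and $a(E_k,X,0)\ge a(E_k,X,B)\ge\epsilon$, the point $x\in X$ is klt; in particular each $E_k$ is a smooth rational curve, the dual graph is a tree, and $0 < a_k \le 1$. Intersecting the identity with $E_j$ and using $K_Y\cdot E_j=-2-E_j^{2}$ yields the linear system
$$\sum_{k=1}^{n}(1-a_k)(E_k\cdot E_j)=(2-w_j)-\sum_i b_i\,d_{ji},$$
where $M:=(E_k\cdot E_j)$ is symmetric and negative-definite, $w_j:=-E_j^2\ge 2$ (since $f$ is the minimal resolution), and $d_{ji}:=B_{Y,i}\cdot E_j\in\mathbb{Z}_{\ge 0}$.

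Next I would bound everything in sight. Lemma \ref{lemma concave}(1) applied with $a_k\ge\epsilon$ gives $w_k\le\lfloor 2/\epsilon\rfloor$. Together with $n\le N$, this bounds $\Delta:=|\det M|$ by some $\Delta_0=\Delta_0(\epsilon,N)$ and bounds every $(n-1)\times(n-1)$ minor of $M$ by some $C=C(\epsilon,N)$. Since $M$ is symmetric, negative-definite and corresponds to a connected configuration of curves, the entries of $-M^{-1}$ are non-negative (the classical fact underlying the existence of the effective dual basis $E_j^{\sharp}$), so the integers $\tilde N_{jk}:=\Delta\cdot M^{-1}_{jk}$ satisfy $\tilde N_{jk}\le 0$ and $|\tilde N_{jk}|\le C$. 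Multiplying the linear system by $\Delta$ and writing everything in terms of $|\tilde N_{jk}|=-\tilde N_{jk}$ yields
$$\Delta\cdot a_j=\Bigl(\Delta+\sum_{k}|\tilde N_{jk}|(2-w_k)\Bigr)-\sum_i b_i\Bigl(\sum_k|\tilde N_{jk}|\,d_{ki}\Bigr).$$

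Finally, I would set $l_j:=\Delta$, $l_{j_0}:=\Delta+\sum_k|\tilde N_{jk}|(2-w_k)$, and $l_{j_i}:=\sum_k|\tilde N_{jk}|\,d_{ki}$, discarding indices with $l_{j_i}=0$ (boundary components not meeting any exceptional divisor). All three are integers; $l_{j_i}\ge 0$ by construction; and $l_{j_0}=\Delta\, a_j+\sum_i b_i\, l_{j_i}\ge \Delta\epsilon>0$. The upper bound $l_{j_0}\le\Delta\le\Delta_0(\epsilon,N)=:N_0$ (using $(2-w_k)\le 0$) depends only on $\epsilon$ and $N$. The main technical obstacle is the sign analysis needed to conclude $l_{j_i}\ge 0$, which reduces to the classical non-positivity of the inverse of a connected, negative-definite intersection matrix; once this is in hand, the lemma follows from linear algebra together with the concavity estimate of Lemma \ref{lemma concave}(1).
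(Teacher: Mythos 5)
Your proof is correct and follows essentially the same route as the paper: both intersect the crepant pullback identity with each $E_j$ to get a linear system with intersection matrix $M$, bound $w_k\le\lfloor 2/\epsilon\rfloor$ via Lemma~\ref{lemma concave}(1) so that $\Delta$ and all cofactors are bounded in terms of $\epsilon$ and $N$, and solve by Cramer's rule. The only difference is one of packaging: the paper simply cites \cite[3.1.10]{Fli92} for the explicit formula $a_j=\Delta^{-1}\sum_k\Delta(\mathcal{DG}-(v_j\mapsto v_k))c_k$, whose coefficients are manifestly non-negative determinants, whereas you derive the same non-negativity from scratch via the M-matrix property of $-M^{-1}$; both give the sign control needed to see $l_{j_0}\le\Delta$ and $l_{j_i}\ge 0$.
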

\begin{proof}
      Let $\mathcal{DG}$ be the dual graph of $f,v_{i}$ the vertex corresponds to $E_{i}, a_{i}:=a(E_{i}, X, B)$ and $w_{i}:=-(E_{i} \cdot E_{i})$ for any $1 \le i\le n.$ By Lemma \ref{lemma concave}$(1)$, $w_{i}\le\lfloor\frac{2}{\epsilon}\rfloor$ for any $1\le i\le n.$ By \cite[3.1.10]{Fli92},
      $$a_{j}=\frac{\sum_{k=1}^{n} \Delta(\mathcal{DG}-(v_{j} \mapsto v_{k})) c_{k}}{\Delta(\mathcal{DG})},$$
      where $(v_j\mapsto v_k)$ is the unique shortest path from $v_j$ to $v_k$, and $c_{k}:=2-2 p_{a}(E_{k})-(f_{*}^{-1} B+\sum_{i \ne k} E_{i}) \cdot E_{k}.$ Since $n \leq N$ and $w_{j} \le\lfloor\frac{2}{\epsilon}\rfloor$, it follows that there exists a positive integer $N_0$ with the required properties.
\end{proof}

     As an application of Lemma \ref{lemma 3.3}, we have the following result.

\begin{prop} \label{prop pld}
     Let $\epsilon\le1$ be a non-negative rational number and $\Ii\subseteq \Qq \cap[0,1]$ a finite set. Then there is a positive integer $I$ depending only on $\epsilon$ and $\Gamma$ satisfying the following. 
     
     Assume that $(X\ni x,B)$ is an lc surface germ such that 
     \begin{enumerate}
       \item $B\in \Ii,$ and
       \item $\pld(X\ni x,B)=\epsilon.$
     \end{enumerate}
     Then $I(K_X+B)$ is Cartier near $x$.
\end{prop}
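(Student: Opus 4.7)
The plan is to apply Lemma \ref{lemma 3.3} to the minimal resolution $f\colon Y\to X$ and reduce the problem to bounding the denominators of the log discrepancies $a_i := a(E_i,X,B)$ of the $f$-exceptional divisors. Indeed, the Cartier index of $K_X+B$ near $x$ equals the least common denominator of the numbers $1-a_i$, since $f^*(K_X+B) = K_Y+B_Y+\sum (1-a_i)E_i$. The vector $(1-a_i)$ is the unique solution to the linear system $\sum_j (1-a_j)(E_j\cdot E_i) = E_i^2+2-B_Y\cdot E_i$, whose right-hand side has denominator dividing a fixed integer $d_\Gamma$ (a common denominator for $\Gamma$). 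Hence Cramer's rule gives that each denominator of $a_i$ divides $d_\Gamma\cdot\Delta(\mathcal{DG})$.

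By Lemma \ref{lemma 3.3}, the dual graph $\mathcal{DG}$ lies in $\mathfrak{F}_{\epsilon,\Gamma}\cup\mathfrak{C}_{\epsilon,\Gamma}\cup\mathfrak{T}_{\epsilon,\Gamma}$. In both $\mathfrak{F}_{\epsilon,\Gamma}$ and $\mathfrak{T}_{\epsilon,\Gamma}$ the determinant $\Delta(\mathcal{DG})$ is bounded by $N(\epsilon,\Gamma)$, so $I=d_\Gamma\cdot N(\epsilon,\Gamma)$ works there.

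The main obstacle is case $\mathfrak{C}_{\epsilon,\Gamma}$, where $\Delta(\mathcal{DG})$ a priori grows with the length $A$ of the middle $\mathcal{DG}_{ch}^{2^A}$ part. Here we use the stronger hypothesis $\pld(X,B)=\epsilon$ (rather than $\pld\ge\epsilon$). Writing $\beta_i=\alpha_i/(m_i-q_i)$ and, after swapping if necessary, assuming $\beta_1\le\beta_2$, the explicit formula from Lemma \ref{lemma 3.3}(2) yields
\[
\bigl(A+\tfrac{m_2}{m_2-q_2}\bigr)(\epsilon-\beta_1) \;=\; \tfrac{q_1}{m_1-q_1}(\beta_2-\epsilon).
\]
Because $\beta_1,\beta_2,m_i,q_i$ range over a finite set determined by $(\epsilon,\Gamma)$, either $\beta_1\ne\epsilon$---in which case $\epsilon-\beta_1$ is bounded below by a positive constant, forcing $A$ and hence $\Delta(\mathcal{DG})$ to be bounded---or $\beta_1=\epsilon$ and $\frac{q_1}{m_1-q_1}(\beta_2-\epsilon)=0$, so either $\beta_2=\epsilon$ or the left ground chain is trivial ($q_1=0$).

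In these degenerate subcases $A$ may be unbounded, but the Cartier index remains bounded: on the middle weight-$2$ chain (whose vertices meet no external curves) the linear recursion $a_{j+1}=2a_j-a_{j-1}$ from the equations above, together with the boundary values both equal to $\epsilon$ (either directly or via the trivial-ground-chain analysis), forces $a_j=\epsilon$ for every middle vertex. The log discrepancies on the ground chain(s) are then determined by Cramer's rule applied to a bounded-size linear system with right-hand side involving $\epsilon$ (a fixed rational) and the coefficients of $B$ (in $\Gamma$), hence have denominator dividing $d_\Gamma\cdot N(\epsilon,\Gamma)$. Combining all cases produces the required uniform integer $I$ depending only on $\epsilon$ and $\Gamma$; the hard part is the degenerate branch of case $\mathfrak{C}_{\epsilon,\Gamma}$ where the resolution has unbounded combinatorial complexity yet the index still stabilizes.
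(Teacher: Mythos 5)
For $\epsilon>0$ your argument is essentially the paper's: reduce to the trichotomy of Lemma \ref{lemma 3.3}, handle $\mathfrak{F}_{\epsilon,\Gamma}$ and $\mathfrak{T}_{\epsilon,\Gamma}$ by the bounded determinant, and in the chain case use the exact equality $\pld=\epsilon$ to force the log discrepancies along the long middle $(-2)$-chain to be constant equal to $\epsilon$ (the paper phrases this as $\epsilon=\frac{\alpha_1}{m_1-q_1}=\frac{\alpha_2}{m_2-q_2}$ and $a_p=\cdots=a_{p+A+1}=\epsilon$, using the ``Moreover'' part of Lemma \ref{lemma 3.3}(2), while you derive it from the identity $(A+\tfrac{m_2}{m_2-q_2})(\epsilon-\beta_1)=\tfrac{q_1}{m_1-q_1}(\beta_2-\epsilon)$; both rest on the same implicit identification of the two displayed expressions with the log discrepancies at the two ends of the middle chain, and both then propagate bounded denominators outward by the adjunction recursion). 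So the $\epsilon>0$ part is fine, modulo the points below.

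There are, however, two genuine gaps. First, you never treat $\epsilon=0$, which the proposition allows: Lemma \ref{lemma 3.3} is stated only for positive $\epsilon$, and for $\pld=0$ the minimal resolution of $X$ need not have a dual graph of the listed types at all (cusp singularities have cyclic dual graphs, simple elliptic ones have an elliptic exceptional curve), so your entire case analysis is unavailable; the paper handles $\epsilon=0$ separately, by the classification of lc surface singularities when $B=0$ near $x$, and by \cite[Corollary 1.9]{Bir16a} when $B\neq 0$ (then $X$ is klt at $x$ and $\mld(X\ni x,B)=0$). Second, your opening claim that ``the Cartier index of $K_X+B$ near $x$ equals the least common denominator of the $1-a_i$'' is not justified and is false in the generality in which you state it: integrality of the Mumford pullback on the minimal resolution implies Cartierness only for rational surface singularities, and lc germs with $\pld=0$ can be non-rational (simple elliptic, cusps), which is exactly where the lc hypothesis alone leaves you. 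For $\epsilon>0$ the germ is klt, hence a rational (quotient) singularity, and the claim can be cited, but you should say so; the paper instead proves the descent uniformly, by observing that $If^{*}(K_X+B)$ is Cartier on $Y$ and then running a $(K_Y+B_Y'+H)$-MMP over $X$ (via \cite[Theorem 1.1]{Fuj12}) whose steps are $If^{*}(K_X+B)$-trivial, so that the cone theorem forces $I(K_X+B)$ to be Cartier on $X$. Fixing the statement of that step and adding the $\epsilon=0$ case are needed to make the proposal a complete proof.
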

\begin{proof}
      If $\epsilon=0$ and $B=0$, then by the classification of lc surface singularities, $IK_X$ is Cartier for some positive integer $I\in\{1,2,3,4,6\}$. If $\epsilon=0$ and $B\neq 0$, then $\mld(X\ni x,B)=0$ and $X$ is klt near $x$. The boundedness of Cartier index follows from \cite[Corollary 1.9]{Bir16a}. 

      It suffices to show the proposition for the case when $\epsilon>0$. Assume that  $\Ii\cup\{\epsilon\}\subseteq\frac{1}{I_0}\Zz$ for some positive integer $I_0.$ Let $f:Y\to X$ be the minimal resolution of $X,$ and we may write
      $$ K_{Y}+B_Y+\sum (1-a_k)E_k=f^{*}(K_{X}+B),$$
      where $B_Y$ is the strict transform of $B$, $E_k$ are the exceptional divisors and $a_k:=a(E_k,X,B)$ for any $k$. By Lemma \ref{lemma 3.3}, the dual graph $\mathcal{DG}$ of $f$ belongs one of $\mathfrak{F}_{\epsilon,\Ii},\mathfrak{C}_{\epsilon,\Ii}$ and $\mathfrak{T}_{\epsilon,\Ii}$. 
      
      Suppose that either $\mathcal{DG}\in\mathfrak{F}_{\epsilon,\Ii}$ or $\mathcal{DG}\in\mathfrak{T}_{\epsilon,\Ii}$. By Lemma \ref{lemma 3.3}, there exists a positive integer $I_1$ which only depends on $\epsilon$ and $\Ii$ such that $\Delta(\mathcal{DG})|I_1$. Let $I:=I_0I_1$. Then $Ia_k\in\Zz$ for any $k$.
      
      Now we consider the case when $\mathcal{DG}=\mathcal{DG}\{\mathcal{DG}_{ch}^{(m_1,q_1),u_1},\mathcal{DG}_{ch}^{2^A},\mathcal{DG}_{ch}^{(m_2,q_2),u_2}\}$ $\in\mathfrak{C}_{\epsilon,\Ii}$. Since $\Ii$ is a finite set, we may assume that $\epsilon\le\frac{\alpha_1}{m_{1}-q_{1}}\le\frac{\alpha_2}{m_{2}-q_{2}}$ and hence
      $$ \pld(X\ni x,B)=a_p=\frac{(A+\frac{m_2}{m_2-q_2})\frac{\alpha_1}{m_1-q_1}+\frac{q_1}{m_1-q_1}\frac{\alpha_2}{m_2-q_2}}{A+\frac{q_1}{m_1-q_1}+\frac{m_2}{m_2-q_2}}=\epsilon,$$
      where $p$ is the number of the vertices of $\mathcal{DG}_{ch}^{(m_1,q_1),u_1}$.
      It follows that $\epsilon=\frac{\alpha_1}{m_{1}-q_{1}}=\frac{\alpha_2}{m_{2}-q_{2}},a_p=a_{p+1}=\cdots=a_{p+A+1}=\epsilon$ and $I_0a_k\in\Zz$ for any $p\le k\le p+A+1.$ Since $a_{j+1}-w_ja_j+a_{j-1}=B_Y\cdot E_j,Ia_k\in \Zz$ for any $k.$

      It is enough to show that $I(K_X+B)$ is Cartier near $x$. Since $Ia_k\in\Zz$, $If^{*}(K_X+B)=I(K_Y+B_Y)+\sum I(1-a_k)E_k$ is Cartier. Let $H_Y\ge0$ be an ample $\Qq$-Cartier divisor on $Y$. Then $H:=f^*f_*H_Y-H_Y$ is ample and exceptional over $X$. Let $K_Y+B_Y':=f^{*}(K_X+B).$ Possibly rescaling $H,$ we may assume that every coefficient of $B_Y'+H$ is at most one. We may run a $(K_Y+B_Y'+H)$-MMP over $X$ and the MMP terminates with $X$ thanks to \cite[Theorem 1.1]{Fuj12}. Since the MMP is $(If^{*}(K_X+B))$-trivial, according to the cone theorem \cite[Theorem 3.2]{Fuj12}, $I(K_X+B)$ is Cartier near $x$.      
\end{proof}

\section{Nakamura's conjecture with DCC coefficients for surfaces}
In this appendix, we show Theorem \ref{lemma dccsmoothmld}, that is, Nakamura's conjecture \cite[Conjecture 1.1]{mustata-nakamura18} holds for surface pairs with DCC coefficients. Musta\c{t}\u{a} and Nakamura proved Theorem \ref{lemma dccsmoothmld} for finite coefficients \cite[Theorem 1.3]{mustata-nakamura18}, and Alexeev proved it in full generality \cite[Lemma 3.7]{Ale93}. Our proof is based on Alexeev's idea and we use some different arguments. 

\begin{thm}\label{lemma dccsmoothmld}
     Let $\Ii\subseteq[0,1]$ be a DCC set. Then there exists a positive integer $N$ depending only on $\Gamma$ satisfying the following.
     
     Assume that $X\ni x$ is a smooth surface germ and $B_1,\dots,B_m$ are distinct prime divisors on $X$ such that
     \begin{itemize}
         \item $(X,B:=\sum_{i=1}^mb_iB_i)$ is lc,
         \item $b_i\in \Ii$ for any $1\le i\le m$, and
         \item $x\in\cap_{i=1}^m B_i$.
     \end{itemize}
     Then there is a prime divisor $E$ over $X$ such that $\mld(X\ni x,B)=a(E,X,B)$ and $a(E,X,0)\le N.$
     In particular, we have 
     $$\mld(X\ni x,B)=l_0-\sum_{i=1}^m l_ib_i$$
     for some non-negative integers $l_0,\dots,l_m$ and $l_0\le N$.
\end{thm}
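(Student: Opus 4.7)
The plan is to argue by contradiction, using the ACC for partial log discrepancies in dimension two (Theorem \ref{cor accforpld}) together with the DCC of $\Gamma$.

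First, I would dispose of the easy case $\mld(X\ni x,B)\ge 1$: by Lemma \ref{lemmma extractmld}(4), the mld is then computed by the exceptional divisor $E_1$ of a single blow-up at $x$, which has $a(E_1,X,0)=2$. So assume $\mld(X\ni x,B)<1$. Next I would bound $m$ uniformly: since $x\in\bigcap_i B_i$ and $(X,B)$ is lc, blowing up $x$ gives $\sum_i b_i\mult_x B_i\le 2$; setting $\epsilon_0:=\inf\{b\in\Gamma\mid b>0\}>0$, which is positive by the DCC, we obtain $m\le \lfloor 2/\epsilon_0\rfloor$.

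Now apply Lemma \ref{lemmma extractmld} to produce a sequence of blow-ups
$$Y:=X_n\to X_{n-1}\to\cdots\to X_0=X$$
satisfying conditions (1)--(5) of that lemma, with $a(E_i,X,B)\in (0,1)$ for every $i$ and $\mld(X\ni x,B)=a(E_n,X,B)$. A routine check shows $a(E_n,X,0)\le n+1$, so it suffices to bound $n$ in terms of $\Gamma$. Suppose for contradiction that no such bound exists. Then there exist pairs $(X_k\ni x_k,B_k=\sum_{i=1}^{m_k}b_{k,i}B_{k,i})$ satisfying the hypotheses, with associated blow-up sequences of length $n_k\to\infty$. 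By Step 2 we may pass to a subsequence on which $m_k=m$ is constant and the combinatorial configuration of the $B_{k,i}$ at $x_k$ (including multiplicities $\mult_{x_k}B_{k,i}\le 2/\epsilon_0$ and pairwise tangency data) stabilizes. Since $\Gamma$ is DCC, after passing to a further subsequence each $\{b_{k,i}\}_k$ is non-increasing with limit $\bar b_i\in\bar\Gamma$.

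The core step is to extract a contradiction from the asymptotic behaviour of the sequences $e_{k,i}:=1-a(E_i^k,X_k,B_k)\in(0,1)$. Using the recursion from the proof of Lemma \ref{lemmma extractmld}, namely
$$e_{k,i+1}=-1+\mult_{x^k_{i}} B_{X_i^k}+e_{k,i}+c_{k,i},\qquad c_{k,i}\in\{0,e_{k,i-1},e_{k,j}\text{ for some }j<i\},$$
one shows that for $n_k\gg 0$ the tail of the dual graph of $Y_k\to X_k$ (together with the strict transforms of the $B_{k,i}$) is of the chain/tree form described in Lemma \ref{lemma 3.3}, so that after contracting $E_{n_k}^k$ one produces a new lc surface germ $(\tilde X_k\ni \tilde x_k,\tilde B_k)$ whose coefficients still lie in the DCC set $\Gamma\cup(\Gamma_+\cup\tfrac1{N'}\Zz_{\ge 0})$ and whose partial log discrepancy equals $1-e_{k,n_k-1}$. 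Letting $k\to\infty$, the $e_{k,i}$ converge (along a subsequence) to a strictly increasing sequence of values approaching $1$, and correspondingly the plds of the $\tilde X_k$ form a strictly increasing sequence, contradicting the ACC for plds (Theorem \ref{cor accforpld}).

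The main obstacle is the last step: one must ensure that the tail structure of the blow-up sequence is genuinely compatible with the hypotheses of Lemma \ref{lemma 3.3} (i.e.\ that the dual graph looks like a chain of $(-2)$-curves capped by bounded ``heads''), and that the limit procedure indeed produces infinitely many distinct plds in the DCC coefficient set. This amounts to the combinatorial analysis carried out by Alexeev \cite[Lemma 3.7]{Ale93}, adapted here to use the DCC hypothesis in place of a finite set. Once the bound $n\le N_1(\Gamma)$ is established, the final formula $\mld(X\ni x,B)=l_0-\sum l_i b_i$ with $l_0\le N$ follows by applying Lemma \ref{lem:logsmoothmld} on the log resolution $Y\to X$: the coefficients in the crepant pullback $K_Y+B_Y=f^*(K_X+B)$ are $\Zz$-linear combinations of the $b_i$ with coefficients bounded in terms of $n$, hence in terms of $\Gamma$.
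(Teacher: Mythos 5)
Your reduction steps (disposing of $\mld\ge 1$ via Lemma \ref{lemmma extractmld}(4), bounding $\mult_x\Supp B$ by $\lfloor 2/\epsilon_0\rfloor$, and reducing everything to bounding the length $n$ of the blow-up tower) match the paper's setup, but the core of your argument has a genuine gap. The entire difficulty of the theorem is concentrated in the step you do not carry out: the assertion that for $n_k\gg 0$ the tail of the tower can be contracted to lc surface germs with controlled coefficients and prescribed plds, which you explicitly delegate to ``the combinatorial analysis carried out by Alexeev [Lemma 3.7, Ale93], adapted here''. But Alexeev's Lemma 3.7 \emph{is} the statement being proved (the appendix points out that Alexeev proved Theorem \ref{lemma dccsmoothmld} in full generality there), so invoking it makes your argument circular rather than a proof. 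Moreover, even granting the contraction construction, the contradiction you aim for does not follow: if the co-discrepancies $e_{k,i}=1-a(E_i^k,X_k,B_k)$ increase toward $1$, then the corresponding log discrepancies $1-e_{k,i}$ \emph{decrease} toward $0$, so the plds of your germs $(\tilde X_k\ni\tilde x_k,\tilde B_k)$ would form a decreasing sequence accumulating at $0$ --- which is perfectly consistent with the ACC of Theorem \ref{cor accforpld} (compare the plds $1/l$ of the germs with a chain of $l$ $(-2)$-curves). Nothing in your sketch produces a strictly \emph{increasing} sequence of plds, so the claimed contradiction is not established. (Two smaller slips: from a sequence in a DCC set one extracts non-decreasing, not non-increasing, subsequences; and $a(E_n,X,0)$ is not bounded by $n+1$ in general --- it can grow roughly like a Fibonacci number in $n$ --- though a bound depending only on $n$ is all you need.)

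For comparison, the paper avoids any compactness or ACC-for-pld input and argues effectively: after noting the dual graph is a chain (Lemma \ref{lemma concave}(4)), it proves that whenever the multiplicity of $\Supp B$ at the blown-up point does not drop, the quantity $e_i$ must increase by at least a fixed $\xi_j>0$ per blow-up, where $\xi_j$ is the infimum of the positive elements of the DCC set $\{-s_0+\sum s_it_i\mid s_0\le j+1,\ t_i\in\Ii\}$; since $e_i\le 1$, this bounds the number of consecutive such blow-ups (Claim B.3), and since $\mult_x\Supp B\le\lfloor 2/b_0\rfloor$ the multiplicity can drop only boundedly many times, giving the bound on $n$ directly. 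If you want to salvage your approach, you would need to actually prove the tail-structure and contraction claims and then identify an invariant that genuinely increases along the sequence (plds decrease here), which in effect forces you back to the kind of explicit increment estimate the paper uses.
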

     
\begin{proof}
     By Lemma \ref{lemmma extractmld}, there exists a sequence of blow-ups
     $$X_n\to X_{n-1}\to\cdots\to X_1\to X_0:=X$$
     with the following properties:
     \begin{itemize}
         \item $f_i:X_{i}\to X_{i-1}$ is the blow-up of $X_{i-1}$ at a point $x_{i-1}\in X_{i-1}$ with the exceptional divisor $E_i$ for any $1\le i\le n$, where $x_0:=x$,
         \item $x_i\in E_i$ for any $1\le i\le n-1$, and
         \item $a(E_i,X,B)>a(E_n,X,B)=\mld(X\ni x,B)$ for any $1\le i\le n-1$.
     \end{itemize}
     For convenience, we may denote the strict transform of $E_i$ on $X_k$ by $E_i$ for any $k\ge i.$ 
     
     Let $B_{X_i}$ be the strict transform of $B$ on $X_i$, $a_i:=a(E_i,X,B)$ and $g_i:=f_1\circ\cdots\circ f_i$. Let $v_i$ be the vertex corresponds to $E_i$, $w_i:=-(E_i\cdot E_i)$, $e_i:=1-a_i\le1$ for any $1\le i\le n,$
     and $c_i:=-1+\mult_{x_{i}}B_{X_i}$ for any $0\le i\le n-1.$ Since $\sum E_i$ is snc, there exists at most one $k<i$ such that $x_i\in E_{k}$. 
     
     If $\mld(X\ni x,B)\ge1,$ then
     $$\mld(X\ni x,B)=a(E_1,X,B)=2-\mult_xB,$$ 
     and $a(E_1,X,0)=2$ by Lemma \ref{lemmma extractmld}$(4)$. Let $N=2$ and we are done. In the following, we may assume that $\mld(X\ni x,B)<1.$     
     By Lemma \ref{lemmma extractmld}$(5)$, $1\ge e_{i}>0$ for any $1\le i\le n$. By Lemma \ref{lemma concave}$(4)$, the dual graph $\mathcal{DG}$ of $g_n$ is a chain. 
     \begin{claim}\label{claim blowuppoints}
If $E_i$ intersects $E_j$ on $X_j$ and $e_i\ge e_j$ for some $i<j$, then $x_{j}\in E_i.$
     \end{claim}
     \begin{proof}[Proof of {\rm Claim B.2}]
         Otherwise there exists a sequence of vertices $v_{k_1},\dots$, $v_{k_l}=:v_n$ such that $v_{k_s}$ is adjacent to $v_{k_{s+1}}$ for any $1\le s\le l-1,$ $v_{k_1}=v_{i}$ and $v_{k_2}=v_{j}$. By Lemma \ref{lemma concave}$(2)$, $e_i<e_{j}$, a contradiction.
     \end{proof}
     
     It is clear that $\mult_{x_{j}}\Supp B_{X_{j}}$ is decreasing, we will need the following claim.
     \begin{claim}\label{claim bddsnc}
     
        For any $j\ge 1$, there exists a positive integer $N_j$ depending only on $\Gamma$ and $j$ such that either $n-j\le N_j$ or
        $$\mult_{x_{m_j}}\Supp B_{X_{m_j}}\le\mult_{x_j}\Supp B_{X_j}-1$$
        for some positive integer $m_j$ satisfying $0<m_j-j\le N_j.$
     \end{claim}
    Suppose that Claim \ref{claim bddsnc} holds. Since $-1+\mult_xB\le1$, 
     $$\mult_x\Supp B\le\lfloor\frac{2}{b_0}\rfloor,$$
     where $b_0:=\min_{b\in \Ii}\{b>0\}>0.$ Thus $n$ and $a(E,X,0)$ are bounded from above by Claim \ref{claim bddsnc}.
    \end{proof}
     \begin{proof}[Proof of {\rm Claim B.3}]

        Let
        $$\Ii_j:=\{-s_0+\sum s_it_i>0\mid s_i\in\Zz_{\ge0},t_i\in \Ii,s_0\le j+1\}\cup\{0\}$$
        which satisfies the DCC set as $\Ii$ does.
        There exists a positive real number $\xi_j$ such that if $-1+\sum s_it_i>0$ for some $s_i\in\Zz_{\ge0}$, $t_i\in \Ii_j$, then $-1+\sum s_it_i\ge\xi_j.$
        
        We will show that $N_j:=\lfloor\frac{1}{\xi_j}\rfloor$ has the required properties. Suppose on the contrary that $n-j>N_j$ and $\mult_{x_{m_j}}\Supp B_{X_{m_j}}=\mult_{x_j}\Supp B_{X_j}$, where $m_j=j+N_j+1$. Thus
        $c_j=c_{j+1}=\cdots=c_{m_j}$.
        We first show that $c_j\le0$. Let $e_{i}'=e_{k}$ if $x_{i}\in E_{k}$ for some $k<i$, otherwise let $e_{i}'=0$.
        Now suppose that $c_j>0$ which implies that $c_j\in\Ii_j,c_j\ge\xi_j$ and $e_{j+1}=c_j+e_j+e_j'> c_j\ge\xi_j$. We show by induction on $k$ that $e_{j+k}> k\xi_j$ for any $1\le k\le N_j+1.$ Assume that for some $N_j\ge k_0\ge 1,$
        $e_{j+i}> i\xi_j$ for any $1\le i\le k_0$. Then by induction,
        $$e_{j+k_0+1}=c_j+e_{j+k_0}+e_{j+k_0}'\ge c_j+e_{j+k_0}> (k_0+1)\xi_j.$$
        We finish the induction. Hence $1\ge e_{m_j}>(m_j-j)\xi_j$ and $m_j-j=N_j+1\le N_j,$ a contradiction. Thus $c_j\le0$.
        
                     \begin{figure}[ht]
       \begin{tikzpicture}

       \draw (5,1.5)--(5,0);
       \node [left] at (5,0.75) {\tiny$E_{j'}$};
       \node at (5.4,0.3) {\tiny$x_{j+k}$};
       \draw (4.9,0.3)--(6.2,-1);
       \node [below] at (5.4,-0.3) {\tiny$E_{j+k}$};
       \draw (5.8,-0.8)--(7.5,-0.8);
       \node [below] at (7,-0.8) {\tiny$E_{j+k-1}$};
       \draw [dashed] (7.2,-0.9)--(8.3,-0.2);
       \draw(8.3,0.3)--(7.8,1.5);
       \node[right] at (8,1) {\tiny$E_{j''}$};
       \draw(8.22,0.7)--(8.22,-0.4);
       \node[right] at (8.22,0) {\tiny$E_{j+1}$};
       \node [below] at (7,-1.2) {\footnotesize{sub-dual graph in the proof Claim B.3}};
      \end{tikzpicture}
      \end{figure}     
        
        If $x_j\notin E_{j_0}$ for any $j_0<j$, then $e_{j+1}=e_{j}+c_j\le e_j$ which implies that $x_{j+1}\in E_j$ by Claim \ref{claim blowuppoints}. Since Claim \ref{claim bddsnc} fails for $j$, Claim \ref{claim bddsnc} also fails for $j+1$. Otherwise we can take $N_j:=N_{j+1}+1$ which has the required properties. Therefore possibly replacing $j,\Ii_j,\xi_j,N_j$ by $j+1,\Ii_{j+1},\xi_{j+1},N_{j+1}$ respectively, we may assume that $x_j\in E_{j_0}$ for some $j_0<j$.
                
        We show that either $x_{j+k-1}\in E_{j_0}$ for any $2\le k\le N_j+1$ or $x_{j+k-1}\in E_{j}$ for any $2\le k\le N_j+1$. Since $E_{j+1}$ intersects $E_{j_0},E_{j}$, and $\mathcal{DG}$ is a chain, either $x_{j+1}\in E_{j}$ or $x_{j+1}\in E_{j_0}$. We may assume that $x_{j+1}\in E_{j'}$, where $j'\in\{j,j_0\}$. Let $j''$ be the integer such that $\{j',j''\}=\{j,j_0\}$. By Claim \ref{claim blowuppoints},
        $$e_{j''}<e_{j+1}=c_j+e_j+e_{j_0}=c_j+e_{j'}+e_{j''},$$ 
        and hence $e_{j'}+c_j>0$. Thus $e_{j'}+c_j\ge \xi_j$ as $e_{j'}+c_j\in \Ii_j$.
        We then show by induction on $k$ that $x_{j+k-1}\in E_{j'}$ for any $2\le k\le  N_j+1$. Assume that for some $ N_j\ge k_0\ge 2,$
        $x_{j+i-1}\in E_{j'}$ for any $2\le i\le k_0$. It follows that
        $$e_{j+i}=e_{j+i-1}+(e_{j'}+c_j)=e_{j''}+i(e_{j'}+c_j)>i\xi_j$$ 
        for any $1\le i\le k_0$. If $x_{j+k_0}\notin E_{j'}$, then $x_{j+k_0}\in E_{j+k_0-1}$ as $\mathcal{DG}$ is a chain. It follows that
        $$e_{j+k_0+1}=c_{j}+e_{j+k_0}+e_{j+k_0-1},$$
        and $e_{j+k_0}>e_{j'}$ by Claim \ref{claim blowuppoints}. Let $\bar{e}_{j+k_0-2}=e_{j''}$ if $k_0=2$, otherwise let $\bar{e}_{j+k_0-2}=e_{j+k_0-2}$.
        Then on $X_{j+k_0+1}$, we have
        \begin{align*}
          0&=(K_{X_{j+k_0+1}}+B_{X_{j+k_0+1}}+\sum_{i=1}^{j+k_0+1} e_{i}E_{i})\cdot E_{j+k_0-1}
         \\&\ge-2+(e_{j+k_0-1}-1)E_{j+k_0-1}\cdot E_{j+k_0-1}+e_{j+k_0+1}+\bar{e}_{j+k_0-2}
         \\&\ge e_{j+k_0+1}+\bar{e}_{j+k_0-2}-2e_{j+k_0-1}
         \\&= e_{j+k_0}-e_{j'}
         > 0,
        \end{align*} 
        a contradiction. We finish the induction.
        Hence $x_{m_j-1}\in E_{j'}$ and 
        $1\ge e_{m_j}=e_{j''}+(m_j-j)(e_{j'}+c_j)> (m_j-j)\xi_j$ which implies that $m_j-j=N_j+1\le N_j,$ a contradiction.
     \end{proof}

Recall that Theorem \ref{lemma dccsmoothmld} implies Lemma \ref{lemma mld set}. The ACC for minimal log discrepancies in dimension 2 follows from Lemma \ref{lemma mld set} and Lemma \ref{cor accforpld}.

\begin{thm}[{\cite[Theorem 3.6]{Ale93}}]\label{thm accformld}
     Let $\Gamma\subseteq[0,1]$ be a DCC set. Then $\mathcal{M}ld(2,\Gamma)$ satisfies the ACC.   
\end{thm}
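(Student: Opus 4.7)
The plan is to derive a contradiction from the structural description of surface MLDs in Lemma \ref{lemma mld set} together with the ACC for PLDs (Theorem \ref{cor accforpld}). Suppose toward a contradiction that $a_1 < a_2 < \cdots$ is a strictly increasing sequence in $\mathcal{M}ld(2,\Gamma)$. Since MLDs are non-negative, $a_2 > a_1 \ge 0$, so setting $\epsilon := a_2 > 0$, every term with $k \ge 2$ lies in $\mathcal{M}ld(2,\Gamma) \cap [\epsilon,+\infty)$. Lemma \ref{lemma mld set} then supplies a positive integer $N_0 = N_0(\epsilon,\Gamma)$ with
$$\mathcal{M}ld(2,\Gamma) \cap [\epsilon,+\infty) \subseteq \Ff(N_0,\epsilon,\Gamma) \cup \mathcal{P}(\epsilon,\Gamma),$$
so after passing to a subsequence I may assume that every $a_k$ lies in one of these two sets. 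It therefore suffices to verify that each of $\Ff(N_0,\epsilon,\Gamma)$ and $\mathcal{P}(\epsilon,\Gamma)$ satisfies the ACC.

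For $\Ff(N_0,\epsilon,\Gamma)$, an element has the form $\frac{l_0 - \sum_i l_i b_i}{l}$ with $l_0 \le N_0$, $l \in \Zz_{>0}$, $l_i \in \Zz_{\ge 0}$, and $b_i \in \Gamma$. Since each $a_k \in [\epsilon,2]$ and $l_0 \le N_0$, the denominator satisfies $l \le N_0/\epsilon$, and the DCC of $\Gamma$ provides $b_{\min} := \min\{b \in \Gamma : b > 0\} > 0$, whence $\sum_{b_i > 0} l_i \le l_0/b_{\min} \le N_0/b_{\min}$. After further extraction the integer data $l$, $l_0$, and $(l_i)_i$ become independent of $k$, and since any sequence in a DCC set admits a non-decreasing subsequence, I may also assume each $b_{k,i}$ is non-decreasing in $k$. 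But then $\sum_i l_i b_{k,i}$ is non-decreasing, forcing $a_k$ to be non-increasing and contradicting strict monotonicity. The same reasoning applied with $l_0 = 1$ shows in particular that $\Ll(\epsilon,\Gamma)$ satisfies the ACC.

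For $\mathcal{P}(\epsilon,\Gamma)$, an element is $\beta_1 + \frac{p_2(\beta_2 - \beta_1)}{A + p_1 + p_2}$ with $p_1,p_2$ in the finite set $\mathcal{I}_0$, $\beta_1,\beta_2 \in \Ll(\epsilon,\Gamma)$ satisfying $\beta_2 \ge \beta_1$, and $A \in \Zz_{>0}$. After extracting so that $(p_{1,k},p_{2,k}) = (p_1,p_2)$ is constant, a short case analysis on the monotonicity patterns of $\beta_{1,k}$ and $\beta_{2,k}$ in the ACC set $\Ll(\epsilon,\Gamma)$ and of $A_k \in \Zz_{>0}$ shows that a strictly increasing $a_k$ must force either a strictly increasing subsequence inside $\Ll(\epsilon,\Gamma)$ or an infinite strictly decreasing sequence of positive integers, both impossible.

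The main obstacle is the discrete extraction step in the $\Ff$ case: since $\Gamma$ is typically infinite, $\Ff(N_0,\epsilon,\Gamma)$ is infinite as well, so the uniform bound $l_0 \le N_0$ from Lemma \ref{lemma mld set}, combined with $b_{\min} > 0$, is essential for extracting a finite discrete kernel that reduces ACC for $\Ff$ to DCC for $\Gamma$. Once this is in place, the $\mathcal{P}$ case follows by a mechanical reduction to the ACC for $\Ll(\epsilon,\Gamma)$ together with the finiteness of $\mathcal{I}_0$.
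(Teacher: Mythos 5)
Your proposal is correct and follows essentially the same route as the paper: the paper also reduces to showing that $\mathcal{M}ld(2,\Gamma)\cap[\epsilon,+\infty)$ satisfies the ACC for $\epsilon>0$ and then invokes Lemma \ref{lemma mld set}; the only real difference is that for the $\mathcal{P}(\epsilon,\Gamma)$-part the paper simply quotes the ACC for partial log discrepancies (Theorem \ref{cor accforpld}), which suffices because in the regime where Lemma \ref{lemma mld set} produces an element of $\mathcal{P}(\epsilon,\Gamma)$ the minimal log discrepancy actually equals a pld, whereas you verify the ACC of the explicit sets $\Ff(N_0,\epsilon,\Gamma)$, $\Ll(\epsilon,\Gamma)$ and $\mathcal{P}(\epsilon,\Gamma)$ by hand. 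Your treatment of $\Ff(N_0,\epsilon,\Gamma)$ is complete: the bounds $l\le N_0/\epsilon$ and $\sum_{b_i>0}l_i\le N_0/b_{\min}$ do reduce everything to finitely many integer patterns plus the DCC of $\Gamma$, and the same argument handles $\Ll(\epsilon,\Gamma)$.

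One small repair is needed in the $\mathcal{P}(\epsilon,\Gamma)$ case: the dichotomy you assert (either a strictly increasing subsequence inside $\Ll(\epsilon,\Gamma)$ or an infinite strictly decreasing sequence of positive integers) is not exhaustive, because non-increasing sequences in the ACC set $\Ll(\epsilon,\Gamma)$ need not stabilize; since $\Gamma$ is only DCC, $\Ll(\epsilon,\Gamma)$ may contain infinite strictly decreasing sequences. Thus after extracting $(p_1,p_2)$ constant and $\beta_{1,k},\beta_{2,k}$ non-increasing, the case $A_k\to\infty$ with strictly decreasing $\beta$'s produces neither horn. It is closed by a short limit argument: writing $c_k=\beta_{1,k}+\frac{p_2(\beta_{2,k}-\beta_{1,k})}{A_k+p_1+p_2}$, the correction term tends to $0$ (numerator bounded, denominator unbounded), so $c_k\to\lim_j\beta_{1,j}$, while $c_k\ge\beta_{1,k}\ge\lim_j\beta_{1,j}$; a strictly increasing sequence must lie strictly below its limit, a contradiction. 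With that sentence added, your argument is a complete and self-contained proof, marginally more elementary than the paper's, which delegates this step to \cite{Ale93} via Theorem \ref{cor accforpld}.
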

\begin{proof}
    It suffices to show that the set $\mathcal{M}ld(2,\Gamma)\cap[\epsilon,+\infty)$ satisfies the ACC for any $1\ge\epsilon>0$. The result follows from Lemma \ref{lemma mld set} and Lemma \ref{cor accforpld}.
\end{proof}

\end{appendices}

\bibliographystyle{alpha}

\newcommand{\etalchar}[1]{$^{#1}$}

\end{document}